\documentclass[11pt]{amsart}
\baselineskip=7.0mm
\usepackage{amsmath}
\usepackage{bbding}
\usepackage{tikz}
\usepackage{amsmath,amssymb}
\setlength{\baselineskip}{1.09\baselineskip}

\theoremstyle{plain}
\newtheorem{theorem}{Theorem}[section]
\newtheorem{lemma}[theorem]{Lemma}

\theoremstyle{definition}

\newtheorem{definition}[theorem]{Definition}

\newtheorem{example}[theorem]{Example}
\numberwithin{equation}{section}

\setlength{\textwidth}{5.8in} \setlength{\textheight}{8.0in}
\hoffset=-0.45truein \voffset=0.1truein

\def\be{\begin{equation}}
\def\ee{\end{equation}}

\begin{document}

\title[Uniformly Degenerate Elliptic Equations]
{Uniformly Degenerate Elliptic Equations\\ with Varying Characteristic Exponents}

\author[Han]{Qing Han}
\address{Department of Mathematics\\
University of Notre Dame\\
Notre Dame, IN 46556, USA} \email{qhan@nd.edu}
\author[Xie]{Jiongduo Xie}
\address{Beijing International Center for Mathematical Research\\
Peking University\\
Beijing, 100871, China}
\email{2001110018@stu.pku.edu.cn}

\begin{abstract}
In this paper, we study the regularity of solutions to 
uniformly degenerate elliptic equations in bounded domains 
under the condition that the characteristic polynomials have 
varying characteristic exponents. 
\end{abstract}

\maketitle

\section{Introduction}\label{sec-Intro}

Uniformly degenerate elliptic equations appear or can be uncovered 
in many problems and have been extensively studied. 
These problems include the Loewner-Nirenberg problem and its nonlinear versions \cite{Loewner&Nirenberg1974,AvilesMcOwen1988,Mazzeo1991,ACF1992CMP, Kichenassamy2004JFA, Kichenassamy2005JFA, GonzalezLi2018, LiNguyenXiong2023}, 
complex Monge-Amp\`ere equations
\cite{Fefferman1976, ChengYau1980CPAM, LeeMelrose1982, HanJiang2024}, 
affine hyperbolic spheres 
\cite{ChengYau1977,JianWang2013JDG,JianWangZhao2017JDE,JianLuWang2022China}, 
the Bergman Laplacian \cite{Graham1983-1, Graham1983-2}, 
minimal graphs in the hyperbolic space  
\cite{Lin1989Invent,Tonegawa1996MathZ,Graham&Witten1999,Lin2012Invent,HanShenWang16CalVar,HanJiang2023}, 
singular stochastic control problems 
\cite{LasryLions1989, LeonoriPorretta2011, JianLuWang2022China}, 
conformally compact Einstein metrics
\cite{GrahamLee1991, Lee1995, OB2000, Graham2000, OB2002, Chrusciel2005, Lee2006, Hellimell2008, OB2016}, 
and proper harmonic maps between hyperbolic spaces and other spaces
\cite{LiTam1991,LiTam1993,LiTam1993Ind,Donnelly1994,LiNi2000,LiSimon2007,Yin2007,ChenLi2023}. 
After introducing the degenerate elliptic equation for the graphs of the minimal surfaces in the hyperbolic space, 
Lin \cite{Lin1989Invent} wrote, \lq\lq It may suggest a general study of linear or non-linear degenerate elliptic equations. 
For example, one would like to know for which class of such equations the solutions are as smooth as the boundary data. 
In the case that the loss in derivatives does occur, one would like to know exactly how much is lost."
In this paper, we prove general regularity results for uniformly degenerate linear elliptic equations. 
Methods in this paper can be modified to treat nonlinear equations. 

The study of the regularity of the solutions to the uniformly degenerate elliptic equations
consists of two parts. 
In the first part, we study the optimal regularity up to a certain level 
under an appropriate regularity assumption of the boundary. 
Here, the optimality refers to the fact that the regularity of solutions cannot be improved 
beyond a level determined by the equation 
even if the boundary is smooth. 
This is a special feature of the uniformly degenerate elliptic equations. 
In the second part, we study behaviors of solutions near the boundary 
beyond the optimal regularity level in the form of the polyhomogeneous expansions. 
Such expansions were originally established by the method of microlocal analysis 
by Melrose \cite{Melrose1981}, Lee and Melrose \cite{LeeMelrose1982}, 
Mazzeo and Melrose \cite{MazzeoMelrose1987}, and Mazzeo \cite{Mazzeo1991}.

In this paper, we study the higher regularity by a different approach. 
We will decompose solutions of uniformly degenerate elliptic equations into 
a finite sum or an infinite sum, involving regular factors and explicit singular factors. 
We are able to prove the uniform and absolute convergence in the case of the infinite sum 
without any analyticity assumption.

Let $\Omega$ be a bounded domain in $\mathbb R^n$. 
For a uniformly degenerate linear elliptic operator $L$ to be introduced, we consider the Dirichlet problem 
for the equation 
\begin{equation}\label{eq-ch2-basic-equation}
Lu=f\quad\text{in }\Omega.\end{equation}
We will study asymptotic expansions near the boundary and 
the global regularity of the solutions under the condition that 
the characteristic exponents are not constant. 

We first introduce a notion of defining functions. 
Let $\Omega$ be a bounded domain in $\mathbb R^n$ with a $C^{1}$-boundary $\partial\Omega$ 
and $\rho$ be a $C^{1}(\bar\Omega)$-function. 
Then, $\rho$ is a {\it defining function} of $\Omega$
if $\rho>0$ in $\Omega$ and $\rho=0$ and $\nabla \rho\neq 0$ on $\partial\Omega$. We always require 
$$|\nabla\rho|=1\quad\text{on }\partial\Omega.$$ Then, $\nabla \rho|_{\partial\Omega}$ 
is the inner unit normal vector along $\partial\Omega$. 
In the following, we will fix a defining function $\rho$, 
with possibly a higher regularity. 

Consider the operator 
\begin{align}\label{eq-ch2-def-operator}
L=\rho^2a_{ij}\partial_{ij}+\rho b_i\partial_i+c\quad\text{in }\Omega,\end{align}
where $a_{ij}$, $b_i$, and $c$ are continuous functions in $\bar\Omega$ satisfying $a_{ij}=a_{ji}$ and, 
for any $x\in \Omega$
and $\xi\in\mathbb R^n$, 
\begin{align}\label{eq-ch2-ellipticity}
\lambda|\xi|^2\le a_{ij}(x)\xi_i\xi_j\le\Lambda|\xi|^2,\end{align}
for some positive constants $\lambda$ and $\Lambda$. 
We note that the operator $L$ is not uniformly elliptic in $\bar \Omega$ and, in fact, is 
degenerate along the boundary $\partial\Omega$, due to the presence of the factor $\rho^2$ in the 
second-order terms. The operator $L$ is called to be {\it uniformly degenerate elliptic}. 

We now introduce two important concepts, 
characteristic polynomials and characteristic exponents. 
Take an arbitrary $\mu\in\mathbb R$. A straightforward computation yields 
$$\rho^{-\mu}L\rho^\mu=\mu(\mu-1)a_{ij}\partial_i\rho\partial_j\rho+\mu b_i\partial_i\rho+c
+\mu\rho a_{ij}\partial_{ij}\rho.$$
Note that $\nabla \rho|_{\partial\Omega}$ is the inner unit normal 
vector along $\partial\Omega$. 
Under the extra condition that $\rho D^2\rho\in C(\bar\Omega)$ with 
$\rho D^2\rho=0$ on $\partial\Omega$,  we have 
$$\rho^{-\mu}L\rho^\mu\big|_{\partial\Omega}=\mu(\mu-1)a_{ij}\nu_i\nu_j
+\mu b_i\nu_i+c,$$
where $\nu=(\nu_1, \cdots, \nu_n)$ is the inner unit normal along $\partial\Omega$. 
We now define 
\begin{align}\label{eq-ch2-definition-P-mu}
P(\mu)=\mu(\mu-1)a_{ij}\nu_i\nu_j
+\mu b_i\nu_i+c
\quad\text{on }\partial\Omega.\end{align}
For each fixed point on $\partial\Omega$, $P(\mu)$ is a quadratic polynomial in $\mu$ 
with a positive leading coefficient. The polynomial $P$ is referred to as the 
{\it characteristic polynomial} of $L$ and its roots as {\it characteristic exponents}
or {\it indicial roots}.

Next, we introduce some conditions on the boundary to offset the effect of the degeneracy of the operator. 
First, 
we assume 
$$P(0)=c<0\quad\text{on }\partial\Omega.$$ 
Hence, the characteristic polynomial $P(\mu)$ has two real roots, a positive one and a negative one,
and any $\mu$ with a negative value of $P(\mu)$ has to be between these two roots. 
Well-known examples demonstrate that characteristic exponents provide obstructions to the higher regularity of solutions
through singular factors.
Refer to Examples \ref{exa-ch2-solutions-not-regular} 
and \ref{exa-ch2-solutions-not-regular-non-constant} for details.
For some integer $k\ge 0$ and constant $\alpha\in (0,1)$, a general regularity 
of $C^{k,\alpha}$ for solutions is possible only under the condition 
$P({k+\alpha})\le 0$ on $\partial\Omega$.
If $k+\alpha$ is bigger than the positive characteristic exponent, 
solutions will not have desired regularity. 
In this paper, we will study decompositions of solutions near the boundary 
and identify factors that prohibit solutions from the higher regularity.  

Let $f$ be a continuous function in $\bar\Omega$. Consider the Dirichlet problem given by 
\eqref{eq-ch2-basic-equation} and 
\begin{align}
\label{eq-ch2-Dirichlet}
u=\frac{f}{c}\quad\text{on }\partial\Omega. 
\end{align}
The boundary value $f/c$ in \eqref{eq-ch2-Dirichlet} is not arbitrarily prescribed and 
is determined by the equation itself, specifically by the ratio of $f$ and $c$. 
This is due to the degeneracy along the boundary. 
In fact, normal derivatives of solutions on the boundary up to a certain order are also determined by the equation. 

We will prove two types of results concerning the regularity with a degree 
beyond the positive characteristic exponent. 
The first type is in the category of finite differentiability, while the second one is 
in the smooth category. 
In the category of finite differentiability, we need to discuss 
separately the cases that 
the H\"older index is larger or smaller than the decimal part of the positive characteristic exponent. 

This paper is organized as follows. 
In Section \ref{sec-Constant-Exponents}, we present our regularity results for constant characteristic exponents. 
In Section \ref{sec-Varying-Exponents}, we present our regularity results for varying characteristic exponents. 
Theorem \ref{Thm-ch4-Linear-MainThm-curved-combined}
and Theorem \ref{Thm-ch4-Linear-MainThm-curved-combined-smooth} are the main results in this paper. 
In Section \ref{sec-flat-boundary}, we reformulate the boundary value problem 
\eqref{eq-ch2-basic-equation} and \eqref{eq-ch2-Dirichlet} 
near a flat portion of the boundary and present several results on the optimal regularity. In Section \ref{sec-Large-Holder-Indices},
we study the case that the H\"older index is larger than the decimal part of 
the positive characteristic exponent. 
In Section \ref{sec-Small-Holder-Indices},
we study the case that the H\"older index is smaller than the decimal part of 
the positive characteristic exponent. 
In Section \ref{sec-Appen-CalculusL},
we prove several results concerning the integrability of integrals involving parameters.

{\it Acknowledgements:} The first author acknowledges the support of NSF Grant DMS-2305038.
The second author acknowledges the support of 
National Key R\&D Program of China Grant 2020YFA0712800.

\section{Constant Characteristic Exponents}\label{sec-Constant-Exponents} 

In this section, we present our regularity results for constant characteristic exponents. 
We first consider an example.

\begin{example}\label{exa-ch2-solutions-not-regular} 
Let $\Omega$ be a bounded domain in $\mathbb R^n$, 
with a $C^\infty$-boundary $\partial\Omega$ and a $C^\infty(\bar\Omega)$-defining function $\rho$. 
For some constants $a$, $b$, and $c$ with $a>0$, set 
\begin{align}\label{eq-ch2-def-operator-example}L=a\rho^2\Delta+b\rho\nabla\rho\cdot\nabla+c.\end{align}
In other words, we take $a_{ij}=a\delta_{ij}$ and $b_i=b\partial_i\rho$ in \eqref{eq-ch2-def-operator}. 
Let $\nu$ be the interior unit normal vector along $\partial\Omega$. 
Then, $a_{ij}\nu_i\nu_j=a$ and $b_i\nu_i=b$ on $\partial\Omega$. 
Hence, the characteristic polynomial of $L$ is given by 
$$P(\mu)=a\mu(\mu-1)+b\mu +c.$$
Let $s$ be a positive root of $P(\mu)$; namely 
\begin{equation}\label{eq-ch2-root-condition}
as(s-1)+bs +c=0.
\end{equation}
We consider two cases. 

{\it Case 1. We assume that $s$ is not an integer.}  
Take a function $\psi\in C^\infty(\bar\Omega)$. 
A straightforward calculation implies
\begin{align*} L(\psi\rho^s)=f,\end{align*}
where 
\begin{align}\label{eq-ch2-example-expression-f}\begin{split}
f&=\rho^s\big\{\big[as(s-1)+bs+c\big]\psi+\big[as(s-1)+bs\big](|\nabla \rho|^2-1)\psi\\
&\qquad+\big[as\psi\Delta\rho+(2as+b)\nabla\rho\cdot\nabla\psi\big]\rho+a\Delta\psi\rho^{2}\big\}.
\end{split}\end{align}
All functions in the right-hand side are smooth except $\rho^s$. 
Note that first term in the right-hand side is zero  by 
\eqref{eq-ch2-root-condition}. 
By $|\nabla \rho|^2=1$ on $\partial\Omega$, 
we can write $f=\eta\rho^{s+1}$, 
for some function $\eta\in C^\infty(\bar\Omega)$. 
In summary, $u=\psi\rho^s$ is a solution of \eqref{eq-ch2-basic-equation}, for some $f=\eta\rho^{s+1}$. Write 
$s=k+\gamma$, for some integer $k\ge 0$ and constant $\gamma\in (0,1)$. 
For such $u$ and $f$, we have $f\in C^{k+1, \gamma}(\bar\Omega)$ and  
$u\in C^{k,\gamma}(\bar\Omega)$, but $u\notin C^{k,\beta}(\bar\Omega)$, 
for any $\beta\in (\gamma,1)$. 

In fact, by choosing $\psi$ appropriately, we can improve the regularity 
of $f$ in \eqref{eq-ch2-example-expression-f}. For a function 
$\psi_0\in C^\infty(\bar\Omega)$ and a positive integer $m$, consider 
$$\psi=\sum_{i=0}^m\psi_i\rho^i.$$
We now assume in addition that, for $\mu=s+1, \cdots, s+m$, 
$$a\mu(\mu-1)+b\mu +c\neq 0.$$
Substituting $\psi$ in \eqref{eq-ch2-example-expression-f} and choosing $\psi_1, \cdots, \psi_m$ successively, 
we can write $f=\eta \rho^{s+m+1}$ for some  $\eta\in C^\infty(\bar\Omega)$. Hence, 
$f\in C^{k+m+1, \gamma}(\bar\Omega)$ and  
$u\in C^{k,\gamma}(\bar\Omega)$, but $u\notin C^{k,\beta}(\bar\Omega)$, 
for any $\beta\in (\gamma,1)$. 

{\it Case 2. We assume that $s$ is an integer.} 
Take a function $\psi\in C^\infty(\bar\Omega)$. 
A straightforward calculation implies
\begin{align*} L(\psi\rho^s\log\rho)=f,\end{align*}
where 
\begin{align*}
f&=\rho^s\log \rho\big\{\big[as(s-1)+bs+c\big]\psi+\big[as(s-1)+bs\big](|\nabla \rho|^2-1)\psi\\
&\qquad+\big[as\psi\Delta\rho+(2as+b)\nabla\psi\cdot\nabla\rho\big]\rho
+a\Delta\psi\rho^{2}\big\}\\
&\qquad +\rho^{s}\big\{\big[a(2s-1)+b\big]\psi|\nabla\rho|^2
+\big[a\psi\Delta\rho+2a\nabla\psi\cdot\nabla\rho\big]\rho\big\}.
\end{align*}
All functions in the right-hand side are smooth except $\log\rho$. 
Note that the first term in the right-hand side 
is zero 
by \eqref{eq-ch2-root-condition}.
By $|\nabla \rho|^2=1$ on $\partial\Omega$, 
we can write $f=\eta_1\rho^{s+1}\log\rho+\eta_2$, 
for some functions $\eta_1, \eta_2\in C^\infty(\bar\Omega)$. 
In summary, $u=\psi\rho^s\log \rho$ is a solution of \eqref{eq-ch2-basic-equation}, 
for the above $f$. For such $u$ and $f$, we have 
$f\in C^{s, \alpha}(\bar\Omega)$ and  
$u\in C^{s-1,\alpha}(\bar\Omega)$ for any $\alpha\in (0,1)$, but $u\notin C^{s}(\bar\Omega)$.
Proceeding similarly as in Case 1, we can raise the regularity of $f$ but maintain the regularity of $u$. 
\end{example} 

Example \ref{exa-ch2-solutions-not-regular} demonstrates that the positive characteristic exponent provides an obstruction 
to the boundary regularity of solutions $u$, regardless of the regularity of $f$. 
Different types of singularity arise depending on whether the positive characteristic exponents are integer. 
In Case 1, 
the positive characteristic exponent $s$ is a non-integer constant
and the only singular factor is given by $\rho^\gamma$ for the decimal part $\gamma\in (0,1)$ of $s$. 
In Case 2, 
the positive characteristic exponent $s$ is an integer
and the only singular factor is given by $\log\rho$. 
The solution $u$ constructed in these cases are given by 
$$u=\sum_{i=0}^m\psi_i\rho^{s+i},$$
if $s$ is not an integer, and 
$$u=\sum_{i=0}^m\psi_{i}\rho^{s+i}\log \rho,$$
if $s$ is an integer. 
The aim of the paper is to demonstrate that $\rho^{s+i}$, for non-integer $s$, and $\rho^{s+i}\log \rho$, 
for integer $s$,  
provide sole obstructions to the higher regularity of solutions.


In the following, we denote by $\underline{m}$ and $\overline{m}$ the characteristic exponents 
with $\underline{m}<0<\overline{m}$ and by $\nu$ the inner unit normal vector to $\partial\Omega$. 
We will prove results concerning the regularity with a degree beyond the positive exponent $\overline{m}$. 
In view of Example \ref{exa-ch2-solutions-not-regular}, 
we need to distinguish the case that  $\overline{m}$ is not an integer  and the
case that $\overline{m}$ is an integer.

We first consider the case that the positive characteristic exponent 
$\overline{m}$ is a non-integer constant and write 
$\overline{m}=[\overline{m}]+\gamma$, 
for some constant $\gamma\in (0,1)$. 
For a nonnegative integer $k$ and a constant $\alpha\in(0,1)\setminus\{\gamma\}$, we set
\begin{align*}
k_{\ast}&=k,\  \epsilon_{\ast}=\alpha-\gamma\quad\text{if }\gamma<\alpha,\\
k_{\ast}&=k-1,\  \epsilon_{\ast}=\alpha+1-\gamma\quad\text{if }\alpha<\gamma.
\end{align*}
We always consider the case $k+\alpha>\overline{m}$.

\begin{theorem}\label{Thm-ch4-Linear-MainThm-curved-combined-constant}
Let $\Omega$ be a bounded domain in $\mathbb R^n$, with  
a $C^1$-boundary $\partial\Omega$ and a defining function $\rho\in C^1(\bar\Omega)$, 
and $u\in C(\bar \Omega)\cap C^{2}(\Omega)$ be 
a solution of the Dirichlet problem \eqref{eq-ch2-basic-equation} and \eqref{eq-ch2-Dirichlet}.  
For some $x_0\in\partial\Omega$, 
suppose that the characteristic polynomial $P$ has a negative constant root  $\underline{m}$
and a positive constant root $\overline{m}$ on $\partial\Omega\cap B_R(x_0)$, 
with $\overline{m}=[\overline{m}]+\gamma$ for some constant $\gamma\in (0,1)$, 
and that $k$ is an integer and $\alpha\in(0,1)$ is a constant such that 
either $k\geq [\overline{m}]\ge 1$, $\gamma<\alpha$  
or $k\geq [\overline{m}]+1$, $\alpha<\gamma$. 
Assume $\partial\Omega\cap B_R(x_0)\in C^{k+2,\alpha}$, 
$\rho\in C^{k+2,\alpha}(\bar\Omega\cap B_R(x_0))$, 
with $\rho\nabla^{k+3}\rho\in C^{\alpha}(\bar\Omega\cap B_R(x_0))$ 
and $\rho\nabla^{k+3}\rho=0$ on $\partial\Omega \cap B_R(x_0)$, 
$a_{ij}, b_i, c\in C^{k+1, \alpha}(\bar \Omega\cap B_R(x_0))$ satisfying \eqref{eq-ch2-ellipticity}, and 
$f\in C^{k,\alpha}(\bar \Omega\cap B_R(x_0))$. Then,
\begin{equation}\label{eq-ch4-decomposition-curved-non-int-constant}
u=v+w\rho^{\gamma} \quad\text{in } \Omega\cap B_R(x_0),
\end{equation}
for some functions $v\in C^{k, \alpha}(\bar\Omega\cap B_{R}(x_0))$  
and $w\in C^{k_*, \epsilon}(\bar\Omega\cap B_{R}(x_0))$,
for any constant $\epsilon\in (0,\epsilon_*)$, with
$\partial_\nu^iw=0$ on $\partial\Omega\cap B_R(x_0)$, for $i=0, \cdots, [\overline{m}]-1$. 
Moreover, for any $\epsilon\in (0,\epsilon_*)$, 
\begin{align*}
&|v|_{C^{k, \alpha}(\bar\Omega\cap B_{R/2}(x_0))}
+|w|_{C^{k_{\ast}, \epsilon}(\bar\Omega\cap B_{R/2}(x_0))}\\
&\qquad\leq C\big\{|u|_{L^{\infty}(\Omega\cap B_R(x_0))}
+|f|_{C^{k, \alpha}(\bar\Omega\cap B_R(x_0))}\big\},
\end{align*}
where $C$ is a positive constant depending only on $n$, $\lambda$, $k$, $\alpha$, $\epsilon$, 
$\underline{m}$, $\overline{m}$, $\Omega\cap B_R(x_0)$, 
the $C^{k+2, \alpha}$-norm of $\rho$ in $\bar\Omega\cap B_R(x_0)$, 
the $C^{\alpha}$-norm of $\rho\nabla^{k+3}\rho$ in $\bar\Omega\cap B_R(x_0)$,
and the $C^{k+1, \alpha}$-norms of $a_{ij}, b_i, c$ in $\bar\Omega\cap B_R(x_0)$.
If, in addition, $\partial_\nu^{[\overline{m}]} w=0$ on $\partial \Omega\cap B_R(x_0)$, 
then $u \in C^{k,\alpha}(\bar\Omega\cap B_R(x_0))$.
\end{theorem}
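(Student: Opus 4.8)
The plan is to reduce to a flat-boundary model problem via the defining function $\rho$ and a boundary straightening, then construct the decomposition $u = v + w\rho^\gamma$ by an iterative scheme in which $v$ absorbs the "regular" Taylor-type part of $u$ along the normal direction and $w\rho^\gamma$ carries the singular contribution forced by the positive characteristic exponent $\overline m = [\overline m]+\gamma$. First I would introduce boundary-fitted coordinates $(x',t)$ with $t=\rho$, so that $L$ becomes, modulo lower-order perturbations, a model operator of Euler type in $t$ whose indicial polynomial at each boundary point is $P(\mu)$; by the hypotheses $P$ has roots $\underline m<0<\overline m$, so the homogeneous model equation $t^2 v_{tt} + (\text{lower order in } t\partial_t) + c\,v = 0$ has a two-dimensional solution space spanned (pointwise in $x'$) by $t^{\underline m}$ and $t^{\overline m}$. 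Since $\underline m<0$ and $u\in C(\bar\Omega)$, the $t^{\underline m}$ branch is excluded, forcing the asymptotics of $u$ near $\partial\Omega$ to be governed by the smooth branch (integer powers $t,t^2,\dots$ up to order $[\overline m]$) together with the singular branch $t^{\overline m}$.

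Next I would run the standard "improvement of flatness / peeling" argument. Using the a priori Schauder-type estimates for $L$ on $\Omega\cap B_R(x_0)$ — which hold on interior balls with constants degenerating like powers of $\mathrm{dist}(\cdot,\partial\Omega)$, as is classical for uniformly degenerate operators — I expect to show that $u$ has a normal Taylor expansion $u = \sum_{i=0}^{[\overline m]} \psi_i(x')\,\rho^i + (\text{remainder})$ where the coefficients $\psi_i$ are determined successively by the equation and the boundary data $f/c$ (this is exactly the mechanism displayed in Example~\ref{exa-ch2-solutions-not-regular}: the normal derivatives up to order $[\overline m]$ are not free but read off from $f$). Setting $v := \sum_{i=0}^{[\overline m]}\psi_i\rho^i$ (suitably extended to be $C^{k,\alpha}(\bar\Omega\cap B_R(x_0))$ using $\rho\in C^{k+2,\alpha}$ and $f\in C^{k,\alpha}$), the difference $u-v$ solves $L(u-v) = g$ with $g$ vanishing to order $\ge [\overline m]+\gamma$ at the boundary; dividing out, $w := (u-v)\rho^{-\gamma}$ should then be shown to lie in $C^{k_*,\epsilon}$ for every $\epsilon<\epsilon_*$, with the stated vanishing $\partial_\nu^i w=0$ for $i=0,\dots,[\overline m]-1$ coming from the orders already subtracted into $v$. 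The two cases $\gamma<\alpha$ (so $k_*=k$, $\epsilon_*=\alpha-\gamma$) and $\alpha<\gamma$ (so $k_*=k-1$, $\epsilon_*=\alpha+1-\gamma$) are the two ways the fractional power $\rho^\gamma$ interacts with a $C^{k,\alpha}$ factor, and are handled by the elementary Hölder calculus for products with $\rho^\gamma$ (cf. the results of Section~\ref{sec-Appen-CalculusL} on integrals with parameters). The quantitative estimate then follows by tracking constants through each step, all of which are linear in $u$ and $f$ — hence the bound by $|u|_{L^\infty} + |f|_{C^{k,\alpha}}$.

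For the final assertion — that $\partial_\nu^{[\overline m]} w = 0$ on $\partial\Omega\cap B_R(x_0)$ upgrades $u$ to $C^{k,\alpha}(\bar\Omega\cap B_R(x_0))$ — the point is that the only obstruction to absorbing $w\rho^\gamma$ into a genuinely $C^{k,\alpha}$ function is the bottom-order coefficient of $w$ at the boundary: if $w$ vanished to order exactly $[\overline m]-1$ one would have a term behaving like $\rho^{[\overline m]-1+\gamma}$, which is only $C^{[\overline m]-1,\gamma}$ and strictly not better (the regularity ceiling sits at the positive characteristic exponent). But $\overline m$ is \emph{not} an integer, so $\rho^{\overline m} = \rho^{[\overline m]+\gamma}$; requiring in addition $\partial_\nu^{[\overline m]} w=0$ pushes the leading behavior of $w\rho^\gamma$ to order $\rho^{[\overline m]+1+\gamma}$ or higher, i.e. past $k+\alpha$ under our standing hypotheses $k\ge[\overline m]$ (when $\gamma<\alpha$) or $k\ge[\overline m]+1$ (when $\alpha<\gamma$), so that $w\rho^\gamma$ itself becomes a $C^{k,\alpha}$ function and $u=v+w\rho^\gamma\in C^{k,\alpha}(\bar\Omega\cap B_R(x_0))$.

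The main obstacle I anticipate is the construction and control of the iterative subtraction producing $v$ and the sharp Hölder regularity of $w=(u-v)\rho^{-\gamma}$: one must show the coefficients $\psi_i$ inherit exactly $C^{k,\alpha}$ regularity (not less) from $f$, $\rho$, and the coefficients $a_{ij},b_i,c$, and that no spurious logarithmic terms appear — which is where non-integrality of $\overline m$ is essential and where the delicate weighted Schauder estimates near $\partial\Omega$, together with the parameter-integral lemmas of Section~\ref{sec-Appen-CalculusL}, do the real work. Everything else (coordinate straightening, linearity of the estimates, the product calculus with $\rho^\gamma$) is routine once that core is in place.
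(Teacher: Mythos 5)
Your overall architecture (flatten the boundary, view $L u=f$ as an Euler-type ODE in $t=\rho$ with indicial roots $\underline{m}<0<\overline{m}$, peel off a normal Taylor expansion, divide the singular part by $\rho^\gamma$) is the same as the paper's, but the construction of $v$ as written has a genuine gap. You take $v=\sum_{i=0}^{[\overline{m}]}\psi_i\rho^i$, so that $u-v$ still contains the integer-power terms $c_i\rho^i$ for $[\overline{m}]+1\le i\le k$. Dividing these by $\rho^\gamma$ produces terms $c_i\rho^{i-\gamma}$, whose fractional exponent is $1-\gamma$; the term with $i=[\overline{m}]+1$ lies only in $C^{[\overline{m}],1-\gamma}$ near the boundary and hence is \emph{not} in $C^{k_*,\epsilon}$ once $k\ge[\overline{m}]+1$ (in the case $\gamma<\alpha$) or $k\ge[\overline{m}]+2$ (in the case $\alpha<\gamma$). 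The paper avoids this by first proving a full two-branch expansion to order $k$ (Theorem \ref{thrm-ch4-Linear-MainThm-non-integer-large}), $u=\sum_{i=0}^{k}c_i t^i+\sum_{i=[\overline{m}]}^{k}c_{i,0}t^{i+\gamma}+R_k$ in the constant-exponent case, and then building $v$ as a $C^{k,\alpha}$ extension matching \emph{all} the integer coefficients $c_0,\dots,c_k$ and $w$ as an extension matching the coefficients $c_{i,0}$, with only the remainder $R_k\rho^{-\gamma}$ left to estimate. Two further points you gloss over are load-bearing: (a) the regularity of $R_k$ and of $t^{-\gamma}R_k$ must be established \emph{in parallel} from the integral representation \eqref{eq-ch4-LinearODE-Solution-l} via the singular-integral lemmas of Section \ref{sec-Appen-CalculusL}; deriving the latter from the former by a product rule with $\rho^{-\gamma}$ loses a Hölder index $\gamma$ at every step of the induction on $k$ and destroys the final exponent $\epsilon_*$; (b) in the case $\alpha<\gamma$ the paper needs two distinct remainders ($R'_{k-1}$ stopped at the integer order and $R_k$ stopped past the fractional order) to control the tangential and normal Hölder regularity of $w$ separately.

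Your argument for the last assertion is also not correct as stated. Knowing $\partial_\nu^{[\overline{m}]}w=0$ makes $w\rho^\gamma=O(\rho^{[\overline{m}]+1+\gamma})$, but a decay rate of $[\overline{m}]+1+\gamma$ does not place $w\rho^\gamma$ in $C^{k,\alpha}$ when $k>[\overline{m}]+1$; order-counting alone cannot close this. The actual mechanism is structural: $\partial_\nu^{[\overline{m}]}w=0$ forces the leading singular coefficient $c_{[\overline{m}],0}$ to vanish, and the recursion \eqref{eq-ch4-expression-a-non-int-k-large}--\eqref{eq-ch4-expression-coefficient-k-k1-non-int-k-large} shows that every subsequent singular coefficient is built linearly from the earlier ones, so all of them vanish; one then reruns the induction with a purely integer-power expansion and full $C^\alpha$ remainders (Step 2 of the proof of Theorem \ref{thrm-ch4-Linear-MainThm-non-integer-large}, using Theorem \ref{thrm-ch3-Linear-NormalEstimate-general-beta} to recover the tangential Hölder index) to conclude $u\in C^{k,\alpha}$. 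Without this propagation-of-vanishing argument the upgrade does not follow.
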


As demonstrated by \eqref{eq-ch4-decomposition-curved-non-int-constant}, 
an obstacle to the higher regularity of $u$ is provided 
by the factor $\rho^{\gamma}$ under the assumption that $\overline{m}$ is not an integer. 
In this case,  the function $v$ 
in \eqref{eq-ch4-decomposition-curved-non-int-constant} has an optimal regularity 
and $w$ has a loss in the H\"older regularity, from $C^{k,\alpha}$
in the assumption 
to $C^{k,\epsilon}$ for an arbitrary constant $\epsilon\in (0,\alpha-\gamma)$ if $\alpha>\gamma$
and to $C^{k-1,\epsilon}$ for an arbitrary constant $\epsilon\in (0,\alpha+1-\gamma)$ if $\alpha<\gamma$. 
We point out that the function $w$ in 
\eqref{eq-ch4-decomposition-curved-non-int-constant}  
has a different regularity, depending on whether $\alpha\in (0,\gamma)$ or $\alpha\in (\gamma,1)$. 
This is natural due to the presence of the factor $\rho^\gamma$ in  
\eqref{eq-ch4-decomposition-curved-non-int-constant}. 
The reason is as follows. We will factorize $\rho^\gamma$ from $\rho^\alpha$. 
If $\alpha>\gamma$, we are able to do this  since 
$\alpha=\gamma+(\alpha-\gamma)$. If $\alpha<\gamma$,  we need to borrow 
a factor of $\rho$ somewhere and use $1+\alpha=\gamma+(1+\alpha-\gamma)$. 
The condition $k+\alpha>\overline{m}$ also put different requirements on $k$. 
If $\alpha>\gamma$, we require $k\ge [\overline{m}]$; 
but if $\alpha<\gamma$, we have to require $k\ge [\overline{m}]+1$.

\begin{theorem}\label{Thm-ch4-Linear-MainThm-curved-combined-smooth-constant}
Let $\Omega$ be a bounded domain in $\mathbb R^n$, with  
a $C^1$-boundary $\partial\Omega$ and a defining function $\rho\in C^1(\bar\Omega)$, 
and $u\in C(\bar \Omega)\cap C^{2}(\Omega)$ be 
a solution of the Dirichlet problem \eqref{eq-ch2-basic-equation} and \eqref{eq-ch2-Dirichlet}.  
For some $x_0\in\partial\Omega$, 
suppose that the characteristic polynomial $P$ has a negative constant root  $\underline{m}$
and a positive constant root $\overline{m}$ on $\partial\Omega\cap B_R(x_0)$, 
with $\overline{m}=[\overline{m}]+\gamma$ for some constant $\gamma\in (0,1)$.
Assume $\partial\Omega\cap B_R(x_0)\in C^{\infty}$, 
$\rho\in C^{\infty}(\bar\Omega\cap B_R(x_0))$, 
$a_{ij}, b_i, c\in C^{\infty}(\bar \Omega\cap B_R(x_0))$ satisfying \eqref{eq-ch2-ellipticity}, and 
$f\in C^{\infty}(\bar \Omega\cap B_R(x_0))$. Then, 
\begin{equation}\label{eq-ch4-decomposition-curved-int-non-small-alpha-smooth-constant}
u=v+w\rho^{\overline{m}}\quad\text{in }\Omega\cap B_{R}(x_0),\end{equation}
for some functions $v, w\in C^{\infty}(\bar\Omega\cap B_{R}(x_0))$. 
Moreover, if 
$w=0$ on $\partial\Omega\cap B_R(x_0)$, then 
$u\in C^{\infty}(\bar\Omega\cap B_{R}(x_0))$. 
\end{theorem}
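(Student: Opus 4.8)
\textbf{Proof proposal for Theorem \ref{Thm-ch4-Linear-MainThm-curved-combined-smooth-constant}.}

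The plan is to bootstrap on the finite-differentiability result, Theorem \ref{Thm-ch4-Linear-MainThm-curved-combined-constant}, and upgrade it to the smooth category by an induction on the number of derivatives. Since all data are now $C^\infty$, for every integer $k$ we may apply the constant-exponent decomposition theorem with a H\"older index $\alpha$ chosen (depending on $k$) so that $k+\alpha>\overline m$; this gives a decomposition $u=v_k+w_k\rho^\gamma$ on $\Omega\cap B_R(x_0)$ with $v_k\in C^{k,\alpha}$ and $w_k\in C^{k_\ast,\epsilon}$. The first point to establish is that the decomposition is essentially unique modulo the more regular pieces: since $\gamma$ is irrational with respect to the integer lattice (more precisely $\rho^\gamma$ is not of the form (smooth) $+$ (smooth)$\cdot\rho^{\gamma}$ with both smooth factors improved), comparing the decompositions for $k$ and $k+1$ forces $v_{k+1}-v_k$ and $w_{k+1}-w_k$ to be as regular as the lower-index objects permit, so one obtains well-defined ``limit'' functions $v$ and $w$ that lie in $C^{k,\alpha}$ for every $k$, i.e. in $C^\infty(\bar\Omega\cap B_{R'}(x_0))$ for any $R'<R$. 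A covering argument then promotes this to $C^\infty$ up to $\partial\Omega\cap B_R(x_0)$.

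The main work is the uniqueness/consistency step. I would argue as follows: if $v+w\rho^\gamma=\tilde v+\tilde w\rho^\gamma$ with $v,\tilde v$ and $w,\tilde w$ smooth near the boundary (in the relevant finite-regularity sense at each stage), then $(v-\tilde v)=(\tilde w-w)\rho^\gamma$. Evaluating normal Taylor expansions at $\partial\Omega$ and using that $\rho$ is a genuine defining function ($\nabla\rho\neq0$), the left side has an integer-power normal expansion while the right side, if $\tilde w-w$ does not vanish to infinite order, contributes a leading term $c\,\rho^{j+\gamma}$ with $j$ an integer and $0<\gamma<1$ a non-integer; matching forces $c=0$ at every order, hence $w=\tilde w$ and $v=\tilde v$ up to the ambient regularity. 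This is exactly the mechanism already visible in Example \ref{exa-ch2-solutions-not-regular}, Case 1, where $\rho^{s+i}$ are the sole obstructions. Combined with the quantitative estimates in Theorem \ref{Thm-ch4-Linear-MainThm-curved-combined-constant}, the differences $v_{k+1}-v_k$, $w_{k+1}-w_k$ are controlled in successively higher norms, so the sequences stabilize and the limits $v,w$ are smooth.

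It remains to derive the second assertion. Once we have $u=v+w\rho^\gamma$ with $v,w\in C^\infty$, I rewrite this in the form $u=v+w\rho^\gamma$ and absorb the full integer part: since $\partial_\nu^i w=0$ on $\partial\Omega\cap B_R(x_0)$ for $i=0,\dots,[\overline m]-1$ by the conclusion of Theorem \ref{Thm-ch4-Linear-MainThm-curved-combined-constant}, and $w$ is smooth, the function $w\rho^\gamma$ can be written as $\tilde w\rho^{\overline m}$ with $\tilde w\in C^\infty$ (peel off $[\overline m]$ factors of $\rho$ using the vanishing of normal derivatives and a Hardy-type/Taylor argument); this gives the stated form \eqref{eq-ch4-decomposition-curved-int-non-small-alpha-smooth-constant}. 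If in addition $w=0$ on $\partial\Omega\cap B_R(x_0)$, then also $\partial_\nu^{[\overline m]}\tilde w$-type cancellation occurs — equivalently we may peel off one more factor of $\rho$ — so $w\rho^\gamma=w_1\rho^{1+\gamma}$ with $w_1$ smooth, and by the last clause of Theorem \ref{Thm-ch4-Linear-MainThm-curved-combined-constant} (the hypothesis $\partial_\nu^{[\overline m]}w=0$) we conclude $u\in C^{k,\alpha}$ for every $k$, hence $u\in C^\infty(\bar\Omega\cap B_R(x_0))$.

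I expect the consistency-of-decompositions step to be the true obstacle: one must argue that the finite-regularity decompositions at different levels $k$ are compatible \emph{without} having a uniqueness statement in hand at infinite order, and the cleanest route is to prove a stand-alone uniqueness lemma for decompositions of the form (smooth) $+$ (smooth)$\,\rho^\gamma$ with $\gamma$ a fixed non-integer, valid in each $C^{k,\alpha}$ class, and then pass to the limit using the quantitative estimates. The peeling arguments in the last paragraph are routine Taylor expansions given the vanishing of the normal derivatives of $w$.
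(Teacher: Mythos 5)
Your reduction of the second assertion (peeling $\rho^{[\overline m]}$ off a smooth $w$ vanishing to order $[\overline m]$ in the normal direction, and invoking the last clause of Theorem \ref{Thm-ch4-Linear-MainThm-curved-combined-constant} when $w=0$ on the boundary) is fine \emph{once smoothness of $v$ and $w$ is in hand}. The genuine gap is exactly the step you flag as "the true obstacle," and the mechanism you propose to close it does not work. The decomposition $u=v+w\rho^\gamma$ is \emph{not} unique: as the paper points out after Theorem \ref{Thm-ch4-Linear-MainThm-decomposition-non-int-large}, one may add to $w$ any function vanishing to infinite order on the boundary and subtract the corresponding term from $v$. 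Your Taylor-matching argument, applied to $(v_{k+1}-v_k)=(w_k-w_{k+1})\rho^\gamma$ at finite regularity $C^{k,\alpha}$, only shows that the normal Taylor coefficients of $w_{k+1}$ and $w_k$ agree up to the order visible at level $k$; it gives no Cauchy property for the sequences $\{v_k\}$, $\{w_k\}$ in any $C^m$ norm, since the functions produced at each level by the extension lemma (Lemma \ref{lemma-ch4-extensions}) are only determined up to high-but-finite-order (or infinite-order) corrections, and the a priori bounds of Theorem \ref{Thm-ch4-Linear-MainThm-curved-combined-constant} control each $v_k,w_k$ individually, not their differences. So "the sequences stabilize" is asserted, not proved, and this is precisely the point the paper warns does not follow trivially.

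The paper closes this gap by a different construction. The objects that \emph{are} canonical and consistent across all $k$ are the boundary expansion coefficients $c_i$ and $c_{i,0}$ (here constant-exponent, so no logarithms) produced by the ODE reduction in Theorem \ref{thrm-ch4-Linear-MainThm-non-integer-large}: the level-$(k+1)$ expansion literally extends the level-$k$ one. One then builds $w$ directly as a Borel-type sum $w=\sum_i c_{i,0}\,\eta(\lambda_i t)\,t^i$ with cutoffs $\eta(\lambda_i t)$ and $\lambda_i$ chosen so large that the tail is summable in every $C^m$ norm (see the proof of Theorem \ref{thrm-ch4-Decomposition-infinite-non-int}), and defines $v:=u-w\rho^{\overline m}$; smoothness of $v$ up to the boundary is then verified for each fixed $k$ by splitting off the first $k$ terms and using the decay and regularity of the remainder $R_k$ from the finite expansion. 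If you want to salvage your outline, replace the "stabilization of $v_k,w_k$" step by this explicit summation of the canonical coefficients; the rest of your argument can then go through.
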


Theorem \ref{Thm-ch4-Linear-MainThm-curved-combined-smooth-constant} does not follow from 
Theorem \ref{Thm-ch4-Linear-MainThm-curved-combined-constant} trivially. 
We point out that the functions $v$ and $w$ in 
\eqref{eq-ch4-decomposition-curved-non-int-constant}
depend on $k$. When $k$ increases, these functions change. 
Under the assumption that all coefficients and nonhomogeneous terms are smooth, 
we actually obtain sequences of functions 
$\{v_k\}_{k=[\overline{m}]}^{\infty}$ and $\{w_k\}_{k=[\overline{m}]}^{\infty}$ such that 
each pair $v_k$ and $w_k$ satisfy 
\eqref{eq-ch4-decomposition-curved-non-int-constant}
and the stated properties. 
Although the regularity of $v_k$ and $w_k$ improves as $k$ increases, 
it is not clear whether these sequences $\{v_k\}_{k=[\overline{m}]}^{\infty}$ and $\{w_k\}_{k=[\overline{m}]}^{\infty}$
converge to some smooth functions $v$ and $w$, respectively. 
Extra work is needed to prove Theorem \ref{Thm-ch4-Linear-MainThm-curved-combined-smooth-constant} 
from Theorem \ref{Thm-ch4-Linear-MainThm-curved-combined-constant}. 

We also point out that $w$ in \eqref{eq-ch4-decomposition-curved-non-int-constant} 
means differently from that in 
\eqref{eq-ch4-decomposition-curved-int-non-small-alpha-smooth-constant}. 
If we factorize $\rho^{[\overline{m}]}$ from $w$ in \eqref{eq-ch4-decomposition-curved-non-int-constant}, 
the new $w$ as in 
\eqref{eq-ch4-decomposition-curved-non-int-constant}
will have a lower regularity in the context of the finite regularity. 

\smallskip 

We next consider the case that the positive characteristic exponent $\overline{m}$ is an integer.

\begin{theorem}\label{Thm-ch4-Linear-MainThm-curved-integer}
Let $\Omega$ be a bounded domain in $\mathbb R^n$, with  
a $C^1$-boundary $\partial\Omega$ and a defining function $\rho\in C^1(\bar\Omega)$, 
and $u\in C(\bar \Omega)\cap C^{2}(\Omega)$ be 
a solution of the Dirichlet problem \eqref{eq-ch2-basic-equation} and \eqref{eq-ch2-Dirichlet}.  
For some $x_0\in\partial\Omega$, 
suppose that the characteristic polynomial $P$ has a negative constant root  $\underline{m}$
and a positive integer constant root $\overline{m}$ on $\partial\Omega\cap B_R(x_0)$. 
For some nonnegative integer $k\ge \overline{m}$
and some $\alpha\in (0,1)$, 
assume $\partial\Omega\cap B_R(x_0)\in C^{k+2,\alpha}$, 
$\rho\in C^{k+2,\alpha}(\bar\Omega\cap B_R(x_0))$, 
with $\rho\nabla^{k+3}\rho\in C^{\alpha}(\bar\Omega\cap B_R(x_0))$ 
and $\rho\nabla^{k+3}\rho=0$ on $\partial\Omega \cap B_R(x_0)$, 
$a_{ij}, b_i, c\in C^{k+1, \alpha}(\bar \Omega\cap B_R(x_0))$ satisfying \eqref{eq-ch2-ellipticity}, and 
$f\in C^{k,\alpha}(\bar \Omega\cap B_R(x_0))$.
Then, 
\begin{equation}\label{eq-ch4-decomposition-curved-int}
u=v+w\log\rho\quad\text{in }\Omega\cap B_{R}(x_0),\end{equation}
for some functions  
$v\in C^{k, \epsilon}(\bar\Omega\cap B_{r}(x_0))$ and 
$w\in C^{k, \alpha}(\bar\Omega\cap B_{r}(x_0))$, 
for any $\epsilon\in (0,\alpha)$ and any $r\in (0,R)$, with
$\partial_\nu^iw=0$ on $\partial\Omega\cap B_R(x_0)$, for $i=0, \cdots, \overline{m}-1$. 
Moreover, for any $\epsilon\in (0,\alpha)$, 
\begin{align*}
&|v|_{C^{k, \epsilon}(\bar\Omega\cap B_{R/2}(x_0))}
+|w|_{C^{k, \alpha}(\bar\Omega\cap B_{R/2}(x_0))}\\
&\qquad\leq C\big\{|u|_{L^{\infty}(\Omega\cap B_R(x_0))}
+|f|_{C^{k, \alpha}(\bar\Omega\cap B_R(x_0))}\big\},
\end{align*}
where $C$ is a positive constant depending only on $n$, $\lambda$, $k$, $\alpha$, $\epsilon$, 
$\underline{m}$, $\overline{m}$, $\Omega\cap B_R(x_0)$, 
the $C^{k+2, \alpha}$-norm of $\rho$ in $\bar\Omega\cap B_R(x_0)$, 
the $C^{\alpha}$-norm of $\rho\nabla^{k+3}\rho$ in $\bar\Omega\cap B_R(x_0)$,
and the $C^{k+1, \alpha}$-norms of $a_{ij}, b_i, c$ in $\bar\Omega\cap B_R(x_0)$.
If, in addition,  $\partial_\nu^{\overline{m}}w=0$ on $\partial\Omega\cap B_R(x_0)$, then 
$u\in C^{k,\epsilon}(\bar\Omega\cap B_{r}(x_0))$, 
for any $\epsilon\in (0,\alpha)$ and any $r\in (0,R)$. 
\end{theorem}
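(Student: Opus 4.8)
The plan is to reduce to the flat-boundary model studied in Section \ref{sec-flat-boundary} and then run an induction on the regularity index $k$, mirroring the argument for the non-integer case (Theorem \ref{Thm-ch4-Linear-MainThm-curved-combined-constant}) but with $\rho^\gamma$ replaced by $\log\rho$ throughout. First I would straighten the boundary locally: since $\partial\Omega\cap B_R(x_0)\in C^{k+2,\alpha}$ and $\rho\in C^{k+2,\alpha}$ with $\rho\nabla^{k+3}\rho\in C^\alpha$ vanishing on the boundary, a $C^{k+2,\alpha}$ change of variables flattens $\partial\Omega$ and carries $\rho$ to the coordinate $x_n$ (up to a controlled multiplicative factor), preserving the structural form \eqref{eq-ch2-def-operator} and the ellipticity \eqref{eq-ch2-ellipticity}, while the characteristic polynomial $P$ is unchanged since it is intrinsic. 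After this reduction the operator takes the model form with $x_n$ as defining function, $\overline m$ and $\underline m$ its constant indicial roots, and we must produce the decomposition $u=v+w\log x_n$.

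The core is an iteration. At the base step, interior Schauder estimates applied on dyadic half-balls together with the barrier/maximum-principle arguments available near the boundary (which are already used to justify the Dirichlet data $f/c$ in \eqref{eq-ch2-Dirichlet} and the vanishing of normal derivatives) give $u\in C^{0,\epsilon}$ up to $\partial\Omega$ and the identification of the leading Taylor coefficients of $u$ determined by $f$. Then I would peel off, one order at a time, the polynomial part: having constructed a $C^{j,\epsilon}$ approximation $v_j$ with $\partial_\nu^i(u-v_j)=0$ for $i<j$, one solves the indicial equation $P(j)\cdot(\text{coefficient})=(\text{known data})$ for the next coefficient. Because $\overline m$ is an integer, exactly at the step $j=\overline m$ the indicial polynomial vanishes, $P(\overline m)=0$, so instead of a polynomial correction one is forced to introduce the term $w\log\rho$; the function $w$ absorbs the resonance and carries the full $C^{k,\alpha}$ regularity, whereas each further differentiation of $w\log x_n$ produces a $\log$-free remainder of one lower order, which explains the loss of $v$ to $C^{k,\epsilon}$ for $\epsilon<\alpha$ rather than $C^{k,\alpha}$. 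The quantitative bounds follow by tracking constants through the finitely many Schauder steps and the dyadic rescaling, giving the stated estimate with the asserted dependence. The final assertion, that $\partial_\nu^{\overline m}w=0$ upgrades $u$ to $C^{k,\epsilon}$, is immediate since then $w=\rho\,\tilde w$ with $\tilde w\in C^{k-1,\alpha}$ by Hardy-type division, and $\rho\log\rho\in C^{0,\epsilon}$ with the product $\rho\tilde w\log\rho$ gaining a full derivative order, absorbing into $v$.

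The main obstacle I anticipate is the bookkeeping of H\"older norms through the $\log\rho$ factor near $x_n=0$: unlike $\rho^\gamma$, the function $\log\rho$ is unbounded, so one cannot naively multiply H\"older estimates, and one must instead work with the weighted quantities $x_n^k\nabla^k(w\log x_n)$ and use that $x_n^\epsilon|\log x_n|$ is bounded on bounded domains for every $\epsilon>0$ — this is precisely the source of the $\epsilon\in(0,\alpha)$ (rather than $\alpha$) in the conclusion for $v$, and controlling the commutators $[\,L,\log\rho\,]$ that appear when one substitutes the ansatz into the equation is where the integrability lemmas of Section \ref{sec-Appen-CalculusL} will be needed. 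A secondary technical point is that the change of variables only has finite regularity $C^{k+2,\alpha}$, so one must be careful that the flattened operator's coefficients lie in $C^{k+1,\alpha}$ and that the extra hypothesis $\rho\nabla^{k+3}\rho\in C^\alpha$ vanishing on the boundary survives the transformation — this controls the "bad" term $\mu\rho a_{ij}\partial_{ij}\rho$ in the computation of $\rho^{-\mu}L\rho^\mu$ and is exactly what makes the indicial polynomial $P$ well-defined and the iteration closeable.
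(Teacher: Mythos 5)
Your outline correctly identifies the skeleton the paper uses: flatten the boundary, work in the normal variable, and observe that the resonance $P(\overline m)=0$ at the integer step forces a logarithm. But two points are genuine gaps rather than omitted routine details. First, the mechanism that actually produces and controls the log term is never specified. The paper does not "solve the indicial equation order by order with Schauder corrections"; it rewrites $Lu=f$ as the ODE \eqref{eq-ch4-LinearODE} in $t$, applies the explicit variation-of-parameters formula \eqref{eq-ch4-LinearODE-Solution-l}, Taylor-expands $F=\sum a_i t^i+S_k$, and the logarithm appears concretely from $\int_0^t s^{-1-\overline m}\,a_{\overline m}s^{\overline m}\,ds=a_{\overline m}\log t$, while the remainders $R_k$ are controlled by the parameter-dependent singular-integral lemmas of Section \ref{sec-Appen-CalculusL} and an induction in which $F$ is re-expanded at each order using the previous expansion of $u$ (this is where terms $t^{i}\log t$ with $i>\overline m$ are generated and where the $\epsilon$-loss for $v$ enters, via Lemma \ref{lemma-ch4-regularity-log}). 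Your proposal gestures at "commutators $[L,\log\rho]$" and dyadic Schauder iteration but does not say how the remainder estimate closes across the resonant order, which is the entire technical content.

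Second, your justification of the final assertion is incorrect. If $\partial_\nu^{\overline m}w=0$ then $w$ vanishes to order $\overline m+1$, so your Hardy-type division gives $w\log\rho=\rho^{\overline m+1}\tilde w\log\rho$, which lies only in $C^{\overline m,\epsilon}$ (the function $\rho^{\overline m+1}\log\rho$ is not $C^{\overline m+1}$), not in $C^{k,\epsilon}$ for $k>\overline m$. The correct argument is structural: in the inductive expansion the coefficient of each higher-order logarithmic term lies in the span of data generated by the first resonant coefficient $c_{\overline m,0}$ (the analogue of \eqref{eq-ch4-expression-a-non-int-k-large} and \eqref{eq-ch4-expression-coefficient-k-k1-non-int-k-large}), so $c_{\overline m,0}=0$ forces every logarithmic coefficient to vanish identically, after which $u$ has a purely polynomial expansion to order $k$ and the regularity follows as in Step 2 of the proof of Theorem \ref{thrm-ch4-Linear-MainThm-non-integer-large}. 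Without this propagation-of-vanishing argument the last claim of the theorem is not proved.
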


\begin{theorem}\label{Thm-ch4-Linear-MainThm-curved-combined-smooth-integer}
Let $\Omega$ be a bounded domain in $\mathbb R^n$, with  
a $C^1$-boundary $\partial\Omega$ and a defining function $\rho\in C^1(\bar\Omega)$, 
and $u\in C(\bar \Omega)\cap C^{2}(\Omega)$ be 
a solution of the Dirichlet problem \eqref{eq-ch2-basic-equation} and \eqref{eq-ch2-Dirichlet}.  
For some $x_0\in\partial\Omega$, 
suppose that the characteristic polynomial $P$ has a negative constant root  $\underline{m}$
and a positive integer constant root $\overline{m}$ on $\partial\Omega\cap B_R(x_0)$.
Assume $\partial\Omega\cap B_R(x_0)\in C^{\infty}$, 
$\rho\in C^{\infty}(\bar\Omega\cap B_R(x_0))$, 
$a_{ij}, b_i, c\in C^{\infty}(\bar \Omega\cap B_R(x_0))$ satisfying \eqref{eq-ch2-ellipticity}, and 
$f\in C^{\infty}(\bar \Omega\cap B_R(x_0))$. 
Then, 
\begin{equation}\label{eq-ch4-decomposition-curved-int-smooth}
u=v+w\rho^{\overline{m}}\log\rho\quad\text{in }\Omega\cap B_{R}(x_0),\end{equation}
for some functions  
$v, w\in C^{\infty}(\bar\Omega\cap B_{r}(x_0))$, 
for any $r\in (0,R)$. 
Moreover, if 
$w=0$ on $\partial\Omega\cap B_R(x_0)$, then 
$u\in C^{\infty}(\bar\Omega\cap B_{r}(x_0))$, 
for any $r\in (0,R)$. 
\end{theorem}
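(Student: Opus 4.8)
The plan is to deduce Theorem~\ref{Thm-ch4-Linear-MainThm-curved-combined-smooth-integer} from the finite-regularity Theorem~\ref{Thm-ch4-Linear-MainThm-curved-integer} by an iteration-and-bootstrap argument, in parallel with how Theorem~\ref{Thm-ch4-Linear-MainThm-curved-combined-smooth-constant} is obtained from Theorem~\ref{Thm-ch4-Linear-MainThm-curved-combined-constant}. First, for each integer $k\ge\overline{m}$, Theorem~\ref{Thm-ch4-Linear-MainThm-curved-integer} gives a decomposition $u=v_k+w_k\log\rho$ on $\Omega\cap B_r(x_0)$ with $v_k\in C^{k,\epsilon}$, $w_k\in C^{k,\alpha}$ (here the smooth hypotheses let us take any $\alpha\in(0,1)$) and $\partial_\nu^i w_k=0$ on the boundary for $i=0,\dots,\overline{m}-1$. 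The first task is to see that the $w_k$ are essentially independent of $k$: since $\log\rho$ is not a polynomial times a smooth function, the equality $v_k-v_{k'}=(w_{k'}-w_k)\log\rho$ together with $v_k-v_{k'}\in C^{\min(k,k')}$ forces $w_{k'}-w_k$ to be a smooth function all of whose normal derivatives vanish to the relevant order; I would argue that in fact $w_{k'}-w_k$ must be divisible by arbitrarily high powers of $\rho$, hence that the Taylor expansions of the $w_k$ at $\partial\Omega$ stabilize, and likewise for $v_k$ after subtracting the corresponding correction. This is the same phenomenon noted in the remark after Theorem~\ref{Thm-ch4-Linear-MainThm-curved-combined-smooth-constant}: the sequences need not literally converge, but their formal boundary expansions do.

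Second, I would extract the common formal expansion. Because $\partial_\nu^i w_k=0$ for $i<\overline{m}$ and all $k$, the stabilized formal Taylor series of $w_k$ at $\partial\Omega$ is divisible by $\rho^{\overline{m}}$; write the formal series as $\rho^{\overline{m}}\,\widehat{W}$ with $\widehat{W}$ a formal power series in $\rho$ with $C^\infty(\partial\Omega)$ coefficients. By Borel's lemma (in the half-space model of Section~\ref{sec-flat-boundary}, applied near the flat boundary and patched using a partition of unity, as is standard), there is a genuine $W\in C^\infty(\bar\Omega\cap B_r(x_0))$ with that Taylor expansion, and correspondingly $V\in C^\infty$ matching the stabilized expansion of $v_k$. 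Set $w=W\rho^{-\overline{m}}$... more precisely, define the candidate decomposition $u=V+W\log\rho$ and observe $w_k\log\rho - W\log\rho = (w_k-W)\log\rho$, where $w_k-W$ is smooth and vanishes to infinite order at $\partial\Omega$; such a function times $\log\rho$ is still $C^\infty$ up to the boundary (this is an elementary fact: $\rho^N|\log\rho|$ and all its derivatives extend continuously by zero for every $N$). Hence $u-(V+W\log\rho)=(v_k-V)+(w_k-W)\log\rho\in C^{k,\epsilon}$ for every $k$, so the difference, call it $g$, lies in $C^\infty(\bar\Omega\cap B_r(x_0))$. Then $u=(V+g)+W\log\rho$, and since we want the stated form $u=v+w\rho^{\overline{m}}\log\rho$, I would at this point re-run the bookkeeping so that the logarithmic term carried along is $\rho^{\overline{m}}\log\rho$ rather than $w_k\log\rho$: because $w_k$ is smooth and divisible (formally) by $\rho^{\overline{m}}$, writing $w_k=\rho^{\overline{m}}\tilde w_k + (\text{flat error})$ and absorbing the flat error into $v$ converts each finite decomposition into $u=\tilde v_k+\tilde w_k\rho^{\overline{m}}\log\rho$, and the same stabilization/Borel argument then produces $v,w\in C^\infty$ with $u=v+w\rho^{\overline{m}}\log\rho$.

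For the final clause, suppose $w=0$ on $\partial\Omega\cap B_R(x_0)$. Then $w\rho^{\overline{m}}\log\rho=\rho^{\overline{m}+1}\cdot(w/\rho)\cdot\rho^{-1}\log\rho$ ... rather, since $w$ vanishes on the boundary and is smooth, $w=\rho\,w_1$ with $w_1\in C^\infty$, so the singular term becomes $w_1\rho^{\overline{m}+1}\log\rho$, which for $\overline{m}\ge 1$ lies in $C^{\overline{m}}$; but that is not enough for $C^\infty$. The correct reasoning: apply Theorem~\ref{Thm-ch4-Linear-MainThm-curved-integer} at level $k$ and use its last sentence — if $\partial_\nu^{\overline{m}}w_k=0$ on the boundary then $u\in C^{k,\epsilon}$. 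The hypothesis $w=0$ on $\partial\Omega$ together with $\partial_\nu^i w_k=0$ for $i<\overline m$ (all $k$) and the matching of formal expansions forces $\partial_\nu^{\overline m}w_k=0$ on the boundary for every $k$ as well; hence $u\in C^{k,\epsilon}(\bar\Omega\cap B_r(x_0))$ for every $k$, i.e. $u\in C^\infty(\bar\Omega\cap B_r(x_0))$.

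The main obstacle I anticipate is the stabilization step: making rigorous that the finite-order decompositions have a well-defined common formal boundary expansion despite the non-convergence of $\{v_k\},\{w_k\}$, and pinning down exactly in what sense $(w_k-W)\log\rho$ is $C^\infty$ up to the boundary. The key technical inputs are (i) uniqueness-type information forcing $w_{k'}-w_k$ to vanish to high order at $\partial\Omega$ — this should follow from comparing regularities on the two sides of $v_k-v_{k'}=(w_{k'}-w_k)\log\rho$ and the fact that a nonzero smooth function vanishing to order exactly $N$ at $\partial\Omega$, times $\log\rho$, is not in $C^{N}$ near the boundary; (ii) Borel's lemma to realize the stabilized series; and (iii) the elementary lemma that $\rho^N(\log\rho)^j\in C^\infty(\bar\Omega)$ after extension by zero, for all $N\ge 1$. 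Everything else is bookkeeping, aided by the flat-boundary reformulation of Section~\ref{sec-flat-boundary} and a partition of unity along $\partial\Omega\cap B_R(x_0)$.
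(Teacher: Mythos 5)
Your proposal is correct in substance, but it follows a genuinely different route from the paper's. The paper does not pass through the decomposition of Theorem \ref{Thm-ch4-Linear-MainThm-curved-integer} at all: following the model of Theorem \ref{thrm-ch4-Decomposition-infinite-non-int}, it goes back to the expansion-level result (the integer-exponent analogue of Theorem \ref{thrm-ch4-Linear-MainThm-non-integer-large}), whose coefficients are canonical --- they are produced successively from the ODE \eqref{eq-ch4-LinearODE} and do not depend on the truncation order $k$ --- so the stabilization issue never arises; the smooth functions $v$ and $w$ are then built by an explicit Borel-type summation of these coefficients against cutoffs $\eta(\lambda_i t)t^i$ with rapidly growing $\lambda_i$, and $v$ is defined as the difference and shown to lie in $C^k$ for every $k$ via the remainder estimates. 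You instead treat Theorem \ref{Thm-ch4-Linear-MainThm-curved-integer} as a black box and supply the missing uniqueness: from $v_k-v_{k'}=(w_{k'}-w_k)\log\rho\in C^{\min(k,k')}$ and the fact that a $C^{N,\alpha}$ function with a nonvanishing $N$-th normal derivative at a boundary point, multiplied by $\log\rho$, is not $C^N$ there, you conclude that the boundary jets of the $w_k$ (hence of the $v_k$) agree up to order $\min(k,k')$; Borel's lemma then realizes the stabilized jets, and the flat discrepancies are harmless since a function vanishing to infinite order times $\log\rho$ is smooth. Both routes work; yours is more self-contained relative to the stated theorems, while the paper's yields canonical coefficients and the absolute and uniform convergence of the full series for free. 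Two points to state carefully when writing this up: $w_{k'}-w_k$ is only $C^{\min(k,k'),\alpha}$, not smooth, so stabilization must be phrased for finite jets (as you in effect do); and for the final clause you should record that the agreement of $\overline{m}$-jets gives $\partial_\nu^{\overline{m}}w_k|_{\partial\Omega}=\overline{m}!\,w|_{\partial\Omega}$, which is exactly what licenses the appeal to the last sentence of Theorem \ref{Thm-ch4-Linear-MainThm-curved-integer}.
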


\section{Varying Characteristic Exponents}\label{sec-Varying-Exponents} 

In this section, we present our regularity results for varying characteristic exponents. 
We first consider an example. 

\begin{example}\label{exa-ch2-solutions-not-regular-non-constant} 
We reexamine the operator $L$ in \eqref{eq-ch2-def-operator-example}
but assume that $a, b, c$ are smooth functions in $\bar\Omega$ with $a>0$ in $\bar\Omega$. 
Let $s$ be a smooth function in $\bar\Omega$ 
such that \eqref{eq-ch2-root-condition} holds on $\partial\Omega$. 
Take a function $\psi\in C(\bar\Omega)\cap C^\infty(\Omega)$. 
A straightforward calculation implies
\begin{align*} L(\psi\rho^s)=f,\end{align*}
where 
\begin{align}\label{eq-ch2-example-expression-f-non-constant}\begin{split}
f&=\rho^s\big\{\big[as(s-1)+bs+c\big]\psi+(2as+b)\psi\nabla\rho\cdot\nabla s\,\rho\log\rho\\
&\qquad+\big[as(s-1)+bs\big](|\nabla \rho|^2-1)\psi\\
&\qquad+\big[as\psi\Delta\rho+(2as+b)\nabla\rho\cdot\nabla\psi+2a\psi\nabla\rho\cdot\nabla s\big]\rho\\
&\qquad 
+a\psi|\nabla s|^2\rho^2(\log\rho)^2
+a(\psi\Delta s+2\nabla\psi\cdot\nabla s)\rho^2\log\rho
+a\Delta\psi\rho^{2}\big\}.
\end{split}\end{align}
Comparing with $f$ in \eqref{eq-ch2-example-expression-f}, 
we note that there are additional terms in the right-hand side, involving 
$\rho\log\rho$, $\rho^2(\log\rho)^2$, and $\rho^2\log\rho$. These terms are not smooth in $\bar\Omega$. 
Similarly, the first term in the right-hand side of \eqref{eq-ch2-example-expression-f-non-constant} 
vanishes on $\partial\Omega$. 

To examine the regularity of $f$, we consider a special case that 
$s$ is {\it between two consecutive integers} on $\partial\Omega$.  
Write 
$s=k+\gamma$, for some integer $k\ge 0$ 
and some function $\gamma$ with $0<\gamma<1$ on $\partial\Omega$. 
Let $\gamma_0$ be the minimum of $\gamma$ on $\partial\Omega$. 
For $u=\psi\rho^s$ and $f$ as in \eqref{eq-ch2-example-expression-f-non-constant} with $\psi\in C^\infty(\bar\Omega)$, 
we have $f\in C^{k+1, \epsilon}(\bar\Omega)$ for any $\epsilon\in (0,\gamma_0)$, 
$f\notin C^{k+1, \gamma_0}(\bar\Omega)$, and  
$u\in C^{k,\gamma_0}(\bar\Omega)$, but $u\notin C^{k,\beta}(\bar\Omega)$, 
for any $\beta\in (\gamma_0,1)$. 
A slight loss of the H\"older index in the regularity of $f$ is caused by $\log \rho$. 

In order to improve the regularity 
of $f$ in \eqref{eq-ch2-example-expression-f-non-constant}, we need to modify $\psi$ 
to eliminate the logarithmic factors in $f$. It is natural to introduce 
logarithmic factors in $\psi$, which will create more logarithmic factors in $f$. 
For a function $\psi_{0,0}\in C^\infty(\bar\Omega)$ and a positive integer $m$, consider 
$$\psi=\sum_{i=0}^m\sum_{j=0}^i\psi_{i,j}\rho^i(\log \rho)^j.$$
We now assume in addition that, for $\mu=s+1, \cdots, s+m$, 
$$a\mu(\mu-1)+b\mu +c\neq 0\quad\text{on }\partial\Omega.$$
Substituting $\psi$ in \eqref{eq-ch2-example-expression-f-non-constant} 
and choosing $\psi_{1,1}, \psi_{1,0}, \cdots, \psi_{m,m}, \cdots, \psi_{m,0}$ successively, 
we have $f\in C^{k+m+1, \epsilon}(\bar\Omega)$ for any $\epsilon\in (0,\gamma_0)$, 
$f\notin C^{k+m+1, \gamma_0}(\bar\Omega)$, and  
$u\in C^{k,\gamma_0}(\bar\Omega)$, but $u\notin C^{k,\beta}(\bar\Omega)$, 
for any $\beta\in (\gamma_0,1)$. 
\end{example} 

We now compare Example \ref{exa-ch2-solutions-not-regular-non-constant} with 
Example \ref{exa-ch2-solutions-not-regular}.  
In Case 1 in Example \ref{exa-ch2-solutions-not-regular}, 
$s$ satisfying \eqref{eq-ch2-root-condition} is assumed to be a non-integer constant. 
The only singular factor is given by $\rho^\gamma$ for some $\gamma\in (0,1)$. 
In Example \ref{exa-ch2-solutions-not-regular-non-constant}, 
$s$ is a function on $\partial\Omega$. 
In addition to $\rho^\gamma$, there are also logarithmic factors given by integer powers of $\log\rho$.  
The solution $u$ constructed in Example \ref{exa-ch2-solutions-not-regular-non-constant} is given by 
$$u=\sum_{i=0}^m\sum_{j=0}^i\psi_{i,j}\rho^{s+i}(\log \rho)^j.$$
The aim of the paper is to demonstrate that $\rho^{s+i}(\log \rho)^j$ 
provide sole obstructions to the higher regularity of solutions.

We next present our regularity results for varying characteristic exponents. 
Let $\overline{m}$ be the positive characteristic exponent. 
We always assume that $\overline{m}$ is {\it between two consecutive integers on $\partial\Omega$} and 
write 
$\overline{m}=[\overline{m}]+\gamma.$
Then, $[\overline{m}]$ is an integer and $0<\gamma<1$ on $\partial\Omega$.
Consider an arbitrary point $x_{0}\in \partial\Omega$. 
For a nonnegative integer $k$ and a constant $\alpha\in(0,1)\setminus\{\gamma(x_{0})\}$, we set
\begin{align*}
k_{\ast}&=k,\  \epsilon_{\ast}(x_{0})=\alpha-\gamma(x_{0})\quad\text{if }\gamma(x_{0})<\alpha,\\
k_{\ast}&=k-1,\  \epsilon_{\ast}(x_{0})=\alpha+1-\gamma(x_{0})\quad\text{if }\alpha<\gamma(x_{0}).
\end{align*}

\begin{theorem}\label{Thm-ch4-Linear-MainThm-curved-combined}
Let $\Omega$ be a bounded domain in $\mathbb R^n$, with  
a $C^1$-boundary $\partial\Omega$ and a defining function $\rho\in C^1(\bar\Omega)$, 
and $u\in C(\bar \Omega)\cap C^{2}(\Omega)$ be 
a solution of the Dirichlet problem \eqref{eq-ch2-basic-equation} and \eqref{eq-ch2-Dirichlet}.  
For some $x_0\in\partial\Omega$, 
suppose that the characteristic polynomial $P$ has a negative root  $\underline{m}$
and a positive root $\overline{m}$ on $\partial\Omega\cap B_R(x_0)$, 
with $\overline{m}=[\overline{m}]+\gamma$
and $0<\gamma<1$ on $\partial\Omega\cap B_R(x_0)$, 
and that $k$ is an integer and $\alpha\in(0,1)$ is a constant such that 
either $k\geq [\overline{m}]\ge 1$, $\gamma<\alpha$ on $\partial\Omega\cap B_R(x_0)$  
or $k\geq [\overline{m}]+1$, $\alpha<\gamma$ on $\partial\Omega\cap B_R(x_0)$. 
Assume $\partial\Omega\cap B_R(x_0)\in C^{k+2,\alpha}$, 
$\rho\in C^{k+2,\alpha}(\bar\Omega\cap B_R(x_0))$, 
with $\rho\nabla^{k+3}\rho\in C^{\alpha}(\bar\Omega\cap B_R(x_0))$ 
and $\rho\nabla^{k+3}\rho=0$ on $\partial\Omega \cap B_R(x_0)$, 
$a_{ij}, b_i, c\in C^{k+1, \alpha}(\bar \Omega\cap B_R(x_0))$ satisfying \eqref{eq-ch2-ellipticity}, and 
$f\in C^{k,\alpha}(\bar \Omega\cap B_R(x_0))$.
Then,
\begin{equation}\label{eq-ch4-decomposition-curved-int-non}
u=v+\sum^{k_{\ast}-[\overline{m}]}_{j=0}w_j\rho^{\gamma}(\log \rho)^{j} \quad\text{in } \Omega\cap B_R(x_0),
\end{equation}
for some functions $v\in C^{k, \alpha}(\bar\Omega\cap B_R(x_0))$ 
and $w_0,\cdots, w_{k_{\ast}-[\overline{m}]}\in C^{k_{\ast}, \epsilon}(\bar\Omega\cap B_R(x_0))$
for any constant $\epsilon$ with $0<\epsilon<\epsilon_*$ on $\partial \Omega\cap B_R(x_0)$, 
with $\partial_\nu^i w_j=0$ on $\partial \Omega\cap B_R(x_0)$, 
for $j=0,\cdots,k_{\ast}-[\overline{m}]$ and $i=0,\cdots,[\overline{m}]+j-1$. Moreover,  
\begin{align*}
&|v|_{C^{k, \alpha}(\bar\Omega\cap B_{R/2}(x_0))}
+\sum^{k_{\ast}-[\overline{m}]}_{j=0}|w_j|_{C^{k_{\ast}, \epsilon}(\bar\Omega\cap B_{R/2}(x_0))}\\
&\qquad\leq C\big\{|u|_{L^{\infty}(\Omega\cap B_R(x_0))}
+|f|_{C^{k, \alpha}(\bar\Omega\cap B_R(x_0))}\big\},
\end{align*}
where $C$ is a positive constant depending only on $n$, $\lambda$, $k$, $\alpha$, $\epsilon$, 
$\underline{m}$, $\overline{m}$, $\Omega\cap B_R(x_0)$, 
the $C^{k+2, \alpha}$-norm of $\rho$ in $\bar\Omega\cap B_R(x_0)$, 
the $C^{\alpha}$-norm of $\rho\nabla^{k+3}\rho$ in $\bar\Omega\cap B_R(x_0)$,
and the $C^{k+1, \alpha}$-norms of $a_{ij}, b_i, c$ in $\bar\Omega\cap B_R(x_0)$.
If, in addition, $\partial_\nu^{[\overline{m}]} w_0=0$ on $\partial \Omega\cap B_R(x_0)$, 
then $u \in C^{k,\alpha}(\bar\Omega\cap B_R(x_0))$. 
\end{theorem}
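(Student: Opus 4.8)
The plan is to reduce to the flat-boundary setting of Section \ref{sec-flat-boundary} and then argue by an iteration on the order of regularity, bootstrapping one derivative at a time and peeling off singular factors $\rho^{\gamma}(\log\rho)^{j}$ as they are forced to appear. First I would flatten the portion $\partial\Omega\cap B_R(x_0)$ by a $C^{k+2,\alpha}$ diffeomorphism, so that $\rho$ becomes (comparable to) the distance to a hyperplane $\{x_n=0\}$ and $L$ takes the normal form used in the flat case; the hypothesis $\rho\nabla^{k+3}\rho\in C^{\alpha}$ with vanishing trace is exactly what is needed to control the error terms $\mu\rho a_{ij}\partial_{ij}\rho$ in $\rho^{-\mu}L\rho^{\mu}$ after this change of variables. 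Having done this, the optimal-regularity results quoted from Section \ref{sec-flat-boundary} give the base case: the solution lies in $C^{[\overline m],\beta}$ (or one order lower, in the small-Hölder-index branch), with the boundary values and low-order normal derivatives determined by $f$ through the recursion coming from $P$.

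Next I would set up the inductive step. Suppose we already have a decomposition $u = v^{(\ell)} + \sum_j w_j^{(\ell)}\rho^{\gamma}(\log\rho)^j$ valid with regularity at level $\ell$; substituting into $Lu=f$ and using $P(\overline m)=0$ on the boundary, the leading terms cancel, and the remaining equation for the next-order correction is again uniformly degenerate elliptic but with a strictly better characteristic polynomial at the relevant shifted exponent, so the flat-case solvability/regularity theory applies to the correction. The key algebraic point — and this is where the varying exponent really enters — is that differentiating $\rho^{\overline m}=\rho^{[\overline m]+\gamma}$ tangentially produces factors $\rho^{\overline m}\log\rho\,\partial_{\tau}\gamma$, so each derivative that hits the singular part manufactures one more power of $\log\rho$; tracking this bookkeeping is what produces precisely the sum $\sum_{j=0}^{k_\ast-[\overline m]}$ and the vanishing conditions $\partial_\nu^i w_j=0$ for $i\le [\overline m]+j-1$ (each extra $\log$ costs one more order of vanishing so that the product stays in the target Hölder class). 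The splitting $\alpha=\gamma+(\alpha-\gamma)$ versus $1+\alpha=\gamma+(1+\alpha-\gamma)$ explained in the text dictates whether we can extract $\rho^{\gamma}$ directly from $\rho^{\alpha}$ or must borrow a factor of $\rho$, which is the source of the two cases $k_\ast=k$ or $k_\ast=k-1$ and of the differing hypotheses $k\ge[\overline m]$ versus $k\ge[\overline m]+1$.

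The estimates would be obtained along the way by the same scheme: at each level, interior Schauder estimates for the uniformly elliptic operator $\rho^{-2}L$ on dyadic shells $\{\rho\sim 2^{-j}\}$, rescaled and summed, give weighted Schauder bounds; translating these into the stated $C^{k,\alpha}$ and $C^{k_\ast,\epsilon}$ norms on $B_{R/2}(x_0)$ uses that $w_j\rho^\gamma(\log\rho)^j$ with the prescribed vanishing has the claimed Hölder regularity — an elementary but slightly delicate computation because $\epsilon<\epsilon_\ast$ must be strict (the endpoint fails, as Example \ref{exa-ch2-solutions-not-regular-non-constant} shows). The final clause, that $\partial_\nu^{[\overline m]}w_0=0$ upgrades $u$ to $C^{k,\alpha}$, follows because that condition forces $w_0\rho^\gamma$ to vanish to order $[\overline m]+1=\lceil\overline m\rceil$ at the boundary, hence the leading singular term is absorbed into a genuinely $C^{k,\alpha}$ function and, by the recursion, so are all the remaining $w_j$ terms.

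The main obstacle I expect is the combinatorial control of the logarithmic powers under the varying exponent: one must show that after $k_\ast-[\overline m]$ steps no higher power of $\log\rho$ than the one recorded survives, which requires carefully matching the order of vanishing imposed on each $w_j$ against the number of tangential derivatives of $\gamma$ that can act before the flat-case theory is invoked, and checking that the solvability at each stage is never obstructed by $P$ vanishing at an unwanted shifted exponent — for this the hypothesis that $\overline m$ stays strictly between two fixed consecutive integers on all of $\partial\Omega\cap B_R(x_0)$, together with the sign condition $P(0)<0$, is essential.
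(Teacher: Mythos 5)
Your reduction to the flat boundary and your use of the optimal-regularity theory as a base case match the paper, and you correctly identify why tangential derivatives of $\rho^{\gamma}$ generate powers of $\log\rho$ when $\gamma$ varies. But the core of your inductive step is missing the mechanism that actually produces the expansion. You propose to substitute a decomposition at level $\ell$ into $Lu=f$ and then apply ``the flat-case solvability/regularity theory'' to the next-order correction, claiming the correction satisfies a degenerate elliptic equation with a strictly better characteristic polynomial at the relevant shifted exponent. This cannot work as stated: the flat-case regularity theory (Theorem \ref{thrm-ch3-Linear-NormalEstimate-general}) requires $P(k+\alpha)\le -c_{k+\alpha}$ and therefore only reaches regularity \emph{below} the positive characteristic exponent $\overline m$; and conjugating $L$ by $\rho^{s}$ with $s\ge\overline m$ (which is what peeling off the singular terms amounts to) shifts the indicial roots to $\underline m-s$ and $\overline m-s$, making $P(0)>0$ and destroying the sign condition $c<0$ on which the whole Dirichlet setup rests. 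So there is no ``better'' operator to which the existing theory applies, and your scheme never says how the coefficients $w_j$ are actually determined. The same problem affects your estimate scheme: interior Schauder estimates on dyadic shells, rescaled and summed, are exactly what proves the optimal regularity up to $\overline m$; they saturate at the critical exponent and cannot see the structure of the solution beyond it.

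What the paper does instead, and what your proposal lacks, is a one-dimensional analysis in the normal direction: the equation is rewritten as the Euler-type ODE \eqref{eq-ch4-LinearODE} in $t$ with right-hand side $F$ built from $f$ and tangential derivatives of $u$ (which are better behaved), solved explicitly by the variation-of-parameters formula \eqref{eq-ch4-LinearODE-Solution-l}; Taylor-expanding $F$ in $t$ and inserting the expansion into that integral formula reads off the coefficients $c_i$ and $c_{i,j}$ explicitly and controls the remainder through the singular-integral lemmas of Section \ref{sec-Appen-CalculusL}. The induction is on the order of this expansion (Lemma \ref{lemma-ch4-Linear-MainThm-non-integer-large}, Theorem \ref{thrm-ch4-Linear-MainThm-non-integer-large} and their Section \ref{sec-Small-Holder-Indices} analogues), not on solving auxiliary boundary value problems, and the decomposition \eqref{eq-ch4-decomposition-curved-int-non} is then assembled from the coefficients by the extension lemma (Lemma \ref{lemma-ch4-extensions}). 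A smaller but real error: in your last clause, $\partial_\nu^{[\overline m]}w_0=0$ does not make $w_0\rho^{\gamma}$ regular enough to be ``absorbed'' (it would only vanish to order $\overline m+1$, still short of $C^{k,\alpha}$ for large $k$); the correct mechanism is that $c_{[\overline m],0}=0$ forces all higher singular coefficients to vanish identically through the recursion \eqref{eq-ch4-expression-a-non-int-k-large}, after which a separate argument upgrades the tangential regularity.
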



\begin{theorem}\label{Thm-ch4-Linear-MainThm-curved-combined-smooth}
Let $\Omega$ be a bounded domain in $\mathbb R^n$, with  
a $C^1$-boundary $\partial\Omega$ and a defining function $\rho\in C^1(\bar\Omega)$, 
and $u\in C(\bar \Omega)\cap C^{2}(\Omega)$ be 
a solution of the Dirichlet problem \eqref{eq-ch2-basic-equation} and \eqref{eq-ch2-Dirichlet}.  
For some $x_0\in\partial\Omega$, 
suppose that the characteristic polynomial $P$ has a negative root  $\underline{m}$
and a positive root $\overline{m}$ on $\partial\Omega\cap B_R(x_0)$, 
with $\overline{m}=[\overline{m}]+\gamma$
and $0<\gamma<1$ on $\partial\Omega\cap B_R(x_0)$. 
Assume $\partial\Omega\cap B_R(x_0)\in C^{\infty}$, 
$\rho\in C^{\infty}(\bar\Omega\cap B_R(x_0))$, 
and $a_{ij}, b_i, c, f\in C^{\infty}(\bar \Omega\cap B_R(x_0))$ satisfying \eqref{eq-ch2-ellipticity}.
Then, for any $r\in (0,R)$, 
$$
u=v+\sum^{\infty}_{j=0}w_j\rho^{\overline{m}+j}(\log \rho)^{j} 
\quad\text{absolutely and uniformly in } \bar\Omega\cap B_r(x_0),
$$
for some functions $v,w_0,w_1,\cdots \in C^{\infty}(\bar\Omega\cap B_R(x_0))$. 
Moreover, for any integer $k\geq [\overline{m}]$,
\begin{equation}\label{eq-ch4-Decomposition-infinite-higher-non-int-intro}
D^{k}\Big[u-\sum^{k-[\overline{m}]}_{j=0}w_j\rho^{\overline{m}+j}(\log \rho)^{j}\Big]
=D^{k}v+\sum^{\infty}_{j=k-[\overline{m}]+1}D^{k}[w_j\rho^{\overline{m}+j}(\log \rho)^{j}],
\end{equation}
where the series in the right-hand side 
converges absolutely and uniformly in $\bar\Omega\cap B_r(x_0)$, for any $r\in(0,R)$.
If, in addition, $w_0=0$ on $\partial \Omega\cap B_R(x_0)$, 
then $u \in C^{\infty}(\bar\Omega\cap B_R(x_0))$.
\end{theorem}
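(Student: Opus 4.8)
The plan is to upgrade the finite decomposition of Theorem~\ref{Thm-ch4-Linear-MainThm-curved-combined} into a convergent infinite expansion by producing the coefficients $w_0,w_1,\dots$ once and for all as $C^\infty$ functions on $\bar\Omega\cap B_R(x_0)$, independent of any truncation level, and then showing that the resulting series and all of its termwise derivatives converge absolutely and uniformly. After shrinking $R$ we may assume $0<\rho<1$ and $\nabla\rho\neq0$ on $\bar\Omega\cap B_R(x_0)$; both are harmless normalizations used only to streamline the size estimates below.

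\emph{Step 1 (the recursion).} Inserting the ansatz $u=v+\sum_{\ell\ge0}w_\ell\,\rho^{\overline m+\ell}(\log\rho)^\ell$ into $Lu=f$ and using the formula for $\rho^{-\mu}L\rho^{\mu}$ from the introduction, one sorts the identity by powers of $\rho$ and of $\log\rho$. The term $w_\ell\,\rho^{\overline m+\ell}(\log\rho)^\ell$ carries the log-level $\ell$: because $\overline m$ is not constant, a tangential derivative hitting $\rho^{\overline m+\ell}$ produces $\rho^{\overline m+\ell}\log\rho\,\nabla\overline m$ — precisely the mechanism of Example~\ref{exa-ch2-solutions-not-regular-non-constant} — so $w_\ell$ feeds the equations at log-levels $\ell,\ell+1,\ell+2$, while at log-level $\ell$ it is governed by the operator $L_\ell:=\rho^2a_{ij}\partial_{ij}+\rho b_i\partial_i+\mathcal P_\ell$, where $\mathcal P_\ell:=(\overline m+\ell)(\overline m+\ell-1)a_{ij}\partial_i\rho\partial_j\rho+(\overline m+\ell)b_i\partial_i\rho+c$ equals $P(\overline m+\ell)$ on $\partial\Omega$. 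Since $\overline m$ is the larger root of the upward quadratic $P$, one has $P(\overline m+\ell)>0$ on $\partial\Omega\cap B_R(x_0)$ for every $\ell\ge1$; hence, after shrinking $R$, $\mathcal P_\ell>0$ throughout $\bar\Omega\cap B_R(x_0)$, with $\mathcal P_\ell\asymp \ell^2$ for large $\ell$, so $L_\ell$ is uniquely solvable with an a priori estimate whose constant is $O(\ell^{-2})$. The boundary datum of $w_0$ is the one already forced by $u$ and $f$ through $P(\overline m)=0$, as in the finite theorem. Determining the normal boundary jets of all the $w_\ell$ in increasing total order — the coupling to $w_{\ell\pm1},w_{\ell\pm2}$ always occurs at strictly higher order, so the system is triangular and non-circular — and then realizing each $w_\ell$ as a genuine $C^\infty(\bar\Omega\cap B_R(x_0))$ function with the prescribed jet (Borel extension, plus a correction in $C^\infty$ chosen so the relevant remainders meet the hypothesis of the last clause of Theorem~\ref{Thm-ch4-Linear-MainThm-curved-combined}), one arranges that for every $N$ the remainder $u-\sum_{\ell=0}^{N}w_\ell\rho^{\overline m+\ell}(\log\rho)^\ell$ solves a problem of the same type with right-hand side in $C^{N',\alpha}$, $N'\to\infty$ as $N\to\infty$.

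\emph{Step 2 (summability — the crux).} Two bounds are combined. First, an elementary one: on $\{0<\rho<1\}$, using $\sup_{0<t<1}t^{a}|\log t|^{b}=(b/a)^{b}e^{-b}$ and Leibniz's rule, one gets for every $N$ and all $\ell\ge N$
\[
\big|D^{N}\!\big(\rho^{\overline m+\ell}(\log\rho)^{\ell}\big)\big|\le C_{N}\,(1+\ell)^{N}\,e^{-\ell}\qquad\text{on }\bar\Omega\cap B_r(x_0),
\]
so the singular factors and all their derivatives are already super-geometrically small in $\ell$. Second, and this is the heart of the matter, one needs $\|w_\ell\|_{C^{N}(\bar\Omega\cap B_r(x_0))}\le C_{N,r}\,E^{\ell}$ for some $E<e$ — one in fact expects factorial decay $\|w_\ell\|_{C^N}\le C_{N,r}/\ell!$. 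This is where the lack of any analyticity assumption makes the estimate subtle: in the recursion of Step~1 each step loses only a bounded number of derivatives and involves only finitely many earlier coefficients, each entering with extra powers of $\rho$ and with coefficients of size $O(\ell)$ coming from $\rho^{-\mu}L\rho^{\mu}$, while the solution operator for $L_\ell$ contributes a gain $O(\ell^{-2})$; I would run a scaled-norm (Nagumo-type) induction on the whole coupled system — weighting $w_\ell$ by a small constant to the power $\ell$ and letting the auxiliary differentiability index drift with $\ell$ — in which the $\ell^{-2}$ gains outweigh both the derivative loss and the $O(\ell)$ factors, closing the estimate for every $N$ without any analyticity input. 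Multiplying the two bounds and summing in $\ell$ yields the absolute and uniform convergence, on every $\bar\Omega\cap B_r(x_0)$ with $r<R$, of $\sum_\ell D^{N}\!\big[w_\ell\rho^{\overline m+\ell}(\log\rho)^\ell\big]$ for every $N$; this gives the convergence of the series for $u$ and the right to differentiate it termwise, i.e.\ \eqref{eq-ch4-Decomposition-infinite-higher-non-int-intro}.

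\emph{Step 3 (identification of $v$ and the last assertion).} Put $v:=u-\sum_{\ell\ge0}w_\ell\rho^{\overline m+\ell}(\log\rho)^\ell$; by Step~2 this is well defined and, for each $N$, the tail $\sum_{\ell>N}w_\ell\rho^{\overline m+\ell}(\log\rho)^\ell$ lies in $C^{N}(\bar\Omega\cap B_r(x_0))$. On the other hand, by Step~1 the remainder $u-\sum_{\ell=0}^{N}w_\ell\rho^{\overline m+\ell}(\log\rho)^\ell$ solves a problem of the required type with $C^{N',\alpha}$ data ($N'\gg N$) and, by construction, satisfies the hypothesis of the last clause of Theorem~\ref{Thm-ch4-Linear-MainThm-curved-combined}, so it is $C^{N}$ up to $\partial\Omega\cap B_r(x_0)$ (a direct uniqueness lemma for expansions of the form $\sum w_\ell\rho^{\overline m+\ell}(\log\rho)^\ell$ identifies the coefficients produced there with $w_{N+1},w_{N+2},\dots$). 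Hence $v\in C^{N}(\bar\Omega\cap B_r(x_0))$ for all $N$ and all $r<R$, so $v\in C^\infty(\bar\Omega\cap B_R(x_0))$, and the $w_\ell$ are smooth by Step~1. Finally, ``$w_0=0$ on $\partial\Omega\cap B_R(x_0)$'' means the leading obstruction vanishes — equivalently $\partial_\nu^{[\overline m]}w_0=0$ for the $w_0$ of \eqref{eq-ch4-decomposition-curved-int-non} — so the last sentence of Theorem~\ref{Thm-ch4-Linear-MainThm-curved-combined}, applied at every $k$ (legitimate since $f$ is smooth), gives $u\in C^{k,\alpha}$ for all $k$, i.e.\ $u\in C^\infty(\bar\Omega\cap B_R(x_0))$. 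The decisive difficulty is Step~2: extracting the geometric (ideally factorial) control of $\|w_\ell\|_{C^N}$ from the recursion with no analyticity hypothesis.
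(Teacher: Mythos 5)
Your Step 2 is where the argument breaks down, and the break is not a technical detail but a conceptual one. You propose to prove $\|w_\ell\|_{C^N}\le C_{N}E^{\ell}$ (ideally $C_N/\ell!$) for the coefficients produced by the recursion, and to beat the elementary bound $|D^N(\rho^{\overline m+\ell}(\log\rho)^\ell)|\le C_N(1+\ell)^N e^{-\ell}$. But the recursion that determines the boundary jets of the $w_\ell$ (in the paper's notation, the coefficients $c_{i,j}$, cf. \eqref{eq-ch4-expression-a-non-int-k-large} and \eqref{eq-ch4-expression-coefficient-k-k1-non-int-k-large}) feeds in, at each order $i$, the Taylor coefficients $\tilde a_{\alpha\beta,q}=\frac{1}{q!}\partial_t^q\tilde a_{\alpha\beta}(\cdot,0)$ of the merely smooth data via sums such as $a^{(1)}_i=\sum_{p+q=i-2}\tilde a_{\alpha\beta,q}\partial_{\alpha\beta}c_p$. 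For non-analytic $a_{ij},b_i,c,f$ these Taylor coefficients are an arbitrary sequence of smooth functions with no growth control whatsoever, so no bound of the form $CE^\ell$ (let alone $1/\ell!$) can hold for the canonically determined coefficients; the $O(\ell^{-2})$ indicial gain cannot compensate for data whose jets grow arbitrarily fast, and in addition each step of the recursion costs two tangential derivatives, so a Nagumo-type scheme cannot close uniformly in $N$. In short, the estimate you identify as ``the crux'' is false in the generality of the theorem.

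The paper's proof (Theorem \ref{thrm-ch4-Decomposition-infinite-non-int}) avoids estimating the coefficients altogether. It first gets, from the finite expansion Theorem \ref{thrm-ch4-Linear-MainThm-non-integer-large} applied with $\ell=2k$ for every $k$, a single infinite family of smooth coefficients $c_{i,j}$ and remainders $R_k\in C^k$. It then \emph{defines} $w^{(j)}=\sum_{i\ge[\overline m]+j}c_{i,j}\,\eta(\lambda_{i,j}t)\,t^{i}$ with a cutoff $\eta$ supported in $|t|<1/\lambda_{i,j}$, choosing $\lambda_{i,j}$ so large that $|c_{i,j}\eta(\lambda_{i,j}t)t^{i}|_{C^{i-1}}\le 2^{-i}$ and likewise for $c_{i,j}\eta(\lambda_{i,j}t)t^{i+\gamma}(\log t)^j$. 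Since the theorem only asserts the existence of \emph{some} smooth $w_j$ (they are manifestly non-unique, as the remark after Theorem \ref{Thm-ch4-Linear-MainThm-decomposition-non-int-large} notes), this Borel-summation device is legitimate, and absolute and uniform convergence of the series and of all its termwise derivatives becomes automatic from the $2^{-i}$ bounds; the smoothness of $v=u-\sum_j w^{(j)}t^\gamma(\log t)^j$ then follows by comparing the tail with $R_k$ for each $k$. If you want to salvage your plan, the missing idea is precisely this: do not try to sum the formal series as it stands, but replace each term by a cutoff version localized to a scale depending on the term, so that convergence is manufactured rather than proved from decay of the coefficients.
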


Theorem \ref{Thm-ch4-Linear-MainThm-curved-combined}
and Theorem \ref{Thm-ch4-Linear-MainThm-curved-combined-smooth}
are the main results in this paper; 
while Theorem \ref{Thm-ch4-Linear-MainThm-curved-combined-constant}
and Theorem \ref{Thm-ch4-Linear-MainThm-curved-combined-smooth-constant} 
are their special cases, respectively. 
Remarks after Theorem \ref{Thm-ch4-Linear-MainThm-curved-combined-constant}
and Theorem \ref{Thm-ch4-Linear-MainThm-curved-combined-smooth-constant} 
also hold for Theorem \ref{Thm-ch4-Linear-MainThm-curved-combined}
and Theorem \ref{Thm-ch4-Linear-MainThm-curved-combined-smooth}.
In the rest of this paper, we will prove 
the general Theorem \ref{Thm-ch4-Linear-MainThm-curved-combined}
and Theorem \ref{Thm-ch4-Linear-MainThm-curved-combined-smooth}
and indicate how to modify the proof to get 
Theorem \ref{Thm-ch4-Linear-MainThm-curved-combined-constant}
and Theorem \ref{Thm-ch4-Linear-MainThm-curved-combined-smooth-constant} 
if the characteristic exponents 
$\underline{m}$ and $\overline{m}$
are constants. 
The proof of Theorem \ref{Thm-ch4-Linear-MainThm-curved-integer}
and Theorem \ref{Thm-ch4-Linear-MainThm-curved-combined-smooth-integer}
is similar.

\section{A Portion of the Flat Boundary}\label{sec-flat-boundary}

We start to prove 
Theorem \ref{Thm-ch4-Linear-MainThm-curved-combined}
and Theorem \ref{Thm-ch4-Linear-MainThm-curved-combined-smooth}. 
By an appropriate change of coordinates, we assume 
that the domain $\Omega$ has a portion of flat boundary and 
that the defining function is given by the distance function to the boundary. 


Denote by $x=(x',t)$ points in $\mathbb{R}^{n}$ and also write $x_{n}=t$. 
Set, for any $x_0'\in\mathbb R^{n-1}$ and any $r>0$, 
\begin{align*}
B'_{r}(x_0')=\{x'\in\mathbb{R}^{n-1}:|x'-x_0'|<r\},
\end{align*}
and 
\begin{align*}
&G_{r}(x_0')=\{(x',t)\in\mathbb{R}^{n}:|x'-x_0'|<r,\ 0<t<r\},\\
&\Sigma_{r}(x_0')=\{(x',0)\in\mathbb{R}^{n}:|x'-x_0'|<r\}. 
\end{align*}
We also set $B'_{r}=B'_{r}(0)$, $G_{r}=G_{r}(0)$, and $\Sigma_{r}=\Sigma_{r}(0)$.
In the remaining part of the paper, summations over Latin letters are from 1 to $n$ 
and those over Greek letters are from 1 to $n-1$.
However, an unrepeated $\alpha, \beta\in (0,1)$ are reserved for 
the H\"older indices in the study of regularity for elliptic equations.

Consider the operator
\begin{align}\label{eq-ch3-LinearOperator-t}
L=t^2a_{ij}\partial_{ij}+tb_i\partial_i+c\quad\text{in }G_1,\end{align}
with $a_{ij}, b_i, c\in C(\bar G_1)$ 
satisfying $a_{ij}=a_{ji}$ and, 
for any $x\in G_1$ and $\xi\in\mathbb R^n$, 
\begin{align}\label{eq-ch3-ellipticity-t}
\lambda|\xi|^2\le a_{ij}(x)\xi_i\xi_j\le\Lambda|\xi|^2,\end{align}
for some positive constants $\lambda$ and $\Lambda$. 
Similarly as in \eqref{eq-ch2-definition-P-mu}, define $P(\mu)$ by 
$$P(\mu)=\mu(\mu-1)a_{nn}+\mu b_n+c\quad\text{in }G_1.$$
We point out that $P(\mu)$ here is defined in the entire domain $G_1$, instead of only on the boundary. 
Note that $P(0)=c$. 

We first assume
\begin{align}\label{eq-bound-c-t}c\le -c_0\quad\text{in }G_1,\end{align}
where $c_0$ is a positive constant. 
We consider 
\begin{align}\label{eq-ch3-Equ} Lu= f\quad\text{in }G_1,
\end{align}
and 
\begin{align}\label{eq-ch3-Dirichlet}
u(\cdot, 0)=\frac{f}{c}(\cdot, 0)\quad\text{on }B'_1.\end{align}
We point out that the assumption $c<0$ on $\Sigma_1$ is essential in order to relate 
the boundary value of $u$ to values of the nonhomogeneous term $f$. 

We start with the following result concerning the optimal regularity of solutions near the boundary.
Refer to \cite{HanXie2024} for details. 

\begin{theorem}\label{thrm-ch3-Linear-NormalEstimate-general}
For some integers $\ell\ge k\ge 0$ and constant 
$\alpha\in (0,1)$, assume 
$\partial^\nu_tD^\tau_{x'}a_{ij}$, $\partial^\nu_tD^\tau_{x'}b_i$, $\partial^\nu_tD^\tau_{x'}c\in C^{\alpha}(\bar G_1)$ 
for any $\nu\le k$ and $\tau+\nu\le \ell$,  
with \eqref{eq-ch3-ellipticity-t}, and $c\le -c_0$ 
and $Q({k+\alpha})\le -c_{k+\alpha}$  in $G_1$, for some positive constants 
$c_0$ and $c_{k+\alpha}$.  
For some $f$ with $\partial^\nu_tD^\tau_{x'}f\in C^{\alpha}(\bar G_1)$ 
for any $\nu\le k$ and $\tau+\nu\le \ell$, let $u\in C(\bar G_1)\cap C^2(G_1)$ be 
a solution of \eqref{eq-ch3-Equ} and \eqref{eq-ch3-Dirichlet}.  
Then, for any nonnegative integers $\nu$ and $\tau$ with $\nu\le k$ and 
$\nu+\tau\le \ell$, and any $r\in (0,1)$, 
\begin{equation*}
\partial_t^\nu D^\tau_{x'}u, tD\partial_t^\nu D^\tau_{x'}u, t^2D^2\partial_t^\nu D^\tau_{x'}u
\in C^\alpha(\bar G_r),\end{equation*}
with $tD\partial_t^\nu D^\tau_{x'}u=0$ and $t^2D^2\partial_t^\nu D^\tau_{x'}u=0$ on $\Sigma_1$,
and 
\begin{align*}
&|\partial_t^\nu D^\tau_{x'}u|_{C^\alpha(\bar G_{1/2})}+|tD\partial_t^\nu D^\tau_{x'}u|_{C^\alpha(\bar G_{1/2})}
+|t^2D^2\partial_t^\nu D^\tau_{x'}u|_{C^\alpha(\bar G_{1/2})}\\
&\qquad\le C\Big\{|u|_{L^\infty(G_1)}+\sum_{j\le k,i+j\le \ell}|\partial^j_tD^i_{x'}f|_{C^{\alpha}(\bar G_1)}\Big\},
\end{align*}
where 
$C$ is a positive constant depending only on $n$, $\ell$, $\lambda$, 
$\alpha$, $c_0$, $c_{k+\alpha}$, and
the $C^{\alpha}$-norms of 
$\partial^\nu_tD^\tau_{x'}a_{ij}, \partial^\nu_tD^\tau_{x'}b_i, \partial^\nu_tD^\tau_{x'}c$  in $\bar G_1$
for any $\nu\le k$ and $\tau+\nu\le \ell$. 
In particular, 
$D_{x'}^\tau u\in C^{k,\alpha}(\bar G_r)$, for any $\tau\le \ell-k$ and $r\in (0,1)$. 
\end{theorem}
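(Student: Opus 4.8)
The plan is to prove Theorem \ref{thrm-ch3-Linear-NormalEstimate-general} by an induction on the number of normal derivatives $\nu$, with a nested treatment of the tangential derivatives $D^\tau_{x'}$ at each level. The base case $\nu=0$ should follow from a Schauder-type estimate for the uniformly degenerate operator $L$ directly, using only the condition $c\le -c_0$ and the hypothesis $Q(\alpha)\le -c_\alpha$ (note that for $k=0$ the requirement $Q(k+\alpha)\le -c_{k+\alpha}$ reads $Q(\alpha)\le -c_\alpha$, which is exactly what guarantees solvability and a priori control at the $C^\alpha$ level). Since tangential differentiation commutes with the $t$-weights, applying $D^\tau_{x'}$ to \eqref{eq-ch3-Equ} produces an equation of the same structural form with right-hand side involving $D^\alpha_{x'}f$ and lower-order tangential derivatives of $u$, so the tangential regularity $D^\tau_{x'}u\in C^\alpha(\bar G_r)$ for $\tau\le\ell$ is obtained by iterating the base estimate on shrinking cylinders $G_r$; the barrier/boundary-value identity \eqref{eq-ch3-Dirichlet} pins down $D^\tau_{x'}u$ on $\Sigma_1$ and feeds the induction.

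The inductive step on $\nu$ is where the real work lies. Assuming the conclusion for $\nu-1$, one differentiates the equation once in $t$. The key algebraic point is that $\partial_t(t^2a_{ij}\partial_{ij}u)=t^2a_{ij}\partial_{ij}(\partial_t u)+2ta_{ij}\partial_{ij}u+t^2(\partial_t a_{ij})\partial_{ij}u$ and similarly for the first-order term, so $\partial_t u$ satisfies a perturbed equation $\tilde L(\partial_t u)=\tilde f$ where the zeroth-order coefficient shifts from $c$ to (essentially) $c+b_n$-type corrections — more precisely the relevant indicial shift replaces $Q(\mu)$ by $Q(\mu+1)$. This is why the hypothesis is stated with $Q(k+\alpha)\le -c_{k+\alpha}$: after differentiating $j$ times in $t$, the effective indicial polynomial controlling $\partial_t^j u$ is $Q(\cdot+j)$ evaluated near $\alpha$, i.e. $Q(j+\alpha)$, and one needs this to remain negative for all $j\le k$, which follows from $Q(k+\alpha)\le -c_{k+\alpha}<0$ together with $c\le -c_0$ and the convexity/sign structure of the quadratic $Q$ (its roots straddling the interval $[0,k+\alpha]$). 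The error terms $2ta_{ij}\partial_{ij}u$ etc. are, by the $\nu-1$ inductive hypothesis applied to $t^2D^2\partial_t^{\nu-1}D^\tau_{x'}u\in C^\alpha$, already of the right class, so the perturbed Schauder estimate closes.

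After establishing $\partial_t^\nu D^\tau_{x'}u\in C^\alpha(\bar G_r)$, the weighted statements $tD\partial_t^\nu D^\tau_{x'}u\in C^\alpha$ and $t^2D^2\partial_t^\nu D^\tau_{x'}u\in C^\alpha$ with vanishing trace on $\Sigma_1$ come from rewriting the equation to solve for the second-order terms: $t^2a_{ij}\partial_{ij}(\partial_t^\nu D^\tau_{x'}u)$ equals $\partial_t^\nu D^\tau_{x'}f$ minus the already-controlled lower-order pieces, and ellipticity \eqref{eq-ch3-ellipticity-t} inverts $a_{ij}$; the factor $t^2\to 0$ forces the trace to vanish, and a similar argument (or interpolation between the $C^\alpha$ bound on $\partial_t^\nu u$ and on $t^2D^2\partial_t^\nu u$) handles the single-weight term $tD\partial_t^\nu u$. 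Finally, the ``in particular'' clause $D_{x'}^\tau u\in C^{k,\alpha}(\bar G_r)$ for $\tau\le\ell-k$ is just the collation of the estimates for all $\nu\le k$ with $\nu+\tau\le\ell$, since having all mixed derivatives $\partial_t^\nu D_{x'}^\tau(\cdot)$ up to $\nu\le k$ in $C^\alpha$ is precisely membership in $C^{k,\alpha}$. The main obstacle is the perturbed Schauder estimate for the degenerate operator at each inductive level — tracking how the constant depends only on the listed quantities and verifying uniformly that the shifted indicial condition $Q(j+\alpha)<0$ persists for every $j\le k$ — and for this I would lean on the cited optimal-regularity machinery in \cite{HanXie2024} rather than redo it here.
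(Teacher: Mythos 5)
First, a point of comparison: the paper does not prove Theorem \ref{thrm-ch3-Linear-NormalEstimate-general} at all --- it is quoted from the companion work \cite{HanXie2024} (``Refer to \cite{HanXie2024} for details''), so there is no in-paper argument to measure your sketch against. Your outline does capture the standard strategy for results of this type: tangential derivatives commute with the weights $t$, $t^2$ and preserve the structure of $L$, so one bootstraps in $x'$ first; normal derivatives are then added one at a time, and differentiating in $t$ shifts the indicial polynomial by one, which is exactly why the hypothesis $Q(k+\alpha)\le -c_{k+\alpha}$, together with $Q(0)=c\le -c_0$ and the convexity of the quadratic, guarantees $Q(j+\alpha)<0$ for every $j\le k$. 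That part of your reasoning is correct and is the right explanation of the role of the hypothesis.

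Two steps in your sketch are not proofs, however. The base case --- $C^\alpha$ regularity of $u$, $tDu$, $t^2D^2u$ up to the degenerate boundary, with vanishing traces of the weighted quantities --- is itself the main analytic content of the theorem, and you dispose of it with ``should follow from a Schauder-type estimate for the uniformly degenerate operator.'' That estimate is precisely what \cite{HanXie2024} establishes (via barriers built from the indicial condition combined with interior Schauder estimates rescaled on balls of radius comparable to $t$); deferring to it is legitimate, but then the entire theorem is being deferred rather than proved. More seriously, your derivation of $t^2D^2\partial_t^\nu D^\tau_{x'}u\in C^\alpha$ by ``solving the equation for the second-order terms and inverting $a_{ij}$'' does not work: the equation yields only the single scalar combination $t^2a_{ij}\partial_{ij}w$, which is one linear relation among the $n(n+1)/2$ second derivatives, so the ellipticity \eqref{eq-ch3-ellipticity-t} cannot recover the individual components $t^2\partial_{ij}w$ from it. The full weighted Hessian bound and its vanishing trace must come from the scaled interior estimates just mentioned, not from algebra on the equation. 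If you intend this as a genuine alternative proof rather than a gloss on \cite{HanXie2024}, these are the two places where the argument still has to be supplied.
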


We now point out one important feature of solutions to the uniformly degenerate elliptic equations. 
In Theorem \ref{thrm-ch3-Linear-NormalEstimate-general}, it suffices to assume $u\in L^\infty(G_1)\cap C^2(G_1)$
and \eqref{eq-ch3-Equ} holds. 
Then, it follows that $u$ is continuous up to $\Sigma_1$ and satisfies \eqref{eq-ch3-Dirichlet}.

Solutions of uniformly degenerate elliptic equations usually have different H\"{o}lder indices 
along the tangential directions and the normal direction.
We will introduce the following notations and terminologies for convenience. We fix an $r>0$.

\begin{definition}\label{def-Holder-double-index} 
Let $\alpha, \beta\in (0,1)$ be constants. 

(i)  Denote by $C^{\beta}_{x'}(\bar G_r)$ the collection of functions $f$ in $\bar G_r$ such that, 
for any $(x_1', t)$, $(x_2',t)\in \bar G_r$, 
$$|f(x_1', t)-f(x_2',t)|\le C|x_1'-x_2'|^\beta,$$ 
for some nonnegative constant $C$. 

(ii) Denote by $C^{\alpha}_{t}(\bar G_r)$ the collection of functions $f$ in $\bar G_r$ such that, 
for any $(x', t_1)$, $(x', t_2)\in \bar G_r$, 
$$|f(x', t_1)-f(x', t_2)|\le C|t_1-t_2|^\alpha,$$
for some nonnegative constant $C$. 

(iii) Set $C^{\beta, \alpha}_{x',t}(\bar G_r)=C^{\beta}_{x'}(\bar G_r)\cap C^{\alpha}_{t}(\bar G_r)$. 
\end{definition} 

In other words, $f$ is in $C^{\beta}_{x'}(\bar G_r)$
if $f(x', t)$ is a $C^\beta$-function of $x'\in \bar B_r'$, uniformly in $t\in [0,r]$, 
and $f$ is in $C^{\alpha}_{t}(\bar G_r)$ if $f(x', t)$ is  
a $C^\alpha$-function of $t\in [0,r]$, uniformly in $x'\in \bar B_r'$. 
If $\alpha=\beta$, $C^{\beta, \alpha}_{x',t}(\bar G_r)$ is the usual H\"older space $C^{\alpha}(\bar G_r)$.
For convenience, we introduce 
\begin{align*}[f]_{C^{\beta}_{x'}(\bar G_r)}
&=\sup_{t\in [0,r]}[f(\cdot, t)]_{C^\beta(\bar B_r')},\\
[f]_{C^{\alpha}_{t}(\bar G_r)}
&=\sup_{x'\in \bar B_r'}[f(x', \cdot)]_{C^\alpha([0,r])},
\end{align*}
and 
\begin{align*}[f]_{C^{\beta, \alpha}_{x',t}(\bar G_r)}
=[f]_{C^{\beta}_{x'}(\bar G_r)}+[f]_{C^{\alpha}_{t}(\bar G_r)}.
\end{align*}
We can define corresponding norms similarly. 

We have the following more general result. 

\begin{theorem}\label{thrm-ch3-Linear-NormalEstimate-general-beta}
For some integers $\ell\ge k\ge 0$ and constants 
$\alpha, \beta\in (0,1)$ with $\beta\le \alpha$, assume 
$\partial^\nu_tD^\tau_{x'}a_{ij}, \partial^\nu_tD^\tau_{x'}b_i, \partial^\nu_tD^\tau_{x'}c\in C^{\alpha}(\bar G_1)$ 
for any $\nu\le k$ and $\tau+\nu\le \ell$,  
with \eqref{eq-ch3-ellipticity-t}, and $c\le -c_0$ 
and $Q({k+\beta})\le -c_{k+\beta}$  in $G_1$, for some positive constants 
$c_0$ and $c_{k+\beta}$.  
For some $f$ with $\partial^\nu_tD^\tau_{x'}f\in C^{\alpha}(\bar G_1)$ 
for any $\nu\le k$ and $\tau+\nu\le \ell$, let $u\in C(\bar G_1)\cap C^2(G_1)$ be 
a solution of \eqref{eq-ch3-Equ} and \eqref{eq-ch3-Dirichlet}.  
Then, for any nonnegative integers $\nu$ and $\tau$ with $\nu\le k$ and 
$\nu+\tau\le \ell$, and any $r\in (0,1)$, 
\begin{equation*}
\partial_t^\nu D^\tau_{x'}u, tD\partial_t^\nu D^\tau_{x'}u, t^2D^2\partial_t^\nu D^\tau_{x'}u
\in C^{\alpha,\beta}_{x',t}(\bar G_r),\end{equation*}
with $tD\partial_t^\nu D^\tau_{x'}u=0$ and $t^2D^2\partial_t^\nu D^\tau_{x'}u=0$ on $\Sigma_1$,
and 
\begin{align*}
&|\partial_t^\nu D^\tau_{x'}u|_{C^{\alpha,\beta}_{x',t}(\bar G_{1/2})}
+|tD\partial_t^\nu D^\tau_{x'}u|_{C^{\alpha,\beta}_{x',t}(\bar G_{1/2})}
+|t^2D^2\partial_t^\nu D^\tau_{x'}u|_{C^{\alpha,\beta}_{x',t}(\bar G_{1/2})}\\
&\qquad\le C\Big\{|u|_{L^\infty(G_1)}+\sum_{j\le k,i+j\le \ell}|\partial^j_tD^i_{x'}f|_{C^{\alpha}(\bar G_1)}\Big\},
\end{align*}
where 
$C$ is a positive constant depending only on $n$, $\ell$, $\lambda$, 
$\alpha$, $\beta$, $c_0$, $c_{k+\beta}$, and
the $C^{\alpha}$-norms of 
$\partial^\nu_tD^\tau_{x'}a_{ij}, \partial^\nu_tD^\tau_{x'}b_i, \partial^\nu_tD^\tau_{x'}c$  in $\bar G_1$
for any $\nu\le k$ and $\tau+\nu\le \ell$. 
\end{theorem}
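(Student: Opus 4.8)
\textbf{Proof proposal for Theorem \ref{thrm-ch3-Linear-NormalEstimate-general-beta}.}

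The plan is to deduce the anisotropic H\"older estimate from the isotropic one in Theorem \ref{thrm-ch3-Linear-NormalEstimate-general} by exploiting the structure of the two indices separately: the tangential $C^\alpha$ regularity comes essentially for free by differentiating the equation in $x'$, while the normal $C^\beta$ regularity is precisely what Theorem \ref{thrm-ch3-Linear-NormalEstimate-general} gives when applied with H\"older index $\beta$ in place of $\alpha$. First I would fix nonnegative integers $\nu\le k$, $\tau$ with $\nu+\tau\le\ell$, and set $g=\partial_t^\nu D_{x'}^\tau u$ (or $tD$ of it, or $t^2D^2$ of it). For the tangential direction, I would differentiate \eqref{eq-ch3-Equ} in $x'$ up to order $\ell-\nu-\tau$ more times past what is needed for $g$ itself, so that $D_{x'}^\sigma g$ for all $\sigma$ with $\nu+\tau+\sigma\le\ell$ is controlled in $C^\alpha(\bar G_r)$ by Theorem \ref{thrm-ch3-Linear-NormalEstimate-general} with the full index $\alpha$ (this is legitimate because the hypotheses on $a_{ij},b_i,c,f$ are stated with the $C^\alpha$-norm of all $\partial_t^\nu D_{x'}^\tau$-derivatives with $\nu\le k$, $\nu+\tau\le\ell$, and $Q(k+\alpha)\le 0$ is \emph{not} assumed here, only $Q(k+\beta)\le 0$ — so in fact I must be more careful: the tangential derivatives do not raise the $t$-differentiation count, so the relevant quadratic condition stays at level $k+\beta$). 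The cleaner route: apply Theorem \ref{thrm-ch3-Linear-NormalEstimate-general} directly with its H\"older exponent taken to be $\beta$ (note $Q(k+\beta)\le -c_{k+\beta}$ is exactly the hypothesis), obtaining that $g\in C^\beta(\bar G_r)$ with the asserted bound; this already delivers $[g]_{C_t^\beta(\bar G_r)}\le[g]_{C^\beta(\bar G_r)}\le C\{\cdots\}$, the normal half of the claim.

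For the tangential half, I would upgrade to exponent $\alpha$ in the $x'$ variable only. The key observation is that the $C_{x'}^\alpha$ seminorm of $g$ can be estimated by interpolating: from Theorem \ref{thrm-ch3-Linear-NormalEstimate-general} applied with exponent $\beta$ and \emph{one extra tangential derivative}, i.e. to $D_{x'}g=\partial_t^\nu D_{x'}^{\tau+1}u$ (permissible as long as $\nu+\tau+1\le\ell$), we get $D_{x'}g\in C^\beta(\bar G_r)$, hence $D_{x'}g\in L^\infty(\bar G_r)$; combined with $g\in L^\infty$, a standard interpolation $[g]_{C_{x'}^\alpha}\le C\|D_{x'}g\|_{L^\infty}^{\alpha}\|g\|_{L^\infty}^{1-\alpha}$ in the $x'$ variables (uniformly in $t$) gives $g\in C_{x'}^\alpha(\bar G_r)$. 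The borderline case $\nu+\tau=\ell$, where no extra tangential derivative is available, must be handled by a separate argument: there one can only claim the $C_{x'}^\beta$ estimate, but since $\ell\ge k$ and the theorem's conclusion in the top case only requires $D_{x'}^\tau u\in C^{k,\alpha}$ with $\tau\le\ell-k$, the problematic top-order mixed terms are $t$-derivatives and are covered by the $C_t^\beta$ estimate; I would organize the bookkeeping so that whenever the $C_{x'}^\alpha$ bound is claimed, a spare tangential derivative is genuinely present, which holds because $\alpha$-control is only asserted in the $x'$ direction and tangential differentiation does not consume the budget that $Q(k+\beta)\le 0$ pays for.

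I would then assemble the two pieces: $[g]_{C_{x',t}^{\alpha,\beta}(\bar G_{1/2})}=[g]_{C_{x'}^\alpha(\bar G_{1/2})}+[g]_{C_t^\beta(\bar G_{1/2})}$, each summand bounded by $C\{|u|_{L^\infty(G_1)}+\sum_{j\le k,\,i+j\le\ell}|\partial_t^jD_{x'}^if|_{C^\alpha(\bar G_1)}\}$ with $C$ depending on the stated quantities (the dependence on $c_{k+\beta}$ rather than $c_{k+\alpha}$ is automatic since we only invoked Theorem \ref{thrm-ch3-Linear-NormalEstimate-general} at exponent $\beta$, possibly with an extra tangential derivative). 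The vanishing of $tD\partial_t^\nu D_{x'}^\tau u$ and $t^2D^2\partial_t^\nu D_{x'}^\tau u$ on $\Sigma_1$ is inherited verbatim from Theorem \ref{thrm-ch3-Linear-NormalEstimate-general}. The main obstacle I anticipate is precisely the borderline index bookkeeping just described: ensuring that the tangential $C^\alpha$ claim is only made where a spare $x'$-derivative exists, and checking that the interpolation in the $x'$ variable is uniform in $t\in[0,r]$ (which it is, since for each fixed $t$ one interpolates on $\bar B_r'$ with constants independent of $t$). A secondary technical point is that the extra tangential derivative is applied to the degenerate equation, so one must verify that $\partial_t^\nu D_{x'}^{\tau+1}u$ again solves an equation of the same form with coefficients still in the required class and with the same quadratic exponent condition at level $k+\beta$ — but this is immediate because tangential differentiation commutes with the $t$-weights $t^2,t$ in \eqref{eq-ch3-LinearOperator-t} and only moves tangential derivatives onto the coefficients, which are assumed smooth enough.
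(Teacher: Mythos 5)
There is a genuine gap in your tangential step. The theorem's conclusion asserts $\partial_t^\nu D_{x'}^\tau u\in C^{\alpha,\beta}_{x',t}(\bar G_r)$ — hence tangential $C^\alpha$ regularity — for \emph{all} $\nu\le k$, $\nu+\tau\le\ell$, including the top order $\nu+\tau=\ell$. Your interpolation $[g]_{C^\alpha_{x'}}\le C\|D_{x'}g\|_{L^\infty}^{\alpha}\|g\|_{L^\infty}^{1-\alpha}$ requires a spare bounded tangential derivative, which simply does not exist at top order: in the base case $\ell=k=0$ the hypotheses give only $f,a_{ij},b_i,c\in C^\alpha(\bar G_1)$ with no tangential derivatives at all, yet the claim is still $u\in C^{\alpha}_{x'}$. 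Your proposed escape — that ``the theorem's conclusion in the top case only requires $D_{x'}^\tau u\in C^{k,\alpha}$ with $\tau\le\ell-k$'' — misreads the statement; the anisotropic $C^\alpha_{x'}$ bound is claimed for every admissible $(\nu,\tau)$, and there is no reorganization of the bookkeeping under which a spare $x'$-derivative is ``genuinely present'' when $\ell=k=0$. So the proposal proves strictly less than the theorem.

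The paper closes exactly this gap with fractional difference quotients rather than a full extra derivative. One sets $u_{\beta,s}(x)=|s|^{-\beta}\bigl[u(x+se)-u(x)\bigr]$ for a tangential unit vector $e$, checks that $Lu_{\beta,s}=h_1$ where $h_1$ is built from $f_{\beta,s}$, $c_{\beta,s}u(\cdot+se)$, etc., and shows by the elementary splitting $|I|=|I|^{\beta/\alpha}|I|^{(\alpha-\beta)/\alpha}$ that $f_{\beta,s}$ and the coefficient quotients lie in $C^{\alpha-\beta}$ uniformly in $s$; hence $h_1\in C^{\min\{\alpha-\beta,\beta\}}$. Applying Theorem \ref{thrm-ch3-Linear-NormalEstimate-general} to this new equation and then invoking Lemma 5.6(1) of \cite{Caffarelli-Cabre1995} converts the uniform H\"older bound on $u_{\beta,s}$ into a gain of $\beta$ in the tangential H\"older exponent of $u$; iterating $k$ times, where $k\beta<\alpha\le(k+1)\beta$, reaches the full exponent $\alpha$ in $x'$ without ever consuming an integer tangential derivative of the data. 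Your first step (applying Theorem \ref{thrm-ch3-Linear-NormalEstimate-general} with exponent $\beta$ to get the normal $C^\beta_t$ control, noting that only $Q(k+\beta)\le-c_{k+\beta}$ is needed) agrees with the paper and is fine; it is the upgrade to $C^\alpha_{x'}$ that must be replaced by the incremental-quotient argument.
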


\begin{proof} 
If $\beta=\alpha$, Theorem \ref{thrm-ch3-Linear-NormalEstimate-general-beta} is 
simply Theorem \ref{thrm-ch3-Linear-NormalEstimate-general}. We consider the case $0<\beta<\alpha<1$. 
For simplicity, we consider $\ell=k=0$ only. In the proof below, we will use Lemma 5.6(1) in \cite{Caffarelli-Cabre1995} repeatedly. 

By $a_{ij}, b_i, c, f\in C^{\beta}(\bar G_1)$ and Theorem \ref{thrm-ch3-Linear-NormalEstimate-general}
with $\beta$ in the role of $\alpha$ there, we have, for any $r\in (0,1)$,  
\begin{equation}\label{a} 
u, tDu, t^2D^2u\in C^{\beta}(\bar G_r),\end{equation}
with $u=u_0$, $tDu=0$, and $t^2D^2u=0$ on $\Sigma_1$, and 
\begin{align}\label{b} 
\begin{split}
&|u|_{C^{\beta}(\bar G_{r})}+|tDu|_{C^{\beta}(\bar G_{r})}
+|t^2D^2u|_{C^{\beta}(\bar G_{r})}\\
&\qquad\le C\big\{|u|_{L^\infty(G_1)}+|f|_{C^{\beta}(\bar G_1)}\big\}.\end{split}
\end{align}
We now improve the H\"older regularity in $x'$. We claim, for any $r\in (0,1)$, 
\begin{equation}\label{eq-ch3-regularity-alpha-beta-x}
u, tDu, t^2D^2u\in C^{\alpha}_{x'}(\bar G_r),\end{equation}
and 
\begin{align}\label{eq-ch3-estimate-alpha-alpha-beta-x}\begin{split}
&[u]_{C^{\alpha}_{x'}(\bar G_{r})}+[tDu]_{C^{\alpha}_{x'}(\bar G_{r})}
+[t^2D^2u]_{C^{\alpha}_{x'}(\bar G_{r})}\\
&\qquad\le C\big\{|u|_{L^\infty(G_1)}+|f|_{C^{\alpha}(\bar G_1)}\big\}.\end{split}
\end{align}

We take a positive integer $k$ such that $k\beta<\alpha\le (k+1)\beta$. 
Set $\beta_0=\alpha-k\beta$. Then, $\alpha=k\beta+\beta_0$ and $\beta_0\in (0, \beta]$. 
We fix an arbitrary unit vector $e\in\mathbb{R}^{n-1}\times\{0\}$ and an arbitrary constant $r_1\in (3/4,1)$. 
Set, for any $x\in \bar G_{r_1}$ and any $s\in\mathbb R$ with $0<|s|\le (1-r_1)/2$, 
$$
u_{\beta,s}(x)=\frac{1}{|s|^{\beta}}[u(x+se)-u(x)],
$$
and similarly $a_{ij,\beta,s}$, $b_{i,\beta,s}$, $c_{\beta,s}$, and $f_{\beta,s}$. 
Evaluate the equation $Lu=f$ at $x+se$ and $x$, take the difference, and then divide by $|s|^{\beta}$. Hence,
\begin{equation}\label{c}
Lu_{\beta,s}=h_1,
\end{equation}
where 
$$
h_1=f_{\beta,s}-t^2a_{ij,\beta,s}\partial_{ij}u(\cdot+se)-t b_{i,\beta,s}\partial_{i}u(\cdot+se)-c_{\beta,s}u(\cdot+se).
$$
By \eqref{a} and \eqref{b}, we have $u_{\beta,s}\in C(\bar G_{r_1})\cap C^2(G_{r_1})$ and 
\begin{align*}
|u_{\beta,s}|_{L^\infty(G_{r_1})}\le C\big\{|u|_{L^\infty(G_1)}+|f|_{C^{\beta}(\bar G_1)}\big\},\end{align*}
where $C$ is a positive constant independent of $e\in\mathbb{R}^{n-1}\times\{0\}$ and
$s\in\mathbb R$ with $0<|s|\le (1-r_1)/2$. 
Next, we study the regularity of $h_1$. For an illustration, we discuss $f_{\beta,s}$.
It is obvious that 
$$|f_{\beta,s}|_{L^\infty(G_{r_1})}\le [f]_{C^\beta(\bar G_1)}.$$
For any $x_{1},x_{2}\in \bar G_{r_1}$, we write 
$$f_{\beta,s}(x_1)-f_{\beta,s}(x_2)=\frac{1}{|s|^{\beta}}I,$$
where 
$$I=\big(f(x_{1}+se)-f(x_1)\big)-\big(f(x_{2}+se)-f(x_{2})\big).$$
First, we have 
$$
|I|\leq 2[f]_{C^{\alpha}(\bar G_{1})}|s|^{\alpha}.
$$
Next, by writing 
$$I=\big(f(x_{1}+se)-f(x_{2}+se)\big)-\big(f(x_1)-f(x_{2})\big),$$
we have 
$$
|I|\leq 2[f]_{C^{\alpha}(\bar G_{1})}|x_1-x_2|^{\alpha}.
$$
Hence, 
$$
|I|=|I|^{\frac\beta\alpha}|I|^{\frac{\alpha-\beta}\alpha}
\leq 2[f]_{C^{\alpha}(\bar G_{1})}|s|^{\beta}|x_1-x_2|^{\alpha-\beta}.
$$
This implies, for any $x_{1},x_{2}\in \bar G_{r_1}$, 
$$|f_{\beta,s}(x_1)-f_{\beta,s}(x_2)|\le 2[f]_{C^{\alpha}(\bar G_{1})}|x_1-x_2|^{\alpha-\beta},$$
and thus 
\begin{equation*}
[f_{\beta,s}]_{C^{\alpha-\beta}(\bar{G}_{r_1})}\leq  2[f]_{C^{\alpha}(\bar G_{1})}.
\end{equation*}
Therefore, 
\begin{equation}\label{d}
|f_{\beta,s}|_{C^{\alpha-\beta}(\bar{G}_{r_1})}\leq  3[f]_{C^{\alpha}(\bar G_{1})}.
\end{equation}
Similarly, we have
\begin{equation}\label{e}
|a_{ij,\beta,s}|_{C^{\alpha-\beta}(\bar{G}_{r_1})}+|b_{i,\beta,s}|_{C^{\alpha-\beta}(\bar{G}_{r_1})}
+|c_{\beta,s}|_{C^{\alpha-\beta}(\bar{G}_{r_1})}\leq C.
\end{equation}
By combining \eqref{b}, \eqref{d}, and \eqref{e}, 
we obtain  $h_1\in C^{\min\{\alpha-\beta, \beta\}}(\bar{G}_{r_1})$ and 
$$|h_1|_{C^{\min\{\alpha-\beta, \beta\}}(\bar{G}_{r_1})}\le 
C\big\{|u|_{L^\infty(G_1)}+|f|_{C^{\alpha}(\bar G_1)}\big\}.$$
We now consider two cases. 

{\it Case 1: $k=1$.} In this case, we have $\alpha=\beta+\beta_0$ with $\beta_0\in (0, \beta]$. 
Hence, $\min\{\alpha-\beta, \beta\}=\beta_0$. 
We also note that $Q(\beta_0)\le -\min\{c_0, c_\beta\}$ in $\bar G_1$.
By applying Theorem \ref{thrm-ch3-Linear-NormalEstimate-general} to \eqref{c} in $G_{r_1}$,
with $\beta_0$ in the role of $\alpha$ there, we get, for any $r\in (0,r_1)$, 
$$
u_{\beta,s},
t D u_{\beta,s},
t^{2} D^{2} u_{\beta,s} \in C^{\beta_0}(\bar{G}_{r}), 
$$
and 
\begin{align*}
&|u_{\beta,s}|_{C^{\beta_0}(\bar G_{r})}+|tDu_{\beta,s}|_{C^{\beta_0}(\bar G_{r})}
+|t^2D^2u_{\beta,s}|_{C^{\beta_0}(\bar G_{r})}\\
&\qquad\le C\big\{|u|_{L^\infty(G_1)}+|f|_{C^{\alpha}(\bar G_1)}\big\}.
\end{align*}
We focus only on the tangential directions. 
By Lemma 5.6(1) in \cite{Caffarelli-Cabre1995}, 
we obtain \eqref{eq-ch3-regularity-alpha-beta-x}
and \eqref{eq-ch3-estimate-alpha-alpha-beta-x} for any $r\in (0, r_1)$. 

{\it Case 2: $k\ge 2$.} Take arbitrary constants $r_1, \cdots, r_k$ such that $3/4<r_k<\cdots<r_1<1$. 
We first prove inductively, for $i=2, \cdots, k$ and any $r\in (0, r_{i-1})$, 
\begin{equation}\label{eq-ch3-regularity-alpha-i-beta-x}
u, tDu, t^2D^2u\in C^{i\beta}_{x'}(\bar G_r),\end{equation}
and 
\begin{align}\label{eq-ch3-estimate-alpha-alpha-i-beta-x}\begin{split}
&[u]_{C^{i\beta}_{x'}(\bar G_{r})}+[tDu]_{C^{i\beta}_{x'}(\bar G_{r})}
+[t^2D^2u]_{C^{i\beta}_{x'}(\bar G_{r})}\\
&\qquad\le C\big\{|u|_{L^\infty(G_1)}+|f|_{C^{\alpha}(\bar G_1)}\big\}.\end{split}
\end{align}
By $k\ge 2$, we have $\min\{\alpha-\beta, \beta\}=\beta$. 
By applying Theorem \ref{thrm-ch3-Linear-NormalEstimate-general} to \eqref{c} in $G_{r_1}$,
with $\beta$ in the role of $\alpha$ there, and proceeding similarly as in Case 1, we get 
\eqref{eq-ch3-regularity-alpha-i-beta-x}
and \eqref{eq-ch3-estimate-alpha-alpha-i-beta-x} for $i=2$ and any $r\in (0, r_1)$. 
Instead of $u_{\beta,s}$, we now consider, 
for any $x\in \bar G_{r_2}$ and any $s\in\mathbb R$ with $0<|s|\le (r_1-r_2)/2$, 
$$
u_{2\beta,s}(x)=\frac{1}{|s|^{2\beta}}[u(x+se)-u(x)].
$$
Then, $u_{2\beta,s}\in C(\bar G_{r_2})\cap C^2(G_{r_2})$ and 
\begin{align*}
|u_{2\beta,s}|_{L^\infty(G_{r_2})}\le C\big\{|u|_{L^\infty(G_1)}+|f|_{C^{\alpha}(\bar G_1)}\big\},\end{align*}
where $C$ is a positive constant independent of $e\in\mathbb{R}^{n-1}\times\{0\}$ and
$s\in\mathbb R$ with $0<|s|\le (r_1-r_2)/2$. 
Proceeding inductively, we obtain \eqref{eq-ch3-regularity-alpha-i-beta-x}
and \eqref{eq-ch3-estimate-alpha-alpha-i-beta-x} for $i=k$ and any $r\in (0, r_{k-1})$. 
Then, proceeding as in Case 1 for $u_{k\beta,s}$, we obtain 
\eqref{eq-ch3-regularity-alpha-beta-x}
and \eqref{eq-ch3-estimate-alpha-alpha-beta-x} for any $r\in (0, r_k)$.
\end{proof}

In this paper, we discuss obstructions to the higher regularity of solutions along the normal direction. 
We start with the equation $Lu=f$ in \eqref{eq-ch3-Equ}
by rewriting it as an ODE in $t$. 
First, a simple rearrangement of $Lu=f$ yields 
$$t^2u_{tt}+\frac{b_n}{a_{nn}}tu_t+\frac{c}{a_{nn}}u=
\frac{1}{a_{nn}}\big(f-t^2a_{\alpha\beta}\partial_{\alpha\beta}u-2t^2a_{\alpha n}\partial_{\alpha t}u
-t b_\alpha\partial_\alpha u\big).$$
Set 
$$p=\frac{b_n}{a_{nn}}(x',0),\quad q=\frac{c}{a_{nn}}(x',0).$$
Then, 
\begin{align}\label{eq-ch4-LinearODE}
t^2u_{tt}+ptu_t+q u=F\quad\text{in }G_1, 
\end{align}
where
\begin{align}\label{eq-ch4-LinearODE-F}\begin{split}
F&=\frac{1}{a_{nn}}\left(f-t^2a_{\alpha\beta}\partial_{\alpha\beta}u-2t^2a_{\alpha n}\partial_{\alpha t}u
-t b_\alpha\partial_\alpha u\right)\\
&\qquad -\frac1t\Big(\frac{b_n}{a_{nn}}-p\Big)t^2\partial_tu
-\frac1t\Big(\frac{c}{a_{nn}}-q\Big)tu.\end{split}\end{align}
We note that $F$ is a linear combination of 
\begin{equation}\label{eq-ch4-LinearDependence}
f,\, t^2\partial_{\alpha t}u, \, t^2\partial_{\alpha\beta}u, \, t^2\partial_tu,\, t\partial_\alpha u,\, tu.
\end{equation}
The expression of $F$ suggests that we need to assume a better regularity for 
$a_{nn}, b_n$, and $c$ than that for the rest of coefficients. 

We always assume that $t^{\underline{m}}$ and $t^{\overline{m}}$ are 
solutions of the linear homogeneous equation corresponding to \eqref{eq-ch4-LinearODE}, 
for some functions $\underline{m}$ and $\overline{m}$ on $B_1'$; namely, 
\be\label{eq-ch4-Assumption_m1}
p=1-(\underline{m}+\overline{m}), \quad q=\underline{m}\cdot\overline{m}.\ee
We always assume that $\underline{m}$ and $\overline{m}$ satisfy 
\be\label{eq-ch4-Assumption_m2}\underline{m}<0<\overline{m}\quad\text{on }B'_{1}.\ee
Throughout this paper, we assume that $\overline{m}$ avoids integer values; 
in other words, $\overline{m}$ is between two consecutive integers. 
Set 
\begin{align}\label{eq-definition-gamma}
\gamma=\overline{m}-[\overline{m}]\quad\text{on }B'_{1}.\end{align}
We emphasize that $\gamma$ is a function satisfying $0<\gamma<1$ on $B_1'$. 

Next, we solve the equation \eqref{eq-ch4-LinearODE}, regarding $F$ as a function of $x'$ and $t$. 
A standard calculation yields the following result. 

\begin{lemma}\label{lemma-ch4-SolutionODE} Let $u$ be a $C(\bar G_r)\cap C^2(G_r)$-solution of 
\eqref{eq-ch4-LinearODE} in $G_r$.
Then,
\begin{align}\label{eq-ch4-LinearODE-Solution-l}\begin{split}
u(x', t)&=\left[u(x',r)r^{-\overline{m}}
+\frac{r^{\underline{m}-\overline{m}}}{\overline{m}-\underline{m}}
\int_0^r s^{-1-\underline{m}}F(x',s)ds\right] t^{\overline{m}}\\
&\qquad-\frac{t^{\underline{m}}}{\overline{m}-\underline{m}} 
\int_0^t s^{-1-\underline{m}}F(x',s) ds\\
&\qquad-\frac{t^{\overline{m}}}{\overline{m}-\underline{m}}
\int_t^r s^{-1-\overline{m}}F(x',s) ds.\end{split}
\end{align}
\end{lemma}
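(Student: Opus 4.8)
The plan is to regard \eqref{eq-ch4-LinearODE} as a second–order linear ODE in $t$ on the interval $(0,r)$, with $x'$ entering only as a parameter, and to solve it by variation of parameters. First I would record that the homogeneous equation $t^{2}y''+p\,t\,y'+q\,y=0$ is of Euler type: substituting $y=t^{m}$ and using \eqref{eq-ch4-Assumption_m1} gives the indicial equation $m^{2}+(p-1)m+q=(m-\underline m)(m-\overline m)=0$, so $t^{\underline m}$ and $t^{\overline m}$ form a fundamental pair of solutions on $(0,\infty)$, with Wronskian $W(t)=(\overline m-\underline m)\,t^{\,\underline m+\overline m-1}$, which is nonvanishing since $\underline m<\overline m$ by \eqref{eq-ch4-Assumption_m2}.

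Next I would put \eqref{eq-ch4-LinearODE} in normalized form $u_{tt}+(p/t)u_{t}+(q/t^{2})u=F/t^{2}$ and apply the variation–of–parameters formula. A short computation gives $y_{2}(F/t^{2})/W=(\overline m-\underline m)^{-1}t^{-1-\underline m}F$ and $y_{1}(F/t^{2})/W=(\overline m-\underline m)^{-1}t^{-1-\overline m}F$, so every solution of \eqref{eq-ch4-LinearODE} on $(0,r)$ has the form
\begin{equation*}
u(x',t)=c_{1}(x')\,t^{\underline m}+c_{2}(x')\,t^{\overline m}
-\frac{t^{\underline m}}{\overline m-\underline m}\int_{0}^{t}s^{-1-\underline m}F(x',s)\,ds
-\frac{t^{\overline m}}{\overline m-\underline m}\int_{t}^{r}s^{-1-\overline m}F(x',s)\,ds ,
\end{equation*}
once the first integral is known to converge. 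Convergence near $s=0$ holds because $-1-\underline m>-1$ (as $\underline m<0$) and $F(x',\cdot)$ is bounded near $t=0$; the latter follows from the regularity of $F$, since by \eqref{eq-ch4-LinearODE-F} and \eqref{eq-ch4-LinearDependence} the term $F$ is a combination of $f$ together with $t^{2}D^{2}u$, $tDu$, and $tu$, each continuous up to $\Sigma_{1}$ under the hypotheses in force (e.g.\ by Theorem \ref{thrm-ch3-Linear-NormalEstimate-general}).

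It then remains to identify $c_{1}$ and $c_{2}$. One checks directly that both integral terms above stay bounded as $t\to 0^{+}$: indeed $t^{\underline m}\int_{0}^{t}s^{-1-\underline m}\,ds=1/(-\underline m)$ and $t^{\overline m}\int_{t}^{r}s^{-1-\overline m}\,ds=\overline m^{-1}\bigl(1-(t/r)^{\overline m}\bigr)$, and $t^{\overline m}$ is bounded while $t^{\underline m}\to\infty$. Since the given solution $u\in C(\bar G_{r})$ is bounded near $t=0$, this forces $c_{1}(x')\equiv 0$. Evaluating the resulting identity at $t=r$, where the last integral vanishes, gives $c_{2}(x')=u(x',r)\,r^{-\overline m}+\dfrac{r^{\,\underline m-\overline m}}{\overline m-\underline m}\int_{0}^{r}s^{-1-\underline m}F(x',s)\,ds$, which is exactly the bracketed coefficient of $t^{\overline m}$ in \eqref{eq-ch4-LinearODE-Solution-l}. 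Substituting $c_{1},c_{2}$ back yields \eqref{eq-ch4-LinearODE-Solution-l}.

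The only genuinely delicate point is the step that simultaneously justifies the convergence of $\int_{0}^{t}s^{-1-\underline m}F\,ds$ and the vanishing of the $t^{\underline m}$–coefficient; I would make it rigorous by first deriving the variation–of–parameters representation on $(\varepsilon,r)$ for $\varepsilon>0$, where all quantities are classical and finite, and then letting $\varepsilon\to 0^{+}$, using the boundedness of $u$ near $\Sigma_{1}$ and the bound on $F$ to control the remainder. Everything else is the routine verification that the right-hand side of \eqref{eq-ch4-LinearODE-Solution-l}, differentiated twice in $t$, satisfies \eqref{eq-ch4-LinearODE} and returns the value $u(x',r)$ at $t=r$.
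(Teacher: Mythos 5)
Your proof is correct and is precisely the ``standard calculation'' the paper invokes without detail: variation of parameters for the Euler equation with fundamental pair $t^{\underline m}, t^{\overline m}$, followed by using the boundedness of $u$ near $\Sigma_1$ to kill the $t^{\underline m}$-coefficient and the evaluation at $t=r$ to identify the $t^{\overline m}$-coefficient. Your explicit attention to the convergence of $\int_0^t s^{-1-\underline m}F\,ds$ and the boundedness of the two integral terms is appropriate; the paper only acknowledges this implicitly (``as long as the integrals \ldots make sense'').
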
 

The expression \eqref{eq-ch4-LinearODE-Solution-l} plays a fundamental role in our study of 
regularity. 
We note that the regularity of $u$ in $x'$ inherits from that of $u(\cdot, r)$ and $F$, 
as long as the integrals in \eqref{eq-ch4-LinearODE-Solution-l} make sense. 
The regularity in $t$ is more complicated. 
The factor $s^{-1-\overline{m}}$ in the last integral 
in \eqref{eq-ch4-LinearODE-Solution-l} is singular at $s=0$ since $\overline{m}$ is positive. 
This is the reason that we integrate from $t$ to $r$ instead of from 0 to $t$.

We also need the following extension lemma. Refer to Gilbarg and H\"ormander \cite{GH1980}. 

\begin{lemma}\label{lemma-ch4-extensions} Let $k$ be a nonnegative integer, $\alpha\in (0,1]$ 
be a constant, and $c_0, c_1, \cdots, c_k$ be given functions on $\bar B'_1$ with 
$c_i\in C^{k-i,\alpha}(\bar B'_1)$ for $0\le i\le k$. Then, there exists 
a function $w\in C^{k,\alpha}(\bar G_1)\cap C^{\infty}(G_1)$ such that, 
for any $i=0, 1, \cdots, k$, 
\begin{equation}\label{eq-Taylor-degree}
\partial^i_tw(\cdot, 0)=c_{i}\quad\text{on }B_1',\end{equation}
and $t^lD^lD^{k}w\in C^{\alpha}(\bar G_1)$ and $t^lD^lD^{k}w=0$ on $\Sigma_1$, 
for any $l\ge 1$. Moreover, $w$ depends on $c_0, c_1, \cdots, c_k$ linearly and 
$$|w|_{C^{k,\alpha}(\bar G_1)}+|t^lD^lD^{k}w|_{C^{\alpha}(\bar G_1)}\le 
C\sum_{i=0}^k|c_i|_{C^{k-i,\alpha}(\bar B_1')},$$ 
where $C$ is a positive constant depending only on $n$, $k$, $\alpha,$ and $l$.
\end{lemma}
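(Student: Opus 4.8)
The statement to prove is Lemma~\ref{lemma-ch4-extensions}, a Whitney-type extension result: given boundary data $c_0,\dots,c_k$ with $c_i\in C^{k-i,\alpha}(\bar B_1')$, construct $w\in C^{k,\alpha}(\bar G_1)\cap C^\infty(G_1)$ realizing the prescribed normal derivatives $\partial_t^iw(\cdot,0)=c_i$, depending linearly on the data, with the weighted bounds $t^lD^lD^kw\in C^\alpha(\bar G_1)$, vanishing on $\Sigma_1$, for all $l\ge1$.

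\textbf{Plan of the proof.} The plan is to build $w$ by a mollification-in-scale (Whitney-type) construction. First I would fix a standard mollifier: pick $\varphi\in C_c^\infty(B_1'(0))$ with $\int_{\mathbb R^{n-1}}\varphi=1$, and for $t>0$ set $\varphi_t(y')=t^{-(n-1)}\varphi(y'/t)$. The natural candidate is
\[
w(x',t)=\sum_{i=0}^k \frac{t^i}{i!}\,(\varphi_t * c_i)(x'),
\]
where the convolution is in the $x'$-variables only and we tacitly first extend each $c_i$ to a function in $C^{k-i,\alpha}(\mathbb R^{n-1})$ with comparable norm (a routine Whitney/Stein extension, used as a black box). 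Here $\varphi_t*c_i$ is smooth in $x'$ for $t>0$, and the prefactor $t^i/i!$ together with the standard facts $\varphi_t*c_i\to c_i$ and $\partial_t(\varphi_t*c_i)=O(t^{-1})\cdot(\text{derivative-type quantity})$ make $w$ smooth in $G_1$ and continuous up to $\Sigma_1$. Linearity in the data is immediate from the formula. The Taylor-coefficient identity \eqref{eq-Taylor-degree} follows because, as $t\to0$, the $j$-th $t$-derivative of $w$ at $t=0$ picks out exactly the $i=j$ term, whose $x'$-convolution converges to $c_j$, while the terms with $i<j$ contribute $t^{i-j}$-order derivatives of $\varphi_t*c_i$ that one must check vanish in the limit after differentiation --- this is where the $C^{k-i,\alpha}$ hypothesis on $c_i$ is used.

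\textbf{The analytic estimates.} The core work is the two families of bounds. For the unweighted part, I would show $w\in C^{k,\alpha}(\bar G_1)$ by differentiating the series: a tangential derivative $D_{x'}$ hits either the prefactor trivially or lands on $\varphi_t*c_i$, costing at most one scale factor $t^{-1}$ when it is absorbed onto the mollifier, but this is compensated because any "lost" power of $t^{-1}$ is paired with a gain of a derivative on $c_i$ (or, once $c_i$'s derivatives are exhausted at level $k-i$, with a gain of $t^{\alpha}$ from the Hölder modulus of $D^{k-i}c_i$). A $\partial_t$ derivative either decreases the power of $t$ in the prefactor by one, or differentiates $\varphi_t$ in $t$, which again produces the schematically identical $t^{-1}\varphi^{(1)}_t$-type kernel with zero mean, so it behaves like a scaled derivative. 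Counting: to produce $D^k w$ one spends $k$ derivatives total; each derivative of $c_i$ beyond order $k-i$ is "paid for" by a factor $t$, and the final Hölder seminorm in $\bar G_1$ is controlled by $\sum_i|c_i|_{C^{k-i,\alpha}(\bar B_1')}$. For the weighted estimate, each further derivative $D$ applied to $D^kw$ (giving $D^lD^kw$) now necessarily costs a genuine scale factor $t^{-1}$ — all of $c_i$'s classical derivatives are used up — so $t^lD^lD^kw$ is exactly scale-invariant and one reads off $t^lD^lD^kw\in C^\alpha(\bar G_1)$ with the stated norm bound; it vanishes on $\Sigma_1$ because the $t^l$ kills the $t^{-l}$ singularity with a net positive power of $t$ (precisely $t^\alpha$ from the Hölder gain), so the boundary trace is $0$.

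\textbf{Main obstacle.} The delicate point — and the part I expect to require the most care — is bookkeeping the interplay between $i$ (which $c_i$ we hit), the number of $x'$-derivatives already placed on $c_i$ (capped at $k-i$ before one must invoke Hölder continuity), and the number of scale factors $t^{-1}$ incurred, uniformly across all $l\ge1$ and all derivative orders $\le k$. Concretely one wants a clean lemma: for multi-indices with $|\beta|$ derivatives in $x'$ and $j$ in $t$ applied to the $i$-th summand, the result is bounded by $C\,t^{\min(0,\,k-i-|\beta|-j+i)}\cdot(\text{norm})$ with an extra $t^\alpha$ and Hölder modulus once the cap is exceeded — essentially the standard Whitney-extension scale analysis, but adapted to the anisotropic weights $t^lD^l$ appearing here. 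I would organize this as a single estimate on $\partial_t^j D_{x'}^\beta(\varphi_t*g)$ for $g\in C^{m,\alpha}$, proved by repeated use of $\int\partial^\gamma\varphi=0$ for $|\gamma|\ge1$ and Taylor expansion of $g$, and then sum over $i$. Everything else — smoothness in $G_1$, continuity up to $\Sigma_1$, the Taylor identity, linearity, and the vanishing on $\Sigma_1$ — then follows formally from this master estimate; these I would state but not belabor, citing Gilbarg--Hörmander \cite{GH1980} for the prototype.
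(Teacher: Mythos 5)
The paper does not prove this lemma itself; it cites Gilbarg--H\"ormander \cite{GH1980}, whose construction is exactly the scale-$t$ mollification you propose, $w(x',t)=\sum_{i=0}^k \frac{t^i}{i!}(\varphi_t*c_i)(x')$ after a Stein--Whitney extension of the $c_i$ to $\mathbb R^{n-1}$. Your overall architecture — the single ``master estimate'' on $\partial_t^jD_{x'}^\beta(\varphi_t*g)$ for $g\in C^{m,\alpha}$, obtained from $\int\partial^\gamma\varphi=0$ for $|\gamma|\ge1$ plus Taylor expansion, then the bookkeeping of how many derivatives land on $c_i$ versus how many scale factors $t^{-1}$ are incurred — is the right one and delivers the $C^{k,\alpha}(\bar G_1)$ bound, the weighted bounds on $t^lD^lD^kw$, the vanishing on $\Sigma_1$, and the linearity.

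There is, however, one genuine gap: the Taylor identity \eqref{eq-Taylor-degree} does \emph{not} hold for a generic mollifier, and your assertion that the cross terms from $i<j$ ``vanish in the limit after differentiation'' because of the $C^{k-i,\alpha}$ hypothesis on $c_i$ is incorrect. Writing $\partial_t^j$ of the $i$-th summand by Leibniz, the term in which all $i$ factors of $t$ are consumed is $\binom{j}{i}\partial_t^{j-i}(\varphi_t*c_i)$, and since $\partial_t^{m}(\varphi_t*g)(x')=(-1)^m\int\varphi(z)\,(z\cdot\nabla)^m g(x'-tz)\,dz$, its limit as $t\to0$ is a nonzero combination $\sum_{|\beta|=j-i}\tfrac{(j-i)!}{\beta!}\bigl(\int z^\beta\varphi\bigr)(-1)^{j-i}D^\beta c_i(x')$. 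For a standard (say radial) $\varphi$ the even moments $\int z^\beta\varphi$ with $|\beta|=2$ are strictly positive, so already $\partial_t^2w(\cdot,0)=c_2+C\,\Delta_{x'}c_0\ne c_2$. No amount of regularity of $c_i$ removes this; it is a property of the kernel. The standard fix is to impose the moment conditions $\int\varphi=1$ and $\int z^\beta\varphi(z)\,dz=0$ for $1\le|\beta|\le k$ (such a compactly supported, sign-changing $\varphi$ exists), or equivalently to correct the coefficients recursively; with that modification the rest of your argument goes through unchanged, since the decay and H\"older estimates only use $\int\partial^\gamma\varphi=0$ for $|\gamma|\ge1$, which holds for any compactly supported kernel.
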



\section{Larger H\"older Indices}\label{sec-Large-Holder-Indices}

In this section, we study the case that the H\"older index is larger than the decimal part of 
the positive characteristic exponent. 
Specifically, let $\gamma$ be the function on $B_1'$ given by \eqref{eq-definition-gamma}. 
Throughout this section, we assume 
\begin{align}\label{eq-large-Holder-index}0<\gamma<\alpha<1\quad\text{on }B_1'.\end{align} 

Before we go to details, we first describe results in this section. 
In Lemma \ref{lemma-ch4-Linear-MainThm-non-integer-large} and 
Theorem \ref{thrm-ch4-Linear-MainThm-non-integer-large}, we study expansions of solutions in terms of $t$. 
In Lemma \ref{lemma-ch4-Linear-MainThm-non-integer-large}, we prove an expansion up to order $[\overline{m}]$. 
This is the first time that the singular factor $t^\gamma$ appears. 
In Theorem \ref{thrm-ch4-Linear-MainThm-non-integer-large}, 
we generalize such an expansion to an arbitrary order $k> [\overline{m}]$. 
In these results, it is essential to study the regularity of remainders. 
In Theorem \ref{Thm-ch4-Linear-MainThm-decomposition-non-int-large} and 
Theorem \ref{thrm-ch4-Decomposition-infinite-non-int},
we establish a concise decomposition of solutions into a regular part and a singular part, 
under the assumption of the finite differentiability and infinite differentiability, respectively. 
Both results are based on Theorem \ref{thrm-ch4-Linear-MainThm-non-integer-large} and, in particular, 
the regularity of the remainders. All results in this section are optimal.

We will use frequently the functions with different H\"older indices in the 
tangent directions and the normal direction, as introduced in Definition \ref{def-Holder-double-index}.
We will use results established in Section \ref{sec-Appen-CalculusL} 
concerning the regularity of certain singular integrals.
We discuss the regularity of solutions in the $C^{k,\alpha}$ space inductively by 
increasing $k$, starting with $k=[\overline{m}]$.

\begin{lemma}\label{lemma-ch4-Linear-MainThm-non-integer-large}
Suppose that $\underline{m}$, $\overline{m}$, and $\gamma$ are functions on $B'_{1}$ satisfying 
\eqref{eq-ch4-Assumption_m1}, \eqref{eq-ch4-Assumption_m2}, 
and \eqref{eq-definition-gamma}, 
and that $\ell$ is an integer and $\alpha\in(0,1)$ is a constant such that $\ell\geq [\overline{m}] \ge 1$ and 
\eqref{eq-large-Holder-index} holds. 
Assume $a_{n n}, b_{n}, c \in C^{\ell+1, \alpha}(\bar{G}_{1})$ and 
$a_{i j}, b_{i}, f \in C^{\ell, \alpha}(\bar{G}_{1})$ for $i \neq n$, 
and let $u \in C(\bar{G}_{1}) \cap C^{2}(G_{1})$ be a solution 
of \eqref{eq-ch3-Equ} and \eqref{eq-ch3-Dirichlet}. 
Then,
\begin{equation*}
u=\sum_{i=0}^{[\overline{m}]} c_{i} t^{i}+c_{[\overline{m}], 0} t^{[\overline{m}]+\gamma}
+R_{[\overline{m}]} \quad \text {in } G_{1},
\end{equation*}
where $c_{0},c_{1},\cdots,c_{[\overline{m}]}$, and $c_{[\overline{m}],0}$ are functions on $B'_{1}$, 
and $R_{[\overline{m}]}$ is a function in $G_{1}$ satisfying, 
for any $x'_{0}\in B'_{1}$, 
any $r\in(0,1-|x'_{0}|)$, 
and any $\epsilon$ with $0<\epsilon 
<\alpha-\gamma$ on $\bar B_{r}'(x_0')$, 
\begin{align}\label{eq-ch4-LinearRegularity-noon-int-m1-large}
\begin{split}
&c_{i} \in C^{\ell-i, \alpha}(\bar B_{r}^{\prime}(x'_{0}))\quad\text{for }i=0,\cdots,[\overline{m}],\\
&c_{[\overline{m}], 0} \in C^{\ell-i, \epsilon}(\bar B_{r}^{\prime}(x'_{0})),
\end{split}    
\end{align}
and, for any nonnegative integers $\nu \leq [\overline{m}]$ and $\tau \leq \ell-[\overline{m}]$, 
\begin{align*}
\partial_{t}^{\nu} D_{x^{\prime}}^{\tau} R_{[\overline{m}]}, 
t \partial_{t} \partial_{t}^{\nu} D_{x^{\prime}}^{\tau} R_{[\overline{m}]}, 
t^{2} \partial_{t}^{2} \partial_{t}^{\nu} D_{x^{\prime}}^{\tau} R_{[\overline{m}]} 
&\in C^{\epsilon,\alpha}_{x',t}(\bar{G}_{r}(x_{0}')),\\ 
\partial_{t}^{\nu} D_{x^{\prime}}^{\tau} (t^{-\gamma}R_{[\overline{m}]}), 
t \partial_{t} \partial_{t}^{\nu} D_{x^{\prime}}^{\tau} (t^{-\gamma}R_{[\overline{m}]}), 
t^{2} \partial_{t}^{2} \partial_{t}^{\nu} D_{x^{\prime}}^{\tau}(t^{-\gamma} R_{[\overline{m}]}) 
&\in C^{\epsilon}(\bar{G}_{r}(x_{0}')),  
\end{align*}
and
\begin{align*}
&\big|\partial_{t}^{\nu} D_{x^{\prime}}^{\tau} R_{[\overline{m}]}\big|
+\big|t \partial_{t} \partial_{t}^{\nu} D_{x^{\prime}}^{\tau} R_{[\overline{m}]}\big|
+\big|t^{2} \partial_{t}^{2} \partial_{t}^{\nu} D_{x^{\prime}}^{\tau} R_{[\overline{m}]}\big| \\
&\qquad 
\leq C t^{[\overline{m}]-\nu+\alpha}\big\{|u|_{L^{\infty}(G_{1})}+|f|_{C^{\ell, \alpha}(\bar{G}_{1})}\big\} 
\quad \text {in }\bar{G}_{r}(x_{0}'),
\end{align*}
for some positive constant $C$ depending only on $n$, $\lambda$, $\ell$, $\alpha$, $r$, 
$\underline{m}$, $\overline{m}$, the $C^{\ell+1, \alpha}$-norms of $a_{n n}, b_{n}, c$ in $\bar{G}_{1}$, 
and the $C^{\ell, \alpha}$-norms of $a_{i j}, b_{i}$ in $\bar{G}_{1}$ for $i \neq n .$ 
\end{lemma}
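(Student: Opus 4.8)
The plan is to use the integral representation \eqref{eq-ch4-LinearODE-Solution-l} from Lemma \ref{lemma-ch4-SolutionODE} as the starting point, after first securing enough tangential regularity and ODE structure from the optimal-regularity results already in hand. First I would apply Theorem \ref{thrm-ch3-Linear-NormalEstimate-general} (with $k=0$ and $\ell$ there) to obtain $D_{x'}^\tau u \in C^{\ell,\alpha}(\bar G_r)$ for $\tau \le \ell$ together with the weighted bounds on $tD u$ and $t^2 D^2 u$; these give that $F$ in \eqref{eq-ch4-LinearODE-F}, being a linear combination of the quantities in \eqref{eq-ch4-LinearDependence}, inherits regularity $\partial_t^\nu D_{x'}^\tau F \in C^\alpha(\bar G_r)$ for $\nu \le [\overline m]-1$ (say), $\nu+\tau \le \ell$, together with the crucial vanishing $F = O(t)$ on $\Sigma_1$. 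I would then localize: fix $x_0' \in B_1'$ and work in $G_r(x_0')$, so that $\underline m, \overline m, \gamma$ may be treated as genuine functions of $x'$, with $0 < \gamma < \alpha$ uniformly on $\bar B_r'(x_0')$.

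The core of the argument is to feed $F$ into \eqref{eq-ch4-LinearODE-Solution-l} and perform a Taylor expansion in $t$ of the regular pieces while isolating the genuinely singular contribution. Write the bracketed coefficient of $t^{\overline m}$ in \eqref{eq-ch4-LinearODE-Solution-l} as a single function $c_{[\overline m],0}(x')$; since $\overline m = [\overline m] + \gamma$, the term $c_{[\overline m],0} t^{\overline m}$ is precisely $c_{[\overline m],0} t^{[\overline m]+\gamma}$. For the remaining two integral terms, I would Taylor-expand $F(x',s)$ about $s=0$ to order $[\overline m]$: writing $F(x',s) = \sum_{i=1}^{[\overline m]} f_i(x') s^i + (\text{remainder of order } [\overline m]+\alpha)$, the contributions $\int_0^t s^{-1-\underline m} s^i \, ds$ and $\int_t^r s^{-1-\overline m} s^i \, ds$ produce, after integration, pure powers $t^i$ (absorbed into the $c_i t^i$ terms, with the $t^{\overline m}$ pieces from the second integral absorbed into $c_{[\overline m],0}$) plus a controlled remainder. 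The point is that after these cancellations the leftover is $R_{[\overline m]}$, and the bounds on $R_{[\overline m]}$ of size $t^{[\overline m]-\nu+\alpha}$ come from the order-$[\overline m]+\alpha$ Taylor remainder of $F$, whose integrals against $s^{-1-\underline m}$ and $s^{-1-\overline m}$ I would estimate directly (the integral from $0$ to $t$ converges because $\underline m < 0$, and the integral from $t$ to $r$ of the remainder against $s^{-1-\overline m}$ is $O(t^{[\overline m]+\alpha - \overline m}) = O(t^{\gamma - 0})$, which after multiplying by $t^{\overline m}$ gives the right power). For the $\partial_t$-derivatives one differentiates \eqref{eq-ch4-LinearODE-Solution-l} and repeats; the weighted quantities $t\partial_t$ and $t^2\partial_t^2$ applied to $R_{[\overline m]}$ are handled by the same mechanism since differentiation in $t$ lowers the power by one and each $t$-factor restores it.

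For the regularity of the coefficients $c_i$ and $c_{[\overline m],0}$ in $x'$ and of $R_{[\overline m]}$ in the mixed norms $C^{\epsilon,\alpha}_{x',t}$, I would invoke the integrability/parameter-regularity lemmas promised for Section \ref{sec-Appen-CalculusL}: the map $x' \mapsto \int_0^r s^{-1-\underline m(x')} F(x',s)\,ds$ loses regularity only through the $x'$-dependence of the exponent $\underline m$, which contributes a $\log s$ factor and hence an arbitrarily small loss $\alpha \to \epsilon$ — this is exactly the origin of the restriction $0 < \epsilon < \alpha - \gamma$ in \eqref{eq-ch4-LinearRegularity-noon-int-m1-large}, since the dangerous term is the one carrying $t^{\gamma}$ and factoring out $t^\gamma$ from a $C^{\alpha}$-in-$t$ object leaves a $C^{\alpha-\gamma}$ object, modulo the logarithmic loss forced by the varying exponent. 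The claims about $t^{-\gamma} R_{[\overline m]} \in C^\epsilon(\bar G_r(x_0'))$ follow because, after subtracting the explicit $t^{\overline m}$ term, the remaining singular content of $R_{[\overline m]}$ is exactly of the form $t^{\overline m} \times (\text{bounded})$, so dividing by $t^\gamma$ leaves $t^{[\overline m]} \times (\text{bounded})$, which is $C^\epsilon$. Finally, the constants are tracked through each estimate to give dependence only on the listed quantities; the higher regularity of $a_{nn}, b_n, c$ (one extra derivative) is used precisely because $F$ in \eqref{eq-ch4-LinearODE-F} contains the terms $\frac1t(\frac{b_n}{a_{nn}} - p) t^2 \partial_t u$ and $\frac1t(\frac{c}{a_{nn}} - q) t u$, where the division by $t$ costs one derivative of those coefficients.

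\textbf{Main obstacle.} The hard part will be bookkeeping the interplay between the Taylor expansion in $t$ and the $x'$-regularity when the exponents $\underline m, \overline m$ vary: one must show that the \emph{same} function $c_{[\overline m],0}(x')$ absorbs both the explicit $t^{\overline m}$ bracket and the $t^{\overline m}$-type residues coming out of the $\int_t^r s^{-1-\overline m} s^i \, ds$ integrals (which produce $t^i$ as well as $t^{\overline m}$ terms, the latter with coefficients depending on $x'$ through $\overline m$), and that after this absorption the genuine remainder $R_{[\overline m]}$ has the sharp decay $t^{[\overline m]+\alpha}$ uniformly — this requires a careful splitting of each integral at scale $s \sim t$ and estimating the two pieces separately, which is where the parameter-dependent calculus lemmas of Section \ref{sec-Appen-CalculusL} do the real work. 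I expect the verification that the $\partial_t^\nu$-derivatives of $R_{[\overline m]}$ (and their $t$-weighted versions) retain the decay $t^{[\overline m]-\nu+\alpha}$ to be the most delicate, since differentiating the integral representation reintroduces boundary terms at $s=t$ that must be shown to cancel against the differentiated integrands.
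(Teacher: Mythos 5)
Your plan follows the same route as the paper: establish the preliminary regularity of $u$ via Theorem \ref{thrm-ch3-Linear-NormalEstimate-general}, transfer it to $F$ in \eqref{eq-ch4-LinearODE-F}, Taylor-expand $F$ in $t$ to order $[\overline{m}]$ inside the representation formula \eqref{eq-ch4-LinearODE-Solution-l}, absorb the $t^{\overline{m}}$-residues (including the convergent integral $\int_0^r s^{-1-\overline{m}}S_{[\overline{m}]}\,ds$, which converges precisely because $\alpha>\gamma$) into $c_{[\overline{m}],0}$, and control the remainder and the coefficients with the parameter-dependent singular-integral lemmas of Section \ref{sec-Appen-CalculusL}. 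Two small corrections to your bookkeeping: you need $\partial_t^{\nu}D_{x'}^{\tau}F\in C^{\alpha}$ for $\nu$ up to $[\overline{m}]$ (not $[\overline{m}]-1$) to define $a_{[\overline{m}]}$ and to get $|S_{[\overline{m}]}|\le Ct^{[\overline{m}]+\alpha}$ — this extra $t$-derivative comes for free from the weighted structure of the terms in \eqref{eq-ch4-LinearDependence}; and $\int_t^r s^{-1-\overline{m}}S_{[\overline{m}]}\,ds$ is $O(1)$, not $O(t^{\alpha-\gamma})$, which is exactly why its value at $t=0$ must be moved into $c_{[\overline{m}],0}$ before the leftover $\int_0^t$ piece yields the decay $t^{[\overline{m}]+\alpha}$.

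The one genuine gap is your treatment of $t^{-\gamma}R_{[\overline{m}]}$. You argue that since $R_{[\overline{m}]}$ is $t^{\overline{m}}$ times something bounded, dividing by $t^{\gamma}$ ``leaves $t^{[\overline{m}]}$ times something bounded, which is $C^{\epsilon}$.'' Pointwise decay does not give H\"older regularity in $x'$, and if you instead derive the regularity of $t^{-\gamma}R_{[\overline{m}]}$ from the already-established $C^{\epsilon,\alpha}_{x',t}$ regularity of $R_{[\overline{m}]}$ by applying Lemma \ref{lemma-ch4-regularity-power1}, you lose an additional H\"older index $\gamma$ and obtain only $\epsilon<\alpha-2\gamma$, short of the stated range $\epsilon<\alpha-\gamma$. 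The paper avoids this by treating $t^{-\gamma}R_{[\overline{m}]}$ in parallel with $R_{[\overline{m}]}$ rather than a posteriori: one writes $t^{-\gamma}\underline{R}_{[\overline{m}]}$ and $t^{-\gamma}\overline{R}_{[\overline{m}]}$ as integrals of $s^{-\gamma}S_{[\overline{m}]}$ against the shifted exponents $\underline{m}-\gamma<0$ and $\overline{m}-\gamma=[\overline{m}]$ (an integer), applies Lemma \ref{lemma-ch4-regularity-power1} only to $t^{-\gamma}S_{[\overline{m}]}$, and then uses Lemma \ref{lemma-ch4-BasicHolderRegularity} and the integer-exponent Lemma \ref{lemma-ch4-singular-integral-int-lower}. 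You would need to incorporate this parallel derivation (and the corresponding exact identities for $t\partial_t(t^{-\gamma}R_{[\overline{m}]})$ and $t^2\partial_t^2(t^{-\gamma}R_{[\overline{m}]})$) to reach the claimed conclusion.
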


\begin{proof}
Note that $\overline{m},\underline{m},\gamma\in C^{\ell+1, \alpha}(B'_{1})$ 
by \eqref{eq-ch4-Assumption_m1}, \eqref{eq-ch4-Assumption_m2}, 
and \eqref{eq-definition-gamma}, and that $[\overline{m}]$ is an integer. 
Without loss of generality, we assume $x'_{0}=0$ 
and take a constant $\epsilon_0$ such that 
$0<\epsilon_0 \le\alpha-\gamma$ on $\bar B_{1}'$. 
Throughout the proof, $\epsilon$ is an arbitrary constant in $(0,\epsilon_{0})$ 
and $r$ is an arbitrary constant in $(0,1)$. 

By Theorem \ref{thrm-ch3-Linear-NormalEstimate-general}, 
we have, for any $\nu \leq[\overline{m}]-1$ and $\tau \leq \ell-\nu$,
\begin{equation}\label{eq-ch4-regularity-u-non-int-preli1-large}
\partial_{t}^{\nu} D_{x^{\prime}}^{\tau} u, 
t D \partial_{t}^{\nu} D_{x^{\prime}}^{\tau} u, 
t^{2} D^{2} \partial_{t}^{\nu} D_{x^{\prime}}^{\tau} u \in C^{\alpha}(\bar{G}_{r}).    
\end{equation}
We now analyze the regularity of $F$, given by \eqref{eq-ch4-LinearODE-F}. 
We write 
\begin{align}\label{eq-ch4-LinearODE-F1}
F=\Tilde f+2\Tilde a_{\beta n}t^2\partial_{\beta t}u+
\Tilde a_{\alpha\beta}t^2\partial_{\alpha\beta}u
+\Tilde b_nt^2\partial_tu+\Tilde b_\beta t\partial_\beta u
+\Tilde ctu,\end{align}
where $\Tilde a_{\beta j}, \Tilde b_i, \Tilde c, \Tilde f\in C^{\ell,\alpha}(\bar G_1)$ 
by the assumption. 
Here, we treat $F$ as a function of $x'$ and $t$. 
Note that $F$ is  a linear combination of 
$f$,  $t^2\partial_{t}D_{x'}u$, $t^2D_{x'}^2u$, $t^2\partial_tu$, $tD_{x'}u$, and $ tu$. 
We need to calculate $\partial_t^\nu D_{x'}^\tau$ acting on these quantities, 
for $\nu\le [\overline{m}]$ and $\tau\le \ell-\nu$. 
For an illustration, we consider $t^2\partial_{t}D_{x'}u$. 
Note that, for any nonnegative integers $\nu$ and $\tau$, 
\begin{align*}
\partial^\nu_tD_{x'}^\tau(t^2\partial_{t}D_{x'}u)&=t^2\partial_t^2\partial_{t}^{\nu-1}D_{x'}^{\tau+1}u\\
&\qquad+2\nu t\partial_t\partial_{t}^{\nu-1} D_{x'}^{\tau+1}u+\nu(\nu-1)\partial_{t}^{\nu-1} D_{x'}^{\tau+1}u.\end{align*}
We intentionally write derivatives with respect to $t$ as above. 
By \eqref{eq-ch4-regularity-u-non-int-preli1-large}, we have, 
for any nonnegative integers  $\nu\le [\overline{m}]$ and $\tau\le \ell-\nu$, 
$$\partial^\nu_tD_{x'}^\tau(t^2\partial_{t}D_{x'}u)\in C^\alpha(\bar G_{r}).$$ 
A similar result holds for other terms in the expression of $F$ in \eqref{eq-ch4-LinearODE-F1}. 
Therefore, for any nonnegative integers $\nu\le [\overline{m}]$ and $\tau\le \ell-\nu$,  
\begin{equation}\label{eq-ch4-regularity-F1-non-int-large}
\partial^\nu_tD_{x'}^\tau F\in C^\alpha(\bar G_{r}).\end{equation}
Here, we treat $F$ as a function of $x'$ and $t$. 


Set 
$$a_i=\frac{1}{i!}\partial_t^iF(\cdot,0)\quad\text{for }i=0, \cdots, [\overline{m}],$$
and write 
\begin{equation}\label{eq-ch4-identity-F-non-int-overline-m-large}F=\sum_{i=0}^{[\overline{m}]} a_it^i
+S_{[\overline{m}]}.\end{equation}
By \eqref{eq-ch4-regularity-F1-non-int-large}, we have 
\begin{equation}\label{eq-regularity-a-non-large} 
a_i\in C^{\ell-i, \alpha}(B_1')\quad\text{for any }i=0, \cdots, [\overline{m}],\end{equation}
and,  
for any $\nu\le [\overline{m}]$ and $\tau\le \ell-[\overline{m}]$, 
\begin{align}\label{eq-regularity-S-m-non-large}\begin{split} 
\partial_t^\nu D_{x'}^\tau S_{[\overline{m}]}&\in C^\alpha(\bar G_r),\\
|\partial_t^\nu D_{x'}^\tau S_{[\overline{m}]}|&\le Ct^{[\overline{m}]-\nu+\alpha}\quad\text{in }\bar G_r.
\end{split}\end{align}

By substituting \eqref{eq-ch4-identity-F-non-int-overline-m-large} in \eqref{eq-ch4-LinearODE-Solution-l} 
and a straightforward computation, we have 
\begin{align}\label{eq-ch4-indentity-u-int-overline-m-non-large}
u=\sum_{i=0}^{[\overline{m}]}c_it^i+c_{[\overline{m}],0}t^{[\overline{m}]+\gamma}
+R_{[\overline{m}]},
\end{align}
where 
\begin{align}\label{eq-ch4-expression-coefficient-m1-int-non-large}\begin{split}
c_{i}&=\frac{a_{i}}{(i-\underline{m})(i-\overline{m})}\quad\text{for }i=0, \cdots, [\overline{m}],\\
c_{[\overline{m}],0}&=u(\cdot,r)r^{-\overline{m}}-\sum_{i=0}^{[\overline{m}]}
\frac{a_ir^{i-\overline{m}}}{(i-\underline{m})(i-\overline{m})}\\
&\qquad 
+\frac{r^{\underline{m}-\overline{m}}}{\overline{m}-\underline{m}}
\int_0^r s^{-1-\underline{m}}S_{[\overline{m}]}ds
-\frac{1}{\overline{m}-\underline{m}}\int_0^rs^{-1-\overline{m}}S_{[\overline{m}]}ds,
\end{split}\end{align}
and
\begin{align}\label{eq-ch4-expression-remainder-m-int-non-large}
R_{[\overline{m}]}=\frac{t^{\overline{m}}}{\overline{m}-\underline{m}}\int_0^ts^{-1-\overline{m}}S_{[\overline{m}]}ds
-\frac{t^{\underline{m}}}{\overline{m}-\underline{m}}\int_0^ts^{-1-\underline{m}}S_{[\overline{m}]}ds.
\end{align} 
By \eqref{eq-regularity-a-non-large}, we have 
$$c_i\in C^{\ell-i,\alpha}(B'_1)\quad\text{for }i=0, \cdots, [\overline{m}].$$
In addition, by \eqref{eq-regularity-S-m-non-large}, 
Lemma \ref{lemma-ch4-BasicHolderRegularity} with $a=-\underline{m}$, 
and Lemma \ref{lemma-ch4-singular-integral-non-int-lower-larger} with $a=\overline{m}$, we have 
$$c_{[\overline{m}],0}\in C^{\ell-[\overline{m}],\epsilon}(B'_1).$$ 
Next, by \eqref{eq-ch4-expression-remainder-m-int-non-large}, we write 
$$R_{[\overline{m}]}=\underline{R}_{[\overline{m}]}+\overline{R}_{[\overline{m}]},$$
where
\begin{align*}
\underline{R}_{[\overline{m}]}
&=-\frac{t^{\underline{m}}}{\overline{m}-\underline{m}}\int_0^ts^{-1-\underline{m}}S_{[\overline{m}]}ds,\\\overline{R}_{[\overline{m}]}
&=\frac{t^{\overline{m}}}{\overline{m}-\underline{m}}\int_0^ts^{-1-\overline{m}}S_{[\overline{m}]}ds.
\end{align*}
A simple computation yields
\begin{align*}
t\partial_tR_{[\overline{m}]}= 
\underline{m}\, \underline{R}_{[\overline{m}]}
+\overline{m}\,\overline{R}_{[\overline{m}]},
\end{align*}
and 
\begin{align*}
t^2\partial^2_{t}R_{[\overline{m}]}= S_{[\overline{m}]}
+\underline{m}(\underline{m}-1) \underline{R}_{[\overline{m}]}
+\overline{m}(\overline{m}-1)\overline{R}_{[\overline{m}]}.
\end{align*}
By \eqref{eq-regularity-S-m-non-large}, 
Lemma \ref{lemma-ch4-BasicHolderRegularity} with $a=-\underline{m}$, 
and Lemma \ref{lemma-ch4-singular-integral-non-int-lower-larger}
with $a=\overline{m}$, we have, 
for any $\nu\le [\overline{m}]$ and $\tau\le \ell-[\overline{m}]$, 
\begin{align*}
\partial_t^\nu D^\tau_{x'}\underline{R}_{[\overline{m}]}\in C^{\alpha}(\bar G_r),
\quad 
\partial_t^\nu D^\tau_{x'}\overline{R}_{[\overline{m}]}\in C^{\epsilon, \alpha}_{x',t}(\bar{G}_r),\end{align*}
and 
\begin{align*}
|\partial_t^\nu D^\tau_{x'}\underline{R}_{[\overline{m}]}|
+|\partial_t^\nu D^\tau_{x'}\overline{R}_{[\overline{m}]}|\le Ct^{[\overline{m}]-\nu+\alpha}
\quad\text{in }G_{1/2}.\end{align*}
As a consequence, we obtain, 
for any $\nu\le [\overline{m}]$ and $\tau\le \ell-[\overline{m}]$, 
\begin{align*}
\partial_t^\nu D_{x'}^\tau R_{[\overline{m}]}, 
t\partial_t\partial_t^\nu D_{x'}^\tau R_{[\overline{m}]}, 
t^2\partial_t^2\partial_t^\nu D_{x'}^\tau R_{[\overline{m}]}\in C^{\epsilon, \alpha}_{x',t}(\bar{G}_r), 
\end{align*}
and
\begin{align*}
|\partial_t^\nu D_{x'}^\tau R_{[\overline{m}]}|
+|t\partial_t\partial_t^\nu D_{x'}^\tau R_{[\overline{m}]}|
+|t^2\partial_t^2\partial_t^\nu D_{x'}^\tau R_{[\overline{m}]}|
\le Ct^{[\overline{m}]-\nu+\alpha}
\quad\text{in }G_{1/2}.\end{align*}
We now analyze the regularity of $t^{-\gamma}R_{[\overline{m}]}$ similarly. 
By \eqref{eq-ch4-expression-remainder-m-int-non-large} again, we write 
\begin{align*}
t^{-\gamma}R_{[\overline{m}]}
=t^{-\gamma}\underline{R}_{[\overline{m}]}+t^{-\gamma}\overline{R}_{[\overline{m}]}, 
\end{align*} 
where 
\begin{align*}
t^{-\gamma}\underline{R}_{[\overline{m}]}
&=-\frac{t^{\underline{m}-\gamma}}{\overline{m}-\underline{m}}
\int_0^ts^{-1-(\underline{m}-\gamma)}\,s^{-\gamma}S_{[\overline{m}]}ds,\\
t^{-\gamma}\overline{R}_{[\overline{m}]}
&=\frac{t^{\overline{m}-\gamma}}{\overline{m}-\underline{m}}
\int_0^ts^{-1-(\overline{m}-\gamma)}\,s^{-\gamma}S_{[\overline{m}]}ds.
\end{align*} 
A similar computation yields
\begin{align*}
t\partial_t(t^{-\gamma}R_{[\overline{m}]})= 
(\underline{m}-\gamma)t^{-\gamma}\underline{R}_{[\overline{m}]}
+(\overline{m}-\gamma)t^{-\gamma}\overline{R}_{[\overline{m}]},
\end{align*}
and 
\begin{align*}
t^2\partial^2_{t}(t^{-\gamma}R_{[\overline{m}]})&= t^{-\gamma}S_{[\overline{m}]}
+(\underline{m}-\gamma)(\underline{m}-\gamma-1) t^{-\gamma}\underline{R}_{[\overline{m}]}\\
&\qquad+(\overline{m}-\gamma)(\overline{m}-\gamma-1)t^{-\gamma}\overline{R}_{[\overline{m}]}.
\end{align*}
By \eqref{eq-regularity-S-m-non-large} and Lemma \ref{lemma-ch4-regularity-power1}, we have, 
for any $\nu\le [\overline{m}]$ and $\tau\le \ell-[\overline{m}]$,  
\begin{align*}
\partial_t^\nu D_{x'}^\tau (t^{-\gamma}S_{[\overline{m}]})&\in C^{\epsilon}(\bar G_r),\\
|\partial_t^\nu D_{x'}^\tau (t^{-\gamma}S_{[\overline{m}]})|&
\le Ct^{[\overline{m}]-\nu+\epsilon}\quad\text{in }\bar G_r.
\end{align*}
Note that $\underline{m}-\gamma<0$ and 
$\overline{m}-\gamma=[\overline{m}]$ by the definition of $\gamma$. 
By Lemma \ref{lemma-ch4-BasicHolderRegularity} 
with $a=\gamma-\underline{m}$, 
and Lemma \ref{lemma-ch4-singular-integral-int-lower} 
with $a=[\overline{m}]$,  
we have, 
for any $\nu\le [\overline{m}]$ and $\tau\le \ell-[\overline{m}]$, 
\begin{align*}
\partial_t^\nu D^\tau_{x'}(t^{-\gamma}\underline{R}_{[\overline{m}]}),  
\partial_t^\nu D^\tau_{x'}(t^{-\gamma}\overline{R}_{[\overline{m}]})
\in C^{\epsilon}(\bar{G}_r).
\end{align*}
As a consequence, we obtain, 
for any $\nu\le [\overline{m}]$ and $\tau\le \ell-[\overline{m}]$, 
\begin{align*}
\partial_t^\nu D_{x'}^\tau(t^{-\gamma}R_{[\overline{m}]}), 
t\partial_t\partial_t^\nu D_{x'}^\tau (t^{-\gamma}R_{[\overline{m}]}), 
t^2\partial_t^2\partial_t^\nu D_{x'}^\tau (t^{-\gamma}R_{[\overline{m}]})
\in C^{\epsilon}(\bar{G}_r).
\end{align*}
We conclude the desired result.
%
\end{proof}

There is a loss of regularity of the remainder $R_{[\overline{m}]}$
in Lemma \ref{lemma-ch4-Linear-MainThm-non-integer-large}, 
with the loss only in the $x'$ direction.
This causes different H\"older indices in $t$ and $x'$ 
and is the motivation to introduce the more general H\"older space in 
Definition \ref{def-Holder-double-index}. 

We point out that $R_{[\overline{m}]}$ and $t^{-\gamma}R_{[\overline{m}]}$ 
have the same regularity in $x'$. To achieve this, we proved the regularity of 
$R_{[\overline{m}]}$ and $t^{-\gamma}R_{[\overline{m}]}$ parallelly. 
If we derive the regularity of $t^{-\gamma}R_{[\overline{m}]}$ 
from that of $R_{[\overline{m}]}$ by Lemma \ref{lemma-ch4-regularity-power1}, 
we will further lose a regularity of a H\"older index $\gamma$.

Next, we study the higher regularity. 
In the proof of the next result, we will use repeatedly the following simple identity. 
Let $\gamma$ be the function given by \eqref{eq-definition-gamma}. 
Then, for any $t>0$, 
$$D_{x'}t^\gamma=t^\gamma\log t\, D_{x'}\gamma.$$ 
We point out that the $x'$-derivative of $t^\gamma$ yields 
a factor of $\log t$ if $\gamma$ is not constant. 

\begin{theorem}\label{thrm-ch4-Linear-MainThm-non-integer-large}
Suppose that $\underline{m}$, $\overline{m}$, and $\gamma$ are functions on $B'_{1}$ satisfying 
\eqref{eq-ch4-Assumption_m1}, \eqref{eq-ch4-Assumption_m2}, 
and \eqref{eq-definition-gamma}, 
and that $\ell$ is an integer and $\alpha\in(0,1)$ is a constant such that $\ell\geq [\overline{m}] \ge1$ 
and \eqref{eq-large-Holder-index} holds. 
Assume $a_{n n}, b_{n}, c \in C^{\ell+1, \alpha}(\bar{G}_{1})$ 
and $a_{i j}, b_{i}, f \in C^{\ell, \alpha}(\bar{G}_{1})$ for $i \neq n$, 
and let $u \in C(\bar{G}_{1}) \cap C^{2}(G_{1})$ be a solution 
of \eqref{eq-ch3-Equ} and \eqref{eq-ch3-Dirichlet}. 
Then, for any $k$ with $[\overline{m}] \leq k \leq \ell$,
\begin{equation}\label{eq-ch4-LinearExpansion-non-int-k-large}
u=\sum_{i=0}^{k} c_{i} t^{i}+\sum^{k}_{i=[\overline{m}]}
\sum^{i-[\overline{m}]}_{j=0} c_{i, j} t^{i+\gamma}(\log t)^{j}+R_{k} \quad \text {in } G_{1},
\end{equation}
where $\{c_{i}\}_{i=0}^{\ell}$ and 
$\{c_{i, j}\}_{ [\overline{m}]\leq i\leq \ell,0\leq j\leq i-[\overline{m}]}$ 
are functions on $B'_{1}$ and $R_{k}$ is a function in $G_{1}$ satisfying, 
for any $x'_{0}\in B'_{1}$, 
any $r\in(0,1-|x'_{0}|)$, 
and any $\epsilon$ with $0<\epsilon <\alpha-\gamma$ on $\bar B_{r}'(x_0')$, 
\begin{align}\label{eq-ch4-regularity-coefficients-non-int-large}\begin{split} 
&c_{i} \in C^{\ell-i, \alpha}(\bar B_{r}^{\prime}(x'_{0}))\quad\text{for }i=0,\cdots,\ell,\\
&c_{i, j} \in C^{\ell-i, \epsilon}(\bar B_{r}^{\prime}(x'_{0}))
\quad\text{for }i=[\overline{m}],\cdots,\ell\text{ and }j=0,\cdots,i-[\overline{m}],
\end{split}    
\end{align}
and, for any nonnegative integers $\nu \leq k$ and $\tau \leq \ell-k$, 
\begin{align}\label{eq-ch4-LinearRegularity-noon-int-m111-large}
\partial_{t}^{\nu} D_{x^{\prime}}^{\tau} R_{k}, t \partial_{t} \partial_{t}^{\nu} D_{x^{\prime}}^{\tau} R_{k}, 
t^{2} \partial_{t}^{2} \partial_{t}^{\nu} D_{x^{\prime}}^{\tau} R_{k} 
&\in C^{\epsilon,\alpha}_{x',t}(\bar{G}_{r}(x_{0}')),\\ 
\label{eq-ch4-LinearRegularity-noon-int-m112-large}
\partial_{t}^{\nu} D_{x^{\prime}}^{\tau} (t^{-\gamma}R_{k}), 
t \partial_{t} \partial_{t}^{\nu} D_{x^{\prime}}^{\tau} (t^{-\gamma}R_{k}), 
t^{2} \partial_{t}^{2} \partial_{t}^{\nu} D_{x^{\prime}}^{\tau}(t^{-\gamma} R_{k}) 
&\in C^{\epsilon}(\bar{G}_{r}(x_{0}')),  
\end{align}
and
\begin{align}\label{eq-ch4-LinearRegularity-non-int-m2-large}\begin{split}
&|\partial_{t}^{\nu} D_{x^{\prime}}^{\tau} R_{k}|
+|t \partial_{t} \partial_{t}^{\nu} D_{x^{\prime}}^{\tau} R_{k}|
+|t^{2} \partial_{t}^{2} \partial_{t}^{\nu} D_{x^{\prime}}^{\tau} R_{k}| \\
&\qquad \leq C t^{k-\nu+\alpha}\big\{|u|_{L^{\infty}(G_{1})}
+|f|_{C^{\ell, \alpha}(\bar{G}_{1})}\big\} \quad \text {in }\bar{G}_{r}(x_{0}'),
\end{split}    
\end{align}
for some positive constant $C$ depending only on $n$, $\lambda$, $\ell$, $\alpha$, $r$, 
$\underline{m}$, $\overline{m}$, 
the $C^{\ell+1, \alpha}$-norms of $a_{n n}, b_{n}, c$ in $\bar{G}_{1}$, 
and the $C^{\ell, \alpha}$-norms of $a_{i j}, b_{i}$ in $\bar{G}_{1}$ for $i \neq n .$ 
If, in addition, $c_{[\overline{m}], 0}=0$ on $B'_r(x_0')$, 
for some $x'_{0}\in B'_{1}$ and $r\in(0,1-|x'_{0}|)$, 
then $c_{i, j}=0$ on $B'_r(x_0')$, 
for any $i=[\overline{m}], \cdots, \ell$ and $j=0,\cdots,i-[\overline{m}]$, 
and $u \in C^{\ell, \alpha}(\bar{G}_{r}(x_{0}'))$. 
\end{theorem}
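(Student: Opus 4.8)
The plan is to argue by induction on $k$, starting from the base case $k=[\overline{m}]$, which is precisely Lemma \ref{lemma-ch4-Linear-MainThm-non-integer-large}. Assume the expansion \eqref{eq-ch4-LinearExpansion-non-int-k-large} holds at some level $k$ with $[\overline{m}]\le k<\ell$, together with the stated regularity of the coefficients and the remainder $R_k$. The goal is to extract one more term in each of the two sums and to produce a remainder $R_{k+1}$ with the sharpened decay $t^{k+1-\nu+\alpha}$. First I would analyze the regularity of $F$, defined by \eqref{eq-ch4-LinearODE-F} (equivalently \eqref{eq-ch4-LinearODE-F1}), one order higher: since $R_k$ and $t^{-\gamma}R_k$ now enjoy the regularity recorded in \eqref{eq-ch4-LinearRegularity-noon-int-m111-large}--\eqref{eq-ch4-LinearRegularity-noon-int-m112-large} and the coefficients $c_i$, $c_{i,j}$ lie in the spaces \eqref{eq-ch4-regularity-coefficients-non-int-large}, substituting the level-$k$ expansion into $F$ expresses $F$ itself as a finite sum $\sum_i a_i t^i + \sum_{i,j} a_{i,j} t^{i+\gamma}(\log t)^j$ plus a remainder $S$ controlled by $t^{k+1-\nu+\alpha}$ (after differentiation). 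The new feature, emphasized right before the theorem statement, is that applying $D_{x'}$ to $t^{\gamma}$ produces a factor $t^{\gamma}\log t\,D_{x'}\gamma$, so differentiating the $t^{i+\gamma}(\log t)^j$ terms in $u$ raises the power of $\log t$ by one; this is exactly why the inner sum in \eqref{eq-ch4-LinearExpansion-non-int-k-large} runs up to $j=i-[\overline{m}]$, and the bookkeeping must track that each differentiation in $x'$ can only increase $j$ by one while increasing $i$ by zero, so the triangular index range is preserved.

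Next I would feed this refined expression for $F$ into the solution formula \eqref{eq-ch4-LinearODE-Solution-l} of Lemma \ref{lemma-ch4-SolutionODE}. Each monomial $t^i$ in $F$ contributes, via the integrals $\int s^{-1-\underline{m}}s^i\,ds$ and $\int s^{-1-\overline{m}}s^i\,ds$, a term $c_i t^i$ with $c_i = a_i/((i-\underline{m})(i-\overline{m}))$ — here one uses that $i$ is an integer and $\overline{m}$ is strictly between two integers, so the denominator never vanishes. Each term $t^{i+\gamma}(\log t)^j$ in $F$ integrates, against the same kernels, to a term of the form $t^{i+\gamma}(\log t)^{j}$ times a smooth-in-$x'$ coefficient (possibly with lower-order $(\log t)^{j'}$, $j'<j$, corrections absorbed into the same index range), using that $i+\gamma$ is never an indicial root because $\gamma\in(0,1)$; this is where the regularity lemmas from Section \ref{sec-Appen-CalculusL} (Lemma \ref{lemma-ch4-BasicHolderRegularity} with $a=-\underline{m}$ or $a=\gamma-\underline{m}$, Lemma \ref{lemma-ch4-singular-integral-non-int-lower-larger} with $a=\overline{m}$, Lemma \ref{lemma-ch4-singular-integral-int-lower} with $a=[\overline{m}]$, and Lemma \ref{lemma-ch4-regularity-power1}) are invoked to get the $C^{\ell-i,\epsilon}_{x'}$ regularity of the new coefficients and the $C^{\epsilon,\alpha}_{x',t}$ and $C^{\epsilon}$ regularity of $R_{k+1}$ and $t^{-\gamma}R_{k+1}$, respectively. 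The remainder $S$ with decay $t^{k+1-\nu+\alpha}$ integrates, exactly as in the proof of Lemma \ref{lemma-ch4-Linear-MainThm-non-integer-large} (split $R_{k+1}=\underline{R}_{k+1}+\overline{R}_{k+1}$, use $t\partial_t$ and $t^2\partial_t^2$ identities), to the desired remainder satisfying \eqref{eq-ch4-LinearRegularity-non-int-m2-large} at level $k+1$.

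For the final assertion, suppose $c_{[\overline{m}],0}=0$ on $B'_r(x_0')$. Tracing the induction, the coefficient $c_{i,j}$ at each stage is built (via the explicit formulas analogous to \eqref{eq-ch4-expression-coefficient-m1-int-non-large}) out of the lower coefficients $c_{i',j'}$ with $i'<i$ coming from the singular part, together with data from $f$ and the already-regular polynomial part; since the singular terms are generated only by pre-existing singular terms (the polynomial part $\sum c_i t^i$ feeds back only into the polynomial part), vanishing of the single seed $c_{[\overline{m}],0}$ forces all $c_{i,j}$ to vanish on $B'_r(x_0')$ by a second downward/forward induction. With all singular terms gone, \eqref{eq-ch4-LinearExpansion-non-int-k-large} at level $k=\ell$ reads $u=\sum_{i=0}^{\ell}c_i t^i + R_\ell$ with $c_i\in C^{\ell-i,\alpha}$ and $R_\ell$ of class $C^{\epsilon,\alpha}_{x',t}$ with decay $t^{\ell-\nu+\alpha}$; a standard argument (matching Taylor coefficients and using the decay of the derivatives of $R_\ell$, together with the tangential regularity from Theorem \ref{thrm-ch3-Linear-NormalEstimate-general}) then upgrades this to $u\in C^{\ell,\alpha}(\bar G_r(x_0'))$. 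I expect the main obstacle to be the combinatorial control of the logarithmic powers — verifying that the triangular index set $\{(i,j): [\overline{m}]\le i\le\ell,\ 0\le j\le i-[\overline{m}]\}$ is exactly closed under the operations "multiply by $t$", "apply $D_{x'}$" (which sends $t^{i+\gamma}(\log t)^j$ to a combination of $t^{i+\gamma}(\log t)^{j}$ and $t^{i+\gamma}(\log t)^{j+1}$), and "integrate against $s^{-1-\overline{m}}$ or $s^{-1-\underline{m}}$" — and keeping the $\epsilon$-loss in the $x'$-direction from compounding across the $\ell-[\overline{m}]$ inductive steps, which is handled by always running the argument for $R_k$ and $t^{-\gamma}R_k$ in parallel rather than deriving one from the other.
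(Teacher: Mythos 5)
Your overall strategy coincides with the paper's: induction on $k$ from the base case of Lemma \ref{lemma-ch4-Linear-MainThm-non-integer-large}, re-expansion of $F$ from \eqref{eq-ch4-LinearODE-F1}, substitution into the solution formula \eqref{eq-ch4-LinearODE-Solution-l}, the calculus lemmas of Section \ref{sec-Appen-CalculusL}, and the parallel treatment of $R_k$ and $t^{-\gamma}R_k$. However, there are two concrete gaps.

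First, your induction step uses only the level-$k$ expansion to analyze all terms of $F$, including $t^2\tilde a_{\alpha\beta}\partial_{\alpha\beta}u$. This does not close the induction at the stated range of tangential derivatives. The hypothesis at level $k$ gives control of $\partial_t^{\nu}D_{x'}^{\tau}R_k$ only for $\tau\le \ell-k$; to produce $\partial_t^{\nu}D_{x'}^{\tau}S_{k+1}$ for all $\tau\le \ell-(k+1)$ you must differentiate $R_k$ twice more in $x'$, which requires $\tau+2\le\ell-k$, i.e.\ $\tau\le\ell-k-2$ --- one tangential derivative short. The paper avoids this by a strong induction: it substitutes the expansion at level $k-2$ (relative to the target level $k$) into the second-order tangential terms $t^2D_{x'}^2u$ and the expansion at level $k-1$ into the first-order terms, precisely because $R_{k-2}$ admits two more tangential derivatives. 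As written, your single-step induction would progressively shrink the admissible range of $\tau$ and could not reach \eqref{eq-ch4-LinearRegularity-noon-int-m111-large} for all $\tau\le\ell-k$.

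Second, the final assertion does not follow by "a standard argument" from the already-established regularity. Once all $c_{i,j}$ vanish, the decomposition $u=\sum_{i=0}^{\ell}c_it^i+R_\ell$ with $R_\ell\in C^{\epsilon,\alpha}_{x',t}$ only yields $u$ of class $C^{\epsilon}$ in $x'$ with $\epsilon<\alpha-\gamma$, and Theorem \ref{thrm-ch3-Linear-NormalEstimate-general} supplies full tangential regularity only up to order $[\overline{m}]$, not $\ell$. To obtain $u\in C^{\ell,\alpha}$ one must rerun the entire induction a second time, now exploiting the absence of singular terms so that $F$ admits a pure Taylor expansion with remainder in $C^{\alpha}$ in both variables (as in \eqref{eq-equality-F-a-non-large-full-Holder}--\eqref{eq-Regularity-S-non-large-full-Holder}), whereupon only Lemma \ref{lemma-ch4-BasicHolderRegularity} and Lemma \ref{lemma-ch4-singular-integral-non-int-higher} are needed and no $\epsilon$-loss occurs; the starting point of that second induction also requires Theorem \ref{thrm-ch3-Linear-NormalEstimate-general-beta} with some $\beta\in(0,\gamma)$. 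Your identification of the propagation of vanishing from $c_{[\overline{m}],0}$ to all $c_{i,j}$ is correct and matches \eqref{eq-ch4-expression-a-non-int-k-large} and \eqref{eq-ch4-expression-coefficient-k-k1-non-int-k-large}, but the regularity upgrade needs this second pass, not a Taylor-matching argument.
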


In the second summation in the right-hand side of 
\eqref{eq-ch4-LinearExpansion-non-int-k-large}, 
the indices $i,j$ in $c_{i,j}$ refer to the integer part of the power of $t$ and the power of $\log t$, 
respectively. 

\begin{proof}
We adopt the same notations as in the proof of Lemma \ref{lemma-ch4-Linear-MainThm-non-integer-large}. 
We will prove 
\eqref{eq-ch4-LinearExpansion-non-int-k-large}-\eqref{eq-ch4-LinearRegularity-non-int-m2-large} 
by induction on $k$. We note that 
\eqref{eq-ch4-LinearExpansion-non-int-k-large}-\eqref{eq-ch4-LinearRegularity-non-int-m2-large} 
hold for $k=[\overline{m}]$ 
by Lemma \ref{lemma-ch4-Linear-MainThm-non-integer-large}.
Without loss of generality, we assume $x'_{0}=0$ 
and take a constant $\epsilon_0$ such that 
$0<\epsilon_0 \le\alpha-\gamma$ on $\bar B_{1}'$. 
Throughout the proof, $\epsilon$ is an arbitrary constant in $(0,\epsilon_{0})$ 
and $r$ is an arbitrary constant in $(0,1)$. 
The proof consists of two steps.

{\it Step 1.} 
We fix an integer $k$ with $[\overline{m}]<k\leq \ell$. 
We assume that 
\eqref{eq-ch4-LinearExpansion-non-int-k-large}-\eqref{eq-ch4-LinearRegularity-non-int-m2-large} 
hold with $k$ replaced by any integer between $[\overline{m}]$ and $k-1$ and then proceed to prove 
\eqref{eq-ch4-LinearExpansion-non-int-k-large}-\eqref{eq-ch4-LinearRegularity-non-int-m2-large} 
for $k$. 
With $c_{i}$ for $0\leq i\leq k-1$ and $c_{i,j}$ for $[\overline{m}]\leq i\leq k-1$ 
and $0\leq j\leq i-[\overline{m}]$ already determined, 
we will find $c_{k}$, $c_{k,0}$, $\cdots$, $c_{k,k-[\overline{m}]}$, and $R_{k}$, 
and prove that they have the stated properties. 
In the following, we will use 
\eqref{eq-ch4-LinearExpansion-non-int-k-large}-\eqref{eq-ch4-LinearRegularity-non-int-m2-large} 
with $k$ replaced by $k-1$ and $k-2$. 

We first analyze $F$, which is given by \eqref{eq-ch4-LinearODE-F1};  
namely, 
\begin{equation*}
F=\Tilde{f}+2 \Tilde{a}_{\alpha n} t^2 \partial_{\alpha t}u
+\Tilde{a}_{\alpha \beta} t^2 \partial_{\alpha \beta}u+\Tilde{b}_n t^2 \partial_t u
+\Tilde{b}_\alpha t \partial_\alpha u+\Tilde{c} t u,
\end{equation*}
where $\Tilde{a}_{\beta j},\Tilde{b}_{i},\Tilde{c},\Tilde{f}
\in C^{\ell,\alpha}(\bar{G}_{1})$ by the assumption.
We will prove
\begin{equation}\label{eq-ch4-identity-F-non-int-k-large}
F=\sum_{i=0}^{k} a_{i} t^{i}+\sum^{k}_{i=[\overline{m}]+1}\sum^{i-[\overline{m}]}_{j=0} 
a_{i,j} t^{i+\gamma}(\log t)^{j}+S_{k},
\end{equation}
where $a_{i}$ and $a_{i,j}$ are functions on $B'_{1}$ and $S_{k}$ is a function in $G_{1}$ such that
\begin{align}\label{eq-ch4-regularity-coefficients-F-non-int-k-large}\begin{split}
&a_{i}\in C^{\ell-i,\alpha}(B_{1}')\quad \text{for }i=0,\cdots,k,\\
&a_{i,j}\in C^{\ell-i,\epsilon}(B_{1}')\quad 
\text{for }i=[\overline{m}]+1,\cdots,k\text{ and }j=0,\cdots,i-[\overline{m}],
\end{split}\end{align}
and, for any nonnegative integers $\nu\leq k$ and $\tau\leq \ell-k$, 
\begin{align}\label{eq-ch4-LinearRegularity-F-m1-non-int-k-large}\begin{split}
&\partial_{t}^{\nu} D_{x^{\prime}}^{\tau} S_{k}\in C^{\epsilon,\alpha}_{x',t}(\bar{G}_{r}),\\
&\partial_{t}^{\nu} D_{x^{\prime}}^{\tau}(t^{-\gamma}S_{k})
\in C^{\epsilon}(\bar{G}_{r}),\\
&|\partial_{t}^{\nu} D_{x^{\prime}}^{\tau} S_{k}|\leq Ct^{k-\nu+\alpha}\quad\text{in }G_{1/2}.
\end{split}\end{align}
We emphasize that the summation in \eqref{eq-ch4-identity-F-non-int-k-large} 
for $t^{i+\gamma}$ starts from $i=[\overline{m}]+1$. 
The regularity of $a_{i}$ for $i=0,\cdots,[\overline{m}]$ is already established 
in \eqref{eq-regularity-a-non-large}. 
The regularity of $a_{i}$ and $a_{i,j}$ for $i=[\overline{m}]+1, \cdots, k-1$ 
and $j=0,\cdots,i-[\overline{m}]$ is the consequence of the induction hypothesis. 
We will focus on $a_k$, $a_{k,j}$ for $j=0,\cdots,k-[\overline{m}]$, and $S_{k}$.

In the expression of $F$ in \eqref{eq-ch4-LinearODE-F1}, 
there are five types of $u$ and its derivatives, given by 
$t^{2}D^{2}_{x'}u$, $t^{2}D_{x'}\partial_{t}u$, $t^{2}\partial_{t}u$, $tD_{x'}u$, and $tu$. 
We will use \eqref{eq-ch4-LinearExpansion-non-int-k-large} for $k-2$ to analyze $t^{2}D^{2}_{x'}u$ 
and use \eqref{eq-ch4-LinearExpansion-non-int-k-large} for $k-1$ to analyze 
$t^{2}D_{x'}\partial_{t}u$, $t^{2}\partial_{t}u$, $tD_{x'}u$, and $tu$. 
If we use \eqref{eq-ch4-LinearExpansion-non-int-k-large} for $k-1$ to study $t^{2}D^{2}_{x'}u$, 
there will be a loss of regularity in $x'$. 
In the following, we set
\begin{align*}
&c_{i,j}=0\quad\text{for }(i,j)\notin\{(p,q)\in \mathbb{Z}^{2}:
[\overline{m}]\leq p\leq \ell,0\leq q\leq p-[\overline{m}]\},\\
&a_{i,j}=0\quad\text{for }(i,j)\notin\{(p,q)\in \mathbb{Z}^{2}:
[\overline{m}]+1\leq p\leq \ell,0\leq q\leq p-[\overline{m}]\}.
\end{align*}
The proof of 
\eqref{eq-ch4-identity-F-non-int-k-large}-\eqref{eq-ch4-LinearRegularity-F-m1-non-int-k-large}
is lengthy. We will provide a detailed analysis 
of the term $\Tilde{a}_{\alpha \beta} t^2 \partial_{\alpha \beta}u$
and indicate how to analyze the rest of terms in \eqref{eq-ch4-LinearODE-F1}.

We first study $\Tilde{a}_{\alpha \beta} t^2 \partial_{\alpha \beta}u$. 
By \eqref{eq-ch4-LinearExpansion-non-int-k-large} with $k$ replaced by $k-2$, we have
$$
u=\sum_{p=0}^{k-2} c_{p} t^{p}+\sum^{k-2}_{p=[\overline{m}]}
\sum^{p-[\overline{m}]}_{j=0} c_{p,j} t^{p+\gamma}(\log t)^{j}+R_{k-2}.
$$
Then,
$$
t^{2}\partial_{\alpha\beta}u=\sum_{p=0}^{k-2} \partial_{\alpha\beta}c_{p} t^{p+2}
+\sum^{k-2}_{p=[\overline{m}]}\sum^{p-[\overline{m}]+2}_{j=0} b_{p,j}t^{p+2+\gamma}(\log t)^{j}
+t^{2}\partial_{\alpha\beta}R_{k-2},
$$
where, for $[\overline{m}]\leq p\leq k-2$ and $0\leq j\leq p-[\overline{m}]+2$,
\begin{align}\label{eq-ch4-expression-b-non-int-k-large}\begin{split}
b_{p,j}&=\partial_{\alpha\beta}c_{p,j}+\partial_{\alpha}c_{p,j-1}\partial_{\beta}\gamma
+\partial_{\beta}c_{p,j-1}\partial_{\alpha}\gamma+c_{p,j-1}\partial_{\alpha\beta}\gamma\\
&\qquad+c_{p,j-2}\partial_{\alpha}\gamma\partial_{\beta}\gamma.
\end{split}\end{align}
We will multiply each term in $t^{2}\partial_{\alpha\beta}u$ by $\tilde{a}_{\alpha\beta}$. 
For each $p=0,\cdots,k-2$, we write
$$
\Tilde{a}_{\alpha\beta}=\sum^{k-p-2}_{q=0}\Tilde{a}_{\alpha\beta,q}t^{q}
+S_{k-p-2}(\Tilde{a}_{\alpha\beta}).
$$
We now examine $t^{2}\Tilde{a}_{\alpha\beta}\partial_{\alpha\beta}u$ in \eqref{eq-ch4-LinearODE-F1}. 
A simple computation shows
$$
t^{2}\Tilde{a}_{\alpha\beta}\partial_{\alpha\beta}u=\sum^{k}_{i=2}a^{(1)}_{i}t^{i}
+\sum^{k}_{i=[\overline{m}]+2}\sum^{i-[\overline{m}]}_{j=0}a^{(1)}_{i,j}t^{i+\gamma}(\log t)^{j}+S^{(1)}_{k},
$$
where
\begin{align*}
a^{(1)}_{i}&=\sum_{p+q=i-2}\tilde{a}_{\alpha\beta,q}\partial_{\alpha\beta}c_{p},\\
a^{(1)}_{i,j}&=\sum_{p+q=i-2}\tilde{a}_{\alpha\beta,q}b_{p,j},
\end{align*}
and
\begin{align}\label{eq-expression-S-1-k-large}
S^{(1)}_{k}=t^{2}\Tilde{a}_{\alpha\beta}\partial_{\alpha\beta}R_{k-2}
+\sum^{k-2}_{p=0}\big[\partial_{\alpha\beta}c_{p}t^{p+2}
+\sum^{p-[\overline{m}]+2}_{j=0}b_{p,j}t^{p+2+\gamma}(\log t)^{j}]S_{k-p-2}(\Tilde{a}_{\alpha\beta}).
\end{align}
We first study the regularity of $a^{(1)}_{k}$ and $a^{(1)}_{k,j}$ for $j\leq k-[\overline{m}]$. 
Recall that $\gamma\in C^{\ell+1, \alpha}(B'_{1})$. 
By \eqref{eq-ch4-regularity-coefficients-non-int-large}, we have, for each $p\leq k-2$,
$$
c_{p}\in C^{\ell-p,\alpha}(B'_{1}),\quad c_{p,h}\in C^{\ell-p,\epsilon}(B'_{1})\quad\text{for }h\leq p-[\overline{m}],
$$
and hence, 
$$
\partial_{\alpha\beta}c_{p}\in  C^{\ell-p-2,\alpha}(B'_{1}),
$$
and, by \eqref{eq-ch4-expression-b-non-int-k-large},
$$
b_{p,j}\in C^{\ell-p-2,\epsilon}(B_{1}')\quad\text{for }j\leq p-[\overline{m}]+2. 
$$
By $\Tilde{a}_{\alpha\beta}\in C^{\ell,\alpha}(\bar{G}_{1})$, we have, for each $q\leq k-p-2$,
$$
\Tilde{a}_{\alpha\beta,q}\in C^{\ell-q,\alpha}(B'_{1}).
$$
It is obvious that 
$$
\ell-p-2\geq\ell-k,\quad\ell-q\geq\ell-k,
$$
for each $p\leq k-2$ and $q\leq k-p-2$.
Therefore, we obtain, for each pair $p$ and $q$ with $p+q=k-2$,
$$
\Tilde{a}_{\alpha\beta,q}\partial_{\alpha\beta}c_{p}\in  C^{\ell-k,\alpha}(B'_{1}),
\quad\Tilde{a}_{\alpha\beta,q}b_{p,j}
\in  C^{\ell-k,\epsilon}(B'_{1})\quad\text{for }j\leq p-[\overline{m}]+2,
$$
and hence
$$
a^{(1)}_{k}\in  C^{\ell-k,\alpha}(B'_{1}),\quad a^{(1)}_{k,j}\in  C^{\ell-k,\epsilon}(B'_{1})
\quad\text{for }j\leq k-[\overline{m}].
$$
Next, we analyze
$t^2\partial_{\alpha \beta}R_{k-2}$,
$t^{p+2}S_{k-p-2}(\Tilde a_{\alpha\beta})$, and
$t^{p+2+\gamma}(\log t)^{j}S_{k-p-2}(\Tilde a_{\alpha\beta})$ in $S_k^{(1)}$.
For any nonnegative integers $\nu\le k$ and $\tau\le \ell-k$, a simple computation yields
\begin{align*}
\partial_t^\nu D_{x'}^\tau (t^2\partial_{\alpha \beta}R_{k-2})
&=t^2\partial_t^2\partial_t^{\nu-2} D_{x'}^{\tau+2} R_{k-2}
+2\nu t\partial_t\partial_t^{\nu-2} D_{x'}^{\tau+2} R_{k-2}\\
&\qquad
+\nu(\nu-1)\partial_t^{\nu-2} D_{x'}^{\tau+2}R_{k-2}.
\end{align*}
We intentionally write derivatives with respect to $t$ as above.
By \eqref{eq-ch4-LinearRegularity-noon-int-m111-large} 
and \eqref{eq-ch4-LinearRegularity-non-int-m2-large} with $k$ replaced by $k-2$, 
we have, for any $\nu\le k$ and $\tau\le \ell-k$,
\begin{align}\label{eq-ch4-LinearRegularity-R1-large}\begin{split}
&\partial_t^\nu D_{x'}^\tau (t^2\partial_{\alpha \beta}R_{k-2})
\in C^{\epsilon,\alpha}_{x',t}(\bar{G}_r),\\
&|\partial_t^\nu D_{x'}^\tau (t^2 \partial_{\alpha \beta}R_{k-2})|\leq  Ct^{k-\nu+\alpha}
\quad\text{in }G_{1/2}.
\end{split}\end{align}
We next study $\partial_t^\nu D_{x'}^\tau (t^{p+2} S_{k-p-2}(\Tilde a_{\alpha\beta}))$ and
$\partial_t^\nu D_{x'}^\tau (t^{p+2+\gamma}(\log t)^{j}S_{k-p-2}(\Tilde a_{\alpha\beta}))$
for $p\le k-2$, $j\leq p-[\overline{m}]+2$, $\nu\le k$, and $\tau\le \ell-k$.
For an illustration, we consider $p= k-2$, $j=1$, $\nu= k$, and $\tau= \ell-k$.
Set
$$I= \partial_t^k D_{x'}^{\ell-k} (t^k S_{0}(\Tilde a_{\alpha\beta})), \quad
J=\partial_t^k D_{x'}^{\ell-k} (t^{k+\gamma}\log t\,{S}_{0}(\Tilde a_{\alpha\beta})).$$
A simple computation yields
$$I=\sum_{\sigma=0}^k\Tilde{c}^{(1)}_{k,\sigma}(t^{k})^{(k-\sigma)}\partial_t^\sigma
D_{x'}^{\ell-k}(S_{0}(\Tilde a_{\alpha\beta})),$$
for some constants $\Tilde c^{(1)}_{k,\sigma}$. Recall that
$$S_{0}(\Tilde a_{\alpha\beta})(x',t)=\Tilde a_{\alpha\beta}(x',t)-\Tilde a_{\alpha\beta}(x',0).$$
The $t$-derivative of the second term is 0.
We consider $\sigma=0$ and $1\le \sigma\le k$
in the summation separately. By renaming $\Tilde c^{(1)}_{k,\sigma}$, we write
$$I=\Tilde c^{(1)}_{k,0}[D_{x'}^{\ell-k}\Tilde a_{\alpha\beta}(x',t)-D_{x'}^{\ell-k}\Tilde a_{\alpha\beta}(x',0)]
+\sum_{\sigma=1}^k\Tilde c^{(1)}_{k,\sigma}t^{\sigma}\partial_t^\sigma
D_{x'}^{\ell-k}\Tilde a_{\alpha\beta}.$$
By $k=p+2\ge 2$, we have $\ell-k\le \ell-2$.
With $\sigma=k$, we have $\partial_t^kD_{x'}^{\ell-k}\Tilde a_{\alpha\beta}\in C^{\alpha}(\bar{G}_r)$.
This is the function in $I$ with the worst regularity. Moreover, each term in $I$ has a factor of $t$.
Hence,
\begin{align*}
I\in C^{\alpha}(\bar{G}_r)\quad\text{and}\quad
|I|\le  Ct
\quad\text{in }G_{1/2}.\end{align*}
Similarly, a simple computation yields
$$J=\sum_{\sigma=0}^k \sum_{\mu=0}^{\ell-k}\Tilde{b}^{(1)}_{k,\ell-k,\sigma,\mu}\partial_t^{k-\sigma}
D_{x'}^{\ell-k-\mu}(t^{k+\gamma}\log t)\partial_t^\sigma
D_{x'}^{\mu}(S_{0}(\Tilde a_{\alpha\beta})).$$
We consider $\sigma=0$ and $1\le \sigma\le k$
in the summation separately, and write
$$
J=J_{1}+J_{2},
$$
where
$$
J_{1}=\sum_{\mu=0}^{\ell-k}\Tilde{b}^{(1)}_{k,\ell-k,0,\mu}\partial_t^{k}
D_{x'}^{\ell-k-\mu}(t^{k+\gamma}\log t)D_{x'}^{\mu}(S_{0}(\Tilde a_{\alpha\beta})),
$$
and
$$
J_{2}=\sum_{\sigma=1}^k \sum_{\mu=0}^{\ell-k}\Tilde{b}^{(1)}_{k,\ell-k,\sigma,\mu}\partial_t^{k-\sigma}
D_{x'}^{\ell-k-\mu}(t^{k+\gamma}\log t)\partial_t^\sigma
D_{x'}^{\mu}(S_{0}(\Tilde a_{\alpha\beta})).
$$
Note that, for $\mu\leq \ell-k\leq \ell-2$,
$$D_{x'}^{\mu}\big(S_{0}(\Tilde a_{\alpha\beta})\big)(x',t)=D_{x'}^{\mu}\Tilde a_{\alpha\beta}(x',t)
-D_{x'}^{\mu}\Tilde a_{\alpha\beta}(x',0)=t\int^{1}_{0}\partial_{t}D_{x'}^{\mu}\Tilde a_{\alpha\beta}(x',t\rho)d\rho.$$
Then, 
\begin{align*}
J_{1}&=\sum_{\mu=0}^{\ell-k}\sum^{\ell-k-\mu+1}_{\pi=0}b^{(1)}_{k,\ell-k,0,\mu,\pi}
t^{\gamma}(\log t)^{\pi}D_{x'}^{\mu}(S_{0}(\Tilde a_{\alpha\beta}))\\
&=\sum_{\mu=0}^{\ell-k}\sum^{\ell-k-\mu+1}_{\pi=0}b^{(1)}_{k,\ell-k,0,\mu,\pi}
t^{1+\gamma}(\log t)^{\pi}\int^{1}_{0}\partial_{t}D_{x'}^{\mu}\Tilde a_{\alpha\beta}(x',t\rho)d\rho,
\end{align*}
where $b^{(1)}_{k,\ell-k,0,\mu,\pi}$ is a polynomial of $D^{s}_{x'}\gamma$, for $s\leq \ell-k-\mu$. Hence,
\begin{align*}
J_1\in C^{\alpha}(\bar{G}_r)\quad\text{and}\quad
|J_1|\le  Ct^{1+\gamma}|\log t|^{\ell-k+1}
\quad\text{in }G_{1/2}.
\end{align*}
Next, with $\sigma=k$ and $\mu=\ell-k$, we have 
$\partial_t^kD_{x'}^{\ell-k}\Tilde a_{\alpha\beta}\in C^{\alpha}(\bar{G}_r)$. 
This is the function in $J_2$ with the worst regularity. 
Moreover, each term in $J_2$ has a factor of $t^{1+\gamma}$.
Hence,
\begin{align*}
J_2\in C^{\alpha}(\bar{G}_r)\quad\text{and}\quad
|J_2|\le  Ct^{1+\gamma}|\log t|^{\ell-k+1}
\quad\text{in }G_{1/2}.
\end{align*}
In summary, we have
\begin{align*}
J\in C^{\alpha}(\bar{G}_r)\quad\text{and}\quad
|J|\le  Ct^{1+\gamma}|\log t|^{\ell-k+1}
\quad\text{in }G_{1/2}.
\end{align*}
In general, we have, for any $p\le k-2$, any $j\leq p-[\overline{m}]+2$, any $\nu\le k$, and $\tau\le \ell-k$,
\begin{align}\label{eq-ch4-LinearRegularity-R2-large}\begin{split}
&\partial_t^\nu D_{x'}^\tau (t^{p+2}S_{k-p-2}(\Tilde a_{\alpha\beta}))
\in C^{\alpha}(\bar{G}_r),\\
&|\partial_t^\nu D_{x'}^\tau (t^{p+2}S_{k-p-2}(\Tilde a_{\alpha\beta}))|
\le  Ct^{k-\nu+1}
\quad\text{in }G_{1/2},
\end{split}\end{align}
and
\begin{align}\label{eq-ch4-LinearRegularity-R3-large}\begin{split}
&\partial_t^\nu D_{x'}^\tau (t^{p+2+\gamma}(\log t)^{j}S_{k-p-2}(\Tilde a_{\alpha\beta}))
\in C^{\alpha}(\bar{G}_r),\\
&|\partial_t^\nu D_{x'}^\tau (t^{p+2+\gamma}(\log t)^{j}S_{k-p-2}(\Tilde a_{\alpha\beta}))|
\le  Ct^{k+\gamma-\nu+1}|\log t|^{j+\tau}
\quad\text{in }G_{1/2}.
\end{split}\end{align}
By combining \eqref{eq-ch4-LinearRegularity-R1-large}, \eqref{eq-ch4-LinearRegularity-R2-large}, 
and \eqref{eq-ch4-LinearRegularity-R3-large}, 
we obtain, for any nonnegative integers $\nu\leq k$ and $\tau\leq\ell-k$, 
\begin{align*}
&\partial_{t}^{\nu} D_{x^{\prime}}^{\tau} S^{(1)}_{k}\in C^{\epsilon,\alpha}_{x',t}(\bar{G}_{r}),\\
&|\partial_{t}^{\nu} D_{x^{\prime}}^{\tau} S^{(1)}_{k}|\leq Ct^{k-\nu+\alpha}\quad\text{in }G_{1/2}.
\end{align*}
Next, we analyze each term in $t^{-\gamma}S^{(1)}_{k}$. 
By \eqref{eq-expression-S-1-k-large}, we have
\begin{align*}
t^{-\gamma}S^{(1)}_{k}&=t^{2}\Tilde{a}_{\alpha\beta}t^{-\gamma}\partial_{\alpha\beta}R_{k-2}\\
&\qquad+\sum^{k-2}_{p=0}\big[\partial_{\alpha\beta}c_{p}t^{p+2-\gamma}
+\sum^{p-[\overline{m}]+2}_{j=0}b_{p,j}t^{p+2}(\log t)^{j}]S_{k-p-2}(\Tilde{a}_{\alpha\beta}).
\end{align*}
A simple computation yields
$$
t^{-\gamma}\partial_{\alpha}R_{k-2}=\partial_{\alpha}(t^{-\gamma}R_{k-2})
+\partial_{\alpha}\gamma \log t (t^{-\gamma}R_{k-2}),
$$
and
\begin{align*}
t^{-\gamma}\partial_{\alpha\beta}R_{k-2}
&=\partial_{\alpha}(t^{-\gamma}\partial_{\beta}R_{k-2})
+\partial_{\alpha}\gamma \log t (t^{-\gamma}\partial_{\beta}R_{k-2})\\
&=\partial_{\alpha\beta}(t^{-\gamma}R_{k-2})
+\big[\partial_{\beta}\gamma \partial_{\alpha}(t^{-\gamma}R_{k-2})
+\partial_{\alpha}\gamma\partial_{\beta}(t^{-\gamma}R_{k-2})\big]\log t\\
&\qquad +\partial_{\alpha\beta}\gamma\,t^{-\gamma}R_{k-2}\log t
+\partial_{\alpha}\gamma \partial_{\beta}\gamma  \,t^{-\gamma}R_{k-2}(\log t)^2.
\end{align*}
By \eqref{eq-ch4-LinearRegularity-noon-int-m112-large} with $k$ replaced by $k-2$, 
and Lemma \ref{lemma-ch4-regularity-log}(i), we have, for any $\nu\le k$ and $\tau\le \ell-k$,
$$
\partial_t^\nu D_{x'}^\tau (t^2t^{-\gamma}\partial_{\alpha \beta}R_{k-2})
\in C^{\epsilon}(\bar{G}_r).
$$
By \eqref{eq-ch4-LinearRegularity-R2-large}, 
\eqref{eq-ch4-LinearRegularity-R3-large}, and Lemma \ref{lemma-ch4-regularity-power1}, we have, 
for any $p\le k-2$, any $j\leq p-[\overline{m}]+2$, any $\nu\le k$, and $\tau\le \ell-k$,
$$
\partial_t^\nu D_{x'}^\tau (t^{p+2-\gamma}S_{k-p-2}(\Tilde a_{\alpha\beta})),
\partial_t^\nu D_{x'}^\tau (t^{p+2}(\log t)^{j}S_{k-p-2}(\Tilde a_{\alpha\beta}))
\in C^{\epsilon}(\bar{G}_r).
$$
Hence, we obtain, for any nonnegative integers $\nu\leq k$ and $\tau\leq\ell-k$, 
$$
\partial_{t}^{\nu} D_{x^{\prime}}^{\tau}(t^{-\gamma}S^{(1)}_{k})\in 
C^{\epsilon}(\bar{G}_{r}).
$$

Next, we analyze the rest of the terms in $F$. 
By \eqref{eq-ch4-LinearExpansion-non-int-k-large} with $k$ replaced by $k-1$, we have 
$$
u=\sum_{p=0}^{k-1} c_{p} t^{p}
+\sum^{k-1}_{p=[\overline{m}]}\sum^{p-[\overline{m}]}_{j=0} c_{p,j} t^{p+\gamma}(\log t)^{j}+R_{k-1}.
$$
Then,
$$
t^{2}\partial_{\alpha t}u=\sum_{p=0}^{k-1} p\partial_{\alpha}c_{p} t^{p+1}+\sum^{k-1}_{p=[\overline{m}]}\sum^{p-[\overline{m}]+1}_{j=0} e_{p,j}t^{p+1+\gamma}(\log t)^{j}+t^{2}\partial_{\alpha t}R_{k-1},
$$
where, for $[\overline{m}]\leq p\leq k-1$ and $0\leq j\leq p-[\overline{m}]+1$, 
\begin{align*}
e_{p,j}&=(p+\gamma)(\partial_{\alpha}c_{p,j}+c_{p,j-1}\partial_{\alpha}\gamma)
+(j+1)(\partial_{\alpha}c_{p,j+1}+c_{p,j}\partial_{\alpha}\gamma).
\end{align*}
For each $p=0,\cdots,k-1$, we write
$$
\Tilde{a}_{\alpha n}=\sum^{k-p-1}_{q=0}\tilde{a}_{\alpha n,q}t^{q}+S_{k-p-1}(\Tilde{a}_{\alpha n}).
$$
By a similar computation, we have
$$
t^{2}\Tilde{a}_{\alpha n}\partial_{\alpha t}u=\sum^{k}_{i=1}a^{(2)}_{i}t^{i}
+\sum^{k}_{i=[\overline{m}]+1}\sum^{i-[\overline{m}]}_{j=0}a^{(2)}_{i,j}t^{i+\gamma}(\log t)^{j}+S^{(2)}_{k},
$$
where 
\begin{align*}
a^{(2)}_{i}&=\sum_{p+q=i-1}p\Tilde{a}_{\alpha n,q}\partial_{\alpha}c_{p},\\
a^{(2)}_{i,j}&=\sum_{p+q=i-1}\Tilde{a}_{\alpha n,q}e_{p,j},\\
\end{align*}
and
\begin{align*}
S^{(2)}_{k}&=t^{2}\Tilde{a}_{\alpha n}\partial_{\alpha t}R_{k-1}\\
&\qquad+\sum^{k-1}_{p=0}\big[p\partial_{\alpha}c_{p}t^{p+1}
+\sum^{p-[\overline{m}]+1}_{j=0}e_{p,j}t^{p+1+\gamma}(\log t)^{j}\big]S_{k-p-1}(\Tilde{a}_{\alpha n}).
\end{align*}
Then,
\begin{align*}
t^{-\gamma}S^{(2)}_{k}&=t^{2}\tilde{a}_{\alpha n}t^{-\gamma}\partial_{\alpha t}R_{k-1}\\
&\qquad+\sum^{k-1}_{p=0}\big[p\partial_{\alpha}c_{p}t^{p+1-\gamma}
+\sum^{p-[\overline{m}]+1}_{j=0}e_{p,j}t^{p+1}(\log t)^{j}\big]S_{k-p-1}(\Tilde{a}_{\alpha n}).
\end{align*}
A simple computation yields
$$
tt^{-\gamma}\partial_{t}R_{k-1}=t\partial_{t}(t^{-\gamma}R_{k-1})+\gamma t^{-\gamma}R_{k-1},
$$
and
\begin{align*}
tt^{-\gamma}\partial_{\alpha t}R_{k-1}&=t\big[\partial_{\alpha}(t^{-\gamma}\partial_{t}R_{k-1})
+\partial_{\alpha}\gamma \log t (t^{-\gamma}\partial_{t}R_{k-1})\big]\\
&=t\partial_{\alpha t}(t^{-\gamma}R_{k-1})+\partial_{\alpha}\gamma t^{-\gamma}R_{k-1}
+\gamma\partial_{\alpha}( t^{-\gamma}R_{k-1})\\
&\qquad+\partial_{\alpha}\gamma  \big[t\partial_{t}(t^{-\gamma}R_{k-1})
+\gamma t^{-\gamma}R_{k-1}\big]\log t.
\end{align*}
Similarly, by \eqref{eq-ch4-regularity-coefficients-non-int-large} for $i\leq k-1$ and $j\leq i-[\overline{m}]$, 
and \eqref{eq-ch4-LinearRegularity-noon-int-m111-large},
\eqref{eq-ch4-LinearRegularity-noon-int-m112-large},
and \eqref{eq-ch4-LinearRegularity-non-int-m2-large} with $k$ replaced by $k-1$, we obtain, 
$$
a^{(2)}_{k}\in  C^{\ell-k,\alpha}(B'_{1}),\quad a^{(2)}_{k,j}
\in  C^{\ell-k,\epsilon}(B'_{1})\quad\text{for }j\leq k-[\overline{m}],
$$
and, for any nonnegative integers $\nu\leq k$ and $\tau\leq\ell-k$, 
$$
\begin{aligned}
&\partial_{t}^{\nu} D_{x^{\prime}}^{\tau} S^{(2)}_{k}\in C^{\epsilon,\alpha}_{x',t}(\bar{G}_{r}),\\
&\partial_{t}^{\nu} D_{x^{\prime}}^{\tau}(t^{-\gamma}S^{(2)}_{k})
\in C^{\epsilon}(\bar{G}_{r}),\\
&|\partial_{t}^{\nu} D_{x^{\prime}}^{\tau} S^{(2)}_{k}|\leq Ct^{k-\nu+\alpha}\quad\text{in }G_{1/2}.
\end{aligned}
$$
We have similar results for $t^{2}\Tilde{b}_{n}\partial_{t}u$, 
$t\Tilde{b}_{\alpha}\partial_{\alpha}u$, and $t\Tilde{c}u$. 

In conclusion, we can write $F$ in \eqref{eq-ch4-LinearODE-F1} as in \eqref{eq-ch4-identity-F-non-int-k-large} 
and obtain \eqref{eq-ch4-regularity-coefficients-F-non-int-k-large} and 
\eqref{eq-ch4-LinearRegularity-F-m1-non-int-k-large}. For future reference, we write
\begin{align}\label{eq-ch4-expression-a-non-int-k-large}\begin{split}
a_{i,j}&\in\mathrm{span}\{c_{p,q},D_{x'}c_{p,q}(p\leq i-1,q\leq p-[\overline{m}]),\\
&\qquad\qquad D^{2}_{x'}c_{p,q}(p\leq i-2,q\leq p-[\overline{m}])\},   
\end{split}\end{align}
for $i=[\overline{m}]+1,\cdots,k$ and $j=0,\cdots,i-[\overline{m}].$ 
We emphasize that $a_{i,j}$ is determined only by $c_{p,q}$ and its derivatives for $p\leq i-1$ and $q\leq p-[\overline{m}]$.

By a simple computation, for a constant $a>0$ and a positive integer $p$, we have, for any $0<t<1$,
$$
\int^{t}_{0}s^{a-1}(\log s)^{p}ds=\sum^{p}_{j=0}\frac{(-1)^{j}}{a^{j+1}}\frac{p!}{(p-j)!}t^{a}(\log t)^{p-j}.
$$
Next, by comparing \eqref{eq-ch4-identity-F-non-int-k-large} 
with \eqref{eq-ch4-identity-F-non-int-overline-m-large}, we have
$$
S_{[\overline{m}]}=\sum_{i=[\overline{m}]+1}^{k} a_{i} t^{i}
+\sum^{k}_{i=[\overline{m}]+1}\sum^{i-[\overline{m}]}_{j=0} a_{i,j} t^{i+\gamma}(\log t)^{j}+S_{k}.
$$
Substituting such an expression in \eqref{eq-ch4-expression-remainder-m-int-non-large} 
and combining with \eqref{eq-ch4-indentity-u-int-overline-m-non-large}, we have
\begin{align}\label{eq-ch4-identity-u-k-intermediate-non-int-overline-m-large}
u=\sum_{i=0}^{k} c_{i} t^{i}+\sum^{k}_{i=[\overline{m}]}\sum^{i-[\overline{m}]}_{j=0} 
c_{i,j} t^{i+\gamma}(\log t)^{j}+R_{k},
\end{align}
where, for $i=[\overline{m}]+1,\cdots,k,$ and $j=0,\cdots,i-[\overline{m}]$,
\begin{align}\label{eq-ch4-expression-coefficient-k-k1-non-int-k-large}\begin{split} 
c_{i} &=\frac{a_{i}}{(i-\underline{m})(i-\overline{m})}, \\
c_{i,j}&\in\mathrm{span}\{a_{i,h} \,(j\leq h\leq i-[\overline{m}])\},
\end{split}\end{align}
and
\begin{align}\label{eq-ch4-expression-R-k-k1-non-int-k-large}
R_{k}=\frac{t^{\overline{m}}}{\overline{m}-\underline{m}} 
\int_{0}^{t} s^{-1-\overline{m}} S_{k} d s-\frac{t^{\underline{m}}}{\overline{m}-\underline{m}} 
\int_{0}^{t} s^{-1-\underline{m}} S_{k} d s.
\end{align}
By \eqref{eq-ch4-regularity-coefficients-F-non-int-k-large} for $i=k$ and $j=0,\cdots,k-[\overline{m}]$, 
we obtain \eqref{eq-ch4-regularity-coefficients-non-int-large} for $i=k$ and $j=0,\cdots,k-[\overline{m}]$. 
Next, by \eqref{eq-ch4-expression-R-k-k1-non-int-k-large}, we write
\begin{equation*}
R_{k}=\underline{R}_{k}+\overline{R}_{k}, 
\end{equation*}
where
\begin{align*}
\underline{R}_{k}
&=-\frac{t^{\underline{m}}}{\overline{m}-\underline{m}} \int_{0}^{t} s^{-1-\underline{m}} S_{k}d s,\\
\overline{R}_{k}&=\frac{t^{\overline{m}}}{\overline{m}-\underline{m}} 
\int_{0}^{t} s^{-1-\overline{m}} S_{k} d s.
\end{align*}
A simple computation yields
\begin{equation*}
t\partial_{t}R_{k}=\underline{m}\, \underline{R}_{k}+\overline{m}\,\overline{R}_{k},
\end{equation*}
and
\begin{equation*}
t^{2}\partial^{2}_{t}R_{k}=S_{k}+\underline{m}(\underline{m}-1) \underline{R}_{k}
+\overline{m}(\overline{m}-1)\overline{R}_{k}.
\end{equation*}
Note that $k\geq [\overline{m}]+1$. By \eqref{eq-ch4-LinearRegularity-F-m1-non-int-k-large}, 
Lemma \ref{lemma-ch4-BasicHolderRegularity} with $a=-\underline{m}$, 
and Lemma \ref{lemma-ch4-singular-integral-non-int-higher} with $a=\overline{m}$, 
we have, for any $\nu \leq k$ and $\tau \leq \ell-k$, 
$$
\partial_{t}^{\nu} D_{x^{\prime}}^{\tau} \underline{R}_{k},
\partial_{t}^{\nu} D_{x^{\prime}}^{\tau} \overline{R}_{k}\in C^{\epsilon,\alpha}_{x',t}(\bar{G}_{r}),
$$
and 
$$
\big|\partial_{t}^{\nu} D_{x^{\prime}}^{\tau} \underline{R}_{k}\big|
+\big|\partial_{t}^{\nu} D_{x^{\prime}}^{\tau} \overline{R}_{k}\big|
\leq C t^{k-\nu+\alpha} \quad\text {in } G_{1 / 2}.
$$
As a consequence, we obtain, for any $\nu \leq k$ and $\tau \leq \ell-k$,
$$
\partial_{t}^{\nu} D_{x^{\prime}}^{\tau} R_{k}, 
t \partial_{t} \partial_{t}^{\nu} D_{x^{\prime}}^{\tau} R_{k}, 
t^{2} \partial_{t}^{2} \partial_{t}^{\nu} D_{x^{\prime}}^{\tau} R_{k} 
\in C^{\epsilon,\alpha}_{x',t}(\bar{G}_{r}),
$$
and
$$
\big|\partial_{t}^{\nu} D_{x^{\prime}}^{\tau} R_{k}\big|
+\big|t \partial_{t} \partial_{t}^{\nu} D_{x^{\prime}}^{\tau} R_{k}\big|
+\big|t^{2} \partial_{t}^{2} \partial_{t}^{\nu} D_{x^{\prime}}^{\tau} R_{k}\big| 
\leq C t^{k-\nu+\alpha} \quad \text {in } G_{1 / 2}.
$$
By \eqref{eq-ch4-expression-R-k-k1-non-int-k-large} again, we write
$$
t^{-\gamma}R_{k}=t^{-\gamma}\underline{R}_{k}+t^{-\gamma}\overline{R}_{k},
$$
where
\begin{align*}
t^{-\gamma}\underline{R}_{k}&=-\frac{t^{\underline{m}-\gamma}}{\overline{m}-\underline{m}} 
\int_{0}^{t} s^{-1-(\underline{m}-\gamma)} s^{-\gamma}S_{k} d s,\\
t^{-\gamma}\overline{R}_{k}&=\frac{t^{\overline{m}-\gamma}}{\overline{m}-\underline{m}} 
\int_{0}^{t} s^{-1-(\overline{m}-\gamma)} s^{-\gamma}S_{k} d s.
\end{align*}
A similar computation yields
\begin{equation*}
t\partial_{t}(t^{-\gamma}R_{k})=(\underline{m}-\gamma)t^{-\gamma}\underline{R}_{k}
+(\overline{m}-\gamma)t^{-\gamma}\overline{R}_{k},
\end{equation*}
and
\begin{align*}
t^{2}\partial^{2}_{t}(t^{-\gamma}R_{k})&=t^{-\gamma}S_{k}
+(\underline{m}-\gamma)(\underline{m}-\gamma-1) t^{-\gamma}\underline{R}_{k}\\
&\qquad+(\overline{m}-\gamma)(\overline{m}-\gamma-1)t^{-\gamma}\overline{R}_{k}.
\end{align*}
Note that $\underline{m}-\gamma<0$ and $\overline{m}-\gamma=[\overline{m}]$ 
by the definition of $\gamma$. By \eqref{eq-ch4-LinearRegularity-F-m1-non-int-k-large}, 
and Lemma \ref{lemma-ch4-BasicHolderRegularity} 
with $a=\gamma-\underline{m}$ 
and Lemma \ref{lemma-ch4-singular-integral-int-higher} 
with $a=[\overline{m}]$, 
we have, for any $\nu \leq k$ and $\tau \leq \ell-k$, 
$$
\partial_{t}^{\nu} D_{x^{\prime}}^{\tau}(t^{-\gamma} \underline{R}_{k}),
\partial_{t}^{\nu} D_{x^{\prime}}^{\tau}(t^{-\gamma} \overline{R}_{k})
\in C^{\epsilon}(\bar{G}_{r}).
$$
As a consequence, we obtain, for any $\nu \leq k$ and $\tau \leq \ell-k$,
$$
\partial_{t}^{\nu} D_{x^{\prime}}^{\tau} (t^{-\gamma}R_{k}), 
t \partial_{t} \partial_{t}^{\nu} D_{x^{\prime}}^{\tau} (t^{-\gamma}R_{k}), 
t^{2} \partial_{t}^{2} \partial_{t}^{\nu} D_{x^{\prime}}^{\tau}(t^{-\gamma} R_{k}) 
\in C^{\epsilon}(\bar{G}_{r}).
$$
Therefore, we conclude 
\eqref{eq-ch4-regularity-coefficients-non-int-large}-\eqref{eq-ch4-LinearRegularity-non-int-m2-large} 
for the chosen $k$. This finishes the proof by induction.

{\it Step 2. We prove \eqref{eq-ch4-LinearRegularity-solution-non-int-m2-large}.}
We now assume $c_{[\overline{m}], 0}=0$ on $B'_1$.  
Then, $a_{i,j}=0$ and $c_{i,j}=0$ by \eqref{eq-ch4-expression-a-non-int-k-large} and 
\eqref{eq-ch4-expression-coefficient-k-k1-non-int-k-large} inductively for 
$i=[\overline{m}]+1, \cdots, k$ and $j=0, \cdots, i-[\overline{m}]$. 
Next, we prove, 
for any nonnegative integers $[\overline{m}]\le\nu\le k$ 
and $\tau+\nu\le \ell$,  
\begin{align}\label{eq-ch4-LinearRegularity-solution-non-int-m2-large}\partial_t^\nu D_{x'}^\tau u, 
t\partial_t\partial_t^\nu D_{x'}^\tau u, 
t^2\partial_t^2\partial_t^\nu D_{x'}^\tau u
\in 
C^{\alpha}(\bar{G}_r). 
\end{align}

For each $\nu$ with $[\overline{m}]\le \nu\le k$, 
by \eqref{eq-ch4-LinearExpansion-non-int-k-large} with $k$ replaced by $\nu$ 
and $c_{i,j}=0$ for $i=[\overline{m}], \cdots, k$ and $j=0, \cdots, i-[\overline{m}]$, we have 
\begin{align*} 
u=\sum_{i=0}^{\nu}c_i t^i
+R_{\nu}\quad\text{in }G_1,\end{align*}
and hence 
$$\partial_t^\nu u=\nu!c_\nu +\partial_t^\nu R_{\nu}\quad\text{in }G_1.$$
By \eqref{eq-ch4-regularity-coefficients-non-int-large} and \eqref{eq-ch4-LinearRegularity-noon-int-m111-large}, we have, 
for any $\tau\le \ell-\nu$,  
$$\partial_t^\nu D_{x'}^\tau u, 
t\partial_t\partial_t^\nu D_{x'}^\tau u, 
t^2\partial_t^2\partial_t^\nu D_{x'}^\tau u
\in  C^{\epsilon, \alpha}_{x',t}(\bar{G}_r).$$
Next, we improve the regularity in $x'$. 

We take a $\beta\in (0,\gamma)$. 
By Theorem \ref{thrm-ch3-Linear-NormalEstimate-general-beta}, we have, 
for any $\nu$ and $\tau$ with $\nu\le [\overline{m}]$ and 
$\nu+\tau\le \ell$, 
\begin{equation*}
\partial_t^\nu D^\tau_{x'}u, tD\partial_t^\nu D^\tau_{x'}u, t^2D^2\partial_t^\nu D^\tau_{x'}u
\in C^{\alpha,\beta}_{x',t}(\bar G_r).\end{equation*}
Hence, \eqref{eq-ch4-LinearRegularity-solution-non-int-m2-large} holds for 
$\nu= [\overline{m}]$ and any $\tau\le \ell-[\overline{m}]$. 

We assume that \eqref{eq-ch4-LinearRegularity-solution-non-int-m2-large} 
holds for $[\overline{m}]\le \nu\le k-1$ and $\tau+\nu\le\ell$  
and then proceed to prove \eqref{eq-ch4-LinearRegularity-solution-non-int-m2-large} 
for $\nu=k$ and $\tau+\nu\le\ell$. 
The proof is similar as that of the regularity of $R_k$ in Step 1. 

By \eqref{eq-ch4-LinearRegularity-solution-non-int-m2-large} 
for any $\nu\le k-1$ and $\tau+\nu\le \ell$ 
and a similar argument as in the proof of Lemma \ref{lemma-ch4-Linear-MainThm-non-integer-large}, 
we have, for any $\nu \leq k$ and $\tau \leq \ell-\nu$,
\begin{equation}\label{i}
\partial_{t}^{\nu} D_{x^{\prime}}^{\tau} F \in C^{\alpha}(\bar{G}_{r}).
\end{equation}
Then, we can write 
\begin{equation}\label{eq-equality-F-a-non-large-full-Holder}
F=\sum_{i=0}^{k} a_it^i
+S_{k},\end{equation}
where $a_i$ are functions on $B_1'$ 
and $S_k$ is a function in $G_1$  
such that   
\begin{equation}\label{eq-regularity-a-non-large-full-Holder}
a_i\in C^{\ell-i, \alpha}(B_1')\quad\text{for any }i=0, \cdots, k,\end{equation}
and,
for any $\nu\le k$ and $\tau\le \ell-k$,
\begin{align}\label{eq-Regularity-S-non-large-full-Holder}\begin{split}
\partial_t^\nu D_{x'}^\tau S_{k}&\in C^\alpha(\bar G_r),\\
|\partial_t^\nu D_{x'}^\tau S_{k}|&\le Ct^{k-\nu+\alpha}\quad\text{in }\bar G_r.
\end{split}\end{align}
In fact, $a_i$ is given by 
$$a_i=\frac{1}{i!}\partial_t^iF(\cdot,0)\quad\text{for }i=0, \cdots, k.$$
Compare with \eqref{eq-ch4-identity-F-non-int-k-large}, 
\eqref{eq-ch4-regularity-coefficients-F-non-int-k-large}, 
and \eqref{eq-ch4-LinearRegularity-F-m1-non-int-k-large}. 

With $F$ given by \eqref{eq-equality-F-a-non-large-full-Holder}, by proceeding as in Step 1, 
we get $u$ as in \eqref{eq-ch4-identity-u-k-intermediate-non-int-overline-m-large}, 
where $c_i$ is given by \eqref{eq-ch4-expression-coefficient-k-k1-non-int-k-large} 
and $c_{i,j}=0$ for $i=[\overline{m}], \cdots, k$ and $j=0, \cdots, i-[\overline{m}]$, 
and $R_k$ is given by \eqref{eq-ch4-expression-R-k-k1-non-int-k-large}. 
Similarly, by \eqref{eq-regularity-a-non-large-full-Holder}
and \eqref{eq-Regularity-S-non-large-full-Holder}, we obtain 
\begin{equation*}
c_i\in C^{\ell-i,\alpha}(B'_1)\quad\text{for }i=0, \cdots, k,
\end{equation*}
and, for any $\nu\le k$ and $\tau\le \ell-k$,
\begin{equation*}
\partial_t^\nu D_{x'}^\tau R_{k},
t\partial_t\partial_t^\nu D_{x'}^\tau R_{k},
t^2\partial_t^2\partial_t^\nu D_{x'}^\tau R_{k}\in C^{\alpha}(\bar{G}_r),
\end{equation*}
and
\begin{align*}
|\partial_t^\nu D_{x'}^\tau R_{k}|
+|t\partial_t\partial_t^\nu D_{x'}^\tau R_{k}|
+|t^2\partial_t^2\partial_t^\nu D_{x'}^\tau R_{k}|
\le Ct^{k-\nu+\alpha}
\quad\text{in }G_{1/2}.\end{align*}
Therefore, we conclude \eqref{eq-ch4-LinearRegularity-solution-non-int-m2-large} 
for $\nu=k$ and $\tau+\nu\le\ell$.
\end{proof}

In Theorem \ref{thrm-ch4-Linear-MainThm-non-integer-large}, 
we proved the regularity of $R_{k}$ and $t^{-\gamma}R_k$. 
The regularity of $R_k$ is already used in the same theorem 
for the regularity of $u$ when $c_{[\overline{m}], 0}$ is identically zero. 
We will use the regularity of $t^{-\gamma}R_k$ in the proof of 
Theorem \ref{Thm-ch4-Linear-MainThm-decomposition-non-int-large} below.  
We need to point out that we proved the regularity of $R_{k}$ and $t^{-\gamma}R_k$ parallelly.  
In this way, both $R_{k}$ and $t^{-\gamma}R_k$ have the same regularity in $x'$. 
If we derive the regularity of $t^{-\gamma}R_k$ from that of $R_{k}$ by 
Lemma \ref{lemma-ch4-regularity-power1}, 
we will further lose a regularity of a H\"older index $\gamma$. 
Such a loss will accumulate in the induction process. 

The expansion in \eqref{eq-ch4-LinearExpansion-non-int-k-large} with $k=\ell$ 
has the following pattern: 
\begin{align*}&t^0, t^1, \cdots, t^{[\overline{m}]},t^{[\overline{m}]+\gamma},
t^{[\overline{m}]+1},t^{[\overline{m}]+1+\gamma}\log t,t^{[\overline{m}]+1+\gamma},\cdots,\\ 
&\qquad t^\ell, t^{\ell+\gamma}(\log t)^{\ell-[\overline{m}]}, \cdots, 
t^{\ell+\gamma}\log t, t^{\ell+\gamma}.\end{align*}
The coefficients $c_0, \cdots, c_{\ell}$ have an optimal regularity 
determined by the regularity of $a_{ij}, b_i, c$, and $f$, 
and there is a loss of regularity for $c_{i,j}$
for $[\overline{m}]\le i\le \ell$ and $0\le j\le i-[\overline{m}]$. 
The first term with non-integer power is given by $t^{[\overline{m}]+\gamma}$. 
If the first non-integer power does not appear, 
then there are no non-integer power or logarithmic terms in the expansion and hence 
the solutions are as regular as the 
coefficients and nonhomogeneous terms allow.


\begin{theorem}\label{Thm-ch4-Linear-MainThm-decomposition-non-int-large}
Suppose that $\underline{m}$, $\overline{m}$, and $\gamma$ are functions on $B'_{1}$ satisfying 
\eqref{eq-ch4-Assumption_m1}, \eqref{eq-ch4-Assumption_m2}, 
and \eqref{eq-definition-gamma}, 
and that $\ell$ is an integer and $\alpha\in(0,1)$ is a constant such that $\ell\geq [\overline{m}] \ge 1$ 
and \eqref{eq-large-Holder-index} holds. 
Assume $a_{n n}, b_{n}, c \in C^{\ell+1, \alpha}(\bar{G}_{1})$ 
and $a_{i j}, b_{i}, f \in C^{\ell, \alpha}(\bar{G}_{1})$ for $i \neq n$, 
and let $u \in C(\bar{G}_{1}) \cap C^{2}(G_{1})$ be a solution 
of \eqref{eq-ch3-Equ} and \eqref{eq-ch3-Dirichlet}. 
Then,
\begin{align}\label{eq-ch4-LinearExpansion-decomposition-non-large}
u=v+\sum^{\ell-[\overline{m}]}_{j=0}w^{(j)}t^{\gamma}(\log t)^{j} \quad\text{in } G_{1},
\end{align}
where $v$ and $w^{(0)},\cdots,w^{(\ell-[\overline{m}])}$ are functions in $G_{1}$ satisfying, 
for any $x'_{0}\in B'_{1}$, 
any $r\in(0,1-|x'_{0}|)$, 
and any $\epsilon$ with $0<\epsilon <\alpha-\gamma$ on $\bar B_{r}'(x_0')$, 
\begin{align*}
&v \in C^{\ell, \alpha}(\bar{G}_{r}(x_{0}')),\\
&w^{(j)} \in C^{\ell, \epsilon}(\bar{G}_{r}(x_{0}'))\quad\text{for }j=0,\cdots,\ell-[\overline{m}],
\end{align*}
with $\partial_t^i w^{(j)}=0$ on $\Sigma_{1}$, for $j=0,\cdots,\ell-[\overline{m}]$ 
and $i=0,\cdots,[\overline{m}]+j-1$, and 
$$
|v|_{C^{\ell, \alpha}(\bar{G}_{r}(x_{0}'))}
+\sum^{\ell-[\overline{m}]}_{j=0}|w^{(j)}|_{C^{\ell, \epsilon}(\bar{G}_{r}(x_{0}'))} 
\leq C\big\{|u|_{L^{\infty}(G_1)}+|f|_{C^{\ell, \alpha}(\bar{G}_1)}\big\},
$$
for some positive constant $C$ depending only on $n$, $\lambda$, $\ell$, $\alpha$, $\epsilon$, $r$
$\underline{m}$, $\overline{m}$, the $C^{\ell+1, \alpha}$-norms of $a_{n n}, b_n, c$ in $\bar{G}_1$, 
and the $C^{\ell, \alpha}$-norms of $a_{i j}, b_i$ in $\bar{G}_1$ for $i \neq n$.
If, in addition, $\partial_t^{[\overline{m}]} w^{(0)}=0$ on $\Sigma_r(x_0')$, 
for some $x'_{0}\in B'_{1}$ and $r\in(0,1-|x'_{0}|)$, 
then $u \in C^{\ell,\alpha}(\bar{G}_{r}(x_{0}'))$. 
\end{theorem}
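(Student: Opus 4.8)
The plan is to read off the decomposition from the finite expansion of Theorem~\ref{thrm-ch4-Linear-MainThm-non-integer-large} at the top level $k=\ell$, turn the coefficients of the singular part into genuine functions on $G_{r}$ via the extension Lemma~\ref{lemma-ch4-extensions}, and then recognize the leftover as a uniformly degenerate solution whose own expansion carries no singular term. Fix $x_{0}'\in B_{1}'$ and $r\in(0,1-|x_{0}'|)$, and take $x_{0}'=0$. Applying Theorem~\ref{thrm-ch4-Linear-MainThm-non-integer-large} with $k=\ell$ and regrouping the double sum by powers of $\log t$,
\[
u=\sum_{i=0}^{\ell}c_it^i+\sum_{j=0}^{\ell-[\overline{m}]}t^\gamma(\log t)^j\Big(\sum_{i=[\overline{m}]+j}^{\ell}c_{i,j}t^i\Big)+R_\ell\quad\text{in }G_1,
\]
with $c_i\in C^{\ell-i,\alpha}(\bar B_r')$, $c_{i,j}\in C^{\ell-i,\epsilon}(\bar B_r')$, and with $R_\ell$ and $t^{-\gamma}R_\ell$ carrying the weighted regularity and the decay $|\partial_t^\nu R_\ell|\le Ct^{\ell-\nu+\alpha}$ recorded there.

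Next I would apply Lemma~\ref{lemma-ch4-extensions} (with parameters $\ell$ and $\epsilon$), for each $j=0,\dots,\ell-[\overline{m}]$, to the list of boundary data $(0,\dots,0,c_{[\overline{m}]+j,j},\dots,c_{\ell,j})$, obtaining $w^{(j)}\in C^{\ell,\epsilon}(\bar G_r)\cap C^\infty(G_r)$ with $\partial_t^iw^{(j)}(\cdot,0)=i!\,c_{i,j}$ for $[\overline{m}]+j\le i\le\ell$ and $\partial_t^iw^{(j)}(\cdot,0)=0$ for $0\le i<[\overline{m}]+j$; the lemma also supplies the weighted bounds $t^lD^lD^\ell w^{(j)}\in C^\epsilon(\bar G_r)$ vanishing on $\Sigma_r$. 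The required boundary vanishing $\partial_t^iw^{(j)}=0$ on $\Sigma_r$ for $i=0,\dots,[\overline{m}]+j-1$ and the estimate for $\sum_j|w^{(j)}|_{C^{\ell,\epsilon}(\bar G_{r})}$ with the asserted dependence then follow from the linearity and the bound in Lemma~\ref{lemma-ch4-extensions} together with the estimates for $c_{i,j}$. I then set $v:=u-\sum_{j=0}^{\ell-[\overline{m}]}w^{(j)}t^\gamma(\log t)^j$, so $v\in C(\bar G_1)\cap C^2(G_1)$ solves $Lv=\tilde f$ in $G_1$ with $\tilde f:=f-L\big[\sum_jw^{(j)}t^\gamma(\log t)^j\big]$; since each $w^{(j)}t^\gamma(\log t)^j$ vanishes to order $\overline{m}+j>1$ at $t=0$, $\tilde f$ extends continuously to $\Sigma_1$ with $\tilde f(\cdot,0)=f(\cdot,0)$ and $v(\cdot,0)=\tilde f/c(\cdot,0)$.

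The core is to prove $v\in C^{\ell,\alpha}(\bar G_r)$. Writing $w^{(j)}=\sum_{i=[\overline{m}]+j}^{\ell}c_{i,j}t^i+\phi^{(j)}$, the remainder $\phi^{(j)}$ has vanishing $t$-Taylor jet through order $\ell$, so $|\phi^{(j)}t^\gamma(\log t)^j|\le Ct^{\ell+\gamma+\epsilon}|\log t|^j$ and these terms do not affect the expansion of $v$ up to order $t^{\ell+\gamma}$. The key structural point is that applying $L$ to $w^{(j)}t^\gamma(\log t)^j$ produces, after the indicial cancellation (the coefficient of $t^{\overline{m}}(\log t)^j$ vanishes because $\overline{m}=[\overline{m}]+\gamma$ is a root of the characteristic polynomial), only terms of strictly higher power of $t$ — the gained factors of $t$ coming from $D_{x'}w^{(j)}$, $D^2_{x'}w^{(j)}$, and from $a_{nn}-a_{nn}(\cdot,0)$, $b_n-b_n(\cdot,0)$, $c-c(\cdot,0)$ — or of the same power of $t$ but strictly lower power of $\log t$, using $D_{x'}t^\gamma=t^\gamma\log t\,D_{x'}\gamma$ exactly as in Step~1 of the proof of Theorem~\ref{thrm-ch4-Linear-MainThm-non-integer-large}. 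Consequently $\tilde f$ is again a combination of $f$ and of terms of the type appearing in \eqref{eq-ch4-LinearODE-F}, so the construction of Theorem~\ref{thrm-ch4-Linear-MainThm-non-integer-large} can be rerun for $v$ and yields an expansion whose singular coefficients through order $\ell$ all vanish; in particular the leading one is zero. A difference-quotient bootstrap in $x'$ for $v$, based on Theorem~\ref{thrm-ch3-Linear-NormalEstimate-general-beta} and the solution formula in Lemma~\ref{lemma-ch4-SolutionODE} (the argument of Step~2 in the proof of Theorem~\ref{thrm-ch4-Linear-MainThm-non-integer-large}, now with the leading singular coefficient absent), then upgrades the mixed-H\"older regularity of $v$ to $v\in C^{\ell,\alpha}(\bar G_r)$, and the corresponding estimate gives $|v|_{C^{\ell,\alpha}(\bar G_r)}\le C\big(|u|_{L^\infty(G_1)}+|f|_{C^{\ell,\alpha}(\bar G_1)}\big)$.

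Finally, for the last assertion: by construction $\partial_t^{[\overline{m}]}w^{(0)}(\cdot,0)=[\overline{m}]!\,c_{[\overline{m}],0}$ on $B_r'$, so $\partial_t^{[\overline{m}]}w^{(0)}=0$ on $\Sigma_r$ forces $c_{[\overline{m}],0}=0$ there, and the last assertion of Theorem~\ref{thrm-ch4-Linear-MainThm-non-integer-large} applied to $u$ itself gives $u\in C^{\ell,\alpha}(\bar G_r)$. I expect the main obstacle to be the computation described in the previous paragraph: controlling the growth of $\log t$-powers when $\gamma$ is non-constant, and verifying that the indicial cancellation together with the gained factors of $t$ keeps $\tilde f$ in the admissible class and prevents $L[w^{(j)}t^\gamma(\log t)^j]$ from regenerating a singular term of order $\le t^{\ell+\gamma}$ — in effect, repeating for the equation satisfied by $v$ the bookkeeping already carried out in the proof of Theorem~\ref{thrm-ch4-Linear-MainThm-non-integer-large}.
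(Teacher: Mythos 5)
Your construction inverts the paper's: you build \emph{all} the singular factors $w^{(0)},\dots,w^{(\ell-[\overline m])}$ by extending the coefficients $c_{i,j}$ via Lemma \ref{lemma-ch4-extensions} and then define $v$ as the leftover, whereas the paper extends the $c_i$ to get $v\in C^{\ell,\alpha}$ and the $c_{i,j}$ to get $w^{(0)},\dots,w^{(\ell-[\overline m]-1)}$, and assigns the leftover to the \emph{last} singular slot $w^{(\ell-[\overline m])}$, which only needs to be $C^{\ell,\epsilon}$. This role reversal creates a genuine gap: your $v$ is not $C^{\ell,\alpha}$ in general. Since $c_{i,j}\in C^{\ell-i,\epsilon}$ only, Lemma \ref{lemma-ch4-extensions} gives $w^{(j)}\in C^{\ell,\epsilon}$ and no better, so the remainder $\phi^{(j)}=w^{(j)}-\sum_i c_{i,j}t^i$ beyond the $t$-Taylor polynomial satisfies only $D^\ell\phi^{(j)}\in C^\epsilon$ with $|D^\ell\phi^{(j)}|\lesssim t^\epsilon$. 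Consequently $D^\ell\big[\phi^{(j)}t^\gamma(\log t)^j\big]$ contains the term $D^\ell\phi^{(j)}\cdot t^\gamma(\log t)^j=O(t^{\gamma+\epsilon}|\log t|^j)$, and since $\gamma+\epsilon<\alpha$ this is not $C^\alpha$ in $t$ (compare values at $t$ and at $0$). Your $v$ absorbs exactly these terms, so the asserted estimate $v\in C^{\ell,\alpha}$ fails for your choice of decomposition; having an expansion of $v$ with no singular \emph{coefficients} does not give $C^{\ell,\alpha}$ regularity of the remainder. The leftover can only be placed where division by $t^\gamma(\log t)^{\ell-[\overline m]}$ is available to tame it --- which is precisely why Theorem \ref{thrm-ch4-Linear-MainThm-non-integer-large} proves the $C^\epsilon$ regularity of $t^{-\gamma}R_k$ in parallel with that of $R_k$.

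Your fallback argument for $v\in C^{\ell,\alpha}$ --- viewing $Lv=\tilde f$ as a fresh degenerate problem and ``rerunning'' Theorem \ref{thrm-ch4-Linear-MainThm-non-integer-large} --- is also not licensed: $\tilde f=f-L\big[\sum_j w^{(j)}t^\gamma(\log t)^j\big]$ contains, even after the indicial cancellation, terms of the form $t^{i+\gamma}(\log t)^j\cdot(\cdot)$ with $i+\gamma<\ell+\alpha$, so $\tilde f\notin C^{\ell,\alpha}(\bar G_1)$ and the hypotheses of that theorem (and of Theorem \ref{thrm-ch3-Linear-NormalEstimate-general-beta}) are not met; one would need a new polyhomogeneous version of the whole induction. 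The fix is to follow the paper's ordering: extend the $c_i$ to obtain $v\in C^{\ell,\alpha}$ directly, extend $c_{i,j}$ only for $j\le\ell-[\overline m]-1$, define $w^{(\ell-[\overline m])}$ by the identity \eqref{eq-ch4-LinearExpansion-decomposition-non-large}, and prove its $C^{\ell,\epsilon}$ regularity using the expansions at every intermediate level $k$ together with the regularity of $t^{-\gamma}R_k$ and Lemmas \ref{lemma-ch4-regularity-log} and \ref{lemma-ch4-regularity-power1}. Your treatment of the final assertion (via $c_{[\overline m],0}=0$ and the last part of Theorem \ref{thrm-ch4-Linear-MainThm-non-integer-large}) is correct and matches the paper.
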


\begin{proof}
We will apply Theorem \ref{thrm-ch4-Linear-MainThm-non-integer-large} 
and Lemma \ref{lemma-ch4-extensions} to construct functions 
$v$ and $w^{(j)}$, for $j=0, \cdots, \ell-[\overline{m}]-1$, 
and prove that there is a function $w^{(\ell-[\overline{m}])}$ with the desired regularity 
such that \eqref{eq-ch4-LinearExpansion-decomposition-non-large} holds.

We adopt notations in the proof of Theorem \ref{thrm-ch4-Linear-MainThm-non-integer-large}. 
Without loss of generality, we assume $x'_{0}=0$ 
and take a constant $\epsilon_0$ such that 
$0<\epsilon_0 \le\alpha-\gamma$ on $\bar B_{1}'$. 
Throughout the proof, $\epsilon$ is an arbitrary constant in $(0,\epsilon_{0})$ 
and $r$ is an arbitrary constant in $(0,1)$. 

By Theorem \ref{thrm-ch4-Linear-MainThm-non-integer-large}, 
there exist functions $c_{i} \in C^{\ell-i, \alpha}(B'_{1})$, for $0 \leq i \leq$ $\ell$, 
and $c_{i, j} \in C^{\ell-i, \epsilon}(B'_{1})$, 
for $[\overline{m}] \leq i \leq \ell$ and $0\leq j\leq i-[\overline{m}]$, such that
$$
u=\sum_{i=0}^{\ell} c_{i} t^{i}+\sum_{i=[\overline{m}]}^{\ell}\sum^{i-[\overline{m}]}_{j=0} 
c_{i, j} t^{i+\gamma}(\log t)^{j}+R_{\ell},
$$
where $R_{\ell}$ is a function in $G_{1}$ with the stated properties in 
Theorem \ref{thrm-ch4-Linear-MainThm-non-integer-large} with $k=\ell$. We write 
$$
u=\sum_{i=0}^{\ell} c_{i} t^{i}+\sum_{j=0}^{\ell-[\overline{m}]}\sum^{\ell}_{i=[\overline{m}]+j} 
c_{i, j} t^{i}t^{\gamma}(\log t)^{j}+R_{\ell}.
$$
By Lemma \ref{lemma-ch4-extensions}, there exist functions $v \in C^{\ell, \alpha}(\bar{G}_{r})$ 
and $w^{(j)}\in  C^{\ell, \epsilon}(\bar{G}_{r}) $ for $j=0,\cdots,\ell-[\overline{m}]-1$, such that
\begin{equation}\label{eq-ch4-choice-v-large}
\partial_{t}^{i} v|_{t=0}=i ! c_{i}\quad\text {for } i=0, \cdots, \ell,
\end{equation}
and
\begin{equation}\label{eq-ch4-choice-w-j-large}
\partial_{t}^{i} w^{(j)}|_{t=0}=
\begin{cases}
0\quad&\text{for }i=0,\cdots,[\overline{m}]+j-1,\\
i ! c_{i,j}\quad&\text {for } i=[\overline{m}]+j, \cdots, \ell. 
\end{cases}    
\end{equation}
Introduce a function $w=w^{(\ell-[\overline{m}])}$ in $G_{1}$ such that
\begin{equation}\label{eq-identity-w-v-non-int-large}
u=v+\sum^{\ell-[\overline{m}]}_{j=0}w^{(j)}t^{\gamma}(\log t)^{j}.    
\end{equation}
We will prove $w \in C^{\ell, \epsilon}(\bar{G}_{r})$.

We first consider a nonnegative integer $k$ with $k\leq [\overline{m}]-1$. 
We will prove $\partial^{\nu}_{t}D^{\tau}_{x'}w\in C^{\epsilon}(\bar{G}_{r})$ 
for any nonnegative integers $\nu\leq k$ and $\tau\leq \ell-k$. 
By Theorem \ref{thrm-ch3-Linear-NormalEstimate-general} and $k+\alpha<\overline{m}$, 
we get $\partial^{\nu}_{t}D^{\tau}_{x'}u\in C^{\alpha}(\bar{G}_{r})$ 
for any $\nu\leq k$ and $\tau\leq \ell-\nu$. 
By $v\in C^{\ell,\alpha}(\bar{G}_{r}),$ \eqref{eq-ch4-choice-v-large}, 
Lemma \ref{lemma-ch4-regularity-log}(ii), and Lemma \ref{lemma-ch4-regularity-power1}, 
we have, for any $\nu\leq k$ and $\tau\leq \ell-k$,
$$
\partial^{\nu}_{t}D^{\tau}_{x'}\big[t^{-\gamma}(\log t)^{-(\ell-[\overline{m}])}(u-v)\big]\in C^{\epsilon}(\bar{G}_{r}).
$$
Similarly, by $w^{(j)}\in  C^{\ell, \epsilon}(\bar{G}_{r}) $ for $j=0,\cdots,\ell-[\overline{m}]-1$, 
\eqref{eq-ch4-choice-w-j-large}, and Lemma \ref{lemma-ch4-regularity-log}(ii), we have, 
for any $\nu\leq k$ and $\tau\leq \ell-k$, and $j=0,\cdots,\ell-[\overline{m}]-1$,
$$
\partial^{\nu}_{t}D^{\tau}_{x'}\big[(\log t)^{j-(\ell-[\overline{m}])}w^{(j)}\big]\in C^{\epsilon}(\bar{G}_{r}).
$$
By \eqref{eq-identity-w-v-non-int-large}, we obtain
$$
w=t^{-\gamma}(\log t)^{-(\ell-[\overline{m}])}(u-v)-\sum^{\ell-[\overline{m}]-1}_{j=0}(\log t)^{j-(\ell-[\overline{m}])}w^{(j)},
$$
and hence $\partial^{\nu}_{t}D^{\tau}_{x'}w\in C^{\epsilon}(\bar{G}_{r})$ for any $\nu\leq k$ and $\tau\leq \ell-k$.

We next fix an integer $k$ with $[\overline{m}] \leq k \leq \ell$. 
We will prove $\partial_{t}^{\nu} D_{x'}^{\tau} w \in C^{\epsilon}(\bar{G}_{r})$
 for any nonnegative integers $\nu\leq k$ and $\tau\leq \ell-k$. 
By Theorem \ref{thrm-ch4-Linear-MainThm-non-integer-large}, we have
\begin{align}\label{eq-ch4-definition-v-k-large}\begin{split}
u&=\sum_{i=0}^{k} c_{i} t^{i}+\sum_{i=[\overline{m}]}^{k}\sum^{i-[\overline{m}]}_{j=0} 
c_{i, j} t^{i+\gamma}(\log t)^{j}+R_{k}\\
&=\sum_{i=0}^{k} c_{i} t^{i}+\sum_{j=0}^{k-[\overline{m}]}\sum^{k}_{i=[\overline{m}]+j} 
c_{i, j} t^{i+\gamma}(\log t)^{j}+R_{k},
\end{split}\end{align}
where $R_{k}$ is a function in $G_{1}$ with the stated properties in 
Theorem \ref{thrm-ch4-Linear-MainThm-non-integer-large}. 
By \eqref{eq-ch4-choice-v-large}, we can write
\begin{equation}\label{eq-ch4-definition-wj-k-large}
v=\sum_{i=0}^{k} c_{i} t^{i}+S_{k}(v), 
\end{equation}
where $S_{k}(v)$ is a function in $\bar{G}_{r}$ such that, for any $\nu \leq k$ and $ \tau \leq \ell-k$,
\begin{equation}\label{eq-ch4-property-wj-k1-large}
\partial_{t}^{\nu}D_{x'}^{\tau}  S_{k}(v) \in C^{\alpha}(\bar{G}_{r}).
\end{equation}
Similarly, for $j=0,\cdots,\ell-[\overline{m}]-1$, we can write
\begin{equation}\label{eq-ch4-property-wj-k2-large}
w^{(j)}=\sum_{i=[\overline{m}]+j}^{k} c_{i,j} t^{i}+S_{k}(w^{(j)}),
\end{equation}
where $S_{k}(w^{(j)})$ is a function in $\bar{G}_{r}$ such that, for any $\nu \leq k$ and $ \tau \leq \ell-k$,
\begin{equation}\label{eq-ch4-property-wj-k3-large}
\partial_{t}^{\nu}D_{x'}^{\tau}  S_{k}(w^{(j)}) \in C^{\epsilon}(\bar{G}_{r}).
\end{equation}
Note that $w^{(j)}=S_{k}(w^{(j)})$ for $j=k-[\overline{m}]+1,\cdots,\ell-[\overline{m}]-1$. 
By substituting \eqref{eq-ch4-definition-v-k-large}, 
\eqref{eq-ch4-definition-wj-k-large}, and \eqref{eq-ch4-property-wj-k2-large}
 in \eqref{eq-identity-w-v-non-int-large}, we get, for $[\overline{m}]\leq k\leq \ell-1$, 
$$
w=(R_{k}-S_{k}(v))t^{-\gamma}(\log t)^{-(\ell-[\overline{m}])}
-\sum^{\ell-[\overline{m}]-1}_{j=0}S_{k}(w^{(j)})(\log t)^{j-(\ell-[\overline{m}])},
$$
and, for $k=\ell$,
$$
w=c_{\ell,\ell-[\overline{m}]}t^{\ell}+(R_{\ell}-S_{\ell}(v))t^{-\gamma}(\log t)^{-(\ell-[\overline{m}])}
-\sum^{\ell-[\overline{m}]-1}_{j=0}S_{\ell}(w^{(j)})(\log t)^{j-(\ell-[\overline{m}])}.
$$
Note that $\partial^i_t w=0$  on $\Sigma_1$ for $i=0,\cdots,\ell-1$. 
Take any $\nu \leq k$ and $\tau \leq \ell-k$. 
By \eqref{eq-ch4-property-wj-k1-large}, \eqref{eq-ch4-property-wj-k3-large}, 
Lemma \ref{lemma-ch4-regularity-log}(ii), and Lemma \ref{lemma-ch4-regularity-power1}, we have, 
$$
\partial_{t}^{\nu}D_{x'}^{\tau}\big[S_{k}(v)t^{-\gamma}(\log t)^{-(\ell-[\overline{m}])}\big]
\in C^{\epsilon}(\bar{G}_{r}),
$$
and for $j=0,\cdots,\ell-[\overline{m}]-1$,
$$
\partial_{t}^{\nu}D_{x'}^{\tau}\big[S_{k}(w^{(j)})(\log t)^{j-(\ell-[\overline{m}])}\big]\in C^{\epsilon}(\bar{G}_{r}).
$$
By \eqref{eq-ch4-regularity-coefficients-non-int-large}, we have, with $k=\ell$, 
$$\partial_{t}^{\nu}D_{x'}^{\tau}(c_{\ell,\ell-[\overline{m}]}t^{\ell})\in C^{\epsilon}(\bar{G}_{r}).$$ 
By \eqref{eq-ch4-LinearRegularity-noon-int-m112-large} and Lemma \ref{lemma-ch4-regularity-log}(ii), we have 
$$
\partial_{t}^{\nu}D_{x'}^{\tau}\big[R_{k}t^{-\gamma}(\log t)^{-(\ell-[\overline{m}])}\big]\in C^{\epsilon}(\bar{G}_{r}).
$$
In summary, we obtain  $\partial_{t}^{\nu}D_{x'}^{\tau}w\in C^{\epsilon}(\bar{G}_{r})$. 
This holds for any $\nu\leq k$ and $\tau\leq \ell-k$, 
and hence for any $\nu$ and $\tau$ with $\nu+\tau\leq \ell$. 
Therefore, $w \in C^{\ell, \epsilon}(\bar{G}_{r})$.

If, in addition, $\partial_t^{[\overline{m}]} w^{(0)}=0$ on $\Sigma_{1}$, 
then $c_{[\overline{m}],0}=0$  on $B'_{1}$. 
By Theorem \ref{thrm-ch4-Linear-MainThm-non-integer-large}, 
we have $u \in C^{\ell, \alpha}(\bar{G}_{r})$.
\end{proof}

We point out that the functions
$v$ and $w^{(0)}, \cdots,w^{(\ell-[\overline{m}])}$ in \eqref{eq-ch4-LinearExpansion-decomposition-non-large} are not unique. 
In fact, take any $\eta\in C^\infty(\bar G_1)$ which vanishes up to infinite order on $\Sigma_1$. 
We can write 
\begin{align*}
u=v-\eta t^{\gamma}\sum^{\ell-[\overline{m}]}_{j=0}(\log t)^{j}
+\sum^{\ell-[\overline{m}]}_{j=0}(w^{(j)}+\eta)t^{\gamma}(\log t)^{j}.\end{align*}
In other words, if $v$ and $w^{(0)}, \cdots,w^{(\ell-[\overline{m}])}$ 
satisfy \eqref{eq-ch4-LinearExpansion-decomposition-non-large}, 
then $v-\eta t^{\gamma}\sum^{\ell-[\overline{m}]}_{j=0}(\log t)^{j}$ 
and $w^{(0)}+\eta, \cdots, w^{(\ell-[\overline{m}])}+\eta$ 
also satisfy \eqref{eq-ch4-LinearExpansion-decomposition-non-large}. 
In this sense, $v$ and $w^{(0)}, \cdots,w^{(\ell-[\overline{m}])}$ 
created in the proof above is only one collection of functions satisfying 
\eqref{eq-ch4-LinearExpansion-decomposition-non-large}. 
However, they satisfy additional properties. 
First, they satisfy the estimates already stated 
in Theorem \ref{Thm-ch4-Linear-MainThm-decomposition-non-int-large}. 
Second, they depend on $u$ linearly. 
This follows from the linear dependence of $c_i$,
for $0 \leq i \leq$ $\ell$, and $c_{i,j}$, 
for $[\overline{m}] \leq i \leq \ell$ and $0\leq j\leq i-[\overline{m}]$, on $u$
as in Theorem \ref{thrm-ch4-Linear-MainThm-non-integer-large} 
and the linear dependence of the functions $v$ and 
$w^{(0)}, \cdots,w^{(\ell-[\overline{m}]-1)}$ on $c_i$, for $0 \leq i \leq$ $\ell$, and $c_{i,j}$, 
for $[\overline{m}] \leq i \leq \ell$ and $0\leq j\leq i-[\overline{m}]$, 
according to Lemma \ref{lemma-ch4-extensions}.

We note that Theorem \ref{Thm-ch4-Linear-MainThm-curved-combined} for the case 
$\gamma<\alpha$ 
follows from Theorem \ref{Thm-ch4-Linear-MainThm-decomposition-non-int-large}. 

\begin{theorem}\label{thrm-ch4-Decomposition-infinite-non-int}
Suppose that $\underline{m}$, $\overline{m}$, and $\gamma$ are functions on $B'_{1}$ satisfying 
\eqref{eq-ch4-Assumption_m1}, \eqref{eq-ch4-Assumption_m2}, 
and \eqref{eq-definition-gamma}, with  $\overline{m} > 1$ and $0<\gamma<1$ on $\bar B_1'$. 
Assume $a_{i j}, b_i, c,$ $f\in C^{\infty}(\bar{G}_1)$
and let $u \in C(\bar{G}_{1}) \cap C^{2}(G_{1})$ be a solution 
of \eqref{eq-ch3-Equ} and \eqref{eq-ch3-Dirichlet}. 
Then,
$$
u=v+\sum^{\infty}_{j=0}w^{(j)}t^{\gamma}(\log t)^{j} \quad\text{absolutely and uniformly in } \bar{G}_{r},
$$
for some functions $v,w^{(0)},w^{(1)},\cdots \in C^{\infty}(\bar{G}_{r})$, for any $r\in(0,1)$, 
with $\partial_t^i w^{(j)}=0$ on $\Sigma_{1}$, for $j=0,1,\cdots$ and $i=0,\cdots, [\overline{m}]+j-1$. 
Moreover, for any integer $k\geq [\overline{m}]$,
\begin{equation}\label{eq-ch4-Decomposition-infinite-higher-non-int}
D^{k}\Big[u-\sum^{k-[\overline{m}]}_{j=0}w^{(j)}t^{\gamma}(\log t)^{j}\Big]
=D^{k}v+\sum^{\infty}_{j=k-[\overline{m}]+1}D^{k}[w^{(j)}t^{\gamma}(\log t)^{j}],
\end{equation}
where the series converges absolutely and uniformly in $\bar{G}_{r}$, for any $r\in(0,1)$.
If, in addition, $\partial_t^{[\overline{m}]} w^{(0)}=0$ on $\Sigma_{1}$, 
then $u \in C^{\infty}(\bar{G}_{r})$, for any $r\in(0,1)$.
\end{theorem}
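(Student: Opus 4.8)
The plan is to obtain this smooth decomposition from Theorem~\ref{thrm-ch4-Linear-MainThm-non-integer-large} by a Borel-type summation, rather than from Theorem~\ref{Thm-ch4-Linear-MainThm-decomposition-non-int-large} directly, since the functions $v$ and $w^{(j)}$ produced there depend on the order $\ell$ of the expansion. Fix $r\in(0,1)$. Since $0<\gamma<1$ on $\bar B_1'$, I would first pick a constant $\alpha\in(0,1)$ with $\gamma<\alpha$ on $\bar B_1'$, so that \eqref{eq-large-Holder-index} holds; as all coefficients and $f$ are smooth, Theorem~\ref{thrm-ch4-Linear-MainThm-non-integer-large} then applies for every integer $\ell\ge[\overline{m}]$. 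The coefficient functions it produces are intrinsic to $u$ and independent of $\ell$: there are $c_i\in C^{\infty}(B_1')$ for $i\ge0$ and $c_{i,j}\in C^{\infty}(B_1')$ for $i\ge[\overline{m}]$, $0\le j\le i-[\overline{m}]$, and for each $k\ge[\overline{m}]$ the expansion \eqref{eq-ch4-LinearExpansion-non-int-k-large} holds with a remainder $R_k$ satisfying $\partial_t^\nu D_{x'}^\tau R_k\in C^{\epsilon,\alpha}_{x',t}(\bar G_{r})$ and $|\partial_t^\nu D_{x'}^\tau R_k|\le C_k\, t^{k-\nu+\alpha}$ for all $\nu\le k$ and all $\tau$ (any $\epsilon\in(0,\alpha-\gamma)$). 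Reindexing the $\log t$-sum in \eqref{eq-ch4-LinearExpansion-non-int-k-large} by powers of $\log t$ singles out $\sum_{i\ge[\overline{m}]+j}c_{i,j}t^i$ as the formal Taylor series at $\Sigma_1$ that the $j$-th singular coefficient should carry.

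Next I would build the $w^{(j)}$ by Borel summation with carefully tuned cutoff scales, in the spirit of the construction behind Lemma~\ref{lemma-ch4-extensions}. Fixing $\chi\in C_c^{\infty}([0,1))$ with $\chi\equiv1$ near $0$, set $w^{(j)}(x',t)=\sum_{i\ge[\overline{m}]+j}c_{i,j}(x')\,\chi(t/\delta_{i,j})\,t^{i}$, where $\delta_{i,j}\downarrow0$ are chosen so small — possible since each $c_{i,j}$ is smooth — that, for instance, $\|c_{i,j}\|_{C^{i+j}(\bar B_r')}\,(1+|\log\delta_{i,j}|)^{i+j}\,\delta_{i,j}^{\gamma/2}\le 2^{-(i+j)}$. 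Then $w^{(j)}\in C^{\infty}(\bar G_r)$, it vanishes to order $[\overline{m}]+j-1$ on $\Sigma_1$, and $\partial_t^i w^{(j)}=i!\,c_{i,j}$ on $\Sigma_1$ for $i\ge[\overline{m}]+j$. The smallness of the scales both confines $w^{(j)}$ to a $t$-strip shrinking super-geometrically in $j$ and, writing $w^{(j)}t^{\gamma}(\log t)^{j}=\sum_i c_{i,j}\chi(t/\delta_{i,j})t^{i+\gamma}(\log t)^{j}$ and exploiting $i+\gamma-\nu\ge\gamma>0$ for $i\ge[\overline{m}]+j>k\ge\nu$, lets one absorb each power of $\log t$ and each $\delta_{i,j}^{-1}$ coming from differentiating $\chi$ into $\delta_{i,j}^{\gamma/2}$. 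The payoff is the convergence I need: $\sum_{j\ge0}w^{(j)}t^{\gamma}(\log t)^{j}$ converges absolutely and uniformly in $\bar G_r$, and, for every $k$, so does $\sum_{j>k-[\overline{m}]}D^{k}[w^{(j)}t^{\gamma}(\log t)^{j}]$ — which is exactly the content of \eqref{eq-ch4-Decomposition-infinite-higher-non-int} once the decomposition is established.

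With the $w^{(j)}$ fixed I would then simply set $v:=u-\sum_{j\ge0}w^{(j)}t^{\gamma}(\log t)^{j}$, a continuous function on $\bar G_r$, and prove $v\in C^{\infty}(\bar G_r)$ with $\partial_t^i v=i!\,c_i$ on $\Sigma_1$. Subtracting \eqref{eq-ch4-LinearExpansion-non-int-k-large} from the definition of $v$ and working where the cutoffs with $i\le k$ are identically $1$, one finds $v=\sum_{i\le k}c_it^i+R_k-\sum_{j\le k-[\overline{m}]}E_{j,k}t^{\gamma}(\log t)^{j}-\sum_{j>k-[\overline{m}]}w^{(j)}t^{\gamma}(\log t)^{j}$, where $E_{j,k}=\sum_{i>k}c_{i,j}\chi(t/\delta_{i,j})t^{i}$ vanishes to order $k+1$ on $\Sigma_1$. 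The remainder bound for $R_k$, the vanishing of $E_{j,k}$ combined with $1+\gamma>\alpha$, and the super-decay of the last tail then give $|\partial_t^\nu D_{x'}^\tau(v-\sum_{i\le k}c_it^i)|\le C_k\,t^{k-\nu+\alpha}$ near $\Sigma_1$ for $\nu\le k$, with that quantity continuous up to $\Sigma_1$; since $k$ is arbitrary and the $c_i$ are smooth in $x'$, the standard argument that a function with arbitrarily good polynomial approximations in $t$ (with smooth $x'$-coefficients) is smooth up to the boundary yields $v\in C^{\infty}(\bar G_r)$. This proves $u=v+\sum_{j\ge0}w^{(j)}t^{\gamma}(\log t)^{j}$ with the asserted convergence, and \eqref{eq-ch4-Decomposition-infinite-higher-non-int} follows by termwise differentiation of the $C^k$-convergent tail. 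For the last assertion, if $\partial_t^{[\overline{m}]}w^{(0)}=0$ on $\Sigma_1$ then $c_{[\overline{m}],0}=0$ on $B_1'$, so the final clause of Theorem~\ref{thrm-ch4-Linear-MainThm-non-integer-large} gives $u\in C^{\ell,\alpha}(\bar G_r)$ for every $\ell$, hence $u\in C^{\infty}(\bar G_r)$.

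The hard part will be the second step: the scales $\delta_{i,j}$ must be chosen so that each $w^{(j)}$ is genuinely $C^{\infty}$ and carries its prescribed boundary jet, while the double series and all its termwise derivatives still converge — this forces a delicate balance of the growing powers $(\log t)^{j}$ against the shrinking supports and the vanishing orders of the $w^{(j)}$. A secondary technical point is the smoothness of $v$ up to $\Sigma_1$ in the third step, which leans on the mixed-H\"older-space regularity of $R_k$ (and hence on the estimates for $t^{-\gamma}R_k$ behind it) furnished by Theorem~\ref{thrm-ch4-Linear-MainThm-non-integer-large}.
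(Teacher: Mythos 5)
Your proposal is correct and follows essentially the same route as the paper: both extract the $\ell$-independent coefficients $c_{i,j}$ from Theorem \ref{thrm-ch4-Linear-MainThm-non-integer-large}, Borel-sum them into $w^{(j)}$ using cutoffs $\chi(t/\delta_{i,j})$ at scales chosen small enough that the double series and all its termwise derivatives converge (the paper's $\eta(\lambda_{i,j}t)$ with $\lambda_{i,j}$ large is the same device), and then define $v$ as the difference and verify $v\in C^{k}(\bar G_r)$ for every $k$ from the finite expansions and the remainder bounds. The only cosmetic difference is that the paper applies Theorem \ref{thrm-ch4-Linear-MainThm-non-integer-large} with $\ell=2k$ to guarantee enough tangential derivatives of $R_k$, where you simply take $\ell$ arbitrarily large; both are legitimate.
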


\begin{proof}
Note that $\overline{m},\underline{m},\gamma\in C^{\infty}(B'_{1})$ 
by \eqref{eq-ch4-Assumption_m1}, \eqref{eq-ch4-Assumption_m2}, 
and \eqref{eq-definition-gamma}. 
Fix a constant $\alpha\in (0,1)$ such that $\alpha>\gamma$ on $\bar B_1'$. 
By applying Theorem \ref{thrm-ch4-Linear-MainThm-non-integer-large} successively 
for each $k\geq [\overline{m}]$ and $\ell=2k$, we obtain sequences of smooth functions 
$\{c_{i}\}^{\infty}_{i=0}$ and $\{c_{i,j}\}_{i\geq [\overline{m}],0\leq j\leq i-[\overline{m}]}$ on $B_{1}'$ 
and a sequence of functions $\{R_{k}\}^{\infty}_{k=[\overline{m}]}$ in $G_{1}$ such that, 
for any $k\geq [\overline{m}]$,
\begin{equation}\label{eq-ch4-expression-any-ell}
u=\sum_{i=0}^{k} c_{i} t^{i}+\sum^{k}_{i=[\overline{m}]}\sum^{i-[\overline{m}]}_{j=0} 
c_{i, j} t^{i+\gamma}(\log t)^{j}+R_{k} \quad \text {in } G_{1},
\end{equation}
and $R_{k}\in C^{k}(\bar{G}_{r})$, for any $r\in(0,1)$. Take a sequence $\{r_{i,j}\}_{i,j\geq 0}
=\{1-2^{-i-j-1}\}_{i,j\geq 0}$ in $(0,1)$ such that $r_{i,j}\rightarrow 1$ as $i+j\rightarrow \infty$. 

Take a cutoff function $\eta\in C^{\infty}_{0}(-1,1)$ with $\eta=1$ on $(-{1}/{2},{1}/{2})$. 
For any constant $\lambda_{i,j}\geq 1$, we observe that 
$$
\eta(\lambda_{i,j}t)=
\begin{cases}
\begin{aligned}
0&\quad\text{if }|t|\geq 1/\lambda_{i,j},\\
1&\quad\text{if }|t|\leq 1/(2\lambda_{i,j}).
\end{aligned}
\end{cases}
$$
It is straightforward to verify that, for any $j\geq 0$, $i\geq [\overline{m}]+j$, 
and $k<i$,
$$
\big|D^k_{(x',t)}[c_{i,j}\eta(\lambda_{i,j}t)t^{i}]\big|\leq 
\frac{C_{i,j,k}}{\lambda_{i,j}^{i-k}}\quad\text{in }G_{r_{i,j}},
$$
for some positive constant $C_{i,j,k}$ depending only on $i$, $j$, $k$,  and $c_{i,j}$. Note that $\overline{m}>1$.
By choosing $\lambda_{i,j}$ sufficiently large, we get, for any $j\geq 0$ and $i\geq [\overline{m}]+j$, 
\begin{equation}\label{eq-ch4-decay-w}
|c_{i,j}\eta(\lambda_{i,j}t)t^{i}|_{C^{i-1}(\bar{G}_{r_{i,j}})}\leq 2^{-i}.
\end{equation}
For each $j\geq 0$, set 
\begin{equation}\label{eq-ch4-def-w-int}
w^{(j)}=\sum^{\infty}_{i=[\overline{m}]+j}c_{i,j}\eta(\lambda_{i,j}t)t^{i}\quad\text{in }G_{1}.
\end{equation}
We note that each term in the summation is smooth in $G_{1}$. Fix a $j\geq 0$. 
We claim that, for any $k\geq [\overline{m}]+j$ and any $r\in(0,1)$,
\begin{equation}\label{eq-ch4-summation-Ck-finite}
\sum^{\infty}_{i=k+1}|c_{i,j}\eta(\lambda_{i,j}t)t^{i}|_{C^{i-1}(\bar G_{r})}\leq C_{r,j,k},
\end{equation}
where $C_{r,j,k}$ is a positive constant depending on $r$, $j$, and $k$. If $r_{i,j}\geq r$ for any $i\geq k+1$, 
by \eqref{eq-ch4-decay-w}, we have 
$$
\sum^{\infty}_{i=k+1}|c_{i,j}\eta(\lambda_{i,j}t)t^{i}|_{C^{i-1}(\bar{G}_{r})}\leq \sum^{\infty}_{i=k+1}2^{-i}=2^{-k}.
$$
Note that $r_{i,j}<r$ can hold only for finitely many $i$'s. 
Hence, in the estimate above, we only need to adjust finitely many terms for a general $r$. 
This proves \eqref{eq-ch4-summation-Ck-finite}. 
As a consequence, we obtain $w^{(j)}\in C^{k}(\bar{G}_{r})$,
for any $r\in (0,1)$ and any $k\geq [\overline{m}]+j$. 
Therefore, $w^{(j)}\in C^{\infty}(\bar{G}_{r})$, for any $r\in (0,1)$. 

Similarly, by choosing $\lambda_{i,j}$ larger if necessary, we have, 
for each $j\geq 0$ and $i\geq [\overline{m}]+j$,
$$
|c_{i,j}\eta(\lambda_{i,j}t)t^{i+\gamma}(\log t)^{j}|_{C^{i-1}(\bar{G}_{r_{i,j}})}\leq 2^{-i}.
$$
Hence, for any $j\geq 0$ and $k\geq [\overline{m}]+j$, and any $r\in(0,1)$,
\begin{equation}\label{eq-ch4-summation-regularity1-finite}
\sum^{\infty}_{i=k+1}c_{i,j}\eta(\lambda_{i,j}t)t^{i+\gamma}(\log t)^{j}\in C^{k}(\bar{G}_{r}), 
\end{equation}
and, for any $k\geq [\overline{m}]$ and any $r\in(0,1)$,
\begin{equation}\label{eq-ch4-summation-regularity2-finite}
\sum^{\infty}_{j=k-[\overline{m}]+1}
\sum^{\infty}_{i=[\overline{m}]+j}c_{i,j}\eta(\lambda_{i,j}t)t^{i+\gamma}(\log t)^{j}\in C^{k}(\bar{G}_{r}).
\end{equation}
In fact, to prove \eqref{eq-ch4-summation-regularity2-finite}, 
if $r_{i,j}\geq r$ for any $j\geq k-[\overline{m}]+1$ and $i\geq[\overline{m}]+j$, we get
\begin{align*}
&\sum^{\infty}_{j=k-[\overline{m}]+1}\sum^{\infty}_{i=[\overline{m}]+j}
|c_{i,j}\eta(\lambda_{i,j}t)t^{i+\gamma}(\log t)^{j}|_{C^{k}(\bar{G}_{r})}\\
&\qquad\leq \sum^{\infty}_{j=k-[\overline{m}]+1}\sum^{\infty}_{i=[\overline{m}]+j}
|c_{i,j}\eta(\lambda_{i,j}t)t^{i+\gamma}(\log t)^{j}|_{C^{i-1}(\bar{G}_{r_{i,j}})}\\
&\qquad\leq \sum^{\infty}_{j=k-[\overline{m}]+1}\sum^{\infty}_{i=[\overline{m}]+j}2^{-i}
=\sum^{\infty}_{j=k-[\overline{m}]+1}2^{-[\overline{m}]+1-j}=2^{-k+1}.
\end{align*}
Note that $r_{i,j}<r$ can hold only for finitely many $i$'s and $j$'s.

Now, take an arbitrarily fixed $k\geq [\overline{m}]$ and $r\in (0,1)$. 
By \eqref{eq-ch4-expression-any-ell}, we write
\begin{align*}
u&=\sum_{i=0}^{k} c_{i} t^{i}+\sum^{k-[\overline{m}]}_{j=0}\sum^{k}_{i=[\overline{m}]+j} 
c_{i, j} t^{i+\gamma}(\log t)^{j}+R_{k}\\
&=\sum_{i=0}^{k} c_{i} t^{i}+\sum^{k-[\overline{m}]}_{j=0}\sum^{k}_{i=[\overline{m}]+j} 
c_{i, j}\eta(\lambda_{i,j}t) t^{i+\gamma}(\log t)^{j}\\
&\qquad+\sum^{k-[\overline{m}]}_{j=0}\sum^{k}_{i=[\overline{m}]+j} c_{i, j}[1-\eta(\lambda_{i,j}t)] 
t^{i+\gamma}(\log t)^{j}+R_{k}.
\end{align*}
By combining with \eqref{eq-ch4-def-w-int}, we have 
$$
\begin{aligned}
u-\sum^{\infty}_{j=0}w^{(j)}t^{\gamma}(\log t)^{j}&=\sum_{i=0}^{k} c_{i} t^{i}
-\sum^{k-[\overline{m}]}_{j=0}\sum^{\infty}_{i=k+1} c_{i, j}\eta(\lambda_{i,j}t) t^{i+\gamma}(\log t)^{j}\\
&\qquad-\sum^{\infty}_{j=k-[\overline{m}]+1}\sum^{\infty}_{i=[\overline{m}]+j} 
c_{i, j}\eta(\lambda_{i,j}t) t^{i+\gamma}(\log t)^{j}\\
&\qquad+\sum^{k-[\overline{m}]}_{j=0}\sum^{k}_{i=[\overline{m}]+j} c_{i, j}[1-\eta(\lambda_{i,j}t)] 
t^{i+\gamma}(\log t)^{j}+R_{k}.
\end{aligned}
$$
In the first summation, each term is smooth in $\bar{G}_{r}$ by $c_{i}\in C^{\infty}(B_{1}')$. 
The entire second and third summations are functions in $C^{k}(\bar{G}_{r})$ by
\eqref{eq-ch4-summation-regularity1-finite} and \eqref{eq-ch4-summation-regularity2-finite}. 
In the fourth summation, there are only finitely many terms and each term is $0$ for small $t$ 
and hence is smooth in $\bar{G}_{r}$. By $R_{k}\in C^{k}(\bar{G}_{r})$, 
we conclude 
$$u-\sum^{\infty}_{j=0}w^{(j)}t^{\gamma}(\log t)^{j}\in C^{k}(\bar{G}_{r}).$$ 
This holds for any $k\geq [\overline{m}]$. Thus,  for any $r\in (0,1)$, 
$$u-\sum^{\infty}_{j=0}w^{(j)}t^{\gamma}(\log t)^{j}\in C^{\infty}(\bar{G}_{r}).$$ 
Set 
$$v=u-\sum^{\infty}_{j=0}w^{(j)}t^{\gamma}(\log t)^{j}.$$ 
Then, 
$$u=v+\sum^{\infty}_{j=0}w^{(j)}t^{\gamma}(\log t)^{j}\quad\text{absolutely and uniformly in } \bar{G}_{r},$$ 
and $v,w^{(j)}\in C^{\infty}(\bar{G}_{r})$ for any $j\ge 0$, with $\partial_t^i w^{(j)}=0$ on $\Sigma_{1}$, 
for any $j\geq 0$, $0\leq i\leq [\overline{m}]+j-1$, and any $r\in(0,1)$.

Finally, for any $k\geq [\overline{m}]$, we write 
$$
\sum^{\infty}_{j=0}w^{(j)}t^{\gamma}(\log t)^{j}=\sum^{k-[\overline{m}]}_{j=0}
w^{(j)}t^{\gamma}(\log t)^{j}+\sum^{\infty}_{j=k-[\overline{m}]+1}w^{(j)}t^{\gamma}(\log t)^{j}.
$$
By \eqref{eq-ch4-def-w-int} and \eqref{eq-ch4-summation-regularity2-finite}, 
we have, for any $k\geq [\overline{m}]$ and any $r\in(0,1)$,
$$
\sum^{\infty}_{j=k-[\overline{m}]+1}w^{(j)}t^{\gamma}(\log t)^{j}\in C^{k}(\bar{G}_{r}),
$$
and we can obtain \eqref{eq-ch4-Decomposition-infinite-higher-non-int} easily.
\end{proof}

Under the additional assumption $\overline{m}> 1$, 
Theorem \ref{Thm-ch4-Linear-MainThm-curved-combined-smooth} 
follows from Theorem \ref{thrm-ch4-Decomposition-infinite-non-int} by renaming the functions $w^{(j)}$, for $j\ge 0$.

\section{Small H\"older Indices}\label{sec-Small-Holder-Indices}

In this section, we study the case that the H\"older index is smaller than the decimal part of 
the positive characteristic exponent. 
Specifically, let $\gamma$ be given by \eqref{eq-definition-gamma}. 
Throughout this section, we assume 
\begin{align}\label{eq-small-Holder-index}0<\alpha<\gamma<1\quad\text{on }B_1'.\end{align} 
We emphasize that $\gamma$ is a function on $B_1'$, instead of a constant. 

There is a new element in this section. 
In Lemma \ref{lemma-ch4-Linear-MainThm-non-integer-small} 
and Theorem \ref{thrm-ch4-Linear-MainThm-non-integer-small} below, 
we establish two different forms of expansions and different estimates of the remainders. 
The remainders have integer orders in one expansion and non-integer orders in another expansion. 
Different expansions are needed to study the regularity of solutions in the tangential directions 
and the normal direction in Theorem \ref{Thm-ch4-Linear-MainThm-decomposition-non-int-small}.

\begin{lemma}\label{lemma-ch4-Linear-MainThm-non-integer-small}
Suppose that $\underline{m}$, $\overline{m}$, and $\gamma$ are functions on $B'_{1}$ satisfying 
\eqref{eq-ch4-Assumption_m1}, \eqref{eq-ch4-Assumption_m2}, 
and \eqref{eq-definition-gamma}, 
and that $\ell$ is an integer and $\alpha\in(0,1)$ is a constant such that $\ell\geq [\overline{m}]+1$ and 
\eqref{eq-small-Holder-index} holds. 
Assume $a_{n n}, b_{n}, c \in C^{\ell+1, \alpha}(\bar{G}_{1})$ 
and $a_{i j}, b_{i}, f \in C^{\ell, \alpha}(\bar{G}_{1})$ for $i \neq n$, 
and let $u \in C(\bar{G}_{1}) \cap C^{2}(G_{1})$ 
be a solution of \eqref{eq-ch3-Equ} and \eqref{eq-ch3-Dirichlet}. 
Then, 
\begin{align*}
u&=\sum_{i=0}^{[\overline{m}]} c_{i} t^{i}+c_{[\overline{m}], 0} t^{[\overline{m}]+\gamma}+R'_{[\overline{m}]}\\
&=\sum_{i=0}^{[\overline{m}]+1} c_{i} t^{i}+c_{[\overline{m}], 0} t^{[\overline{m}]+\gamma}+R_{[\overline{m}]+1} 
\quad \text {in } G_{1},
\end{align*}
where $c_{0},c_{1},\cdots,c_{[\overline{m}]+1}$, and $c_{[\overline{m}], 0}$ are functions on $B'_{1}$, and $R'_{[\overline{m}]}$ and $R_{[\overline{m}]+1}$ are functions in $G_{1}$ satisfying, 
for any $x'_{0}\in B'_{1}$, 
any $r\in(0,1-|x'_{0}|)$, 
and any $\epsilon$ with $\alpha<\epsilon <\alpha+1-\gamma$ on $\bar B_{r}'(x_0')$, 
\begin{align*}
&c_{i} \in C^{\ell-i, \alpha}(\bar B_{r}^{\prime}(x'_{0}))\quad\text{for }i=0,\cdots,[\overline{m}]+1,\\
&c_{[\overline{m}], 0} \in C^{\ell-[\overline{m}]-1, \epsilon}(\bar B_{r}^{\prime}(x'_{0})),
\end{align*}   
for any $\nu \leq [\overline{m}]$ and $\tau \leq \ell-1-[\overline{m}]$, 
$$
\partial_{t}^{\nu} D_{x^{\prime}}^{\tau} (t^{-\gamma}R'_{[\overline{m}]}), 
t \partial_{t} \partial_{t}^{\nu} D_{x^{\prime}}^{\tau} (t^{-\gamma}R'_{[\overline{m}]}), 
t^{2} \partial_{t}^{2} \partial_{t}^{\nu} D_{x^{\prime}}^{\tau} (t^{-\gamma}R'_{[\overline{m}]}) 
\in C^{\epsilon,\epsilon-\alpha}_{x',t}(\bar{G}_{r}(x_{0}')),   
$$
and
\begin{align*}
&\big|\partial_{t}^{\nu} D_{x^{\prime}}^{\tau} R'_{[\overline{m}]}\big|
+\big|t \partial_{t} \partial_{t}^{\nu} D_{x^{\prime}}^{\tau} R'_{[\overline{m}]}\big|
+\big|t^{2} \partial_{t}^{2} \partial_{t}^{\nu} D_{x^{\prime}}^{\tau} R'_{[\overline{m}]}\big| \\
&\qquad \leq C t^{[\overline{m}]+1-\nu}\big\{|u|_{L^{\infty}(G_{1})}+|f|_{C^{\ell, \alpha}(\bar{G}_{1})}\big\} 
\quad \text {in }\bar{G}_{r}(x_{0}'),
\end{align*} 
and, for any $\nu \leq [\overline{m}]+1$ and $\tau \leq \ell-[\overline{m}]-1$, 
$$
\partial_{t}^{\nu} D_{x^{\prime}}^{\tau} R_{[\overline{m}]+1}, 
t \partial_{t} \partial_{t}^{\nu} D_{x^{\prime}}^{\tau} R_{[\overline{m}]+1}, 
t^{2} \partial_{t}^{2} \partial_{t}^{\nu} D_{x^{\prime}}^{\tau} R_{[\overline{m}]+1} 
\in C^{\alpha}(\bar{G}_{r}(x_{0}')),   
$$
and
\begin{align*}
&\big|\partial_{t}^{\nu} D_{x^{\prime}}^{\tau} R_{[\overline{m}]+1}\big|
+\big|t \partial_{t} \partial_{t}^{\nu} D_{x^{\prime}}^{\tau} R_{[\overline{m}]+1}\big|
+\big|t^{2} \partial_{t}^{2} \partial_{t}^{\nu} D_{x^{\prime}}^{\tau} R_{[\overline{m}]+1}\big| \\
&\qquad \leq C t^{[\overline{m}]+1-\nu+\alpha}\big\{|u|_{L^{\infty}(G_{1})}
+|f|_{C^{\ell, \alpha}(\bar{G}_{1})}\big\} \quad \text {in }\bar{G}_{r}(x_{0}'),
\end{align*} 
for some positive constant $C$ depending only on $n$, $\lambda$, $\ell$, $\alpha$, $r$, $\epsilon$, 
$\underline{m}$, $\overline{m}$, the $C^{\ell+1, \alpha}$-norms of $a_{n n}, b_{n}, c$ in $\bar{G}_{1}$, 
and the $C^{\ell, \alpha}$-norms of $a_{i j}, b_{i}$ in $\bar{G}_{1}$ for $i \neq n.$ 
\end{lemma}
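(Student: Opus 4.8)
The plan is to imitate the proof of Lemma~\ref{lemma-ch4-Linear-MainThm-non-integer-large}, but to Taylor–expand the right-hand side $F$ of the reduced ODE \eqref{eq-ch4-LinearODE} to two \emph{different} orders, $[\overline m]$ and $[\overline m]+1$, and to record the corresponding remainders in the two forms stated: $R'_{[\overline m]}$ (of integer order, together with the regularity of $t^{-\gamma}R'_{[\overline m]}$), which will later control the tangential regularity, and $R_{[\overline m]+1}$ (of fractional order, with plain $C^\alpha$ regularity), which will later control the normal regularity. Without loss of generality take $x_0'=0$, fix $\epsilon_0$ with $\alpha<\epsilon_0<\alpha+1-\gamma$ on $\bar B_1'$, and let $\epsilon\in(\alpha,\epsilon_0)$, $r\in(0,1)$. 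Since $\alpha<\gamma$ one has $[\overline m]+\alpha<\overline m$, so Theorem~\ref{thrm-ch3-Linear-NormalEstimate-general} applies with $k=[\overline m]$, giving $\partial_t^\nu D_{x'}^\tau u$, $tD\partial_t^\nu D_{x'}^\tau u$, $t^2D^2\partial_t^\nu D_{x'}^\tau u\in C^\alpha(\bar G_r)$ for all $\nu\le[\overline m]$ and $\nu+\tau\le\ell$ — one more order of $t$-derivatives than was available in the large-index case, which is precisely what lets us reach order $[\overline m]+1$. Inserting this into the expression \eqref{eq-ch4-LinearODE-F1} for $F$ and differentiating its six model terms as in Lemma~\ref{lemma-ch4-Linear-MainThm-non-integer-large} yields $\partial_t^\nu D_{x'}^\tau F\in C^\alpha(\bar G_r)$ for $\nu\le[\overline m]+1$ and $\tau\le\ell-\nu$; here the assumption $\ell\ge[\overline m]+1$ is first used.

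Next I would set $a_i=\tfrac1{i!}\partial_t^iF(\cdot,0)\in C^{\ell-i,\alpha}(B_1')$ for $0\le i\le[\overline m]+1$ and write $F=\sum_{i=0}^{[\overline m]}a_it^i+S'_{[\overline m]}$ and $F=\sum_{i=0}^{[\overline m]+1}a_it^i+S_{[\overline m]+1}$, so that $S'_{[\overline m]}=a_{[\overline m]+1}t^{[\overline m]+1}+S_{[\overline m]+1}$, with $\partial_t^\nu D_{x'}^\tau S'_{[\overline m]}$ of size $O(t^{[\overline m]+1-\nu})$, $\partial_t^\nu D_{x'}^\tau S_{[\overline m]+1}$ of size $O(t^{[\overline m]+1-\nu+\alpha})$, and the expected $C^\alpha$ regularity. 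Substituting each into the solution formula \eqref{eq-ch4-LinearODE-Solution-l} of Lemma~\ref{lemma-ch4-SolutionODE} and grouping by powers of $t$ gives
\[
u=\sum_{i=0}^{[\overline m]}c_it^i+c_{[\overline m],0}t^{[\overline m]+\gamma}+R'_{[\overline m]}=\sum_{i=0}^{[\overline m]+1}c_it^i+c_{[\overline m],0}t^{[\overline m]+\gamma}+R_{[\overline m]+1},
\]
with $c_i=a_i/((i-\underline m)(i-\overline m))$, the same $c_{[\overline m],0}$ in both lines, and $R'_{[\overline m]}=c_{[\overline m]+1}t^{[\overline m]+1}+R_{[\overline m]+1}$; that the $a_{[\overline m]+1}t^{[\overline m]+1}$-term contributes precisely $c_{[\overline m]+1}t^{[\overline m]+1}$ and produces no $t^{\overline m}$-term rests on the identity $([\overline m]+1-\underline m)-(1-\gamma)=\overline m-\underline m$, exactly as in \eqref{eq-ch4-expression-coefficient-m1-int-non-large}, while $R_{[\overline m]+1}$ is built from $S_{[\overline m]+1}$ as in \eqref{eq-ch4-expression-remainder-m-int-non-large}. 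The regularity $c_i\in C^{\ell-i,\alpha}$ is immediate, and $c_{[\overline m],0}\in C^{\ell-[\overline m]-1,\epsilon}$ follows from the regularity of the $a_i$ and of $S'_{[\overline m]}$ together with the singular-integral lemmas of Section~\ref{sec-Appen-CalculusL}.

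For the remainders I would proceed as in Lemma~\ref{lemma-ch4-Linear-MainThm-non-integer-large}: split $R_{[\overline m]+1}=\underline R_{[\overline m]+1}+\overline R_{[\overline m]+1}$, use the identities for $t\partial_tR_{[\overline m]+1}$ and $t^2\partial_t^2R_{[\overline m]+1}$, and apply Lemma~\ref{lemma-ch4-BasicHolderRegularity} with $a=-\underline m$ and the non-integer singular-integral lemma with $a=\overline m$ (valid since $S_{[\overline m]+1}$ vanishes to order $[\overline m]+1+\alpha$ and $[\overline m]+1>\overline m$), obtaining the $C^\alpha$ regularity of $\partial_t^\nu D_{x'}^\tau R_{[\overline m]+1}$, $t\partial_t(\cdot)$, $t^2\partial_t^2(\cdot)$ and the bound $\le Ct^{[\overline m]+1-\nu+\alpha}\{\cdots\}$. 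For $t^{-\gamma}R'_{[\overline m]}$ I would \emph{not} divide the regularity of $R'_{[\overline m]}$ by $t^\gamma$ (that would cost an extra H\"older index $\gamma$ that would accumulate later), but instead represent $t^{-\gamma}R'_{[\overline m]}=t^{-\gamma}\underline R'_{[\overline m]}+t^{-\gamma}\overline R'_{[\overline m]}$ with $s^{-\gamma}$ pulled inside the integrals; using $\overline m-\gamma=[\overline m]$, the second piece is $\frac{t^{[\overline m]}}{\overline m-\underline m}\int_0^t s^{-1-[\overline m]}s^{-\gamma}S'_{[\overline m]}\,ds$, whose integrand is $O(s^{-\gamma})$ and hence integrable. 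Controlling $s^{-\gamma}S'_{[\overline m]}$ via Lemma~\ref{lemma-ch4-regularity-power1} (a $C^\alpha$ function vanishing to order $[\overline m]+1>\gamma$), then invoking Lemma~\ref{lemma-ch4-BasicHolderRegularity} with $a=\gamma-\underline m$ and the integer singular-integral lemma with $a=[\overline m]$, and finally using the identities for $t\partial_t(t^{-\gamma}R'_{[\overline m]})$ and $t^2\partial_t^2(t^{-\gamma}R'_{[\overline m]})$, gives $\partial_t^\nu D_{x'}^\tau(t^{-\gamma}R'_{[\overline m]})$, $t\partial_t(\cdot)$, $t^2\partial_t^2(\cdot)\in C^{\epsilon,\epsilon-\alpha}_{x',t}(\bar G_r)$ for $\nu\le[\overline m]$, $\tau\le\ell-1-[\overline m]$. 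The size estimate for $R'_{[\overline m]}$ then follows from $R'_{[\overline m]}=c_{[\overline m]+1}t^{[\overline m]+1}+R_{[\overline m]+1}$ and the bound on $R_{[\overline m]+1}$.

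The step I expect to be the main obstacle is this last regularity analysis of $t^{-\gamma}R'_{[\overline m]}$, in particular pinning down the \emph{asymmetric} pair of exponents $(\epsilon,\epsilon-\alpha)$. Since $\alpha<\gamma$ one cannot write $t^\alpha=t^\gamma\cdot t^{\alpha-\gamma}$; one must borrow a factor of $t$ and use $1+\alpha=\gamma+(1+\alpha-\gamma)$, which forces the first expansion to be carried only to the integer order $[\overline m]+1$ (so that $s^{-\gamma}S'_{[\overline m]}$ still vanishes to positive order and the singular integrals converge) and accounts for the hypothesis $\ell\ge[\overline m]+1$ and for $R'_{[\overline m]}$ having integer rather than fractional order. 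The logarithmic factors produced by $D_{x'}t^{[\overline m]+1-\gamma}=t^{[\overline m]+1-\gamma}(\log t)D_{x'}(-\gamma)$ are harmless, since a power $t^\delta$ with $\delta>0$ absorbs $\log t$ into any H\"older index strictly below $\delta$ in $t$ — exactly the budget recorded as $\epsilon-\alpha<1-\gamma$. Maintaining both expansions consistently, with $R'_{[\overline m]}=c_{[\overline m]+1}t^{[\overline m]+1}+R_{[\overline m]+1}$ and a common $c_{[\overline m],0}$, is what is needed afterwards in Theorem~\ref{Thm-ch4-Linear-MainThm-decomposition-non-int-small}, where the first expansion governs the tangential regularity and the second the normal regularity.
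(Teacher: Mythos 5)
Your proposal matches the paper's proof essentially step for step: the two Taylor expansions of $F$ to orders $[\overline m]$ and $[\overline m]+1$, the relation $R'_{[\overline m]}=c_{[\overline m]+1}t^{[\overline m]+1}+R_{[\overline m]+1}$ with a common $c_{[\overline m],0}$, the parallel (rather than derived) treatment of $t^{-\gamma}R'_{[\overline m]}$ with $s^{-\gamma}$ pulled inside the integrals, and the same singular-integral lemmas for the two remainders. The only slip is that for $s^{-\gamma}S'_{[\overline m]}$ you cite Lemma \ref{lemma-ch4-regularity-power1}, whose hypothesis requires $\gamma<\alpha$; since here $\alpha<\gamma$ the correct tool is Lemma \ref{lemma-ch4-regularity-power2}(i) (with $k=[\overline m]+1$, $l=\ell-[\overline m]-1$), which implements exactly the ``borrow a factor of $t$'' mechanism you describe and yields the asymmetric exponents $(\epsilon,\epsilon-\alpha)$.
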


\begin{proof}
Note that $\overline{m},\underline{m},\gamma\in C^{\ell+1, \alpha}(B'_{1})$ 
by \eqref{eq-ch4-Assumption_m1}, \eqref{eq-ch4-Assumption_m2}, 
and \eqref{eq-definition-gamma}, and that $[\overline{m}]$ is an integer. 
Without loss of generality, we assume $x'_{0}=0$ 
and take a constant $\epsilon_0$ such that 
$\alpha<\epsilon_0 \le\alpha+1-\gamma$ on $\bar B_{1}'$. 
Throughout the proof, $\epsilon$ is an arbitrary constant in $(\alpha,\epsilon_{0})$ 
and $r$ is an arbitrary constant in $(0,1)$.

By Theorem \ref{thrm-ch3-Linear-NormalEstimate-general}, 
we have, for any $\nu \leq[\overline{m}]$ and $\tau \leq \ell-\nu$,
\begin{equation*}
\partial_{t}^{\nu} D_{x^{\prime}}^{\tau} u, t D \partial_{t}^{\nu} D_{x^{\prime}}^{\tau} u, 
t^{2} D^{2} \partial_{t}^{\nu} D_{x^{\prime}}^{\tau} u \in C^{\alpha}(\bar{G}_{r}).    
\end{equation*}
By 
a similar argument as in the proof of 
Lemma \ref{lemma-ch4-Linear-MainThm-non-integer-large}, 
we get, for any $\nu \leq[\overline{m}]+1$ and $\tau \leq \ell-\nu$,
\begin{equation}\label{eq-ch4-regularity-F1-non-int-small}
\partial_{t}^{\nu} D_{x^{\prime}}^{\tau} F \in C^{\alpha}(\bar{G}_{r}).    
\end{equation}
Set
$$
a_{i}=\frac{1}{i !} \partial_{t}^{i} F(\cdot, 0) \quad \text{for } i=0, \cdots,[\overline{m}]+1.
$$
By \eqref{eq-ch4-regularity-F1-non-int-small}, we have
\begin{equation}\label{eq-regularity-a-non-small} 
a_{i}\in C^{\ell-i,\alpha}(B_{1}')\quad\text{for any }i=0,\cdots,[\overline{m}]+1.
\end{equation}

We first consider 
\begin{equation}\label{eq-ch4-identity-F-non-int-overline-m-small-1}
F=\sum_{i=0}^{[\overline{m}]} a_{i} t^{i}+S'_{[\overline{m}]}.
\end{equation}
Then, for any $\nu\leq [\overline{m}],\tau\leq \ell-[\overline{m}]$ and any $\nu\leq [\overline{m}]+1,\tau\leq \ell-[\overline{m}]-1$,
\begin{align}\label{eq-regularity-S-m-non-small-1a}\begin{split}
&\partial_{t}^{\nu} D_{x^{\prime}}^{\tau}S'_{[\overline{m}]} \in C^{\alpha}(\bar{G}_{r}),\\
&|\partial_{t}^{\nu} D_{x^{\prime}}^{\tau}S'_{[\overline{m}]}|\leq Ct^{[\overline{m}]+1-\nu}\quad\text{in }\bar{G}_{r}.
\end{split}
\end{align}
Here, we use $\prime$ in $S'_{[\overline{m}]}$ to indicate that $S'_{[\overline{m}]}$ is the remainder 
at the integer order and distinguish it from the remainder $S_{[\overline{m}]+1}$ to be used at the
non-integer order. By substituting \eqref{eq-ch4-identity-F-non-int-overline-m-small-1} 
in \eqref{eq-ch4-LinearODE-Solution-l} and a straightforward computation, we have
\begin{equation}\label{eq-ch4-indentity-u-int-overline-m-non-small-1}
u=\sum_{i=0}^{[\overline{m}]} c_{i} t^{i}+c_{[\overline{m}], 0} t^{[\overline{m}]+\gamma}+R'_{[\overline{m}]},
\end{equation}
where
\begin{align}\label{eq-expression-coefficient-m-int}\begin{split}
c_{i}&= \frac{a_{i}}{(i-\underline{m})(i-\overline{m})}\quad \text {for } i=0, \cdots,[\overline{m}], \\
c_{[\overline{m}], 0}&=u(\cdot, r) r^{-\overline{m}} 
-\sum_{i=0}^{[\overline{m}]} \frac{a_{i} r^{i-\overline{m}}}{(i-\underline{m})(i-\overline{m})} \\
&\qquad+\frac{r^{\underline{m}-\overline{m}}}{\overline{m}-\underline{m}} \int_{0}^{r} s^{-1-\underline{m}} S'_{[\overline{m}]}d s-\frac{1}{\overline{m}-\underline{m}} \int_{0}^{r} s^{-1-\overline{m}} S'_{[\overline{m}]} d s,
\end{split}
\end{align}
and
\begin{equation}\label{eq-ch4-expression-remainder-m-int-non-small-1}
R'_{[\overline{m}]}=\frac{t^{\overline{m}}}{\overline{m}-\underline{m}} 
\int_{0}^{t} s^{-1-\overline{m}} S'_{[\overline{m}]} d s
-\frac{t^{\underline{m}}}{\overline{m}-\underline{m}} \int_{0}^{t} s^{-1-\underline{m}} S'_{[\overline{m}]} d s.
\end{equation}
By \eqref{eq-regularity-a-non-small}, we have
$$
c_{i}\in C^{\ell-i,\alpha}(B'_{1})\quad\text{for any }i=0,\cdots,[\overline{m}].
$$
By \eqref{eq-regularity-S-m-non-small-1a}, 
Lemma \ref{lemma-ch4-BasicHolderRegularity} with $a=-\underline{m}$ and $k=[\overline{m}]$, 
and Lemma \ref{lemma-ch4-singular-integral-non-int-lower-smaller}  with $a=\overline{m}$, we have
$$
c_{[\overline{m}], 0}\in C^{\ell-[\overline{m}]-1,\epsilon}(B'_{1}).
$$
We emphasize that this is an almost optimal regularity of $c_{[\overline{m}],0}$, 
with a minor loss of the H\"older index.
If we include the term $t^{[\overline{m}]+1}$ and use the remainder $S_{[\overline{m}]+1}$ in 
\eqref{eq-ch4-identity-F-non-int-overline-m-small-1}, we will not get the optimal regularity of $c_{[\overline{m}], 0}$.

With \eqref{eq-regularity-S-m-non-small-1a} and \eqref{eq-ch4-expression-remainder-m-int-non-small-1},
we have, for any nonnegative integers $\nu \leq [\overline{m}]$ and $\tau \leq \ell-1-[\overline{m}]$,
$$
\big|\partial_{t}^{\nu} D_{x^{\prime}}^{\tau} R'_{[\overline{m}]}\big|
+\big|t \partial_{t} \partial_{t}^{\nu} D_{x^{\prime}}^{\tau} R'_{[\overline{m}]}\big|
+\big|t^{2} \partial_{t}^{2} \partial_{t}^{\nu} D_{x^{\prime}}^{\tau} R'_{[\overline{m}]}\big| 
\leq C t^{[\overline{m}]-\nu+1}\quad \text {in } G_{1 / 2}.
$$
Hence, $ R'_{[\overline{m}]}$ in \eqref{eq-ch4-indentity-u-int-overline-m-non-small-1} 
is indeed the remainder of $u$ expanded up to $t^{[\overline{m}]+\gamma}$. 
Next, by \eqref{eq-ch4-expression-remainder-m-int-non-small-1}, we write
 \begin{equation}\label{eq-ch4-expression-remainder-m-int-non-small-102}
t^{-\gamma}R'_{[\overline{m}]}=t^{-\gamma}\underline{R}'_{[\overline{m}]}
+t^{-\gamma}\overline{R}'_{[\overline{m}]},
\end{equation}
where
\begin{align*}
t^{-\gamma}\underline{R}'_{[\overline{m}]}&=-\frac{t^{\underline{m}-\gamma}}{\overline{m}-\underline{m}}
 \int_{0}^{t} s^{-1-(\underline{m}-\gamma)} s^{-\gamma}S'_{[\overline{m}]} d s,\\
 t^{-\gamma}\overline{R}'_{[\overline{m}]}&=\frac{t^{\overline{m}-\gamma}}{\overline{m}-\underline{m}} 
\int_{0}^{t} s^{-1-(\overline{m}-\gamma)} s^{-\gamma}S'_{[\overline{m}]} d s.
\end{align*}
A simple computation yields
\begin{equation}\label{eq-ch4-expression-remainder-m-int-non-small-103}
t\partial_{t}(t^{-\gamma}R'_{[\overline{m}]})
=(\underline{m}-\gamma) t^{-\gamma}\underline{R}'_{[\overline{m}]}
+(\overline{m}-\gamma)t^{-\gamma}\overline{R}'_{[\overline{m}]},
\end{equation}
and
\begin{equation}\label{eq-ch4-expression-remainder-m-int-non-small-104}\begin{split}
t^{2}\partial^{2}_{t}(t^{-\gamma}R'_{[\overline{m}]})
&=t^{-\gamma}S'_{[\overline{m}]}
+(\underline{m}-\gamma)(\underline{m}-\gamma-1) t^{-\gamma}\underline{R}'_{[\overline{m}]}\\
&\qquad+(\overline{m}-\gamma)(\overline{m}-\gamma-1)t^{-\gamma}\overline{R}'_{[\overline{m}]}.
\end{split}\end{equation}
By \eqref{eq-regularity-S-m-non-small-1a} 
and Lemma \ref{lemma-ch4-regularity-power2}(i) with $k=[\overline{m}]+1$ and $l=\ell-[\overline{m}]-1$, 
we have, for any $\nu\leq [\overline{m}]$ and $\tau\leq \ell-1-[\overline{m}]$, 
\begin{align*}
&\partial_{t}^{\nu} D_{x^{\prime}}^{\tau}(t^{-\gamma}S'_{[\overline{m}]})
\in  C^{\epsilon,\epsilon-\alpha}_{x',t}(\bar{G}_{r}),\\
&|\partial_{t}^{\nu} D_{x^{\prime}}^{\tau}(t^{-\gamma}S'_{[\overline{m}]})|
\leq Ct^{[\overline{m}]-\nu+\epsilon-\alpha}\quad\text{in }\bar{G}_{r}.
\end{align*}
Note that $\underline{m}-\gamma<0$ and $\overline{m}-\gamma=[\overline{m}]$ by the definition of $\gamma$. 
By Lemma \ref{lemma-ch4-BasicHolderRegularity} with $a=\gamma-\underline{m}$ 
and Lemma \ref{lemma-ch4-singular-integral-int-lower} with $a=[\overline{m}]$, 
we get, for any $\nu\leq [\overline{m}]$ and $\tau\leq \ell-1-[\overline{m}]$,
$$
\partial_{t}^{\nu} D_{x^{\prime}}^{\tau} (t^{-\gamma}\underline{R}'_{[\overline{m}]}), \,
\partial_{t}^{\nu} D_{x^{\prime}}^{\tau} (t^{-\gamma}\overline{R}'_{[\overline{m}]})
\in C^{\epsilon,\epsilon-\alpha}_{x',t}(\bar{G}_{r}).   
$$
As a consequence, we obtain, for any $\nu\leq [\overline{m}]$ and $\tau\leq \ell-1-[\overline{m}]$,
$$
\partial_{t}^{\nu} D_{x^{\prime}}^{\tau} (t^{-\gamma}R'_{[\overline{m}]}), 
t \partial_{t} \partial_{t}^{\nu} D_{x^{\prime}}^{\tau} (t^{-\gamma}R'_{[\overline{m}]}), 
t^{2} \partial_{t}^{2} \partial_{t}^{\nu} D_{x^{\prime}}^{\tau} (t^{-\gamma}R'_{[\overline{m}]}) 
\in C^{\epsilon,\epsilon-\alpha}_{x',t}(\bar{G}_{r}).   
$$
This implies the desired H\"older regularity of $t^{-\gamma}R'_{[\overline{m}]}$. 
Although the H\"older regularity of $t^{-\gamma}R'_{[\overline{m}]}$ 
in $t$ is not needed in applications later on,
we need the H\"older regularity of $t^{-\gamma}S'_{[\overline{m}]}$ both in $x'$ and $t$ 
in order to apply Lemma \ref{lemma-ch4-singular-integral-int-lower}.

We now continue to expand $u$. Write
\begin{equation}\label{eq-ch4-identity-F-non-int-overline-m-small-201}
F=\sum_{i=0}^{[\overline{m}]+1} a_{i} t^{i}+S_{[\overline{m}]+1}.
\end{equation}
In other words,
\begin{equation}\label{eq-ch4-identity-F-non-int-overline-m-small-2}
S'_{[\overline{m}]}=a_{[\overline{m}]+1}t^{[\overline{m}]+1}+S_{[\overline{m}]+1},
\end{equation}
Then, for any $\nu\leq [\overline{m}]+1$ and $\tau\leq \ell-[\overline{m}]-1$,
\begin{align}\label{eq-regularity-S-m-non-small-2}\begin{split}
&\partial_{t}^{\nu} D_{x^{\prime}}^{\tau}S_{[\overline{m}]+1} \in C^{\alpha}(\bar{G}_{r}),\\
&|\partial_{t}^{\nu} D_{x^{\prime}}^{\tau}S_{[\overline{m}]+1}|\leq Ct^{[\overline{m}]+1-\nu+\alpha}
\quad\text{in }\bar{G}_{r}.
\end{split}\end{align}
By substituting \eqref{eq-ch4-identity-F-non-int-overline-m-small-2} 
in \eqref{eq-ch4-expression-remainder-m-int-non-small-1} 
and a straightforward computation, we have
$$
R_{[\overline{m}]}'=c_{[\overline{m}]+1}t^{[\overline{m}]+1}+R_{[\overline{m}]+1},
$$
where $c_{[\overline{m}]+1}$ is given by 
\begin{equation}\label{eq-ch4-expression-coefficient-m1-int-non-small-2}
c_{i}= \frac{a_{i}}{(i-\underline{m})(i-\overline{m})}\quad \text {for } i=[\overline{m}]+1, \\
\end{equation}
and
\begin{align}\label{eq-ch4-expression-remainder-m-int-non-small-2}
R_{[\overline{m}]+1}
=\frac{t^{\overline{m}}}{\overline{m}-\underline{m}} \int_{0}^{t} s^{-1-\overline{m}} S_{[\overline{m}]+1} d s
-\frac{t^{\underline{m}}}{\overline{m}-\underline{m}} \int_{0}^{t} s^{-1-\underline{m}} S_{[\overline{m}]+1} d s.
\end{align}
With \eqref{eq-ch4-indentity-u-int-overline-m-non-small-1}, we obtain
$$
u=\sum_{i=0}^{[\overline{m}]+1}c_it^i+c_{[\overline{m}],0}t^{[\overline{m}]+\gamma}+R_{[\overline{m}]+1}.
$$
By \eqref{eq-regularity-a-non-small}, we have
$$
c_{i}\in C^{\ell-i,\alpha}(B'_{1})\quad\text{for }i=[\overline{m}]+1.
$$
Next, by \eqref{eq-ch4-expression-remainder-m-int-non-small-2}, we write 
$$
R_{[\overline{m}]+1}=\underline{R}_{[\overline{m}]+1}+\overline{R}_{[\overline{m}]+1},
$$
where
\begin{align*}
\overline{R}_{[\overline{m}]+1}&=\frac{t^{\overline{m}}}{\overline{m}-\underline{m}} 
\int_{0}^{t} s^{-1-\overline{m}} S_{[\overline{m}]+1} d s,\\ 
\underline{R}_{[\overline{m}]+1}
&=-\frac{t^{\underline{m}}}{\overline{m}-\underline{m}} \int_{0}^{t} s^{-1-\underline{m}} S_{[\overline{m}]+1} d s.   
\end{align*}
A simple computation yields
$$
t\partial_{t}R_{[\overline{m}]+1}=\underline{m} \underline{R}_{[\overline{m}]+1}
+\overline{m}\overline{R}_{[\overline{m}]+1},
$$
and
$$
t^{2}\partial^{2}_{t}R_{[\overline{m}]+1}=S_{[\overline{m}]+1}
+\underline{m}(\underline{m}-1) \underline{R}_{[\overline{m}]+1}
+\overline{m}(\overline{m}-1)\overline{R}_{[\overline{m}]+1}.
$$
By \eqref{eq-regularity-S-m-non-small-2}, 
Lemma \ref{lemma-ch4-BasicHolderRegularity} 
with $k=[\overline{m}]+1$, $a=-\underline{m}$, and $\beta=\alpha$, 
and Lemma \ref{lemma-ch4-singular-integral-non-int-higher} 
with $k=[\overline{m}]+1$, $a=\overline{m}$, and $\beta=\alpha$, 
we have, for any $\nu \leq [\overline{m}]+1$ and $\tau \leq \ell-[\overline{m}]-1$, 
$$
\partial_{t}^{\nu} D_{x^{\prime}}^{\tau} \underline{R}_{[\overline{m}]+1},
\partial_{t}^{\nu} D_{x^{\prime}}^{\tau} \overline{R}_{[\overline{m}]+1}\in C^{\alpha}(\bar{G}_{r}),
$$
and
$$
\big|\partial_{t}^{\nu} D_{x^{\prime}}^{\tau} \underline{R}_{[\overline{m}]+1}\big|
+\big|\partial_{t}^{\nu} D_{x^{\prime}}^{\tau} \overline{R}_{[\overline{m}]+1}\big|
\leq C t^{[\overline{m}]+1-\nu+\alpha}\quad \text {in } G_{1 / 2}.
$$
As a consequence, we obtain, for any $\nu \leq [\overline{m}]+1$ and $\tau \leq \ell-[\overline{m}]-1$, 
$$
\partial_{t}^{\nu} D_{x^{\prime}}^{\tau} R_{[\overline{m}]+1}, 
t \partial_{t} \partial_{t}^{\nu} D_{x^{\prime}}^{\tau} R_{[\overline{m}]+1}, 
t^{2} \partial_{t}^{2} \partial_{t}^{\nu} D_{x^{\prime}}^{\tau} R_{[\overline{m}]+1} \in C^{\alpha}(\bar{G}_{r}),
$$
and
\begin{align*}
&|\partial_{t}^{\nu} D_{x^{\prime}}^{\tau} R_{[\overline{m}]+1}|
+|t \partial_{t} \partial_{t}^{\nu} D_{x^{\prime}}^{\tau} R_{[\overline{m}]+1}|\
+|t^{2} \partial_{t}^{2} \partial_{t}^{\nu} D_{x^{\prime}}^{\tau} R_{[\overline{m}]+1}| \\
&\qquad\leq C t^{[\overline{m}]+1-\nu+\alpha}\quad \text {in } G_{1 / 2}.
\end{align*}
We conclude the desired result.
\end{proof}

Next, we study the higher regularity.

\begin{theorem}\label{thrm-ch4-Linear-MainThm-non-integer-small}
Suppose that $\underline{m}$, $\overline{m}$, and $\gamma$ are functions on $B'_{1}$ satisfying 
\eqref{eq-ch4-Assumption_m1}, \eqref{eq-ch4-Assumption_m2}, 
and \eqref{eq-definition-gamma}, 
and that $\ell$ is an integer and $\alpha\in(0,1)$ is a constant such that $\ell\geq [\overline{m}]+1$ 
and \eqref{eq-small-Holder-index} holds. 
Assume $a_{n n}, b_{n}, c \in C^{\ell+1, \alpha}(\bar{G}_{1})$ 
and $a_{i j}, b_{i}, f \in C^{\ell, \alpha}(\bar{G}_{1})$ for $i \neq n$, 
and let $u \in C(\bar{G}_{1}) \cap C^{2}(G_{1})$ be a solution 
of \eqref{eq-ch3-Equ} and \eqref{eq-ch3-Dirichlet}. 
Then, for any $k$ with $[\overline{m}]+1\leq k\leq \ell$,
\begin{align}\label{eq-ch4-LinearExpansion-non-int-k-small}\begin{split}
u&=\sum_{i=0}^{k-1} c_{i} t^{i}+\sum^{k-1}_{i=[\overline{m}]}
\sum^{i-[\overline{m}]}_{j=0}c_{i, j} t^{i+\gamma}(\log t)^{j}+R'_{k-1}\\
&=\sum_{i=0}^{k} c_{i} t^{i}+\sum^{k-1}_{i=[\overline{m}]}
\sum^{i-[\overline{m}]}_{j=0}c_{i, j} t^{i+\gamma}(\log t)^{j}+R_{k} \quad \text {in } G_{1},
\end{split}\end{align}
where $\{c_{i}\}_{i=0}^{\ell}$ and $\{c_{i, j}\}_{ [\overline{m}]\leq i\leq \ell-1,0\leq j\leq i-[\overline{m}]}$ 
are functions on $B'_{1}$, and $R'_{k-1}$ and $R_{k}$ are functions in $G_{1}$ satisfying, 
for any $x'_{0}\in B'_{1}$, 
any $r\in(0,1-|x'_{0}|)$, 
and any $\epsilon$ with $\alpha<\epsilon <\alpha+1-\gamma$ on $\bar B_{r}'(x_0')$, 
\begin{align}\label{eq-ch4-regularity-coefficients-non-int-small}\begin{split}
&c_{i} \in C^{\ell-i, \alpha}(\bar B_{r}^{\prime}(x'_{0}))\quad\text{for }i=0,\cdots,\ell,\\
&c_{i, j} \in C^{\ell-i-1, \epsilon}(\bar B_{r}^{\prime}(x'_{0}))
\quad\text{for }i=[\overline{m}],\cdots,\ell-1\text{ and }j=0,\cdots,i-[\overline{m}],
\end{split}\end{align}
for any $\nu \leq k-1$ and $\tau \leq \ell-k$, 
\begin{align}\label{eq-ch4-LinearRegularity-non-int-m1-prime-small}\begin{split}
\partial_{t}^{\nu} D_{x^{\prime}}^{\tau} (t^{-\gamma}R'_{k-1}),
& t \partial_{t} \partial_{t}^{\nu} D_{x^{\prime}}^{\tau} (t^{-\gamma}R'_{k-1}),\\
&t^{2} \partial_{t}^{2} \partial_{t}^{\nu} D_{x^{\prime}}^{\tau} (t^{-\gamma}R'_{k-1}) 
\in C^{\epsilon,\epsilon-\alpha}_{x',t}(\bar{G}_{r}(x_{0}')), 
\end{split}\end{align}
and
\begin{align}\label{eq-ch4-LinearRegularity-non-int-m2-prime-small}\begin{split}
&\big|\partial_{t}^{\nu} D_{x^{\prime}}^{\tau} R'_{k-1}\big|
+\big|t \partial_{t} \partial_{t}^{\nu} D_{x^{\prime}}^{\tau} R'_{k-1}\big|
+\big|t^{2} \partial_{t}^{2} \partial_{t}^{\nu} D_{x^{\prime}}^{\tau} R'_{k-1}\big| \\
&\qquad \leq C t^{k-\nu}\big\{|u|_{L^{\infty}(G_{1})}
+|f|_{C^{\ell, \alpha}(\bar{G}_{1})}\big\} \quad \text {in }\bar{G}_{r}(x_{0}'),
\end{split}\end{align}
and, for any $\nu \leq k$ and $\tau \leq \ell-k$, 
\begin{align}\label{eq-ch4-LinearRegularity-non-int-m1-small}
\partial_{t}^{\nu} D_{x^{\prime}}^{\tau} R_{k}, t \partial_{t} \partial_{t}^{\nu} D_{x^{\prime}}^{\tau} R_{k}, 
t^{2} \partial_{t}^{2} \partial_{t}^{\nu} D_{x^{\prime}}^{\tau} R_{k} \in C^{\alpha}(\bar{G}_{r}(x_{0}')),   
\end{align}
and
\begin{align}\label{eq-ch4-LinearRegularity-non-int-m2-small}\begin{split}
&\big|\partial_{t}^{\nu} D_{x^{\prime}}^{\tau} R_{k}\big|
+\big|t \partial_{t} \partial_{t}^{\nu} D_{x^{\prime}}^{\tau} R_{k}\big|
+\big|t^{2} \partial_{t}^{2} \partial_{t}^{\nu} D_{x^{\prime}}^{\tau} R_{k}\big| \\
&\qquad \leq C t^{k-\nu+\alpha}\big\{|u|_{L^{\infty}(G_{1})}+|f|_{C^{\ell, \alpha}(\bar{G}_{1})}\big\}
 \quad \text {in }\bar{G}_{r}(x_{0}'),
\end{split}\end{align}
for some positive constant $C$ depending only on $n$, $\lambda$, $\ell$, $\alpha$, $r$, $\epsilon$, 
$\underline{m}$, $\overline{m}$, the $C^{\ell+1, \alpha}$-norms of $a_{n n}, b_{n}, c$ in $\bar{G}_{1}$, 
and the $C^{\ell, \alpha}$-norms of $a_{i j}, b_{i}$ in $\bar{G}_{1}$ for $i \neq n.$ 
If, in addition, $c_{[\overline{m}], 0}=0$ on $B'_{r}(x_0')$, 
for some $x'_{0}\in B'_{1}$ and $r\in(0,1-|x'_{0}|)$, 
then $c_{i, j}=0$ on $B'_{r}(x_0')$, 
for any $i=[\overline{m}], \cdots, \ell-1$ and $j=0,\cdots,i-[\overline{m}]$, 
and $u \in C^{\ell,\alpha}(\bar{G}_{r}(x_{0}'))$.  
\end{theorem}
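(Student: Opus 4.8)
\medskip
\noindent\textbf{Proof plan.}
The strategy is to run the induction of Theorem~\ref{thrm-ch4-Linear-MainThm-non-integer-large} on $k$, from $k=[\overline{m}]+1$ up to $k=\ell$, but now carrying \emph{two} expansions of $u$ at each level: the one with the primed remainder $R'_{k-1}$, decaying like $t^{k-\nu}$ and obtained by stopping the Taylor expansion of $F$ in $t$ at integer order, and the one with remainder $R_{k}$, decaying like $t^{k-\nu+\alpha}$ and obtained by keeping one extra integer-order term in $F$. The base case $k=[\overline{m}]+1$ is precisely Lemma~\ref{lemma-ch4-Linear-MainThm-non-integer-small}. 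As there, one normalizes $x_{0}'=0$, fixes $\epsilon_{0}$ with $\alpha<\epsilon_{0}\le\alpha+1-\gamma$ on $\bar B_{1}'$, and lets $\epsilon\in(\alpha,\epsilon_{0})$ and $r\in(0,1)$ be arbitrary.

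For the inductive step, assume \eqref{eq-ch4-LinearExpansion-non-int-k-small}--\eqref{eq-ch4-LinearRegularity-non-int-m2-small} hold with $k$ replaced by every integer from $[\overline{m}]+1$ to $k-1$. Writing $F$ as in \eqref{eq-ch4-LinearODE-F1}, I would substitute the expansion of $u$ up to order $k-2$ into the second-order tangential term $\tilde a_{\alpha\beta}t^{2}\partial_{\alpha\beta}u$ and the expansion up to order $k-1$ into $\tilde a_{\alpha n}t^{2}\partial_{\alpha t}u$, $\tilde b_{n}t^{2}\partial_{t}u$, $\tilde b_{\alpha}t\partial_{\alpha}u$, $\tilde c\,tu$ (using the lower order for the $\partial_{\alpha\beta}$-term so as not to lose a tangential derivative, exactly as in the large-index proof), and collect. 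Using $D_{x'}t^{\gamma}=t^{\gamma}\log t\,D_{x'}\gamma$ one gets
\[
F=\sum_{i=0}^{k}a_{i}t^{i}+\sum_{i=[\overline{m}]+1}^{k}\sum_{j=0}^{i-[\overline{m}]}a_{i,j}\,t^{i+\gamma}(\log t)^{j}+S_{k},
\]
with $a_{i}\in C^{\ell-i,\alpha}(B_{1}')$, $a_{i,j}\in C^{\ell-i-1,\epsilon}(B_{1}')$, the remainder $S_{k}$ decaying like $t^{k-\nu+\alpha}$ with $\partial_{t}^{\nu}D_{x'}^{\tau}S_{k}\in C^{\alpha}$, and the primed remainder $S'_{k-1}$ (got by moving $a_{k}t^{k}$ and the $t^{k+\gamma}(\log t)^{j}$ terms back into the remainder) decaying like $t^{k-\nu}$ with $t^{-\gamma}S'_{k-1}$ in the mixed space $C^{\epsilon,\epsilon-\alpha}_{x',t}$; these are verified as for the illustrative term in the large-index proof, now invoking Lemma~\ref{lemma-ch4-regularity-power2} and Lemma~\ref{lemma-ch4-regularity-log} in place of Lemma~\ref{lemma-ch4-regularity-power1}. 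Plugging both expansions of $F$ into the ODE representation \eqref{eq-ch4-LinearODE-Solution-l} and using the elementary antiderivative $\int_{0}^{t}s^{a-1}(\log s)^{p}\,ds=\sum_{j=0}^{p}\frac{(-1)^{j}p!}{a^{j+1}(p-j)!}t^{a}(\log t)^{p-j}$, one reads off $c_{k}=a_{k}/\big((k-\underline{m})(k-\overline{m})\big)$, $c_{k,j}\in\mathrm{span}\{a_{k,h}:j\le h\le k-[\overline{m}]\}$, and the remainders
\[
R_{k}=\frac{t^{\overline{m}}}{\overline{m}-\underline{m}}\int_{0}^{t}s^{-1-\overline{m}}S_{k}\,ds-\frac{t^{\underline{m}}}{\overline{m}-\underline{m}}\int_{0}^{t}s^{-1-\underline{m}}S_{k}\,ds,
\]
and similarly $R'_{k-1}$ with $S'_{k-1}$. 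The regularity of $c_{k},c_{k,j}$ is inherited from $a_{k},a_{k,j}$; for the remainders one splits $R_{k}=\underline R_{k}+\overline R_{k}$, uses the identities $t\partial_{t}R_{k}=\underline{m}\,\underline R_{k}+\overline{m}\,\overline R_{k}$ and $t^{2}\partial_{t}^{2}R_{k}=S_{k}+\underline{m}(\underline{m}-1)\underline R_{k}+\overline{m}(\overline{m}-1)\overline R_{k}$ together with their $t^{-\gamma}$-weighted versions (where $\overline{m}-\gamma=[\overline{m}]$), and applies Lemma~\ref{lemma-ch4-BasicHolderRegularity} with $a=-\underline{m}$ or $a=\gamma-\underline{m}$, Lemma~\ref{lemma-ch4-singular-integral-non-int-higher} with $a=\overline{m}$, and Lemma~\ref{lemma-ch4-singular-integral-int-lower} with $a=[\overline{m}]$. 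This establishes \eqref{eq-ch4-regularity-coefficients-non-int-small}--\eqref{eq-ch4-LinearRegularity-non-int-m2-small} for $k$ and closes the induction.

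For the final assertion I would mirror Step~2 of the large-index proof. If $c_{[\overline{m}],0}=0$ on $B'_{r}$, the linear-dependence bookkeeping (the analogues of \eqref{eq-ch4-expression-a-non-int-k-large} and \eqref{eq-ch4-expression-coefficient-k-k1-non-int-k-large}) forces $a_{i,j}=c_{i,j}=0$ for all admissible $i,j$ by induction, so $u=\sum_{i=0}^{k}c_{i}t^{i}+R_{k}$ with no logarithmic terms. One then upgrades the joint H\"older regularity to full $C^{\ell,\alpha}(\bar G_{r})$: apply Theorem~\ref{thrm-ch3-Linear-NormalEstimate-general-beta} with its $k$ taken to be $[\overline{m}]$ and some $\beta\in(0,\alpha)$ (admissible since $[\overline{m}]+\beta<\overline{m}$, so $Q([\overline{m}]+\beta)<0$) to get $\partial_{t}^{\nu}D_{x'}^{\tau}u\in C^{\alpha,\beta}_{x',t}$ for $\nu\le[\overline{m}]$, $\nu+\tau\le\ell$; combined with $\partial_{t}^{[\overline{m}]}u=[\overline{m}]!\,c_{[\overline{m}]}+\partial_{t}^{[\overline{m}]}R_{[\overline{m}]}$ this gives the desired regularity at $\nu=[\overline{m}]$, after which one inducts on $\nu$ from $[\overline{m}]$ to $\ell$, at each step re-deriving $\partial_{t}^{\nu}D_{x'}^{\tau}F\in C^{\alpha}$ and re-running the representation formula with the log-free $F=\sum a_{i}t^{i}+S_{k}$, concluding $u\in C^{\ell,\alpha}(\bar G_{r})$.

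The main obstacle is the double bookkeeping: because $\alpha<\gamma$ one cannot factor $t^{\gamma}$ out of $t^{\alpha}$ directly and must carry the primed and unprimed expansions in tandem through every stage, and — crucially — establish the regularity of $R'_{k-1}$ and of $t^{-\gamma}R'_{k-1}$ \emph{in parallel}, never deriving the latter from the former (which by Lemma~\ref{lemma-ch4-regularity-power1} would cost an extra H\"older exponent $\gamma$ at each inductive step and destroy the scheme), with the mixed space $C^{\epsilon,\epsilon-\alpha}_{x',t}$ as the natural home of the weighted remainders. A secondary delicate point is securing the sharp exponent $c_{[\overline{m}],0}\in C^{\ell-[\overline{m}]-1,\epsilon}$: this requires stopping the $t$-expansion of $F$ at the integer order $[\overline{m}]$ rather than $[\overline{m}]+1$ and invoking the ``smaller'' variant Lemma~\ref{lemma-ch4-singular-integral-non-int-lower-smaller}, as already noted inside Lemma~\ref{lemma-ch4-Linear-MainThm-non-integer-small}.
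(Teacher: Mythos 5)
Your proposal is correct and follows essentially the same route as the paper, which omits the proof of this theorem precisely because it is the induction of Theorem \ref{thrm-ch4-Linear-MainThm-non-integer-large} run with the two parallel expansions $R'_{k-1}$ and $R_k$, base case Lemma \ref{lemma-ch4-Linear-MainThm-non-integer-small}, and the appendix lemmas you cite. The one slip is that in the inductive step ($k\ge[\overline{m}]+2$) the weighted remainder $t^{-\gamma}\overline{R}'_{k-1}$ calls for Lemma \ref{lemma-ch4-singular-integral-int-higher} rather than the ``lower'' variant, since the number of $t$-derivatives $\nu\le k-1$ then exceeds $a=[\overline{m}]$ — exactly as the paper indicates in the remark following the theorem.
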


The proof of 
Theorem \ref{thrm-ch4-Linear-MainThm-non-integer-small} 
is similar to that of Theorem \ref{thrm-ch4-Linear-MainThm-non-integer-large}
and hence omitted. 
We obtain the regularity of $t^{-\gamma} R'_{k-1}$ 
by Lemma \ref{lemma-ch4-BasicHolderRegularity} 
and Lemma \ref{lemma-ch4-singular-integral-int-higher}
and the regularity of $R_k$ by Lemma \ref{lemma-ch4-BasicHolderRegularity} 
and Lemma \ref{lemma-ch4-singular-integral-non-int-higher}. 
We point out two differences, the pattern of the expansion 
and the regularity of the remainders. 

The second expansion in 
\eqref{eq-ch4-LinearExpansion-non-int-k-small} with $k=\ell$ has the following pattern: 
\begin{align*}&t^0, t^1, \cdots, t^{[\overline{m}]},t^{[\overline{m}]+\gamma},
t^{[\overline{m}]+1},t^{[\overline{m}]+1+\gamma}\log t,t^{[\overline{m}]+1+\gamma},\cdots,\\ 
&\qquad t^{\ell-1}, t^{\ell-1+\gamma}(\log t)^{\ell-1-[\overline{m}]}, \cdots, 
t^{\ell-1+\gamma}\log t, t^{\ell-1+\gamma}, t^{\ell}.\end{align*}
The coefficients $c_0, \cdots, c_{\ell}$ have an optimal regularity 
determined by the regularity of $a_{ij}, b_i, c$, and $f$, 
and there is a loss of regularity for $c_{i,j}$, 
for $[\overline{m}]\le i\le \ell-1$ and $0\le j\le i-[\overline{m}]$. 
The first term with non-integer power is given by $t^{[\overline{m}]+\gamma}$. 
If the first non-integer power does not appear, 
then there are no non-integer power or logarithmic terms in the expansion and the solutions are as regular as the 
coefficients and nonhomogeneous terms allow. 

We point out that $R_k$ in \eqref{eq-ch4-LinearExpansion-non-int-k-large}
and \eqref{eq-ch4-LinearExpansion-non-int-k-small} has the same decay order 
according to \eqref{eq-ch4-LinearRegularity-non-int-m2-large}
and \eqref{eq-ch4-LinearRegularity-non-int-m2-small}; namely, 
\begin{align*}
|R_{k}|
\le Ct^{k+\alpha}\big\{|u|_{L^\infty(G_1)}+|f|_{C^{\ell,\alpha}(\bar G_1)}\big\}
\quad\text{in }G_{1/2}.\end{align*}
Due to the different relations $\gamma<\alpha$ and $\alpha<\gamma$, 
$R_k$ appears in different locations in the expansion. 
Specifically, $R_k$ appears after the singular term $t^{k+\gamma}$ if $\gamma<\alpha$ and 
after the term $t^k$ if $\alpha<\gamma$. 
Moreover, $R_{k}$ is $C^\alpha$ in $t$ but $C^\epsilon$ in $x'$ 
for any $\epsilon\in (0,\alpha-\gamma)$
in Theorem \ref{thrm-ch4-Linear-MainThm-non-integer-large}
and 
$R_{k}$ is $C^\alpha$ in both $t$ and $x'$
in Theorem \ref{thrm-ch4-Linear-MainThm-non-integer-small}.
In other words, there is no loss of regularity in Theorem \ref{thrm-ch4-Linear-MainThm-non-integer-small}
but a loss of regularity in Theorem \ref{thrm-ch4-Linear-MainThm-non-integer-large}, 
with the loss only in the $x'$ direction.

\begin{theorem}\label{Thm-ch4-Linear-MainThm-decomposition-non-int-small}
Suppose that $\underline{m}$, $\overline{m}$, and $\gamma$ are functions on $B'_{1}$ satisfying 
\eqref{eq-ch4-Assumption_m1}, \eqref{eq-ch4-Assumption_m2}, 
and \eqref{eq-definition-gamma}, 
and that $\ell$ is an integer and $\alpha\in(0,1)$ is a constant such that $\ell\geq [\overline{m}]+1$ 
and \eqref{eq-small-Holder-index} holds. 
Assume $a_{n n}, b_{n}, c \in C^{\ell+1, \alpha}(\bar{G}_{1})$ 
and $a_{i j}, b_{i}, f \in C^{\ell, \alpha}(\bar{G}_{1})$ for $i \neq n$, 
and let $u \in C(\bar{G}_{1}) \cap C^{2}(G_{1})$ be a solution 
of \eqref{eq-ch3-Equ} and \eqref{eq-ch3-Dirichlet}. 
Then,
\begin{align}\label{eq-ch4-LinearExpansion-decomposition-non-small}
u=v+\sum^{\ell-1-[\overline{m}]}_{j=0}w^{(j)}t^{\gamma}(\log t)^{j} \quad\text{in } G_{1},
\end{align}
where $v$ and $w^{(0)},\cdots,w^{(\ell-1-[\overline{m}])}$ are functions in $G_{1}$ satisfying, 
for any $x'_{0}\in B'_{1}$, 
any $r\in(0,1-|x'_{0}|)$, 
and any $\epsilon$ with $\alpha<\epsilon <\alpha+1-\gamma$ on $\bar B_{r}'(x_0')$, 
\begin{align*}
&v \in C^{\ell, \alpha}(\bar{G}_{r}(x_{0}')),\\
&w^{(j)} \in C^{\ell-1, \epsilon}(\bar{G}_{r}(x_{0}'))\quad\text{for }j=0,\cdots,\ell-1-[\overline{m}],
\end{align*}
with $\partial_t^i w^{(j)}=0$ on $\Sigma_{1}$, for $j=0,\cdots,\ell-1-[\overline{m}]$ and $i=0,\cdots,[\overline{m}]+j-1$, 
and 
$$
|v|_{C^{\ell, \alpha}(\bar{G}_{r}(x_{0}'))}
+\sum^{\ell-1-[\overline{m}]}_{j=0}|w^{(j)}|_{C^{\ell-1, \epsilon}(\bar{G}_{r}(x_{0}'))} 
\leq C\big\{|u|_{L^{\infty}(G_1)}+|f|_{C^{\ell, \alpha}(\bar{G}_1)}\big\},
$$
where $C$ is a positive constant depending only on $n$, $\lambda$, $\ell$, $\alpha$, $\epsilon$, $r$, 
$\underline{m}$, $\overline{m}$, the $C^{\ell+1, \alpha}$-norms of $a_{n n}, b_n, c$ in $\bar{G}_1$, 
and the $C^{\ell, \alpha}$-norms of $a_{i j}, b_i$ in $\bar{G}_1$ for $i \neq n$.
If, in addition, $\partial_t^{[\overline{m}]} w^{(0)}=0$ on $\Sigma_{r}(x_0')$, 
for some $x'_{0}\in B'_{1}$ 
and $r\in(0,1-|x'_{0}|)$, 
then $u \in C^{\ell,\alpha}(\bar{G}_{r}(x_{0}'))$.  
\end{theorem}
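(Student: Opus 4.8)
The plan is to follow the proof of Theorem \ref{Thm-ch4-Linear-MainThm-decomposition-non-int-large}, replacing Theorem \ref{thrm-ch4-Linear-MainThm-non-integer-large} by Theorem \ref{thrm-ch4-Linear-MainThm-non-integer-small}. Assume $x_0'=0$, fix a constant $\epsilon$ with $\alpha<\epsilon<\alpha+1-\gamma$ on $\bar B_1'$ and an $r\in(0,1)$. First I would apply Theorem \ref{thrm-ch4-Linear-MainThm-non-integer-small} with $k=\ell$ and reorganize the logarithmic double sum to write
\[
u=\sum_{i=0}^{\ell}c_it^i+\sum_{j=0}^{\ell-1-[\overline m]}t^{\gamma}(\log t)^{j}\sum_{i=[\overline m]+j}^{\ell-1}c_{i,j}t^{i}+R_\ell\quad\text{in }G_1,
\]
with $c_i\in C^{\ell-i,\alpha}(\bar B_r')$, $c_{i,j}\in C^{\ell-i-1,\epsilon}(\bar B_r')$, and $R_\ell$ having the decay and mixed H\"older regularity from that theorem with $k=\ell$. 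Then, by Lemma \ref{lemma-ch4-extensions}, I would construct $v\in C^{\ell,\alpha}(\bar G_r)$ with $\partial_t^iv|_{t=0}=i!\,c_i$ for $0\le i\le\ell$, and, for $0\le j\le\ell-2-[\overline m]$, functions $w^{(j)}\in C^{\ell-1,\epsilon}(\bar G_r)$ with $\partial_t^iw^{(j)}|_{t=0}=0$ for $0\le i\le[\overline m]+j-1$ and $\partial_t^iw^{(j)}|_{t=0}=i!\,c_{i,j}$ for $[\overline m]+j\le i\le\ell-1$; since the prescribed Taylor datum of order $i$ lies only in $C^{(\ell-1)-i,\epsilon}$, the extension lemma yields only $C^{\ell-1,\epsilon}$ regularity here, which is the one-derivative loss that distinguishes this case from the large-index one. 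Finally I would define $w=w^{(\ell-1-[\overline m])}$ by \eqref{eq-ch4-LinearExpansion-decomposition-non-small}; the substance of the proof is to show $w\in C^{\ell-1,\epsilon}(\bar G_r)$, after which $\partial_t^iw=0$ on $\Sigma_1$ for $0\le i\le\ell-2$ follows by matching low-order Taylor behavior.

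To prove $w\in C^{\ell-1,\epsilon}$ I would bound $\partial_t^\nu D_{x'}^\tau w$ for all $\nu+\tau\le\ell-1$, separating according to the number $k$ of normal derivatives. For $k\le[\overline m]-1$, solving \eqref{eq-ch4-LinearExpansion-decomposition-non-small} for $w$ expresses it through $t^{-\gamma}(\log t)^{-(\ell-1-[\overline m])}(u-v)$ and $(\log t)^{j-(\ell-1-[\overline m])}w^{(j)}$, $j<\ell-1-[\overline m]$; the optimal regularity $\partial_t^\nu D_{x'}^\tau u\in C^\alpha(\bar G_r)$ from Theorem \ref{thrm-ch3-Linear-NormalEstimate-general} (valid since $k+\alpha<\overline m$), the matching Taylor data of $v$ and the $w^{(j)}$, and Lemma \ref{lemma-ch4-regularity-log}(ii) together with Lemma \ref{lemma-ch4-regularity-power1} and Lemma \ref{lemma-ch4-regularity-power2} give the claim. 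For $[\overline m]\le k\le\ell-1$ I would instead invoke Theorem \ref{thrm-ch4-Linear-MainThm-non-integer-small} at level $k$ and, crucially, the \emph{primed} expansion $u=\sum_{i\le k-1}c_it^i+\sum c_{i,j}t^{i+\gamma}(\log t)^j+R'_{k-1}$, because it is $t^{-\gamma}R'_{k-1}$, not $t^{-\gamma}R_k$, that lies in $C^{\epsilon,\epsilon-\alpha}_{x',t}(\bar G_r)$. Writing $v=\sum_{i\le k-1}c_it^i+S_{k-1}(v)$ and $w^{(j)}=\sum_{[\overline m]+j\le i\le k-1}c_{i,j}t^i+S_{k-1}(w^{(j)})$ with Taylor remainders $S_{k-1}(v)$, $S_{k-1}(w^{(j)})$, the $c_{i,j}t^i$ terms cancel and $w$ becomes a finite combination of $(R'_{k-1}-S_{k-1}(v))t^{-\gamma}(\log t)^{-(\ell-1-[\overline m])}$ and $S_{k-1}(w^{(j)})(\log t)^{j-(\ell-1-[\overline m])}$, together with, when $k=\ell-1$, an explicit term built from $c_{\ell-1,\ell-1-[\overline m]}$. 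Applying $\partial_t^\nu D_{x'}^\tau$ and using Lemma \ref{lemma-ch4-regularity-log}(ii), Lemma \ref{lemma-ch4-regularity-power1}, Lemma \ref{lemma-ch4-regularity-power2}, and the regularity of $t^{-\gamma}R'_{k-1}$ yields $\partial_t^\nu D_{x'}^\tau w\in C^\epsilon(\bar G_r)$; combining both ranges of $k$ gives $w\in C^{\ell-1,\epsilon}$. The quantitative estimate follows from the linear dependence of the $c_i,c_{i,j}$ on $u$ in Theorem \ref{thrm-ch4-Linear-MainThm-non-integer-small} and of the extensions on their data in Lemma \ref{lemma-ch4-extensions}.

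For the last assertion, $\partial_t^{[\overline m]}w^{(0)}=0$ on $\Sigma_r(x_0')$ forces $c_{[\overline m],0}=0$ there, so the final clause of Theorem \ref{thrm-ch4-Linear-MainThm-non-integer-small} gives that all $c_{i,j}$ vanish and $u\in C^{\ell,\alpha}(\bar G_r(x_0'))$. The main obstacle I anticipate is the bookkeeping of the $t^{-\gamma}$-factoring when $\alpha<\gamma$: one must consistently use the primed remainders $R'_{k-1}$ and the mixed spaces $C^{\epsilon,\epsilon-\alpha}_{x',t}$, verify that Lemma \ref{lemma-ch4-regularity-power2} and Lemma \ref{lemma-ch4-regularity-log} apply with the shifted exponent range $\alpha<\epsilon<\alpha+1-\gamma$, and carry the systematic loss of one derivative through the induction---which is exactly what separates this statement from Theorem \ref{Thm-ch4-Linear-MainThm-decomposition-non-int-large}.
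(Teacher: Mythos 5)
Your overall architecture matches the paper's proof: expand $u$ via Theorem \ref{thrm-ch4-Linear-MainThm-non-integer-small} with $k=\ell$, build $v$ and $w^{(0)},\dots,w^{(\ell-2-[\overline{m}])}$ by Lemma \ref{lemma-ch4-extensions}, define $w=w^{(\ell-1-[\overline{m}])}$ by the decomposition identity, and then establish its regularity derivative by derivative, with the low range $\nu\le[\overline{m}]-1$ handled by the optimal regularity theorem. The final assertion and the estimate are also handled as in the paper.

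There is, however, a genuine gap in the high range of normal derivatives. You assert that the key is to use the primed expansion $R'_{k-1}$ ``not $t^{-\gamma}R_k$,'' and that applying the lemmas to the resulting decomposition of $w$ yields $\partial_t^\nu D_{x'}^\tau w\in C^{\epsilon}(\bar G_r)$. But $t^{-\gamma}R'_{k-1}$ lies only in $C^{\epsilon,\epsilon-\alpha}_{x',t}(\bar G_r)$, i.e., it is merely $C^{\epsilon-\alpha}$ in $t$, and Lemma \ref{lemma-ch4-regularity-power2}(i), which is what applies to $S'_{k-1}(v)t^{-\gamma}$ under the weaker vanishing hypothesis of the primed Taylor remainder, gives only $C^{\delta}_t$ with $\delta<1-\gamma$. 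Since $\epsilon$ may be as large as $\alpha+1-\gamma>1-\gamma>\epsilon-\alpha$, the primed expansion alone cannot deliver the claimed $C^{\epsilon}$ regularity of $w$ in the $t$-direction. The paper resolves this by running \emph{two} parallel steps: Step 1 uses the primed expansion ($R'_{k-1}$, $S'_{k-1}(v)$, Lemma \ref{lemma-ch4-regularity-power2}(i)) to get $\partial_t^\nu D_{x'}^\tau w\in C^{\epsilon}_{x'}$, and Step 2 uses the \emph{unprimed} expansion ($R_k$ with its full $C^\alpha$ regularity and decay $t^{k-\nu+\alpha}$, $S_k(v)$ vanishing to order $k$, and Lemma \ref{lemma-ch4-regularity-power2}(ii)) to get $\partial_t^\nu D_{x'}^\tau w\in C^{\epsilon}_{t}$. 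Both expansions are indispensable — this is exactly the ``new element'' the paper flags at the start of Section \ref{sec-Small-Holder-Indices} — so your dismissal of $R_k$ leaves the normal-direction H\"older estimate unproved. A smaller point: you invoke Lemma \ref{lemma-ch4-regularity-power1} in this regime, but its hypothesis requires $\gamma\le c<\alpha$, which fails here where $\alpha<\gamma$; only Lemma \ref{lemma-ch4-regularity-power2} is available in this section.
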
 

\begin{proof}
We will apply Theorem \ref{thrm-ch4-Linear-MainThm-non-integer-small} 
and Lemma \ref{lemma-ch4-extensions} to construct functions $v$ 
and $w^{(0)}$, $\cdots$, $w^{(\ell-[\overline{m}]-2)}$, 
and prove that there is a function $w^{(\ell-1-[\overline{m}])}$ with the desired regularity 
such that \eqref{eq-ch4-LinearExpansion-decomposition-non-small} holds.
Without loss of generality, we assume $x'_{0}=0$ 
and take a constant $\epsilon_0$ such that 
$\alpha<\epsilon_0 \le\alpha+1-\gamma$ on $\bar B_{1}'$. 
Throughout the proof, $\epsilon$ is an arbitrary constant in $(\alpha,\epsilon_{0})$ 
and $r$ is an arbitrary constant in $(0,1)$. 

By Theorem \ref{thrm-ch4-Linear-MainThm-non-integer-small}, 
there exist functions $c_{i} \in C^{\ell-i, \alpha}(B'_{1})$, for $0 \leq i \leq$ $\ell$, 
and $c_{i, j} \in C^{\ell-1-i, \epsilon}(B'_{1})$, for $[\overline{m}] \leq i \leq \ell-1$ and $0\leq j\leq i-[\overline{m}]$, 
such that
$$
u=\sum_{i=0}^{\ell} c_{i} t^{i}+\sum_{i=[\overline{m}]}^{\ell-1}
\sum^{i-[\overline{m}]}_{j=0} c_{i, j} t^{i+\gamma}(\log t)^{j}+R_{\ell},
$$
where $R_{\ell}$ is a function in $G_{1}$ with the stated properties 
in Theorem \ref{thrm-ch4-Linear-MainThm-non-integer-small}  with $k=\ell$.
We write 
$$
u=\sum_{i=0}^{\ell} c_{i} t^{i}
+\sum_{j=0}^{\ell-1-[\overline{m}]}\sum^{\ell-1}_{i=[\overline{m}]+j} c_{i, j} t^{i}t^{\gamma}(\log t)^{j}+R_{\ell}.
$$
By Lemma \ref{lemma-ch4-extensions}, there exist functions $v \in C^{\ell, \alpha}(\bar{B}_{r})$ 
and $w^{(j)}\in  C^{\ell-1, \epsilon}(\bar{B}_{r}) $ for $j=0,\cdots,\ell-[\overline{m}]-2$, such that
\begin{equation}\label{eq-ch4-choice-v-small}
\partial_{t}^{i} v|_{t=0}=i ! c_{i}\quad\text {for } i=0, \cdots, \ell,
\end{equation}
and
\begin{equation}\label{eq-ch4-choice-w-j-small}
\partial_{t}^{i} w^{(j)}|_{t=0}=
\begin{cases}
0&\quad\text{for }i=0,\cdots,[\overline{m}]+j-1,\\
i ! c_{i,j}&\quad\text {for } i=[\overline{m}]+j, \cdots, \ell-1. 
\end{cases}    
\end{equation}
Introduce a function $w=w^{(\ell-1-[\overline{m}])}$ in $G_{1}$ such that
\begin{equation}\label{eq-identity-w-v-non-int-small}
u=v+\sum^{\ell-1-[\overline{m}]}_{j=0}w^{(j)}t^{\gamma}(\log t)^{j}.    
\end{equation}
We will prove $w \in C^{\ell-1, \epsilon}(\bar{G}_{r})$.

We first consider a positive integer $k$ with $k\leq [\overline{m}]$. 
We will prove $\partial^{\nu}_{t}D^{\tau}_{x'}w\in C^{\epsilon}(\bar{G}_{r})$ 
for any nonnegative integers $\nu\leq k-1$ and $\tau\leq \ell-k$. 
By Theorem \ref{thrm-ch3-Linear-NormalEstimate-general} and $k+\alpha<\overline{m}$, 
we get $\partial^{\nu}_{t}D^{\tau}_{x'}u\in C^{\alpha}(\bar{G}_{r})$ for any $\nu\leq k$ and $\tau\leq \ell-\nu$. 
By $v\in C^{\ell,\alpha}(\bar{G}_{r}),$ \eqref{eq-ch4-choice-v-small}, 
Lemma \ref{lemma-ch4-regularity-log}(ii), and Lemma \ref{lemma-ch4-regularity-power2}, 
we have, for any $\nu\leq k-1$ and $\tau\leq \ell-k$,
$$
\partial^{\nu}_{t}D^{\tau}_{x'}\big[t^{-\gamma}(\log t)^{-(\ell-1-[\overline{m}])}(u-v)\big]\in C^{\epsilon}(\bar{G}_{r}).
$$
Similarly, by $w^{(j)}\in  C^{\ell-1, \epsilon}(\bar{G}_{r}) $ for $j=0,\cdots,\ell-[\overline{m}]-2$, 
\eqref{eq-ch4-choice-w-j-small}, and Lemma \ref{lemma-ch4-regularity-log}(ii), we have
$$
\partial^{\nu}_{t}D^{\tau}_{x'}\big[(\log t)^{j-(\ell-1-[\overline{m}])}w^{(j)}\big]\in C^{\epsilon}(\bar{G}_{r}),
$$
for any $\nu\leq k-1$ and $\tau\leq \ell-k$, and $j=0,\cdots,\ell-[\overline{m}]-2$. 
By \eqref{eq-identity-w-v-non-int-small}, we obtain
$$
w=t^{-\gamma}(\log t)^{-(\ell-1-[\overline{m}])}(u-v)
-\sum^{\ell-[\overline{m}]-2}_{j=0}(\log t)^{j-(\ell-1-[\overline{m}])}w^{(j)},
$$
and hence $\partial^{\nu}_{t}D^{\tau}_{x'}w\in C^{\epsilon}(\bar{G}_{r})$ for any $\nu\leq k-1$ and $\tau\leq \ell-k$.

We next fix an integer $k$ with $[\overline{m}]+1 \leq k\leq \ell$. 
We will prove that $\partial_{t}^{\nu} D_{x'}^{\tau} w \in C^{\epsilon}(\bar{G}_{r})$, 
for any $\nu\leq k-1$ and $\tau \leq \ell-k$, in two steps.

{\it Step 1.} We prove that $\partial_{t}^{\nu} D_{x'}^{\tau} w \in C^{\epsilon}_{x'}(\bar{G}_{r})$, 
for any $\nu \leq k-1$ and $\tau \leq \ell-k$.

By Theorem \ref{thrm-ch4-Linear-MainThm-non-integer-small}, we have
\begin{align}\label{eq-ch4-definition-v-k-small-v1}\begin{split}
u&=\sum_{i=0}^{k-1} c_{i} t^{i}
+\sum_{i=[\overline{m}]}^{k-1}\sum^{i-[\overline{m}]}_{j=0} c_{i, j} t^{i+\gamma}(\log t)^{j}+R_{k-1}'\\
&=\sum_{i=0}^{k-1} c_{i} t^{i}
+\sum_{j=0}^{k-1-[\overline{m}]}\sum^{k-1}_{i=[\overline{m}]+j} c_{i, j} t^{i+\gamma}(\log t)^{j}+R_{k-1}',
\end{split}\end{align}
where $R_{k-1}'$ is a function in $G_{1}$ 
with the stated properties in Theorem \ref{thrm-ch4-Linear-MainThm-non-integer-small}. 
By the definition of $v$, we write
\begin{equation}\label{eq-ch4-definition-wj-k-small-v1}
v=\sum_{i=0}^{k-1} c_{i} t^{i}+S'_{k-1}(v), 
\end{equation}
where $S'_{k-1}(v)$ is a function in $\bar{G}_{r}$ such that, 
for any $\nu \leq k-1$, $ \tau \leq \ell-(k-1)$ and any $\nu \leq k$, $ \tau \leq \ell-k$,
\begin{equation}\label{eq-ch4-regularity-S-v-small}
\partial_{t}^{\nu}D_{x'}^{\tau}  S'_{k-1}(v) \in C^{\alpha}(\bar{G}_{r}).  
\end{equation}
Similarly, for $j=0,\cdots,\ell-[\overline{m}]-2$, we write
\begin{equation}\label{eq-ch4-definition-wj-k-small-wj}
w^{(j)}=\sum_{i=[\overline{m}]+j}^{k-1} c_{i,j} t^{i}+S'_{k-1}(w^{(j)}),
\end{equation}
where $S'_{k-1}(w^{(j)})$ is a function in $\bar{G}_{r}$ such that, for any $\nu \leq k-1$ and $ \tau \leq \ell-k$, 
\begin{equation}\label{eq-ch4-regularity-S-wj-small}
\partial_{t}^{\nu}D_{x'}^{\tau}  S'_{k-1}(w^{(j)}) \in C^{\epsilon}(\bar{G}_{r}).   
\end{equation}
Note that $w^{(j)}=S'_{k-1}(w^{(j)})$ for $j=k-[\overline{m}],\cdots,\ell-[\overline{m}]-2$. 
By substituting \eqref{eq-ch4-definition-v-k-small-v1}, \eqref{eq-ch4-definition-wj-k-small-v1}, 
and \eqref{eq-ch4-definition-wj-k-small-wj} in \eqref{eq-identity-w-v-non-int-small}, 
we get, for $[\overline{m}]+1\leq k\leq \ell-1$,
$$
w=(R_{k-1}'-S'_{k-1}(v))t^{-\gamma}(\log t)^{-(\ell-1-[\overline{m}])}
-\sum^{\ell-[\overline{m}]-2}_{j=0}S'_{k-1}(w^{(j)})(\log t)^{j-(\ell-1-[\overline{m}])},
$$
and, for $k=\ell$,
\begin{align*}
w&=c_{\ell-1,\ell-1-[\overline{m}]}t^{\ell-1}
+(R_{\ell-1}'-S'_{\ell-1}(v))t^{-\gamma}(\log t)^{-(\ell-1-[\overline{m}])}\\
&\qquad-\sum^{\ell-[\overline{m}]-2}_{j=0}S'_{\ell-1}(w^{(j)})(\log t)^{j-(\ell-1-[\overline{m}])}.
\end{align*}
Take any $\nu \leq k-1$ and $\tau \leq \ell-k$. 
By \eqref{eq-ch4-regularity-S-v-small}, \eqref{eq-ch4-regularity-S-wj-small}, 
Lemma \ref{lemma-ch4-regularity-log}(ii), and Lemma \ref{lemma-ch4-regularity-power2}(i), we have, 
$$
\partial_{t}^{\nu}D_{x'}^{\tau}\big[S'_{k-1}(v)t^{-\gamma}(\log t)^{-(\ell-1-[\overline{m}])}\big]
\in C^{\epsilon}_{x'}(\bar{G}_{r}),
$$
and for $j=0,\cdots,\ell-[\overline{m}]-2$,
$$
\partial_{t}^{\nu}D_{x'}^{\tau}\big[S'_{k-1}(w^{(j)})(\log t)^{j-(\ell-1-[\overline{m}])}\big]
\in C^{\epsilon}_{x'}(\bar{G}_{r}).
$$
By \eqref{eq-ch4-regularity-coefficients-non-int-small} and with $k=\ell$, 
we have $\partial_{t}^{\nu}D_{x'}^{\tau}(c_{\ell-1,\ell-1-[\overline{m}]}t^{\ell-1})\in C^{\epsilon}_{x'}(\bar{G}_{r})$. By \eqref{eq-ch4-LinearRegularity-non-int-m1-prime-small} 
and Lemma \ref{lemma-ch4-regularity-log}(ii), we get
$$
\partial_{t}^{\nu}D_{x'}^{\tau}\big[R'_{k-1}t^{-\gamma}(\log t)^{-(\ell-1-[\overline{m}])}\big]
\in C^{\epsilon}_{x'}(\bar{G}_{r}).
$$
Therefore, we obtain $\partial_{t}^{\nu}D_{x'}^{\tau}w\in C^{\epsilon}_{x'}(\bar{G}_{r})$. 
We note that the regularity of $\partial_{t}^{\nu}D_{x'}^{\tau}w$ in $x'$ is the desired result. 

{\it Step 2.} We prove that $\partial_{t}^{\nu} D_{x'}^{\tau} w \in C^{\epsilon}_{t}(\bar{G}_{r})$, 
for any $\nu \leq k-1$ and $\tau \leq \ell-k$.

By Theorem \ref{thrm-ch4-Linear-MainThm-non-integer-small}, we have
\begin{align}\label{eq-ch4-definition-v-k-small-v2}\begin{split}
u&=\sum_{i=0}^{k} c_{i} t^{i}
+\sum_{i=[\overline{m}]}^{k-1}\sum^{i-[\overline{m}]}_{j=0} c_{i, j} t^{i+\gamma}(\log t)^{j}+R_{k}\\
&=\sum_{i=0}^{k} c_{i} t^{i}
+\sum_{j=0}^{k-1-[\overline{m}]}\sum^{k-1}_{i=[\overline{m}]+j} c_{i, j} t^{i+\gamma}(\log t)^{j}+R_{k},
\end{split}\end{align}
where $R_{k}$ is a function in $G_{1}$ with the stated properties
in Theorem \ref{thrm-ch4-Linear-MainThm-non-integer-small}. By the definition of $v$, we can write
\begin{equation}\label{eq-ch4-definition-wj-k-small-v2}
v=\sum_{i=0}^{k} c_{i} t^{i}+S_{k}(v), 
\end{equation}
where $S_{k}(v)$ is a function in $\bar{G}_{r}$ such that, for any $\nu \leq k$ and $ \tau \leq \ell-k$,
\begin{equation}\label{eq-ch4-regularity-S-v-small-v2}
\partial_{t}^{\nu}D_{x'}^{\tau}  S_{k}(v) \in C^{\alpha}(\bar{G}_{r}).
\end{equation}
Similarly, for $j=0,\cdots,\ell-[\overline{m}]-2$, we can write
\begin{equation}\label{eq-ch4-definition-wj-k-small-wj-w2}
w^{(j)}=\sum_{i=[\overline{m}]+j}^{k-1} c_{i,j} t^{i}+S_{k-1}(w^{(j)}),
\end{equation}
where $S_{k-1}(w^{(j)})$ is a function in $\bar{G}_{r}$ such that, for any $\nu \leq k-1$ and $ \tau \leq \ell-k$,
\begin{equation}\label{eq-ch4-regularity-S-wj-small-w2}
\partial_{t}^{\nu}D_{x'}^{\tau}  S_{k-1}(w^{(j)}) \in C^{\epsilon}(\bar{G}_{r}).
\end{equation}
Note that $w^{(j)}=S_{k-1}(w^{(j)})$ for $j=k-[\overline{m}],\cdots,\ell-[\overline{m}]-2$. 
By substituting \eqref{eq-ch4-definition-v-k-small-v2}, \eqref{eq-ch4-definition-wj-k-small-v2}, 
and \eqref{eq-ch4-definition-wj-k-small-wj-w2} in \eqref{eq-identity-w-v-non-int-small}, 
we get, for $[\overline{m}]+1\leq k\leq \ell-1$,
$$
w=(R_{k}-S_{k}(v))t^{-\gamma}(\log t)^{-(\ell-1-[\overline{m}])}
-\sum^{\ell-[\overline{m}]-2}_{j=0}S_{k-1}(w^{(j)})(\log t)^{j-(\ell-1-[\overline{m}])},
$$
and, for $k=\ell$,
\begin{align*}
w&=c_{\ell-1,\ell-1-[\overline{m}]}t^{\ell-1}+(R_{\ell}-S_{\ell}(v))t^{-\gamma}(\log t)^{-(\ell-1-[\overline{m}])}\\
&\qquad-\sum^{\ell-[\overline{m}]-2}_{j=0}S_{\ell-1}(w^{(j)})(\log t)^{j-(\ell-1-[\overline{m}])}.
\end{align*}
Take any $\nu \leq k-1$ and $\tau \leq \ell-k$. 
By \eqref{eq-ch4-regularity-S-v-small-v2}, \eqref{eq-ch4-regularity-S-wj-small-w2}, 
Lemma \ref{lemma-ch4-regularity-log}(ii), and Lemma \ref{lemma-ch4-regularity-power2}(ii), we have, 
$$
\partial_{t}^{\nu}D_{x'}^{\tau}\big[S_{k}(v)t^{-\gamma}(\log t)^{-(\ell-1-[\overline{m}])}\big]
\in C^{\epsilon}_{t}(\bar{G}_{r}),
$$
and for $j=0,\cdots,\ell-[\overline{m}]-2$, 
$$
\partial_{t}^{\nu}D_{x'}^{\tau}\big[S_{k-1}(w^{(j)})(\log t)^{j-(\ell-1-[\overline{m}])}\big]
\in C^{\epsilon}_{t}(\bar{G}_{r}).
$$
By \eqref{eq-ch4-LinearRegularity-non-int-m1-small}, 
Lemma \ref{lemma-ch4-regularity-log}(ii), and Lemma \ref{lemma-ch4-regularity-power2}(ii), we have
$$
\partial_{t}^{\nu}D_{x'}^{\tau}\big[R_{k}t^{-\gamma}(\log t)^{-(\ell-1-[\overline{m}])}\big]\in C^{\epsilon}_{t}(\bar{G}_{r}).
$$
Therefore, we obtain $\partial_{t}^{\nu}D_{x'}^{\tau}w\in C^{\epsilon}_{t}(\bar{G}_{r})$. 
\end{proof}

By using Theorem \ref{thrm-ch4-Linear-MainThm-non-integer-small}, 
we can prove Theorem \ref{thrm-ch4-Decomposition-infinite-non-int} 
without the additional assumption $[\overline{m}]\ge 1$.  
Then, Theorem \ref{Thm-ch4-Linear-MainThm-curved-combined-smooth}
follows  by renaming the functions $w^{(j)}$, for $j\ge 0$.

\section{Appendix A: H\"older Continuous Functions}\label{sec-Appen-CalculusL}

In this section, we prove several results concerning 
the H\"older continuity of functions in integral forms.
These results play important roles in earlier sections. In this section, we fix an $r\in(0,1)$.

Throughout this section, we will use the following estimates repeatedly. 
Let $\gamma_1$ and $\gamma_2$ be two positive constants such that 
$c_1\le \gamma_1, \gamma_2\le c_2$ for some constants $c_1, c_2>0$. 
Then, for any $s\in (0,1)$, 
$$|s^{\gamma_1}-s^{\gamma_2}|\le s^{c_1}|\log s|\,|\gamma_1-\gamma_2|,$$
and 
$$|s^{-\gamma_1}-s^{-\gamma_2}|\le s^{-c_2}|\log s|\,|\gamma_1-\gamma_2|.$$
Obviously, if $\gamma_1=\gamma_2$, these inequalities become trivial. 

\begin{lemma}\label{lemma-ch4-BasicHolderRegularity}
Let $k$ and $l$ be nonnegative integers, $\alpha,\beta\in(0,1)$ be constants, 
and  $a\in C^{l,\beta}(\bar{B}'_{r})$ and $f \in C(\bar{G}_{r})$ be functions, 
with $a\geq a_{0}$ in $\bar{B}'_{r}$ for some constant $a_{0}>0$. 
Define, for any $\left(x^{\prime}, t\right) \in G_{r}$,
$$
F(x^{\prime}, t)=\frac{1}{t^{a(x^{\prime})}} \int_{0}^{t} s^{a(x^{\prime})-1} f(x^{\prime}, s) d s.
$$
Suppose $\partial_{t}^{\nu} D_{x^{\prime}}^{\tau} f \in C^{\beta,\alpha}_{x',t}(\bar{G}_{r})$, 
for any $\tau \leq l$ and $\nu \leq k.$ 
Then, $\partial_{t}^{\nu} D_{x^{\prime}}^{\tau} F \in C^{\beta,\alpha}_{x',t}(\bar{G}_{r})$, 
for any $\tau \leq l$ and $\nu \leq k$. Moreover, if $\partial^{\nu}_{t}D^{\tau}_{x'}f(\cdot,0)=0$, 
for any $\tau \leq l$ and $\nu \leq k$, then, for any $\tau \leq l$ and $\nu \leq k$, 
and any $\left(x^{\prime}, t\right) \in G_{r}$,
$$
|\partial_{t}^{\nu} D_{x^{\prime}}^{\tau} F(x^{\prime}, t)| 
\leq C\sum^{\tau}_{i=0}[\partial_{t}^{k} D_{x^{\prime}}^{i} f]_{C^{\alpha}_{t}(\bar{G}_{r})} t^{k-\nu+\alpha},
$$
where $C$ is a positive constant depending only on $a_{0}$, $k$, $\alpha$, 
and the $C^{l}$-norm of $a$ in $\bar B'_r$. 
\end{lemma}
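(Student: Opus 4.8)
The plan is to establish the statement by a change of variables followed by differentiation under the integral sign, treating the $x'$-regularity and the $t$-regularity separately since the two H\"older indices $\beta$ and $\alpha$ play different roles. First I would substitute $s = t\sigma$ to rewrite
$$F(x',t) = \int_0^1 \sigma^{a(x')-1} f(x', t\sigma)\, d\sigma.$$
This removes the singular prefactor $t^{-a(x')}$ entirely and exhibits $F$ as an average of $f$ against the probability-type weight $\sigma^{a(x')-1}\,d\sigma$ on $(0,1)$, which is integrable because $a \geq a_0 > 0$. In this form the dependence of $F$ on $(x',t)$ is manifestly as smooth as that of $f$, except that differentiating $\sigma^{a(x')-1}$ in $x'$ produces factors of $\log\sigma$, each of which is still integrable against $\sigma^{a_0 - 1}$ near $\sigma = 0$; this is exactly where the $C^l$-norm of $a$ enters the constant.

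Next I would compute the derivatives. For a multi-index in $x'$ of order $\tau \leq l$, the Leibniz rule applied to $\sigma^{a(x')-1}f(x',t\sigma)$ gives a finite sum of terms, each a product of a polynomial in $D^j_{x'}a$ (for $j \leq \tau$), a power $(\log\sigma)^p$ with $p \leq \tau$, and a derivative $D^{i}_{x'}f(x',t\sigma)$ with $i \leq \tau$; since $a \in C^{l,\beta}$ and each $D^i_{x'}f \in C^\beta_{x'}$, the $\beta$-H\"older estimate in $x'$ follows by the uniform integrability in $\sigma$ of $\sigma^{a_0-1}|\log\sigma|^{\tau}$ and the elementary inequality $|\sigma^{a(x_1')-1} - \sigma^{a(x_2')-1}| \leq \sigma^{a_0 - 1}|\log\sigma|\,|a(x_1') - a(x_2')|$ already recorded at the start of this section. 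For $t$-derivatives, $\partial_t^\nu$ acting on $f(x',t\sigma)$ brings down $\sigma^\nu$, so $\partial_t^\nu F(x',t) = \int_0^1 \sigma^{a(x')-1+\nu}(\partial_t^\nu f)(x',t\sigma)\,d\sigma$ (combined with the $x'$-Leibniz expansion in the mixed case), and the $C^\alpha_t$-bound of $\partial_t^\nu D^\tau_{x'}F$ follows immediately from that of $\partial_t^\nu D^\tau_{x'}f$ since $\sigma \in (0,1)$ contracts $t$-increments.

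For the final decay estimate, I would use the vanishing hypothesis $\partial_t^\nu D^\tau_{x'}f(\cdot,0) = 0$ for $\nu \leq k$, $\tau \leq l$. Writing the Taylor expansion of $s \mapsto \partial_t^\tau\text{-free part}$ — more precisely, expanding $D^\tau_{x'}f(x', s)$ in $s$ to order $k$ with the integral remainder and using that all derivatives up to order $k$ in $t$ vanish at $s=0$ — one gets $|\partial_t^k D^i_{x'} f(x',s)| \leq [\partial_t^k D^i_{x'}f]_{C^\alpha_t} s^\alpha$ and more generally $|\partial_t^\nu D^i_{x'}f(x',s)| \leq C [\partial_t^k D^i_{x'}f]_{C^\alpha_t}\, s^{k-\nu+\alpha}$ for $\nu \leq k$. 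Substituting $s = t\sigma$ into the integral representation of $\partial_t^\nu D^\tau_{x'}F$ and pulling out $t^{k-\nu+\alpha}$, the remaining $\sigma$-integral $\int_0^1 \sigma^{a_0 - 1 + (k-\nu+\alpha)}|\log\sigma|^\tau\,d\sigma$ is finite, giving the claimed bound with the stated dependence of $C$. I expect the main obstacle to be bookkeeping: carefully tracking the combinatorial Leibniz expansion so that every term genuinely has the asserted regularity and decay, and verifying that the $\log\sigma$ powers never exceed what integrability against $\sigma^{a_0-1}$ can absorb — but there is no analytic difficulty, only the need to organize the sum cleanly.
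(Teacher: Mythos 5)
Your proposal is correct and follows essentially the same route as the paper: the substitution $s=t\rho$, the splitting of $x'$-increments into a difference of $f$ plus a difference of the weight $\rho^{a(x')-1}$ controlled by $\rho^{a_0-1}|\log\rho|\,|a(x_1')-a(x_2')|$, the formula $\partial_t^\nu F=\int_0^1\rho^{a(x')-1+\nu}\partial_t^\nu f(x',t\rho)\,d\rho$, and the decay bound via $|\partial_t^\nu f(x',s)|\le[\partial_t^k f]_{C^\alpha_t}s^{k-\nu+\alpha}$ when the derivatives vanish at $s=0$. The only difference is that you sketch the general Leibniz expansion in $x'$ at once, whereas the paper writes out $l=0$ and $l=1$ and asserts the general case; both are fine.
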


\begin{proof}
We first consider $l=0$. By a change of variables $s=t\rho$, we have, for any $(x',t)\in G_{r}$,
$$
F(x',t)=\int^{1}_{0}f(x',t\rho)\rho^{a(x')-1}d\rho.
$$
Then, 
$$
|F(x',t)|\leq |f|_{L^{\infty}(G_{r})}\int^{1}_{0}\rho^{a(x')-1}d\rho
=\frac{1}{a(x')}|f|_{L^{\infty}(G_{r})}\leq \frac{1}{a_{0}}|f|_{L^{\infty}(G_{r})}.
$$

Next, for any $(x'_{1},t),(x'_{2},t)\in G_{r}$, we write 
$$F(x'_{1},t)-F(x'_{2},t)=I_1+I_2,$$
where 
\begin{align*}
I_1&=\int^{1}_{0}\big[f(x_1',t\rho)-f(x_2',t\rho)\big]\rho^{a(x_1')-1}d\rho,\\
I_2&=\int^{1}_{0}f(x_2',t\rho)\big[\rho^{a(x_1')-1}-\rho^{a(x_2')-1}\big]d\rho.
\end{align*}
First, we have, for any $\rho\in(0,1]$,
$$
|f(x'_{1},t\rho)-f(x'_{2},t\rho)|
\leq [f]_{C^{\beta}_{x'}(\bar{G}_{r})}|x'_{1}-x'_{2}|^{\beta},
$$
and
\begin{align*}
\big|\rho^{a(x'_{1})-1}-\rho^{a(x'_{2})-1}\big|&\leq \rho^{a_{0}-1}|\log\rho||a(x'_{1})-a(x'_{2})|\\
&\leq  \rho^{a_{0}-1}|\log\rho|[a]_{C^{\beta}(\bar{B}'_{r})}|x'_{1}-x'_{2}|^{\beta}.
\end{align*}
Then,
\begin{align*}
|I_1|&\leq\int^{1}_{0}|f(x'_{1},t\rho)-f(x'_{2},t\rho)|\rho^{a(x'_{1})-1}d\rho\\
&\leq  [f]_{C^{\beta}_{x'}(\bar{G}_{r})}|x'_{1}-x'_{2}|^{\beta}\int^{1}_{0}\rho^{a_{0}-1}d\rho\\
&\leq  C[f]_{C^{\beta}_{x'}(\bar{G}_{r})}|x'_{1}-x'_{2}|^{\beta},\end{align*}
and 
\begin{align*}
|I_2|&\le \int^{1}_{0}|f(x'_{2},t\rho)||\rho^{a(x'_{1})-1}-\rho^{a(x'_{2})-1}|d\rho\\
&\le|f|_{L^{\infty}(G_{r})}[a]_{C^{\beta}(\bar{B}'_{r})}|x'_{1}-x'_{2}|^{\beta}\int^{1}_{0}\rho^{a_{0}-1}|\log \rho|d\rho\\
&\le C|f|_{L^{\infty}(G_{r})}|x'_{1}-x'_{2}|^{\beta}.
\end{align*}
Thus,
\begin{align*}
|F(x'_{1},t)-F(x'_{2},t)|\leq 
C\big\{[f]_{C^{\beta}_{x'}(\bar{G}_{r})}+|f|_{L^{\infty}(G_{r})}\big\}|x'_{1}-x'_{2}|^{\beta},
\end{align*}
where $C$ is a positive constant depending only on $a_{0}$
and the $C^{\beta}$-norm of $a$ in $\bar B'_r$. 
Hence, $F\in C^{\beta}_{x'}(\bar{G}_{r})$. 

Similarly, for any $(x',t_{1}),(x',t_{2})\in G_{r}$, we have, for any $\rho\in[0,1]$,
$$
|f(x',t_{1}\rho)-f(x',t_{2}\rho)|\leq [f]_{C^{\alpha}_{t}(\bar{G}_{r})}\rho^{\alpha}|t_{1}-t_{2}|^{\alpha}.
$$
Then,
\begin{align*}
|F(x',t_{1})-F(x',t_{2})|&\leq  [f]_{C^{\alpha}_{t}(\bar{G}_{r})}|t_{1}-t_{2}|^{\alpha}
\int^{1}_{0}\rho^{a(x')+\alpha-1}d\rho\\
&\leq \frac{1}{a_{0}+\alpha}[f]_{C^{\alpha}_{t}(\bar{G}_{r})} |t_{1}-t_{2}|^{\alpha}.
\end{align*}
Hence, $F\in C^{\alpha}_{t}(\bar{G}_{r})$. 

Take any nonnegative integer $\nu$ with $\nu\leq k$. Then, for any $(x',t)\in G_{r}$,
$$
\partial^{\nu}_{t}F(x',t)=\int^{1}_{0}\partial^{\nu}_{t}f(x',t\rho)\rho^{a(x')-1+\nu}d\rho.
$$
Hence, $\partial^{\nu}_{t}F\in C^{\beta,\alpha}_{x',t}(\bar{G}_{r})$. 
If $\partial^{\nu}_{t}f(x',0)=0$ for any $\nu \leq k$, then
\begin{align*}
|\partial_{t}^{\nu} F(x^{\prime}, t)| 
&\leq [\partial_{t}^{k} f]_{C^{\alpha}_{t}(\bar{G}_{r})}t^{k-\nu+\alpha}\int^{1}_{0}\rho^{k+a(x')-1+\alpha}d\rho\\
&\leq \frac{1}{k+a_{0}+\alpha}[\partial_{t}^{k} f]_{C^{\alpha}_{t}(\bar{G}_{r})}t^{k-\nu+\alpha}.
\end{align*}
This yields the desired result for $l=0$. 

If $l=1$, we have
$$
D_{x'}F(x',t)=\int^{1}_{0}D_{x'}f(x',t\rho)\rho^{a(x^{\prime})-1}d\rho
+\int^{1}_{0}f(x',t\rho)\rho^{a(x^{\prime})-1}\log \rho d\rho\cdot  D_{x'}a(x^{\prime}).
$$
We can proceed similarly as above. Therefore, we can obtain the desired result for any $l\geq 0$.
\end{proof}


\begin{lemma}\label{lemma-ch4-singular-integral-non-int-lower-larger}
Let $l$ be a nonnegative integer, $\alpha\in(0,1)$ be a constant, 
and $a\in C^{l,\alpha}(\bar{B}'_{r})$ and $f\in C(\bar{G}_{r})$ be functions, 
with $a>0$ and $0< a-[a]\leq c<1$ in $\bar{B}'_{r}$ for some constant $c>0$. Define, for any $(x',t)\in G_{r}$,
$$
F(x',t)=t^{a(x^{\prime})}\int^{t}_{0}s^{-a(x^{\prime})-1}f(x',s)ds.
$$
Suppose $\alpha\in(c,1)$, $\partial^{\nu}_{t}D^{\tau}_{x'}f\in C^{\alpha}(\bar{G}_{r})$ 
and $\partial^{\nu}_{t}D^{\tau}_{x'}f(\cdot,0)=0$, for any $\tau\leq l$ and $\nu\leq [a]$. 
Then, $\partial^{\nu}_{t}D^{\tau}_{x'}F\in C^{\epsilon,\alpha}_{x',t}(\bar{G}_{r})$, 
for any $\tau\leq l$ and $\nu\leq [a]$, and any $\epsilon\in (0,\alpha-c)$. 
Moreover, for any $\tau \leq l$ and $\nu \leq [a]$, and any $(x^{\prime}, t) \in G_{r}$,
$$
|\partial_{t}^{\nu} D_{x^{\prime}}^{\tau} F(x^{\prime}, t)| 
\leq C\sum^{\tau}_{i=0}\big[\partial_{t}^{[a]} D_{x^{\prime}}^{i} f\big]_{C^{\alpha}_{t}(\bar{G}_{r})} t^{[a]-\nu+\alpha},
$$
where $C$ is a positive constant depending only on $c$, $\alpha$, 
and the $C^{l}$-norm of $a$ in $\bar B'_r$. 
\end{lemma}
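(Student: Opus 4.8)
The plan is to reduce $F$, via the substitution $s=t\rho$, to the integral
\[
F(x',t)=\int_0^1\rho^{-a(x')-1}f(x',t\rho)\,d\rho
\]
over the fixed interval $(0,1)$, and to read off all the regularity from this representation, in the spirit of the proof of Lemma \ref{lemma-ch4-BasicHolderRegularity} but now with the non-integrable weight $\rho^{-a(x')-1}$. First note that, since $\bar B'_r$ is connected, $a$ is continuous, and $0<a-[a]\le c<1$, the function $a$ never meets an integer, so $[a]$ is a fixed integer on $\bar B'_r$. The basic input is a decay estimate: because $\partial_t^i D_{x'}^{\tau'}f(\cdot,0)=0$ for all $i\le[a]$ and $\tau'\le l$, while $\partial_t^{[a]}D_{x'}^{\tau'}f\in C^\alpha(\bar G_r)$, Taylor's formula in $t$ gives
\[
|\partial_t^\nu D_{x'}^{\tau'}f(x',s)|\le C\,[\partial_t^{[a]}D_{x'}^{\tau'}f]_{C^\alpha_t(\bar G_r)}\,s^{[a]-\nu+\alpha}\qquad(0\le\nu\le[a],\ \tau'\le l).
\]
In particular the integrand of $F$ is $O\big(t^{[a]+\alpha}\rho^{\,\alpha-(a(x')-[a])-1}\big)$ with exponent $\alpha-(a(x')-[a])-1\ge\alpha-c-1>-1$, so the integral converges absolutely and uniformly and $F\in C(\bar G_r)$.

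Next I would differentiate under the integral sign (legitimate by the uniform bounds just obtained, applied to the differentiated integrands): for $\nu\le[a]$ and $\tau\le l$,
\[
\partial_t^\nu D_{x'}^\tau F(x',t)=\sum_{\tau'\le\tau}\sum_{j}\int_0^1\rho^{\,\nu-a(x')-1}(\log\rho)^{j}\,P_{\tau',j}\big(\{D_{x'}^{i}a\}_{i\le\tau-\tau'}\big)\,(\partial_t^\nu D_{x'}^{\tau'}f)(x',t\rho)\,d\rho,
\]
a finite sum in which the $P_{\tau',j}$ are universal polynomials and each $D_{x'}^{i}a$ with $i\le l$ lies in $C^{0,\alpha}(\bar B'_r)$. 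By the decay estimate each integrand is dominated by $C\,t^{[a]-\nu+\alpha}\rho^{\,\alpha-(a-[a])-1}|\log\rho|^{j}$, and integrating and summing gives the asserted bound $|\partial_t^\nu D_{x'}^\tau F|\le C\sum_{i\le\tau}[\partial_t^{[a]}D_{x'}^{i}f]_{C^\alpha_t(\bar G_r)}\,t^{[a]-\nu+\alpha}$ with $C$ depending only on $c$, $\alpha$, $l$, and $\|a\|_{C^{l}(\bar B'_r)}$; since $[a]-\nu+\alpha>0$, it also shows $\partial_t^\nu D_{x'}^\tau F(\cdot,0)=0$. The $C^\alpha$-regularity in $t$ then follows: if $\nu<[a]$, the bound just proved applied to $\nu+1$ shows $\partial_t^{\nu+1}D_{x'}^\tau F$ is bounded, so $\partial_t^\nu D_{x'}^\tau F$ is Lipschitz, hence $C^\alpha$, in $t$; if $\nu=[a]$, only the factor $(\partial_t^{[a]}D_{x'}^{\tau'}f)(x',t\rho)$ depends on $t$, its $t$-increment is at most $[\partial_t^{[a]}D_{x'}^{\tau'}f]_{C^\alpha_t}\rho^\alpha|t_1-t_2|^\alpha$, and the remaining $\rho$-integral $\int_0^1\rho^{\,\alpha-(a(x')-[a])-1}|\log\rho|^{j}\,d\rho$ is finite precisely because $\alpha>c\ge a(x')-[a]$.

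For the $C^\epsilon$-regularity in $x'$, fix $\epsilon\in(0,\alpha-c)$ and compare $\partial_t^\nu D_{x'}^\tau F$ at two points $x'_1,x'_2$, splitting each term of the integral representation into three pieces according to whether the change of base point acts on $\rho^{-a(x')-1}$, on $P_{\tau',j}$, or on $(\partial_t^\nu D_{x'}^{\tau'}f)(x',t\rho)$. For the first two pieces I would use $|\rho^{-a(x'_1)-1}-\rho^{-a(x'_2)-1}|\le\rho^{-\max a-1}|\log\rho|\,[a]_{C^\alpha(\bar B'_r)}|x'_1-x'_2|^\alpha$ and $|P_{\tau',j}(x'_1)-P_{\tau',j}(x'_2)|\le C|x'_1-x'_2|^\alpha$, combined with the decay $|(\partial_t^\nu D_{x'}^{\tau'}f)(x',t\rho)|\le C(t\rho)^{[a]-\nu+\alpha}$; the resulting $\rho$-integrals converge (exponent $\alpha-(a-[a])-1>-1$, with at most one extra power of $|\log\rho|$), so these pieces are $O(|x'_1-x'_2|^\alpha)$. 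For the third piece, put $\phi(s)=(\partial_t^\nu D_{x'}^{\tau'}f)(x'_1,s)-(\partial_t^\nu D_{x'}^{\tau'}f)(x'_2,s)$; since $\phi^{(k)}(0)=0$ for $k\le[a]-\nu$ and $|\phi^{([a]-\nu)}(s)|\le[\partial_t^{[a]}D_{x'}^{\tau'}f]_{C^\alpha_{x'}}|x'_1-x'_2|^\alpha$, Taylor gives $|\phi(s)|\le Cs^{[a]-\nu}|x'_1-x'_2|^\alpha$, while the decay estimate gives $|\phi(s)|\le Cs^{[a]-\nu+\alpha}$; interpolating these two bounds with weight $\theta=1-\epsilon/\alpha$ yields $|\phi(s)|\le Cs^{[a]-\nu+\alpha-\epsilon}|x'_1-x'_2|^\epsilon$. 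The associated $\rho$-integral then carries the factor $\rho^{\,-(a(x')-[a])-1+\alpha-\epsilon}|\log\rho|^{j}$, which is integrable on $(0,1)$ exactly because $\epsilon<\alpha-c\le\alpha-(a(x')-[a])$. Summing the finitely many terms gives $\partial_t^\nu D_{x'}^\tau F\in C^\epsilon_{x'}(\bar G_r)$, and together with the $t$-regularity this gives $\partial_t^\nu D_{x'}^\tau F\in C^{\epsilon,\alpha}_{x',t}(\bar G_r)$ for all $\nu\le[a]$ and $\tau\le l$.

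The delicate point throughout is the convergence of the $\rho$-integrals: the weight $\rho^{-a(x')-1}$ is not integrable at $\rho=0$, and only the enforced vanishing of $f$ and its $t$-derivatives up to order $[a]$ restores integrability. After the interpolation needed to reach the $C^\epsilon_{x'}$-bound the governing exponent is the borderline $-(a-[a])-1+\alpha-\epsilon$, and the standing hypotheses $\alpha>c$ and $\epsilon<\alpha-c$ are precisely what keep it above $-1$; this is also the structural reason for the unavoidable loss of Hölder exponent from $\alpha$ to an arbitrary $\epsilon<\alpha-c$ in the tangential direction. I expect the only genuinely laborious step to be the explicit bookkeeping of the polynomial factors $P_{\tau',j}$ and of the logarithmic powers generated by repeated application of $D_{x'}$ to $\rho^{-a(x')-1}$.
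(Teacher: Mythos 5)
Your proposal is correct and follows essentially the same route as the paper's proof: the substitution $s=t\rho$, the Taylor-based decay estimate from the vanishing of $\partial_t^\nu D_{x'}^\tau f$ at $t=0$, and an interpolation between the $C^\alpha_t$- and $C^\alpha_{x'}$-bounds on the $f$-difference that trades $\epsilon$ of tangential H\"older exponent for the extra $\rho$-decay needed to beat the non-integrable weight $\rho^{-a(x')-1}$ (the paper writes this as $|I-I\!\!I|^{\delta}|I-I\!\!I|^{1-\delta}$ with $(1-\delta)\alpha=\epsilon$, which is your interpolation of the two bounds on $\phi$). The only difference is that you carry out the bookkeeping for general $l$, including the $\log\rho$ factors from differentiating $\rho^{-a(x')-1}$ in $x'$, whereas the paper writes out only the case $l=0$.
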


\begin{proof}
We only consider $l=0$.
First, we derive decay estimates of $F$ and its derivatives. By a change of variables $s=t \rho$, we have
$$
F(x^{\prime}, t)=\int_{0}^{1} \rho^{-a(x^{\prime})-1} f(x^{\prime}, t \rho) d \rho,
$$
and hence, for any $0\leq \nu \leq [a]$,
$$
\partial_{t}^{\nu} F(x^{\prime}, t)=\int_{0}^{1} \rho^{\nu-a(x')-1} \partial_{t}^{\nu} f(x^{\prime}, t \rho) d \rho.
$$
Now, we fix a $0\leq \nu \leq [a]$. By
$$
|\partial_{t}^{\nu} f(x^{\prime}, t)| \leq\big[\partial_{t}^{[a]} f\big]_{C^{\alpha}_{t}(\bar{G}_{r})} t^{[a]-\nu+\alpha},
$$
we have, for any $(x^{\prime}, t) \in G_{r}$,
\begin{align*}
|\partial_{t}^{\nu} F(x^{\prime}, t)| 
& \leq\big[\partial_{t}^{[a]} f\big]_{C^{\alpha}_{t}(\bar{G}_{r})} t^{[a]-\nu+\alpha} 
\int_{0}^{1} \rho^{[a]+\alpha-a(x^{\prime})-1} d \rho \\
&\leq\frac{1}{\alpha-c}\big[\partial_{t}^{[a]} f\big]_{C^{\alpha}_{t}(\bar{G}_{r})} t^{[a]-\nu+\alpha},
\end{align*}
where we used $[a]+\alpha-a\geq \alpha-c>0$. 

For the H\"older semi-norm of $\partial_{t}^{\nu} F$ in $t$, it suffices to consider the case $\nu = [a].$ 
For any $(x^{\prime}, t_{1}), (x^{\prime}, t_{2})\in G_{r}$, we have
$$
\big|\partial_{t}^{[a]} f(x^{\prime}, t_{1} \rho)-\partial_{t}^{[a]} f(x^{\prime}, t_{2} \rho)\big| 
\leq\big[\partial_{t}^{[a]} f\big]_{C^{\alpha}_{t}(\bar{G}_{r})} \rho^{\alpha}\left|t_{1}-t_{2}\right|^{\alpha}.
$$
Then,
\begin{align*}
\big|\partial_{t}^{[a]} F(x^{\prime}, t_{1})-\partial_{t}^{[a]} F(x^{\prime}, t_{2})\big| 
& \leq\big[\partial_{t}^{[a]} f\big]_{C^{\alpha}_{t}(\bar{G}_{r})}|t_{1}-t_{2}|^{\alpha} 
\int_{0}^{1} \rho^{[a]+\alpha-a(x^{\prime})-1} d \rho \\
&\leq \frac{1}{\alpha-c}\big[\partial_{t}^{[a]} f\big]_{C^{\alpha}_{t}(\bar{G}_{r})}|t_{1}-t_{2}|^{\alpha}.
\end{align*}
Hence, $\partial_{t}^{[a]} F\in C^{\alpha}_{t}(\bar{G}_{r})$.

Next, we consider the H\"{o}lder semi-norm of $\partial_{t}^{\nu} F$ in $x'.$ 
Take any $(x_{1}^{\prime}, t)$, $(x_{2}^{\prime}, t)\in G_{r}$. Fix a $\nu \leq [a]$
and write 
$$\partial_{t}^{\nu} F(x_1^{\prime}, t)-\partial_{t}^{\nu} F(x_2^{\prime}, t)=I_1+I_2,$$
where 
\begin{align*}
I_1&=\int_{0}^{1} \rho^{\nu-a(x_1')-1} \big[\partial_{t}^{\nu} f(x_1^{\prime}, t \rho)
-\partial_{t}^{\nu} f(x_2^{\prime}, t \rho)\big] d \rho,\\
I_2&=\int_{0}^{1} \partial_{t}^{\nu} f(x_2^{\prime}, t \rho)[\rho^{\nu-a(x_1')-1} -\rho^{\nu-a(x_2')-1}] d \rho.
\end{align*}
We first study $I_1$. Note that for $\nu\leq [a]-1$,
$$
\partial_{t}^{\nu} f(x_{1}^{\prime}, t \rho)-\partial_{t}^{\nu} f(x_{2}^{\prime}, t \rho)
=\rho t \int_{0}^{1}\left(\partial_{t}^{\nu+1} f(x_{1}^{\prime}, s t \rho)
-\partial_{t}^{\nu+1} f(x_{2}^{\prime}, s t \rho)\right) d s.
$$
By iterating this formula finitely many times, we have
$$
|\partial_{t}^{\nu} f(x_{1}^{\prime}, t \rho)-\partial_{t}^{\nu} f(x_{2}^{\prime}, t \rho)| 
\leq(\rho t)^{[a]-\nu} \sup _{s \in[0,1]}\big|\partial_{t}^{[a]} f(x_{1}^{\prime}, s t \rho)
-\partial_{t}^{[a]} f(x_{2}^{\prime}, s t \rho)\big|,
$$
and hence
\begin{align*}
|I_1|&\leq\int^{1}_{0}\rho^{\nu-a(x'_{1})-1}|\partial_{t}^{\nu} f(x_{1}^{\prime}, t \rho)
-\partial_{t}^{\nu} f(x_{2}^{\prime}, t \rho)| d\rho\\
& \leq t^{[a]-\nu} \int_{0}^{1} \rho^{-c-1} \sup _{s \in[0,1]}\big|\partial_{t}^{[a]} f(x_{1}^{\prime}, s t \rho)
-\partial_{t}^{[a]} f(x_{2}^{\prime}, s t \rho)\big| d \rho.
\end{align*}
For any $\delta\in(0,1)$, by writing $|I-I\!\!I|=|I-I\!\!I|^{\delta}\cdot|I-I\!\!I|^{1-\delta}$, we have
\begin{align*}
&|\partial_{t}^{[a]} f (x_{1}^{\prime}, t \rho)-\partial_{t}^{[a]} f(x_{2}^{\prime}, t \rho)|\\
&\qquad \leq(|\partial_{t}^{[a]} f(x_{1}^{\prime}, t \rho)|
+|\partial_{t}^{[a]} f(x_{2}^{\prime}, t \rho)|)^{\delta}
|\partial_{t}^{[a]} f(x_{1}^{\prime}, t \rho)-\partial_{t}^{[a]} f(x_{2}^{\prime}, t \rho)|^{1-\delta} \\
&\qquad \leq 2[\partial_{t}^{[a]} f]^{\delta}_{C^{\alpha}_{t}(\bar{G}_{r})}
[\partial_{t}^{[a]} f]^{1-\delta}_{C^{\alpha}_{x'}(\bar{G}_{r})} 
t^{\delta \alpha} \rho^{\delta \alpha}\left|x_{1}^{\prime}-x_{2}^{\prime}\right|^{(1-\delta) \alpha}.
\end{align*}
Hence,
\begin{align*}
|I_1|
&\leq 2[\partial_{t}^{[a]} f]^{\delta}_{C^{\alpha}_{t}(\bar{G}_{r})}
[\partial_{t}^{[a]} f]^{1-\delta}_{C^{\alpha}_{x'}(\bar{G}_{r})} 
r^{\delta \alpha}|x_{1}^{\prime}-x_{2}^{\prime}|^{(1-\delta) \alpha} \int_{0}^{1} \rho^{-1-c+\delta \alpha} d \rho \\
&\leq C[\partial_{t}^{[a]} f]^{\delta}_{C^{\alpha}_{t}(\bar{G}_{r})}
[\partial_{t}^{[a]} f]^{1-\delta}_{C^{\alpha}_{x'}(\bar{G}_{r})} 
r^{\delta \alpha}|x_{1}^{\prime}-x_{2}^{\prime}|^{(1-\delta) \alpha},
\end{align*}
if $-c+\delta\alpha>0$. For $I_2$, we get 
\begin{align*}
|I_2|
&\le 
\int^{1}_{0}|\partial_{t}^{\nu} f(x_{2}^{\prime}, t \rho)||\rho^{\nu-a(x'_{1})-1}-\rho^{\nu-a(x'_{2})-1}|d\rho\\
&\le\int^{1}_{0}\rho^{\nu-[a]-c-1}|\log\rho||\partial_{t}^{\nu} f(x_{2}^{\prime}, t \rho)||a(x'_{1})-a(x'_{2})|d\rho.
\end{align*}
By 
$$
|\partial_{t}^{\nu} f(x^{\prime}_{2}, t\rho)| 
\leq\big[\partial_{t}^{[a]} f\big]_{C^{\alpha}_{t}(\bar{G}_{r})} (t\rho)^{[a]-\nu+\alpha},
$$
we have
\begin{align*}
|I_2|
&\leq [a]_{C^{\alpha}(\bar{B}'_{r})}\big[\partial_{t}^{[a]} f\big]_{C^{\alpha}_{t}(\bar{G}_{r})}r^{\alpha}
|x'_{1}-x'_{2}|^{\alpha}\int^{1}_{0}\rho^{-1-c+\alpha}|\log\rho|d\rho\\
&\leq C[a]_{C^{\alpha}(\bar{B}'_{r})}\big[\partial_{t}^{[a]} f\big]_{C^{\alpha}_{t}(\bar{G}_{r})}r^{\alpha}
|x'_{1}-x'_{2}|^{\alpha}.
\end{align*}
Therefore, 
$$|\partial_{t}^{\nu} F(x_1^{\prime}, t)-\partial_{t}^{\nu} F(x_2^{\prime}, t)|
\le CK|x_{1}^{\prime}-x_{2}^{\prime}|^{(1-\delta) \alpha},$$
if $-c+\delta\alpha>0$, where
$$
K=[\partial_{t}^{[a]} f]^{\delta}_{C^{\alpha}_{t}(\bar{G}_{r})}[\partial_{t}^{[a]} f]^{1-\delta}_{C^{\alpha}_{x'}(\bar{G}_{r})}
+\big[\partial_{t}^{[a]} f\big]_{C^{\alpha}_{t}(\bar{G}_{r})},
$$
and $C$ is a positive constant depending only on $r$, $\delta$, $c$, $\alpha$, 
and the $C^{\alpha}$-norm of $a$ in $\bar B'_r$. 
For any $\epsilon \in(0, \alpha-c),$ take $\delta$ such that $(1-\delta)\alpha=\epsilon$. 
Then, $0<\delta<1$ and $-c+\delta\alpha>0$, and hence $\partial_{t}^{\nu} F \in C^{\epsilon}_{x'}(\bar{G}_{r})$. 
This holds for any $\nu \leq [a]$. 
\end{proof} 

\begin{lemma}\label{lemma-ch4-singular-integral-non-int-lower-smaller}
Let $l$ be a nonnegative integer, $\alpha\in(0,1)$ be a constant, 
and $a\in C^{l}(\bar{B}'_{r})$ and $f\in C(\bar{G}_{r})$ be functions, 
with $a>0$ and $0< a-[a]\leq c<1$ in $\bar{B}'_{r}$ for some constant $c>0$. Define, for any $(x',t)\in G_{r}$,
$$
F(x',t)=t^{a(x^{\prime})}\int^{t}_{0}s^{-a(x^{\prime})-1}f(x',s)ds.
$$
Suppose $\alpha\in(0,c)$, $l\geq 1$, $\partial^{\nu}_{t}D^{\tau}_{x'}f\in C^{\alpha}(\bar{G}_{r})$, 
for any $\tau\leq l$, $\nu\leq [a]$ and any $\tau\leq l-1$, $\nu\leq [a]+1$, 
and $\partial^{\nu}_{t}D^{\tau}_{x'}f(\cdot,0)=0$, for any $\tau\leq l$ and $\nu\leq [a]$. 
Then, $\partial^{\nu}_{t}D^{\tau}_{x'}F\in C^{\epsilon}_{x'}(\bar{G}_{r})$, for any $\tau\leq l-1$ and $\nu\leq [a]$, 
and any $\epsilon\in (0,\alpha+1-c)$, and $\partial^{\nu}_{t}D^{\tau}_{x'}F\in C^{\alpha}_{t}(\bar{G}_{r})$, 
for any $\tau\leq l-1$ and $\nu\leq [a]+1$. 
Moreover, for any $\tau \leq l-1$ and $\nu \leq [a]+1$, and any $(x^{\prime}, t) \in G_{r}$,
$$
|\partial_{t}^{\nu} D_{x^{\prime}}^{\tau} F(x^{\prime}, t)| 
\leq C\sum^{\tau}_{i=0}\big|\partial_{t}^{[a]+1}D^{i}_{x'} f\big|_{L^{\infty}(G_{r})} t^{[a]-\nu+1},
$$
where $C$ is a positive constant depending only on $c$, $\alpha$,
and the $C^{l-1}$-norm of $a$ in $\bar B'_r$. 
\end{lemma}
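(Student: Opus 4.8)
The plan is to follow the scheme of the proof of Lemma~\ref{lemma-ch4-singular-integral-non-int-lower-larger}, with the one modification forced by the regime $\alpha<c$: there the weight $\rho^{-a(x')-1}$ in the rescaled integral is controlled through the $C^\alpha_t$-seminorm of $\partial_t^{[a]}f$, but when $\alpha<c$ that weight is no longer integrable after the interpolation step, so I would draw on one additional $t$-derivative of $f$ --- which is exactly why the hypotheses carry $\partial_t^{[a]+1}D_{x'}^\tau f\in C^\alpha(\bar G_r)$ only for $\tau\le l-1$ and why the index ranges in the conclusion are shifted from $\tau\le l$ to $\tau\le l-1$. As before, I would first dispose of $l=0$ and then reduce $l\ge1$ to it: each $D_{x'}$ applied to the formula for $F$ produces either $D_{x'}f$ or a factor $D_{x'}a$ together with $\log\rho$ coming from $D_{x'}\rho^{-a(x')}$, and all of these keep the $\rho$-weight integrable, so the general case follows from finitely many applications of the $l=0$ estimates with at most $l-1$ derivatives landing on $f$. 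After the substitution $s=t\rho$ I would work with $F(x',t)=\int_0^1\rho^{-a(x')-1}f(x',t\rho)\,d\rho$.

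For the decay bound and the $C^\alpha_t$ statement, fix $\nu\le[a]$. The vanishing $\partial_t^jf(\cdot,0)=0$ for $\nu\le j\le[a]$ lets me write, by Taylor's formula in $t$,
\[
\partial_t^\nu f(x',s)=\frac{1}{([a]-\nu)!}\int_0^s(s-\sigma)^{[a]-\nu}\,\partial_t^{[a]+1}f(x',\sigma)\,d\sigma,
\]
so that after rescaling
\[
\partial_t^\nu F(x',t)=c_{a,\nu}\,t^{[a]-\nu+1}\int_0^1\int_0^1\rho^{[a]-a(x')}(1-\mu)^{[a]-\nu}\,\partial_t^{[a]+1}f(x',t\rho\mu)\,d\mu\,d\rho.
\]
The essential gain is that the weight has improved to $\rho^{[a]-a(x')}$, which is integrable precisely because $[a]-a(x')>-1$, i.e.\ because $a-[a]<1$; bounding $\partial_t^{[a]+1}f$ by its sup-norm gives $|\partial_t^\nu F|\le Ct^{[a]-\nu+1}$, and bounding its $t$-increments by $[\partial_t^{[a]+1}f]_{C^\alpha_t}$ gives $\partial_t^\nu F\in C^\alpha_t$. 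For $\nu=[a]+1$ one differentiates under the integral directly and applies the same two bounds.

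For the regularity in $x'$, fix $\nu\le[a]$ and two points $(x_1',t),(x_2',t)\in G_r$, and split $\partial_t^\nu F(x_1',t)-\partial_t^\nu F(x_2',t)=I_1+I_2$, where $I_1$ gathers the difference of the $f$-values against the weight $\rho^{-a(x_1')-1}$ and $I_2$ gathers $\partial_t^\nu f(x_2',t\rho)$ against $\rho^{-a(x_1')-1}-\rho^{-a(x_2')-1}$. In $I_1$ I would insert the Taylor representation at both points, so the difference carries a factor $(t\rho)^{[a]-\nu+1}$, the weight becomes $\rho^{[a]-a}$, and the remaining integrand is a difference of $\partial_t^{[a]+1}f$ at $x_1'$ and $x_2'$, which I would estimate by interpolating between its $C^\alpha_t$ bound (a power $(t\rho\mu)^{\delta\alpha}$) and its $C^\alpha_{x'}$ bound (a power $|x_1'-x_2'|^{(1-\delta)\alpha}$); because $[a]-a>-1$ the leftover $\rho$-integral converges for every admissible $\delta$, and the bookkeeping of $\delta$ against the surviving power of $t$ is what produces the H\"older exponents $\epsilon<\alpha+1-c$. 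In $I_2$ I would use $|\partial_t^\nu f(x_2',t\rho)|\le C(t\rho)^{[a]-\nu+1}$ together with $|\rho^{-a(x_1')-1}-\rho^{-a(x_2')-1}|\le C\rho^{-[a]-c-1}|\log\rho|\,|a(x_1')-a(x_2')|$ and the Lipschitz continuity of $a$ (available since $l\ge1$), which gives $I_2\le C\,t^{[a]-\nu+1}|x_1'-x_2'|$, dominated on the bounded base by the desired $|x_1'-x_2'|^\epsilon$.

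The step I expect to be delicate is exactly this exponent bookkeeping in $I_1$: one must use $[a]+1$ and not $[a]$ $t$-derivatives of $f$ --- otherwise the $\rho$-weight is not integrable when $\alpha<c$ --- and then keep the interpolation parameter in the narrow range where both the $\rho$-integral converges and the resulting H\"older exponent is admissible; this is the single mechanism that simultaneously forces the shift to $\tau\le l-1$, dictates the allowed loss $\epsilon<\alpha+1-c$, and yields the decay order $t^{[a]-\nu+1}$ of the remainder. The remaining assertions (the explicit estimate and, if needed downstream, the vanishing of the relevant quantities on $\Sigma_r$) are routine consequences of the two displayed formulas and are handled as in Lemma~\ref{lemma-ch4-singular-integral-non-int-lower-larger} and the earlier lemmas.
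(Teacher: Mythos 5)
Your treatment of the decay estimate and of the H\"older continuity in $t$ matches the paper's (the paper bounds $|\partial_t^\nu f(x',s)|\le |\partial_t^{[a]+1}f|_{L^\infty}s^{[a]+1-\nu}$ directly rather than via the integral form of Taylor's theorem, but this is cosmetic), and your $I_2$ estimate is essentially the paper's. The gap is in the $I_1$ estimate, which is the heart of the lemma. You propose to interpolate, for the difference $\partial_t^{[a]+1}f(x_1',\cdot)-\partial_t^{[a]+1}f(x_2',\cdot)$, between ``its $C^\alpha_t$ bound (a power $(t\rho\mu)^{\delta\alpha}$)'' and ``its $C^\alpha_{x'}$ bound (a power $|x_1'-x_2'|^{(1-\delta)\alpha}$)''. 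Two things go wrong. First, extracting a power $(t\rho\mu)^{\delta\alpha}$ from the $C^\alpha_t$ seminorm requires $\partial_t^{[a]+1}f(\cdot,0)=0$, and the vanishing hypothesis only covers $\nu\le[a]$; the order-$[a]+1$ $t$-derivative need not vanish on $\Sigma_r$. Second, and more fundamentally, this interpolation yields the exponent $(1-\delta)\alpha<\alpha$ on $|x_1'-x_2'|$, so it can never reach the claimed range $\epsilon\in(0,\alpha+1-c)$: since $c<1$, the endpoint $\alpha+1-c$ exceeds $\alpha$, and in the application (Lemma \ref{lemma-ch4-Linear-MainThm-non-integer-small}) $\epsilon$ is taken strictly above $\alpha$. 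Relatedly, your remark that ``the leftover $\rho$-integral converges for every admissible $\delta$'' signals that the trade-off which actually caps $\epsilon$ at $\alpha+1-c$ is absent from your setup.

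The ingredient you are missing is the mixed derivative $\partial_t^{[a]}D_{x'}f$, i.e.\ the hypothesis with $\tau=1\le l$ and $\nu=[a]$, which your sketch never invokes. The paper bounds $\big|\partial_t^{[a]}f(x_1',t\rho)-\partial_t^{[a]}f(x_2',t\rho)\big|$ in two competing ways: (a) by integrating once more in $t$, $\le \rho t\,[\partial_t^{[a]+1}f]_{C^\alpha_{x'}}|x_1'-x_2'|^{\alpha}$; and (b) by the mean value theorem in $x'$ together with $\partial_t^{[a]}D_{x'}f(\cdot,0)=0$, $\le [\partial_t^{[a]}D_{x'}f]_{C^\alpha_t}(t\rho)^{\alpha}|x_1'-x_2'|$. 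Interpolating $(a)^{\delta}(b)^{1-\delta}$ produces $|x_1'-x_2'|^{\delta\alpha+(1-\delta)}$, whose exponent sweeps out $(\alpha,1)$, together with the weight $\rho^{-c-1+\delta+(1-\delta)\alpha}$, whose integrability forces $\delta+(1-\delta)\alpha>c$ and hence exactly $\epsilon<\alpha+1-c$. It is this use of a full $x'$-derivative of $f$ (at the level $\nu=[a]$, where $\tau\le l$ is available) that simultaneously explains the shift of the conclusion to $\tau\le l-1$ and produces H\"older exponents above $\alpha$; attributing the shift solely to the extra $t$-derivative, as you do, misses this mechanism.
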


\begin{proof} 
We only consider $l=1$.
First, we derive an estimate of $F$. By a change of variables $s=t \rho$, we have
$$
F(x^{\prime}, t)=\int_{0}^{1} \rho^{-a(x^{\prime})-1} f(x^{\prime}, t \rho) d \rho,
$$
and hence, for any $0\leq \nu \leq [a]+1$,
$$
\partial_{t}^{\nu} F(x^{\prime}, t)=\int_{0}^{1} \rho^{\nu-a(x^{\prime})-1} \partial_{t}^{\nu} f(x^{\prime}, t \rho) d \rho.
$$
Now, we fix a $ \nu \leq [a]+1$. By
$$
|\partial_{t}^{\nu} f(x^{\prime}, t)| \leq|\partial_{t}^{[a]+1} f|_{L^{\infty}(G_{r})} t^{[a]-\nu+1},
$$
we have, for any $(x^{\prime}, t) \in G_{r}$,
\begin{align*}
|\partial_{t}^{\nu} F(x^{\prime}, t)| 
& \leq|\partial_{t}^{[a]+1} f|_{L^{\infty}(G_{r})} t^{[a]-\nu+1} \int_{0}^{1} \rho^{[a]-a(x^{\prime})} d \rho \\
&\leq \frac{1}{1-c}|\partial_{t}^{[a]+1} f|_{L^{\infty}(G_{r})} t^{[a]-\nu+1}.
\end{align*}

For the H\"older semi-norm of $\partial_{t}^{\nu} F$ in $t$, it suffices to consider the case $\nu = [a]+1.$ 
For any $(x^{\prime}, t_{1}), (x^{\prime}, t_{2})\in G_{r}$, we have
$$
|\partial_{t}^{[a]+1} f(x^{\prime}, t_{1} \rho)-\partial_{t}^{[a]+1} f(x^{\prime}, t_{2} \rho)| 
\leq[\partial_{t}^{[a]+1} f]_{C^{\alpha}_{t}(\bar{G}_{r})} \rho^{\alpha}|t_{1}-t_{2}|^{\alpha}.
$$
Note that $[a]+\alpha-a> -1$, so we have
\begin{align*}
|\partial_{t}^{[a]+1} F(x^{\prime}, t_{1})-\partial_{t}^{[a]+1} F(x^{\prime}, t_{2})| 
& \leq[\partial_{t}^{[a]+1} f]_{C^{\alpha}_{t}(\bar{G}_{r})}|t_{1}-t_{2}|^{\alpha} 
\int_{0}^{1} \rho^{[a]+\alpha-a(x^{\prime})} d \rho \\
&\leq\frac{1}{\alpha-c+1}[\partial_{t}^{[a]+1} f]_{C^{\alpha}_{t}(\bar{G}_{r})}|t_{1}-t_{2}|^{\alpha}.
\end{align*}
Hence, $\partial_{t}^{[a]+1} F \in C^{\alpha}_{t}(\bar{G}_{r})$.

Next, we consider the H\"{o}lder semi-norm of $\partial_{t}^{\nu} F$ in $x^{\prime}.$ 
Take any $(x_{1}^{\prime}, t)$, $(x_{2}^{\prime}, t)\in G_{r}$. 
Fix a $\nu \leq [a]$ and write 
$$\partial_{t}^{\nu} F(x_1^{\prime}, t)-\partial_{t}^{\nu} F(x_2^{\prime}, t)=I_1+I_2,$$
where 
\begin{align*}
I_1&=\int_{0}^{1} \rho^{\nu-a(x_1')-1} \big[\partial_{t}^{\nu} f(x_1^{\prime}, t \rho)
-\partial_{t}^{\nu} f(x_2^{\prime}, t \rho)\big] d \rho,\\
I_2&=\int_{0}^{1} \partial_{t}^{\nu} f(x_2^{\prime}, t \rho)[\rho^{\nu-a(x_1')-1} -\rho^{\nu-a(x_2')-1}] d \rho.
\end{align*}
We first study $I_1$. 
Note
$$
\partial_{t}^{\nu} f(x_{1}^{\prime}, t \rho)-\partial_{t}^{\nu} f(x_{2}^{\prime}, t \rho)
=\rho t \int_{0}^{1}\big(\partial_{t}^{\nu+1} f(x_{1}^{\prime}, s t \rho)-\partial_{t}^{\nu+1} f(x_{2}^{\prime}, s t \rho)\big) d s.
$$
By iterating this formula finitely many times, we have
\begin{align*}
|\partial_{t}^{\nu} f(x_{1}^{\prime}, t \rho)-\partial_{t}^{\nu} f(x_{2}^{\prime}, t \rho)|
\leq(\rho t)^{[a]-\nu} \sup _{s \in[0,1]}\big|\partial_{t}^{[a]} f(x_{1}^{\prime}, s t \rho)
-\partial_{t}^{[a]} f(x_{2}^{\prime}, s t \rho)\big|,
\end{align*}
and hence
\begin{align*}
|I_1|&\leq\int^{1}_{0}\rho^{\nu-a(x'_{1})-1}
|\partial_{t}^{\nu} f(x_{1}^{\prime}, t \rho)-\partial_{t}^{\nu} f(x_{2}^{\prime}, t \rho)| d\rho\\
& \leq t^{[a]-\nu} \int_{0}^{1} \rho^{-c-1} \sup _{s \in[0,1]}
\big|\partial_{t}^{[a]} f(x_{1}^{\prime}, s t \rho)-\partial_{t}^{[a]} f(x_{2}^{\prime}, s t \rho)\big| d \rho.
\end{align*}
First, we have
\begin{align*}
|\partial_{t}^{[a]} f(x_{1}^{\prime}, t \rho)-\partial_{t}^{[a]} f(x_{2}^{\prime}, t \rho)|
&\leq  \rho t \sup _{s \in[0,1]}|\partial_{t}^{[a]+1} f(x_{1}^{\prime}, s t \rho)-\partial_{t}^{[a]+1} f(x_{2}^{\prime}, s t \rho)|\\
&\leq \rho t [\partial_{t}^{[a]+1} f]_{C^{\alpha}_{x'}(\bar{G}_{r})} |x_{1}^{\prime}-x_{2}^{\prime}|^{\alpha}.
\end{align*}
Next, we also get
\begin{align*}
|\partial_{t}^{[a]} f(x_{1}^{\prime}, t \rho)-\partial_{t}^{[a]} f(x_{2}^{\prime}, t \rho)|
&=\Big|\int^{1}_{0}\partial_{t}^{[a]}D_{x'} f(s x_{1}'+(1-s) x_{2}', t \rho)\cdot (x_{1}'-x_{2}')ds\Big|\\
&\leq [\partial_{t}^{[a]}D_{x'} f]_{C^{\alpha}_{t}(\bar{G}_{r})}t^{\alpha}\rho^{\alpha}|x_{1}^{\prime}-x_{2}^{\prime}|.
\end{align*}
For any $\delta\in(0,1)$, by writing $|I-I\!\!I|=|I-I\!\!I|^{\delta}\cdot|I-I\!\!I|^{1-\delta}$, we have
\begin{align*}
&\big| \partial_{t}^{[a]} f(x_{1}^{\prime}, t \rho)-\partial_{t}^{[a]} f(x_{2}^{\prime}, t \rho) \big| \\
&\qquad \leq [\partial_{t}^{[a]+1} f]_{C^{\alpha}_{x'}(\bar{G}_{r})}^{\delta} 
t^{\delta} \rho^{\delta}|x_{1}^{\prime}-x_{2}^{\prime}|^{\alpha\delta}\\
&\qquad\qquad\cdot[\partial_{t}^{[a]}D_{x'} f]^{1-\delta}_{C^{\alpha}_{t}(\bar{G}_{r})}
t^{(1-\delta)\alpha}\rho^{(1-\delta)\alpha}\left|x_{1}^{\prime}-x_{2}^{\prime}\right|^{1-\delta}\\
&\qquad\leq CK_1t^{\delta+(1-\delta)\alpha}\rho^{\delta+(1-\delta)\alpha}
|x_{1}^{\prime}-x_{2}^{\prime}|^{1+\alpha\delta-\delta},
\end{align*}
where 
$$K_1=[\partial_{t}^{[a]+1} f]_{C^{\alpha}_{x'}(\bar{G}_{r})}^{\delta}
[\partial_{t}^{[a]}D_{x'} f]^{1-\delta}_{C^{\alpha}_{t}(\bar{G}_{r})}.$$
Thus,
\begin{align*}
|I_1|& \leq CK_1r^{\delta +(1-\delta)\alpha}
|x_{1}^{\prime}-x_{2}^{\prime}|^{1+\alpha\delta-\delta} \int_{0}^{1} \rho^{-1-c+\delta+(1-\delta) \alpha} d \rho \\
& \leq CK_1r^{\delta +(1-\delta)\alpha}|x_{1}^{\prime}-x_{2}^{\prime}|^{1+\alpha\delta-\delta},
\end{align*}
if $-c+\delta+(1-\delta) \alpha> 0$. For $I_2$, we have 
\begin{align*}
|I_2|
&\le\int^{1}_{0}|\partial_{t}^{\nu} f(x_{2}^{\prime}, t \rho)||\rho^{\nu-a(x'_{1})-1}-\rho^{\nu-a(x'_{2})-1}|d\rho\\
&\le\int^{1}_{0}\rho^{\nu-[a]-c-1}|\log\rho||\partial_{t}^{\nu} f(x_{2}^{\prime}, t \rho)||a(x'_{1})-a(x'_{2})|d\rho.
\end{align*}
By
$$
|\partial_{t}^{\nu} f(x^{\prime}_{2}, t\rho)| \leq\big|\partial_{t}^{[a]+1} f\big|_{L^{\infty}(G_{r})}(t\rho)^{[a]+1-\nu},
$$
we get
\begin{align*}
|I_2| 
&\le  \big|\partial_{t}^{[a]+1} f\big|_{L^{\infty}(G_{r})}
|D_{x'}a|_{L^{\infty}(B'_{r})}r^{[a]+1-\nu}|x'_{1}-x'_{2}|\int^{1}_{0}\rho^{-c}|\log\rho|d\rho\\
&\leq C \big|\partial_{t}^{[a]+1} f\big|_{L^{\infty}(G_{r})}
r^{[a]+1-\nu}|x'_{1}-x'_{2}|. 
\end{align*}
In summary, we obtain 
$$|\partial_{t}^{\nu} F(x_1^{\prime}, t)-\partial_{t}^{\nu} F(x_2^{\prime}, t)|
\le CK|x_{1}^{\prime}-x_{2}^{\prime}|^{1+\alpha\delta-\delta},$$
if $-c+\delta+(1-\delta) \alpha> 0$, where 
$$
K=[\partial_{t}^{[a]+1} f]_{C^{\alpha}_{x'}(\bar{G}_{r})}^{\delta}
[\partial_{t}^{[a]}D_{x'} f]^{1-\delta}_{C^{\alpha}_{t}(\bar{G}_{r})}+\big|\partial_{t}^{[a]+1} f\big|_{L^{\infty}(G_{r})},
$$
and $C$ is a positive constant depending only on $r$, $\delta$, $c$, $\alpha$, 
and the $L^{\infty}$-norm of $D_{x'}a$ in $B'_r$.  
For any $\epsilon \in(\alpha, \alpha-c+1)$, take $\delta$ such that $1+\alpha\delta-\delta=\epsilon$. Then,
$$
\delta=\frac{1-\epsilon}{1-\alpha}\in(0,1),
$$
and $-c+\delta+(1-\delta) \alpha> 0$, and hence $\partial_{t}^{\nu} F \in C^{\epsilon}_{x'}(\bar{G}_{r})$. 
This holds for any $\nu \leq [a]$.  
\end{proof}

\begin{lemma}\label{lemma-ch4-singular-integral-non-int-higher}
Let $k$ and $l$ be nonnegative integers, $\alpha,\beta\in(0,1)$ be constants, 
and $a\in C^{l,\beta}(\bar{B}'_{r})$ and $f\in C(\bar{G}_{r})$ be functions, with $a>0$ in $\bar{B}'_{r}$. 
Suppose $k\geq [a]+1$. Define, for any $(x',t)\in G_{r}$,
$$
F(x',t)=t^{a(x')}\int^{t}_{0}s^{-a(x')-1}f(x',s)ds.
$$
Suppose $\partial^{\nu}_{t}D^{\tau}_{x'}f\in C^{\beta,\alpha}_{x',t}(\bar{G}_{r})$ 
and $\partial^{\nu}_{t}D^{\tau}_{x'}f(\cdot,0)=0$, for any $\tau\leq l$ and $\nu\leq k$. 
Then, $\partial^{\nu}_{t}D^{\tau}_{x'}F\in C^{\beta,\alpha}_{x',t}(\bar{G}_{r})$, for any $\tau\leq l$ and $\nu\leq k$. 
Moreover, for any $\tau \leq l$ and $\nu \leq k$, and any $(x^{\prime}, t) \in G_{r}$,
$$
|\partial_{t}^{\nu} D_{x^{\prime}}^{\tau} F(x^{\prime}, t)| 
\leq C\sum^{\tau}_{i=0}[\partial_{t}^{k} D_{x^{\prime}}^{i} f]_{C^{\alpha}_{t}(\bar{G}_{r})} t^{k-\nu+\alpha},
$$
where $C$ is a positive constant depending only on $|a|_{C^{l}(\bar{B}'_{r})}$, $k$, $l$, and $\alpha$. 
\end{lemma}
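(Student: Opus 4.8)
The plan is to follow the template of Lemma~\ref{lemma-ch4-BasicHolderRegularity} and Lemma~\ref{lemma-ch4-singular-integral-non-int-lower-larger}, but to use the stronger hypothesis $k\ge[a]+1$ to avoid any loss of H\"older exponent. First I would reduce to $l=0$: for $l\ge 1$, differentiating the integral in $x'$ produces a finite sum of terms of the same shape, with coefficients in $C^{\beta}_{x'}(\bar B'_r)$ built from $a$ and its $x'$-derivatives (which are controlled since $a\in C^{l,\beta}(\bar B'_r)$) and with harmless factors $(\log\rho)^m$, $m\le\tau$, so that the $l=0$ estimates apply term by term once one checks $\int_0^1\rho^{k-a-1}|\log\rho|^m\,d\rho<\infty$, which holds whenever $k>a$. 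After the substitution $s=t\rho$,
$$\partial_t^\nu F(x',t)=\int_0^1\rho^{\nu-a(x')-1}\,\partial_t^\nu f(x',t\rho)\,d\rho\qquad(0\le\nu\le k),$$
and I would prove, in this order, the pointwise decay bound, the H\"older continuity of $\partial_t^\nu F$ in $t$, and then the H\"older continuity in $x'$.

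For the decay bound, since $\partial_t^j f(x',0)=0$ for all $j\le k$, Taylor's formula in $t$ with integral remainder gives $|\partial_t^\nu f(x',t\rho)|\le C[\partial_t^k f]_{C^{\alpha}_{t}(\bar G_r)}(t\rho)^{k-\nu+\alpha}$; the decisive arithmetic fact is that $k\ge[a]+1$ forces $a<k$ on $\bar B'_r$, so the exponent $k-a-1+\alpha$ exceeds $-1$ and $\int_0^1\rho^{k-a-1+\alpha}\,d\rho<\infty$, yielding $|\partial_t^\nu F(x',t)|\le C[\partial_t^k f]_{C^{\alpha}_{t}}t^{k-\nu+\alpha}$. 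The same integral, with $|\partial_t^k f(x',t_1\rho)-\partial_t^k f(x',t_2\rho)|\le[\partial_t^k f]_{C^{\alpha}_{t}}\rho^{\alpha}|t_1-t_2|^{\alpha}$ replacing the pointwise bound, controls the $t$-H\"older seminorm of $\partial_t^k F$, while for $\nu<k$ one instead observes that $\partial_t^{\nu+1}F$ is bounded by the decay bound, so $\partial_t^\nu F$ is Lipschitz in $t$. Hence $\partial_t^\nu F\in C^{\alpha}_{t}(\bar G_r)$ for all $\nu\le k$.

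The heart of the argument is the $C^{\beta}_{x'}$ estimate. Fixing $\nu\le k$ and $(x_1',t),(x_2',t)\in G_r$, I would split $\partial_t^\nu F(x_1',t)-\partial_t^\nu F(x_2',t)=I_1+I_2$, where $I_1$ pairs the difference $\partial_t^\nu f(x_1',t\rho)-\partial_t^\nu f(x_2',t\rho)$ with $\rho^{\nu-a(x_1')-1}$ and $I_2$ pairs $\partial_t^\nu f(x_2',t\rho)$ with $\rho^{\nu-a(x_1')-1}-\rho^{\nu-a(x_2')-1}$. For $I_1$, the vanishing $\partial_t^j f(x',0)=0$ for $\nu\le j\le k$ allows iterating the fundamental theorem of calculus $k-\nu$ times, so that
$$\big|\partial_t^\nu f(x_1',t\rho)-\partial_t^\nu f(x_2',t\rho)\big|\le(t\rho)^{k-\nu}\sup_{s\in[0,1]}\big|\partial_t^k f(x_1',st\rho)-\partial_t^k f(x_2',st\rho)\big|\le(t\rho)^{k-\nu}[\partial_t^k f]_{C^{\beta}_{x'}}|x_1'-x_2'|^{\beta};$$
integrating against $\rho^{\nu-a(x_1')-1}$ leaves $\int_0^1\rho^{k-a(x_1')-1}\,d\rho$, which converges exactly because $k>a$, so $|I_1|\le Cr^{k-\nu}[\partial_t^k f]_{C^{\beta}_{x'}}|x_1'-x_2'|^{\beta}$ with the full exponent $\beta$. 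For $I_2$, I would use $|\rho^{\nu-a(x_1')-1}-\rho^{\nu-a(x_2')-1}|\le\rho^{\nu-a_{\max}-1}|\log\rho|\,[a]_{C^{\beta}(\bar B'_r)}|x_1'-x_2'|^{\beta}$ with $a_{\max}=\max_{\bar B'_r}a<k$, together with the decay bound for $\partial_t^\nu f$, so that $\int_0^1\rho^{k-a_{\max}-1+\alpha}|\log\rho|\,d\rho$ converges and $|I_2|\le Cr^{k-\nu+\alpha}|x_1'-x_2'|^{\beta}$. Adding $I_1$ and $I_2$, and reinstating the coefficients and $(\log\rho)^m$ factors for general $l$, gives $\partial_t^\nu D_{x'}^{\tau}F\in C^{\beta}_{x'}(\bar G_r)$, which together with the two estimates above completes the proof.

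The step I expect to require the most care is the $x'$-estimate of $I_1$: one must verify that the full $k$-fold iteration of the fundamental theorem of calculus is legitimate --- which uses the vanishing of $\partial_t^j f(\cdot,0)$ up to order $k$, not merely up to $[a]$ --- and that $\int_0^1\rho^{k-a-1}\,d\rho$ is finite, which is precisely where the hypothesis $k\ge[a]+1$ enters essentially. This is exactly the mechanism by which Lemma~\ref{lemma-ch4-singular-integral-non-int-higher} avoids the H\"older loss that is unavoidable in Lemma~\ref{lemma-ch4-singular-integral-non-int-lower-larger}, where only vanishing up to order $[a]$ is available, the analogous integral $\int_0^1\rho^{[a]-a-1}\,d\rho$ diverges, and one is forced into an interpolation argument that costs a fraction of a derivative.
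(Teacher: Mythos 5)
Your proposal is correct and follows essentially the approach the paper intends: the paper omits this proof, stating only that it is similar to that of Lemma \ref{lemma-ch4-singular-integral-non-int-lower-larger}, and your adaptation is the right one. You correctly identify the decisive point, namely that the vanishing of $\partial_t^j f(\cdot,0)$ up to order $k\ge[a]+1$ makes $\int_0^1\rho^{k-a-1}\,d\rho$ converge, so the interpolation step (and the attendant loss from $\beta$ to $\epsilon<\alpha-c$) in Lemma \ref{lemma-ch4-singular-integral-non-int-lower-larger} is unnecessary and the full $C^{\beta}_{x'}$ regularity is preserved.
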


The proof is similar to that of Lemma \ref{lemma-ch4-singular-integral-non-int-lower-larger} and is omitted.

\begin{lemma}\label{lemma-ch4-singular-integral-int-lower}
Let $l$ and $a$ be nonnegative integers, $\alpha,\beta\in(0,1)$ be constants, and $f\in C(\bar{G}_{r})$ be a function. 
Define, for any $(x',t)\in G_{r}$,
$$
F(x',t)=t^{a}\int^{t}_{0}s^{-a-1}f(x',s)ds.
$$
Suppose $\partial^{\nu}_{t}D^{\tau}_{x'}f\in C^{\beta,\alpha}_{x',t}(\bar{G}_{r})$ 
and $\partial^{\nu}_{t}D^{\tau}_{x'}f(\cdot,0)=0$, for any $\tau\leq l$ and $\nu\leq a$. 
Then, $\partial^{\nu}_{t}D^{\tau}_{x'}F\in C^{\epsilon,\alpha}_{x',t}(\bar{G}_{r})$, 
for any $\tau\leq l$ and $\nu\leq a$, and any $\epsilon\in (0,\beta)$. 
Moreover, for any $\tau \leq l$ and $\nu \leq a$, and any $(x^{\prime}, t) \in G_{r}$,
$$
|\partial_{t}^{\nu} D_{x^{\prime}}^{\tau} F(x^{\prime}, t)| 
\leq C[\partial_{t}^{a} D_{x^{\prime}}^{\tau} f]_{C^{\alpha}_{t}(\bar{G}_{r})} t^{a-\nu+\alpha},
$$
where $C$ is a positive constant depending only on $a$ and $\alpha$.  
\end{lemma}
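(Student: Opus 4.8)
The plan is to mimic the proof of Lemma~\ref{lemma-ch4-singular-integral-non-int-lower-larger}, exploiting two simplifications available here: the exponent $a$ is a constant (indeed an integer), so no $x'$-derivative of $a$ is ever produced and the analogue of the term $I_2$ in that proof is simply absent; and the constant $c$ bounding the fractional part there is now effectively $0$, which is harmless as long as we keep a strictly positive power of $\rho$ inside the relevant integrals. First I would change variables $s=t\rho$ to write, for $(x',t)\in G_r$,
\[
F(x',t)=\int_0^1\rho^{-a-1}f(x',t\rho)\,d\rho,
\]
and differentiate under the integral sign; since $t^a s^{-a-1}$ carries no $x'$-dependence, this gives directly
\[
\partial_t^\nu D_{x'}^\tau F(x',t)=\int_0^1\rho^{\nu-a-1}\,\partial_t^\nu D_{x'}^\tau f(x',t\rho)\,d\rho
\qquad(\nu\le a,\ \tau\le l).
\]
From $\partial_t^j D_{x'}^\tau f(\cdot,0)=0$ for $j\le a$ and an iteration of the fundamental theorem of calculus one gets $|\partial_t^\nu D_{x'}^\tau f(x',\sigma)|\le[\partial_t^a D_{x'}^\tau f]_{C^\alpha_t(\bar G_r)}\sigma^{a-\nu+\alpha}$; inserting this and using the convergent integral $\int_0^1\rho^{\alpha-1}\,d\rho=1/\alpha$ yields the stated pointwise bound $|\partial_t^\nu D_{x'}^\tau F(x',t)|\le C[\partial_t^a D_{x'}^\tau f]_{C^\alpha_t(\bar G_r)}\,t^{a-\nu+\alpha}$.

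For H\"older regularity in $t$, it suffices to handle $\nu=a$: for $\nu<a$ the decay bound shows $\partial_t^{\nu+1}D_{x'}^\tau F$ is bounded on $G_r$, so $\partial_t^\nu D_{x'}^\tau F$ is Lipschitz, hence $C^\alpha$, in $t$ uniformly in $x'$. For $\nu=a$ I would estimate $\partial_t^a D_{x'}^\tau F(x',t_1)-\partial_t^a D_{x'}^\tau F(x',t_2)$ straight from the integral representation, using $|\partial_t^a D_{x'}^\tau f(x',t_1\rho)-\partial_t^a D_{x'}^\tau f(x',t_2\rho)|\le[\partial_t^a D_{x'}^\tau f]_{C^\alpha_t}\rho^\alpha|t_1-t_2|^\alpha$ and again $\int_0^1\rho^{\alpha-1}\,d\rho<\infty$; note there is \emph{no} loss of H\"older index in $t$. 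For regularity in $x'$, fix $\nu\le a$; since $a$ is constant,
\[
\partial_t^\nu D_{x'}^\tau F(x_1',t)-\partial_t^\nu D_{x'}^\tau F(x_2',t)=\int_0^1\rho^{\nu-a-1}\big[\partial_t^\nu D_{x'}^\tau f(x_1',t\rho)-\partial_t^\nu D_{x'}^\tau f(x_2',t\rho)\big]\,d\rho,
\]
with no additional term. Iterating the fundamental theorem of calculus in $t$ bounds the bracket by $(\rho t)^{a-\nu}\sup_{s\in[0,1]}|\partial_t^a D_{x'}^\tau f(x_1',st\rho)-\partial_t^a D_{x'}^\tau f(x_2',st\rho)|$. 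The key step is to interpolate this difference, for $\delta\in(0,1)$, between the $C^\beta_{x'}$ bound and the decay bound $|\partial_t^a D_{x'}^\tau f(x_i',\sigma)|\le[\partial_t^a D_{x'}^\tau f]_{C^\alpha_t}\sigma^\alpha$, obtaining a bound of order $\sigma^{\delta\alpha}|x_1'-x_2'|^{(1-\delta)\beta}$. Substituting produces the factor $\rho^{\nu-a-1}(\rho t)^{a-\nu}(t\rho)^{\delta\alpha}=t^{a-\nu+\delta\alpha}\rho^{-1+\delta\alpha}$, and $\int_0^1\rho^{-1+\delta\alpha}\,d\rho=1/(\delta\alpha)<\infty$ \emph{precisely because} $\delta>0$; since $t\le r<1$ and $a-\nu+\delta\alpha\ge0$, the powers of $t$ are harmless. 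This gives $|\partial_t^\nu D_{x'}^\tau F(x_1',t)-\partial_t^\nu D_{x'}^\tau F(x_2',t)|\le C|x_1'-x_2'|^{(1-\delta)\beta}$, and for any prescribed $\epsilon\in(0,\beta)$ the choice $\delta=1-\epsilon/\beta\in(0,1)$ yields $\partial_t^\nu D_{x'}^\tau F\in C^\epsilon_{x'}(\bar G_r)$.

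Combining these gives $\partial_t^\nu D_{x'}^\tau F\in C^{\epsilon,\alpha}_{x',t}(\bar G_r)$ for all $\tau\le l$, $\nu\le a$, $\epsilon\in(0,\beta)$, together with the pointwise estimate, and the constant depends only on $a$ and $\alpha$ as claimed (the seminorms of $f$ entering multiplicatively on the right). I do not expect a genuine obstacle: the argument is strictly simpler than that of Lemma~\ref{lemma-ch4-singular-integral-non-int-lower-larger}, and the only point requiring care is the bookkeeping that traces the strict inequality $\epsilon<\beta$ back to the requirement $\delta>0$, which is exactly what makes $\int_0^1\rho^{-1+\delta\alpha}\,d\rho$ converge at the origin; carrying the multi-index $D_{x'}^\tau$ through when $l\ge1$ introduces no new idea since $a$ is a constant.
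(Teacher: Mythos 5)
Your proof is correct and follows essentially the route the paper intends: the paper omits this proof, stating only that it is similar to that of Lemma \ref{lemma-ch4-singular-integral-non-int-lower-larger}, and your argument is exactly that proof with the two simplifications you identify (no $I_2$-type term since $a$ is constant, and the interpolation exponent requirement reducing to $\delta\alpha>0$). The bookkeeping — the decay bound via iterated integration from $\partial_t^\nu D_{x'}^\tau f(\cdot,0)=0$, the unweakened $C^\alpha_t$ regularity, and the choice $\delta=1-\epsilon/\beta$ producing the loss from $\beta$ to $\epsilon$ in $x'$ — all checks out.
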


The proof is similar to that of Lemma \ref{lemma-ch4-singular-integral-non-int-lower-larger} and is omitted.

\begin{lemma}\label{lemma-ch4-singular-integral-int-higher} 
Let $k$, $l$, and $a$ be nonnegative integers with $k\ge a+1$, 
$\alpha, \beta\in (0,1)$ be constants, and $f\in C(\bar G_r)$ be a function. 
Define, 
for any $(x',t)\in G_r$, 
$$F(x', t)=t^{a}\int_0^ts^{-a-1}f(x',s)ds.$$
Suppose 
$\partial_t^\nu D_{x'}^\tau  f \in C^{\beta,\alpha}_{x',t}(\bar{G}_r)$ and 
$\partial_t^\nu D_{x'}^\tau f(\cdot, 0)=0$ for any 
$\tau\le l$ and $\nu\le k$. 
Then, $\partial_t^\nu D_{x'}^\tau F\in C^{\beta,\alpha}_{x',t}(\bar{G}_r),$ for any 
$\tau\le l$ and $\nu\le k$. 
Moreover, for any 
$\tau\le l$, $\nu\le k$, and any $(x',t)\in G_r$,  
$$|\partial_t^\nu D_{x'}^\tau F(x',t)|
\le C[\partial_t^k D_{x'}^\tau f]_{C^\alpha_t(\bar G_r)}t^{k-\nu+\alpha},$$ 
where $C$ is a positive constant depending only on $a$, $k$, and $\alpha$. 
\end{lemma}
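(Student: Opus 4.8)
\textbf{Proof plan for Lemma \ref{lemma-ch4-singular-integral-int-higher}.}
The statement is the integer-order, higher-regularity companion of Lemma \ref{lemma-ch4-singular-integral-non-int-lower-larger}, with the non-integer exponent $a(x')$ replaced by a constant integer $a$; consequently the $x'$-derivatives no longer produce a $\log\rho$ factor from $\rho^{\nu-a(x')-1}$, so there is \emph{no} loss of H\"older index and we may keep $C^{\beta}$ in $x'$. The plan is to follow the same scheme as in the proof of Lemma \ref{lemma-ch4-singular-integral-non-int-lower-larger}, which was already carried out in detail, and simply record the simplifications. First I would reduce to $l=0$; the case $l\ge 1$ is handled by differentiating under the integral sign, and since $a$ is constant the only new terms are $\int_0^1\rho^{\nu-a-1}\partial_t^\nu D_{x'}^\tau f(x',t\rho)\,d\rho$ with no extra $\log\rho$ weights, so the argument is identical. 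By the substitution $s=t\rho$ write
$$
F(x',t)=\int_0^1\rho^{-a-1}f(x',t\rho)\,d\rho,\qquad
\partial_t^\nu F(x',t)=\int_0^1\rho^{\nu-a-1}\partial_t^\nu f(x',t\rho)\,d\rho
$$
for $\nu\le k$, where the integral converges because $\partial_t^\nu f$ vanishes to order $[a]-\nu=a-\nu\ge 0$ at $t=0$; more precisely $|\partial_t^\nu f(x',t\rho)|\le[\partial_t^k f]_{C^\alpha_t(\bar G_r)}(t\rho)^{k-\nu+\alpha}$, and since $k-\nu+\alpha-a\ge k-a+\alpha-\nu\ge 1+\alpha-\nu$ one still has $\nu-a-1+(k-\nu+\alpha)=k-a-1+\alpha\ge\alpha>-1$, giving the pointwise decay bound $|\partial_t^\nu D_{x'}^\tau F(x',t)|\le C[\partial_t^k D_{x'}^\tau f]_{C^\alpha_t(\bar G_r)}t^{k-\nu+\alpha}$ after also differentiating $\tau$ times in $x'$ (which commutes with everything since $a$ is constant).

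For the regularity in $t$ it suffices to treat $\nu=k$: writing the difference $\partial_t^k F(x',t_1)-\partial_t^k F(x',t_2)$ as $\int_0^1\rho^{k-a-1}[\partial_t^k f(x',t_1\rho)-\partial_t^k f(x',t_2\rho)]\,d\rho$ and using $|\partial_t^k f(x',t_1\rho)-\partial_t^k f(x',t_2\rho)|\le[\partial_t^k f]_{C^\alpha_t(\bar G_r)}\rho^\alpha|t_1-t_2|^\alpha$, the integral $\int_0^1\rho^{k-a-1+\alpha}\,d\rho$ is finite because $k-a-1+\alpha\ge\alpha>-1$, so $\partial_t^k F\in C^\alpha_t(\bar G_r)$ with the stated bound. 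For the regularity in $x'$, fix $\nu\le k$ and estimate $\partial_t^\nu F(x_1',t)-\partial_t^\nu F(x_2',t)=\int_0^1\rho^{\nu-a-1}[\partial_t^\nu f(x_1',t\rho)-\partial_t^\nu f(x_2',t\rho)]\,d\rho$. Exactly as in Lemma \ref{lemma-ch4-singular-integral-non-int-lower-larger}, iterating the identity $\partial_t^\nu f(x_1',t\rho)-\partial_t^\nu f(x_2',t\rho)=\rho t\int_0^1(\partial_t^{\nu+1}f(x_1',st\rho)-\partial_t^{\nu+1}f(x_2',st\rho))\,ds$ down to order $k$ gives $|\partial_t^\nu f(x_1',t\rho)-\partial_t^\nu f(x_2',t\rho)|\le(\rho t)^{k-\nu}\sup_{s\in[0,1]}|\partial_t^k f(x_1',st\rho)-\partial_t^k f(x_2',st\rho)|$, and one bounds this last quantity directly by $[\partial_t^k f]_{C^\beta_{x'}(\bar G_r)}|x_1'-x_2'|^\beta$ (no interpolation is needed here, which is the key difference from the non-integer case). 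Then
$$
|\partial_t^\nu F(x_1',t)-\partial_t^\nu F(x_2',t)|
\le[\partial_t^k f]_{C^\beta_{x'}(\bar G_r)}|x_1'-x_2'|^\beta\, t^{k-\nu}\int_0^1\rho^{k-a-1}\,d\rho
\le C[\partial_t^k f]_{C^\beta_{x'}(\bar G_r)}|x_1'-x_2'|^\beta,
$$
using $k-a-1\ge -1+1=0>-1$ (indeed $k\ge a+1$), so $\partial_t^\nu F\in C^\beta_{x'}(\bar G_r)$; combining the two gives $\partial_t^\nu D_{x'}^\tau F\in C^{\beta,\alpha}_{x',t}(\bar G_r)$ for all $\tau\le l$, $\nu\le k$.

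I do not expect any genuine obstacle: the hypothesis $k\ge a+1$ is precisely what makes every exponent appearing in the $\rho$-integrals strictly larger than $-1$, so all integrals converge and all constants depend only on $a$, $k$, and $\alpha$ (and, for $l\ge 1$, trivially on $l$ too, since $a$ contributes nothing to the derivative constants). The only point requiring a little care is organizing the iteration of the difference identity when $\nu<k$ and checking that after it we really only need the $C^\beta_{x'}$ seminorm of $\partial_t^k f$ — but this is exactly parallel to the corresponding step in Lemma \ref{lemma-ch4-singular-integral-non-int-lower-larger}, simply without the splitting $|I|=|I|^\delta|I|^{1-\delta}$ that was forced there by the $\rho^{-c-1}$ singularity. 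Since all the mechanics have been displayed already, the write-up reduces to citing the analogous estimates and noting the simplifications, and the result follows.
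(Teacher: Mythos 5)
Your proposal is correct and follows exactly the route the paper intends: the paper omits this proof, stating only that it is similar to that of Lemma \ref{lemma-ch4-singular-integral-non-int-lower-larger}, and you carry out precisely that adaptation, correctly identifying that the constancy of $a$ removes the $I_2$ term and that $k\ge a+1$ makes every $\rho$-exponent at least $k-a-1\ge 0$, so the interpolation step becomes unnecessary and the full $C^\beta_{x'}$ regularity is retained. (The parenthetical remark that $\partial_t^\nu f$ ``vanishes to order $a-\nu\ge 0$'' is inaccurate for $\nu>a$, but the precise bound $|\partial_t^\nu f(x',t\rho)|\le[\partial_t^k f]_{C^\alpha_t}(t\rho)^{k-\nu+\alpha}$ that you actually use is correct, so this does not affect the argument.)
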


The proof is similar to that of Lemma \ref{lemma-ch4-singular-integral-non-int-lower-larger} and is omitted.

\begin{lemma}\label{lemma-ch4-regularity-log}
Let $k$ and $l$ be nonnegative integers, $\alpha\in(0,1)$ be a constant, and $f\in C(\bar{G}_{r})$ be a function. 
Suppose $\partial^{\nu}_{t}D^{\tau}_{x'}f\in C^{\alpha}(\bar{G}_{r})$ and $\partial^{\nu}_{t}D^{\tau}_{x'}f(\cdot,0)=0$, 
for any $\tau\leq l$ and $\nu\leq k$. Then, 

$\mathrm{(i)}$ $\partial^{\nu}_{t}D^{\tau}_{x'}[f\log t]\in C^{\epsilon}(\bar{G}_{r})$, 
for any $\tau\leq l$ and $\nu\leq k$, 
and any $\epsilon\in(0,\alpha)$.

$\mathrm{(ii)}$ $\partial^{\nu}_{t}D^{\tau}_{x'}[f(\log t)^{-1}]\in C^{\alpha}(\bar{G}_{r})$, 
for any $\tau\leq l$ and $\nu\leq k$.  
\end{lemma}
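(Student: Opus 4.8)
The plan is to prove both parts by the standard trick of converting $\log t$ and $(\log t)^{-1}$ into integrals with a slowly varying exponent, so that we may reduce to the already-established machinery of Lemma \ref{lemma-ch4-BasicHolderRegularity}. The key observation is the identity
\[
\log t=-\int_0^1 s^{-1}\,ds\ \ \text{(divergent)},\qquad\text{but}\qquad
t^{-\delta}\log t=-\frac{1}{\delta}\,\partial_\delta\big(t^{-\delta}\big)\cdot\frac{1}{t^{-\delta}}\ ,
\]
which is awkward; a cleaner route is to write, for any small constant $\delta>0$,
\[
f(x',t)\log t=-\frac{1}{\delta}\Big(t^{\delta}f(x',t)\cdot t^{-\delta}-f(x',t)\Big)+\frac{t^\delta f(x',t)-f(x',t)}{\delta}\cdot t^{-\delta}+\cdots
\]
which again is not quite it. The honest approach is: since $\partial_t^\nu D_{x'}^\tau f$ vanishes at $t=0$ for $\nu\le k$, Taylor's theorem with integral remainder gives $f(x',t)=\frac{t^k}{(k-1)!}\int_0^1(1-\sigma)^{k-1}\partial_t^k f(x',t\sigma)\,d\sigma$ (and similarly after applying $D_{x'}^\tau$), so $f=t^k g$ with $g\in C^\alpha_t$ and $t$-decay absorbed; then $f\log t=t^k g\log t$, and one checks directly that $t^j\log t\in C^{j-\epsilon}$ near $t=0$ for any $\epsilon\in(0,1)$ while derivatives of order $\le k$ in $t$ of $t^k\log t$ lie in $C^\epsilon$ for every $\epsilon<1$. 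So part (i) reduces to: the product of a $C^\alpha$ function vanishing to order $k$ in $t$ with $\log t$ has its first $k$ normal derivatives (and $l$ tangential derivatives) in $C^\epsilon$ for $\epsilon<\alpha$, the loss of exactly $\alpha-\epsilon$ coming solely from the $|\log t|\,t^\alpha$-type modulus of continuity.

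Concretely, for part (i) I would first reduce to $f$ depending only on $t$ by applying $D_{x'}^\tau$ and using the Leibniz rule (derivatives of $\log t$ in $x'$ are zero, so no cross terms arise), then expand $f$ via Taylor as above so that $\partial_t^\nu f$ satisfies $|\partial_t^\nu f(x',t)|\le C[\partial_t^k f]_{C^\alpha_t} t^{k-\nu+\alpha}$. Then $\partial_t^\nu[f\log t]=\sum_{i=0}^\nu\binom{\nu}{i}\partial_t^{\nu-i}f\cdot\partial_t^i\log t$, and $\partial_t^i\log t=(-1)^{i-1}(i-1)!\,t^{-i}$ for $i\ge1$. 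Each term is $\partial_t^{\nu-i}f\cdot t^{-i}$ up to constants, with $|\partial_t^{\nu-i}f\cdot t^{-i}|\le C t^{k-\nu+\alpha}$, and for the $i=0$ term one has $\partial_t^\nu f\cdot\log t$ with $|\partial_t^\nu f\cdot\log t|\le C t^{k-\nu+\alpha}|\log t|\le C_\epsilon t^{k-\nu+\epsilon}$. The $C^\epsilon_t$ bound then follows from the elementary estimate that a function bounded by $Ct^{\beta}$ whose derivative is bounded by $Ct^{\beta-1}$ (here $\beta=\epsilon$ for the worst term, coming from $\partial_t^{k}f\cdot\log t$ when $\nu=k$) lies in $C^\epsilon_t$; the $C^\epsilon_{x'}$ bound follows from interpolating the $L^\infty$ bound in $x'$ against the $C^\alpha_{x'}$ bound exactly as in the proof of Lemma \ref{lemma-ch4-singular-integral-non-int-lower-larger}, using $|x_1'-x_2'|^\alpha|\log t|\le C_\epsilon |x_1'-x_2'|^\epsilon$ localized appropriately.

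For part (ii) the situation is actually better, and I would handle it separately. Here $(\log t)^{-1}\to 0$ as $t\to 0^+$ and is bounded with bounded derivatives away from $t=0$; in fact $\partial_t^i[(\log t)^{-1}]$ is a finite linear combination of $t^{-i}(\log t)^{-j}$ for $1\le j\le i+1$, each of which is $O(t^{-i}|\log t|^{-1})$. So in $\partial_t^\nu[f(\log t)^{-1}]=\sum_{i=0}^\nu\binom{\nu}{i}\partial_t^{\nu-i}f\cdot\partial_t^i[(\log t)^{-1}]$, every term with $i\ge1$ is bounded by $C t^{k-\nu+\alpha}|\log t|^{-1}\le C t^{k-\nu+\alpha}$, and the $i=0$ term $\partial_t^\nu f\cdot(\log t)^{-1}$ is bounded by $C t^{k-\nu+\alpha}$; no logarithmic gain is needed and no loss in the Hölder index occurs, so one gets $C^\alpha$ rather than merely $C^\epsilon$. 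The $C^\alpha_t$ and $C^\alpha_{x'}$ estimates then follow by the same mean-value/interpolation argument, now with clean powers and no logarithmic factors to absorb.

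The main obstacle, and the only place requiring genuine care, is bookkeeping the mixed derivatives $\partial_t^\nu D_{x'}^\tau$ together with the $C^{\beta,\alpha}_{x',t}$ (anisotropic) norms: one must verify that applying $D_{x'}^\tau$ first does not spoil the vanishing-at-$t=0$ hypothesis (it does not, since $\partial_t^\nu D_{x'}^\tau f(\cdot,0)=0$ is assumed for all $\tau\le l$, $\nu\le k$) and that the Taylor expansion in $t$ commutes with $D_{x'}^\tau$ with controlled constants. I expect the proof to be essentially a transcription of the estimates in Lemma \ref{lemma-ch4-BasicHolderRegularity} and Lemma \ref{lemma-ch4-singular-integral-non-int-lower-larger}, with $\log t$ playing the role that the integral factor $\int_0^1\rho^{a-1}\log\rho\,d\rho$ played there, and the single new elementary ingredient being the bound $t^\alpha|\log t|\le C_\epsilon t^\epsilon$ on $(0,r)$ for $\epsilon<\alpha$, which is responsible for the $C^\epsilon$ (rather than $C^\alpha$) conclusion in part (i) and its absence in part (ii).
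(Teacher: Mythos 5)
The paper omits the proof of this lemma as standard, and your proposal is indeed the standard argument: kill the tangential derivatives of the logarithm (they vanish), expand $\partial_t^\nu$ by Leibniz using $\partial_t^i\log t=(-1)^{i-1}(i-1)!\,t^{-i}$, use the vanishing hypothesis to get $|\partial_t^{\nu}D_{x'}^{\tau}f|\le C t^{k-\nu+\alpha}$, and absorb the single surviving $\log t$ via $t^{\alpha}|\log t|\le C_\epsilon t^{\epsilon}$. Your identification of where the loss from $\alpha$ to $\epsilon$ occurs in (i), and why no loss occurs in (ii), is correct. (The opening paragraph's attempts to write $\log t$ as an integral with varying exponent are dead ends, as you acknowledge, and can simply be deleted.)

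Two steps need tightening. First, in (i), for the worst term $\partial_t^{k}D_{x'}^{\tau}f\cdot\log t$ your stated mechanism for the $C^{\epsilon}_t$ bound (``bounded by $Ct^{\beta}$ with derivative bounded by $Ct^{\beta-1}$'') does not literally apply: differentiating this term once more produces $\partial_t^{k+1}f$, which is not assumed to exist. Instead split the difference by the product rule, $(\partial_t^k f(t_2)-\partial_t^k f(t_1))\log t_2+\partial_t^k f(t_1)(\log t_2-\log t_1)$, and run the usual near/far dichotomy in $|t_2-t_1|$ versus $t_1$, using $[\partial_t^k f]_{C^{\alpha}_t}$ for the first piece and $|\partial_t^k f(t_1)|\le Ct_1^{\alpha}$, $|\log t_2-\log t_1|\le (t_2-t_1)/t_1$ for the second. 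Second, and more substantively, in (ii) the phrase ``the same mean-value/interpolation argument'' would not deliver the full $C^{\alpha}_{x'}$ seminorm: any genuine interpolation with exponent $\delta>0$ between the sup bound and the $C^{\alpha}_{x'}$ bound (as in Lemma \ref{lemma-ch4-singular-integral-non-int-lower-larger}) degrades the index to $\alpha(1-\delta)$. For the terms $\partial_t^{\nu-i}D_{x'}^{\tau}f\cdot t^{-i}(\log t)^{-j}$ with $i\ge 1$ you must instead iterate the fundamental theorem of calculus on the $x'$-difference, which gives $|\partial_t^{\nu-i}D_{x'}^{\tau}f(x_1',t)-\partial_t^{\nu-i}D_{x'}^{\tau}f(x_2',t)|\le C\,t^{\,k-\nu+i}\,[\partial_t^{k}D_{x'}^{\tau}f]_{C^{\alpha}_{x'}}|x_1'-x_2'|^{\alpha}$ at the full index $\alpha$; the factor $t^{\,k-\nu+i}$ then cancels $t^{-i}$ exactly, with no interpolation and hence no loss. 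Interpolation is needed, and admissible, only for the $i=0$, $\nu=k$ term in part (i), which is precisely where the conclusion weakens to $C^{\epsilon}$.
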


The proof is standard and hence omitted. 

\begin{lemma}\label{lemma-ch4-regularity-power1}
Let $k$ and $l$ be nonnegative integers, $\alpha\in(0,1)$ be a constant, 
and  $\gamma\in C^{l,\alpha}(\bar{B}'_{r})$ and $f\in C(\bar{G}_{r})$ be functions, 
with $0<\gamma\leq c<1$ in $\bar{B}'_{r}$, for some positive constant $c<\alpha$. 
Suppose $\partial^{\nu}_{t}D^{\tau}_{x'}f\in C^{\alpha}(\bar{G}_{r})$ and $\partial^{\nu}_{t}D^{\tau}_{x'}f(\cdot,0)=0$, 
for any $\tau\leq l$ and $\nu\leq k$. 
Then, $\partial^{\nu}_{t}D^{\tau}_{x'}(t^{-\gamma}f)\in C^{\epsilon}(\bar{G}_{r})$, 
for any $\tau\leq l$ and $\nu\leq k$, and $\epsilon\in (0,\alpha-c)$. 
\end{lemma}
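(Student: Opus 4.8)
The plan is to isolate the essential mechanism as a one-variable statement and then reduce the general case to it by Leibniz's rule and Taylor's formula. The \emph{essential claim} is: if $h\in C^{\alpha_0}(\bar G_r)$ with $h(\cdot,0)=0$, and $\gamma\in C^{\alpha_0}(\bar B_r')$ with $0<\gamma\le c_0<\alpha_0$, then $t^{-\gamma}h\in C^\epsilon(\bar G_r)$ for every $\epsilon<\alpha_0-c_0$, with $|t^{-\gamma}h|_{C^\epsilon(\bar G_r)}\le C|h|_{C^{\alpha_0}(\bar G_r)}$. Granting this, I would expand
\[\partial_t^\nu D_{x'}^\tau(t^{-\gamma}f)=\sum_{j\le\nu,\ \sigma\le\tau}\binom{\nu}{j}\binom{\tau}{\sigma}\big(\partial_t^jD_{x'}^\sigma t^{-\gamma}\big)\big(\partial_t^{\nu-j}D_{x'}^{\tau-\sigma}f\big).\]
Using $D_{x'}t^{-\gamma}=-t^{-\gamma}\log t\,D_{x'}\gamma$ repeatedly, one checks that $\partial_t^jD_{x'}^\sigma t^{-\gamma}$ equals $t^{-\gamma-j}$ times a polynomial in $\log t$ of degree at most $\sigma$ whose coefficients are polynomials in $\gamma,D_{x'}\gamma,\dots,D_{x'}^\sigma\gamma$, hence lie in $C^{l-\sigma,\alpha}(\bar B_r')$. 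Since $\partial_t^mD_{x'}^{\tau-\sigma}f(\cdot,0)=0$ for all $m\le k$ and $\partial_t^kD_{x'}^{\tau-\sigma}f\in C^\alpha(\bar G_r)$, Taylor's formula with integral remainder writes $\partial_t^{\nu-j}D_{x'}^{\tau-\sigma}f$ as $t^{k-\nu+j}$ times a bounded function in $C^\alpha(\bar G_r)$ vanishing at $t=0$ when $\nu-j<k$, while for $\nu=k$, $j=0$ it is just $\partial_t^kD_{x'}^{\tau-\sigma}f$, already in $C^\alpha(\bar G_r)$ and vanishing at $t=0$. Multiplying and noting $t^{-\gamma-j}\cdot t^{k-\nu+j}=t^{k-\nu-\gamma}$, every summand has the form $t^{-\gamma}\big[t^{m}(\log t)^{p}Q(x')g(x',t)\big]$ with $m=\max\{k-\nu,0\}\ge0$, $Q\in C^0(\bar B_r')$, and $g\in C^\alpha(\bar G_r)$, $g(\cdot,0)=0$.

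I would then absorb $t^m(\log t)^pQ$ into a single H\"older factor: $t^m(\log t)^p$ is bounded and H\"older of every index $<1$ (so in $C^\alpha$ when $m\ge1$; when $m=0$ the logarithm is handled by Lemma~\ref{lemma-ch4-regularity-log}(i) at the cost of an arbitrarily small loss), and $Q$ is H\"older in $x'$; so each summand equals $t^{-\gamma}h$ with $h\in C^{\alpha'}(\bar G_r)$, $h(\cdot,0)=0$, for some $\alpha'$ that may be taken arbitrarily close to $\alpha$. Applying the essential claim with $\alpha_0=\alpha'$ and $c_0=c$ places each summand in $C^\epsilon$ for every $\epsilon<\alpha'-c$; choosing $\alpha'\in(\epsilon+c,\alpha)$ shows this for every $\epsilon<\alpha-c$. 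Summing the finitely many summands gives the lemma.

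The remaining work is the essential claim, which is where the real difficulty lies. Boundedness is immediate from $|t^{-\gamma}h|\le[h]_{C^{\alpha_0}}t^{\alpha_0-c_0}$. For the H\"older seminorm in $t$ I would split on $t_1\le t_2/2$ versus $t_1>t_2/2$: in the first case $t_1,t_2\lesssim t_2-t_1$ and the pointwise bound $t^{\alpha_0-c_0}$ already dominates each value; in the second case I decompose $t_2^{-\gamma}h(x',t_2)-t_1^{-\gamma}h(x',t_1)$ into $t_2^{-\gamma}[h(x',t_2)-h(x',t_1)]$ plus $[t_2^{-\gamma}-t_1^{-\gamma}]h(x',t_1)$, bound the first by $[h]_{C^{\alpha_0}}(t_2-t_1)^{\alpha_0}$ and the second by $\gamma t_1^{-\gamma-1}(t_2-t_1)\cdot[h]_{C^{\alpha_0}}t_1^{\alpha_0}$ via the mean value theorem, and convert surplus powers of $t_1$ into powers of $t_2-t_1$ using $t_2-t_1\le 2t_1$; the exponents close exactly because $\epsilon<\alpha_0-c_0$. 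For the H\"older seminorm in $x'$ the key decomposition is $t^{-\gamma(x_1')}h(x_1',t)-t^{-\gamma(x_2')}h(x_2',t)=t^{-\gamma(x_1')}[h(x_1',t)-h(x_2',t)]+[t^{-\gamma(x_1')}-t^{-\gamma(x_2')}]h(x_2',t)$: the first term I control by interpolating $|h(x_1',t)-h(x_2',t)|\le(2[h]_{C^{\alpha_0}}t^{\alpha_0})^{\theta}([h]_{C^{\alpha_0}}|x_1'-x_2'|^{\alpha_0})^{1-\theta}$ and taking $\theta=c_0/\alpha_0$ so the net power of $t$ vanishes, leaving $|x_1'-x_2'|^{\alpha_0-c_0}$; the second term I control with the logarithmic estimate $|t^{-\gamma(x_1')}-t^{-\gamma(x_2')}|\le t^{-c_0}|\log t|\,[\gamma]_{C^{\alpha_0}}|x_1'-x_2'|^{\alpha_0}$ recorded at the start of this section, together with $|h(x_2',t)|\le[h]_{C^{\alpha_0}}t^{\alpha_0}$ and the boundedness of $t^{\alpha_0-c_0}|\log t|$. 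The joint $C^\epsilon$ bound then follows by combining an $x'$-increment with a $t$-increment and using $|x_1'-x_2'|,|t_1-t_2|\le|x_1-x_2|$. The main obstacle is precisely this case-splitting and interpolation argument in the essential claim, which forces the sharp loss from $\alpha$ down to $\epsilon<\alpha-c$; the Leibniz and Taylor steps above are then routine bookkeeping.
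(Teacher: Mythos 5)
Your proposal is correct and follows essentially the same route as the paper: the ``essential claim'' is exactly the paper's base case $k=l=0$, with the identical splitting into $t^{-\gamma(x_1')}[f(x_1',t)-f(x_2',t)]$ plus $f(x_2',t)[t^{-\gamma(x_1')}-t^{-\gamma(x_2')}]$ and the same interpolation with exponent $\theta=c/\alpha$, and your Leibniz--Taylor reduction together with Lemma~\ref{lemma-ch4-regularity-log}(i) for the logarithms is a systematic version of the paper's treatment of the higher-derivative cases. The only cosmetic differences are your case-splitting/mean-value argument for the $t$-seminorm, where the paper uses direct algebraic identities, both yielding the same endpoint $\alpha-c$.
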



\begin{proof}
We only consider several special cases. 

{\it Case 1: $k=l=0$.} First, we have, for any $(x',t)\in \bar{G}_{r}$,
$$
|t^{-\gamma(x')}f(x',t)|\leq [f]_{C^{\alpha}(\bar{G}_{r})}t^{\alpha-\gamma(x')}
\leq [f]_{C^{\alpha}(\bar{G}_{r})}t^{\alpha-c}.
$$

Next, we estimate the H\"{o}lder semi-norm of $t^{-\gamma}f$ in $x'$. For any $(x_{1}',t)$, $(x_{2}',t)\in G_{r}$, we write 
$$t^{-\gamma(x_1')}f(x_1',t)-t^{-\gamma(x_2')}f(x_2',t)=I_1+I_2,$$
where 
\begin{align*} 
I_1&=t^{-\gamma(x_1')}[f(x_1',t)-f(x_2',t)],\\
I_2&=f(x_2',t)[t^{-\gamma(x_1')}-t^{-\gamma(x_2')}].
\end{align*}
We first analyze $I_1$. Note that 
$$
|f(x_{1}',t)-f(x_{2}',t)|\leq 2[f]_{C^{\alpha}_{t}(\bar{G}_{r})}t^{\alpha},
$$
and 
$$
|f(x_{1}',t)-f(x_{2}',t)|\leq [f]_{C^{\alpha}_{x'}(\bar{G}_{r})}|x_{1}'-x_{2}'|^{\alpha}.
$$
For a constant $\delta\in(0,1)$ to be determined, we have 
\begin{align*}
|f(x_{1}',t)-f(x_{2}',t)|&= |f(x_{1}',t)-f(x_{2}',t)|^{\delta}|f(x_{1}',t)-f(x_{2}',t)|^{1-\delta}   \\
&\leq 2[f]_{C^{\alpha}_{t}(\bar{G}_{r})}^{\delta}t^{\alpha\delta}
\cdot[f]_{C^{\alpha}_{x'}(\bar{G}_{r})}^{1-\delta}|x_{1}'-x_{2}'|^{\alpha(1-\delta)}\\
&\leq2[f]_{C^{\alpha}(\bar{G}_{r})}t^{\alpha\delta}|x_{1}'-x_{2}'|^{\alpha-\alpha\delta},
\end{align*}
and hence
$$
|I_1|\leq 2[f]_{C^{\alpha}(\bar{G}_{r})}
t^{\alpha\delta-\gamma(x_1')}|x_{1}'-x_{2}'|^{\alpha-\alpha\delta}
\leq 2[f]_{C^{\alpha}(\bar{G}_{r})}t^{\alpha\delta-c}|x_{1}'-x_{2}'|^{\alpha-\alpha\delta}.
$$
By taking $\delta={c}/{\alpha}\in(0,1)$, we obtain 
$$
|I_1|\leq 2[f]_{C^{\alpha}(\bar{G}_{r})}|x_{1}'-x_{2}'|^{\alpha-c}.
$$
Next, we analyze $I_2$. For any $0<t\le r\le 1$, we have 
$$|t^{-\gamma(x_1')}-t^{-\gamma(x_2')}|\le t^{-c}|\log t|\,
|\gamma(x_1')-\gamma(x_2')|\le Ct^{-c}|\log t|\,
|x_1'-x_2'|^\alpha,$$
and hence 
$$
|I_2|\leq C[f]_{C^{\alpha}(\bar{G}_{r})}t^{\alpha-c}|\log t|\,
|x_1'-x_2'|^\alpha
\leq C[f]_{C^{\alpha}(\bar{G}_{r})}
|x_1'-x_2'|^{\alpha-c},$$
by $c<\alpha$. 
In conclusion, we obtain 
\begin{align*}
|t^{-\gamma(x_1')}f(x_1',t)-t^{-\gamma(x_2')}f(x_2',t)|\leq  
C[f]_{C^{\alpha}(\bar{G}_{r})}|x_{1}'-x_{2}'|^{\alpha-c}.
\end{align*}
Thus, $t^{-\gamma}f\in C^{\alpha-c}_{x'}(\bar{G}_{r})$.

Last, we estimate the H\"{o}lder semi-norm of $t^{-\gamma}f$ in $t$. 
Take any  $(x',t_{1})$, $(x',t_{2})\in G_{r}$ with $t_{1}\leq t_{2}$. We write
$$
t^{-\gamma(x')}_{1}f(x',t_{1})-t^{-\gamma(x')}_{2}f(x',t_{2})=J_{1}+J_{2},
$$
where
\begin{align*}
J_{1}&= t^{-\gamma(x')}_{2}[f(x',t_{1})-f(x',t_{2})],\\
J_{2}&=[t^{-\gamma(x')}_{1}-t^{-\gamma(x')}_{2}]f(x',t_{1}).
\end{align*}
First, we get
\begin{align*}
|J_{1}|&\leq  [f]_{C^{\alpha}_{t}(\bar{G}_{r})}t^{-\gamma(x')}_{2}(t_{2}-t_{1})^{\alpha}\\
&=  [f]_{C^{\alpha}_{t}(\bar{G}_{r})}\Big(\frac{t_2-t_{1}}{t_{2}}\Big)^{\gamma(x')}(t_{2}-t_{1})^{\alpha-\gamma(x')}\\
&\leq  [f]_{C^{\alpha}_{t}(\bar{G}_{r})}(t_{2}-t_{1})^{\alpha-\gamma(x')}.
\end{align*}
Second, we have
\begin{align*}
|J_{2}|&\leq [f]_{C^{\alpha}_{t}(\bar{G}_{r})}t^{\alpha}_{1}
(t^{-\gamma(x')}_{1}-t^{-\gamma(x')}_{2})\\
&= [f]_{C^{\alpha}_{t}(\bar{G}_{r})}t^{\alpha-\gamma(x')}_{1}\Big(1-\Big(\frac{t_{1}}{t_{2}}\Big)^{\gamma(x')}\Big)\\
&\leq [f]_{C^{\alpha}_{t}(\bar{G}_{r})}t^{\alpha-\gamma(x')}_{1}\Big(1-\frac{t_{1}}{t_{2}}\Big)\\
&=[f]_{C^{\alpha}_{t}(\bar{G}_{r})}\Big(\frac{t_{2}-t_{1}}{t_{2}}\Big)^{1-(\alpha-\gamma(x'))}
\Big(\frac{t_{1}}{t_{2}}\Big)^{\alpha-\gamma(x')}(t_{2}-t_{1})^{\alpha-\gamma(x')}\\
&\leq [f]_{C^{\alpha}_{t}(\bar{G}_{r})}(t_{2}-t_{1})^{\alpha-\gamma(x')}.
\end{align*}
In conclusion, we obtain
$$
|t^{-\gamma(x')}_{1}f(x',t_{1})-t^{-\gamma(x')}_{2}f(x',t_{2})|
\leq 2[f]_{C^{\alpha}_{t}(\bar{G}_{r})}(t_{2}-t_{1})^{\alpha-c}.
$$
Thus, $t^{-\gamma}f\in C^{\alpha-c}_{t}(\bar{G}_{r})$.

{\it Case 2: $k=1$ and $l=0$.} Note that 
$$\partial_t(t^{-\gamma}f)=t^{-\gamma}\partial_tf-\gamma t^{-\gamma}\,\frac{f}{t}.$$
By what we proved in Case 1, we conclude $\partial_t(t^{-\gamma}f)\in C^{\alpha-c}(\bar{G}_{r})$.

{\it Case 3: $k=0$ and $l=1$.} Note that 
$$D_{x'}(t^{-\gamma}f)=t^{-\gamma}D_{x'}f-t^{-\gamma}\log t\, fD_{x'}\gamma.$$
By what we proved in Case 1, we have 
$t^{-\gamma}D_{x'}f, t^{-\gamma}f\in C^{\alpha-c}(\bar{G}_{r})$.
By Lemma \ref{lemma-ch4-regularity-log}(i), we obtain 
$D_{x'}(t^{-\gamma}f)\in C^{\epsilon}(\bar{G}_{r})$ for any $\epsilon\in (0,\alpha-c)$. 
\end{proof}

\begin{lemma}\label{lemma-ch4-regularity-power2}
Let $k$ and $l$ be nonnegative integers with $k\geq 1$, $\alpha\in(0,1)$ be a constant, 
and $\gamma\in C^{l,\alpha}(\bar{B}'_{r})$ and $f\in C(\bar{G}_{r})$ be functions, 
with $0<\gamma\leq c<1$ in $\bar{B}'_{r}$, for some positive constant $c>\alpha$. 

$\mathrm{(i)}$ Suppose $\partial^{\nu}_{t}D^{\tau}_{x'}f\in C^{\alpha}(\bar{G}_{r})$ 
and $\partial^{\nu}_{t}D^{\tau}_{x'}f(\cdot,0)=0$, for any $\tau\leq l+1$ and $\nu\leq k-1$, and 
$\partial^{\nu}_{t}D^{\tau}_{x'}f\in C^{\alpha}(\bar{G}_{r})$, for any $\tau\leq l$ and $\nu\leq k$. 
Then, $\partial^{\nu}_{t}D^{\tau}_{x'}(t^{-\gamma}f)\in C^{\epsilon,\delta}_{x',t}(\bar{G}_{r})$, 
for any $\tau\leq l$ and $\nu\leq k-1$, any $\epsilon\in(0,1+\alpha-c)$, and any $\delta\in(0,1-c)$.

$\mathrm{(ii)}$ Suppose $\partial^{\nu}_{t}D^{\tau}_{x'}f\in C^{\alpha}(\bar{G}_{r})$ 
and $\partial^{\nu}_{t}D^{\tau}_{x'}f(\cdot,0)=0$, for any $\tau\leq l$ and $\nu\leq k$. 
Then, $\partial^{\nu}_{t}D^{\tau}_{x'}(t^{-\gamma}f)\in C^{\epsilon}_{t}(\bar{G}_{r})$, 
for any $\tau\leq l$ and $\nu\leq k-1$, and any $\epsilon\in(0,1+\alpha-c)$.
\end{lemma}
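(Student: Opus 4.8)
\textbf{Proof proposal for Lemma~\ref{lemma-ch4-regularity-power2}.}
The plan is to reduce everything to the scalar estimates already carried out in the proof of Lemma~\ref{lemma-ch4-regularity-power1} (Case~1), which handles $t^{-\gamma}f$ when $f$ vanishes at $t=0$, combined with Leibniz expansions that push $t$-derivatives and $x'$-derivatives through the product $t^{-\gamma}f$. The essential obstruction here, compared with Lemma~\ref{lemma-ch4-regularity-power1}, is that now $c>\alpha$, so the naive quantity $t^{-\gamma}f$ with $f\in C^\alpha$ and $f(\cdot,0)=0$ is \emph{not} bounded near $t=0$: one only has $|f|\le C t^\alpha$ while $t^{-\gamma}$ blows up faster than $t^{-\alpha}$. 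This forces us to borrow one extra power of $t$, i.e. to use the sharper vanishing $|f(x',t)|\le C t^{1+\alpha}$ (or, for intermediate $t$-derivatives, $|\partial_t^\nu f|\le C t^{1+\alpha-\nu}$), which is exactly why the hypotheses in $\mathrm{(i)}$ and $\mathrm{(ii)}$ demand one more order of vanishing/regularity in the appropriate variable than in Lemma~\ref{lemma-ch4-regularity-power1} and why the statement only asserts regularity of $\partial_t^\nu D_{x'}^\tau(t^{-\gamma}f)$ for $\nu\le k-1$, not $\nu\le k$.

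For part~$\mathrm{(i)}$ I would first treat the base case $k=1$, $l=0$, proving $t^{-\gamma}f\in C^{\epsilon,\delta}_{x',t}(\bar G_r)$. Write $t^{-\gamma}f=t^{1-\gamma}\cdot(f/t)$, where $g:=f/t=\int_0^1\partial_tf(x',t\rho)\,d\rho$ satisfies $g\in C^\alpha(\bar G_r)$ with $g(\cdot,0)=\partial_tf(\cdot,0)=0$ by hypothesis (using $\partial_t^\nu f\in C^\alpha$, $\partial_t^\nu f(\cdot,0)=0$ for $\nu\le 1$, $\tau\le 1$). Now $t^{1-\gamma}=t\cdot t^{-\gamma}$ with $0<\gamma\le c$, so $1-\gamma\ge 1-c>0$; applying the interpolation bookkeeping from the proof of Lemma~\ref{lemma-ch4-regularity-power1} — splitting the $x'$-difference $t^{-\gamma}g(x_1')-t^{-\gamma}g(x_2')$ into $t^{-\gamma}[g(x_1')-g(x_2')]$ plus $g(x_2')[t^{-\gamma}-t^{-\gamma}]$, interpolating $|g(x_1',t)-g(x_2',t)|$ between its $C^\alpha_t$ bound $t^\alpha$ and its $C^\alpha_{x'}$ bound, and using $|t^{-\gamma(x_1')}-t^{-\gamma(x_2')}|\le t^{-c}|\log t|\,|x_1'-x_2'|^\alpha$ — gives the $x'$-Hölder exponent $1+\alpha-c-\eta$ for any small $\eta$ (the extra $t$ supplies the $+1$), hence $t^{-\gamma}f\in C^\epsilon_{x'}$ for $\epsilon<1+\alpha-c$. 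The $t$-Hölder estimate is the same two-term split $J_1+J_2$ as in Lemma~\ref{lemma-ch4-regularity-power1}, now with $1-\gamma$ in place of $-\gamma$, yielding $t$-exponent $1-\gamma$, hence $\delta<1-c$. For general $k,l$ I would argue by the identities
$$\partial_t(t^{-\gamma}f)=t^{-\gamma}\partial_tf-\gamma\, t^{-\gamma}\frac{f}{t},\qquad D_{x'}(t^{-\gamma}f)=t^{-\gamma}D_{x'}f-t^{-\gamma}\log t\,f\,D_{x'}\gamma,$$
iterating them: each $\partial_t$ either lands on $f$ (lowering the vanishing order by one, which is why the $f/t$-type terms stay controlled exactly up to $\nu\le k-1$) or produces another $f/t$ factor absorbed into a function satisfying the Case~1 hypotheses; each $D_{x'}$ produces a $\log t$ factor handled by Lemma~\ref{lemma-ch4-regularity-log}(i). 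One checks inductively that every resulting term is of the form $t^{-\gamma}(\log t)^m\cdot h$ with $h\in C^\alpha$, $h(\cdot,0)=0$, so Lemma~\ref{lemma-ch4-regularity-power1}'s estimates (with the borrowed power of $t$) plus Lemma~\ref{lemma-ch4-regularity-log}(i) finish it.

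For part~$\mathrm{(ii)}$, only the $t$-direction regularity is claimed, and the argument is a strict simplification: repeat the $J_1+J_2$ split in $t$ from Case~1 of Lemma~\ref{lemma-ch4-regularity-power1} but with the sharper vanishing $|\partial_t^\nu f|\le C t^{1+\alpha-\nu}$ for $\nu\le k-1$ (which follows from $\partial_t^\nu f\in C^\alpha$, $\partial_t^\nu f(\cdot,0)=0$ for $\nu\le k$). Writing $\partial_t^\nu(t^{-\gamma}f)$ via Leibniz as a sum of terms $c_{\nu,j}t^{-\gamma-j}\partial_t^{\nu-j}f$ and noting $t^{-\gamma-j}\partial_t^{\nu-j}f$ has the form $t^{1-\gamma}\cdot(\text{bounded }C^\alpha_t\text{ function vanishing at }0)$ for $\nu-j\le k-1$, the same computation as for $J_1,J_2$ gives a $C^{1+\alpha-c-\eta}_t$, hence $C^\epsilon_t$ for $\epsilon<1+\alpha-c$, bound; the $D_{x'}^\tau$-derivatives only introduce $\log t$ factors, absorbed by Lemma~\ref{lemma-ch4-regularity-log}(i). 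I expect the main technical nuisance — though not a genuine obstacle, since all the machinery is in place — to be the careful bookkeeping of the $(\log t)^m$ factors generated when many $D_{x'}$'s act on $t^{-\gamma}$, and verifying at each induction step that the residual factor still vanishes to the order needed to offset $t^{-\gamma}$ with $c>\alpha$; this is precisely where the asymmetry $\nu\le k-1$ (versus $\tau\le l$) in the conclusion originates.
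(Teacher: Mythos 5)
The paper does not actually write out a proof of this lemma (it only remarks that the proof is ``similar to that of Lemma~\ref{lemma-ch4-regularity-power1}''), so your proposal has to stand on its own, and part (i) has two concrete gaps. First, you assert that $g=f/t$ satisfies $g(\cdot,0)=\partial_tf(\cdot,0)=0$ ``by hypothesis''; but in (i) the vanishing is assumed only for $\nu\le k-1$, so for $k=1$ the trace $\partial_tf(\cdot,0)$ need not vanish. Hence $g$ does not satisfy the hypotheses of Lemma~\ref{lemma-ch4-regularity-power1}, and the bound $|f|\le Ct^{1+\alpha}$ from your preamble is not available in (i) --- only $|f|\le Ct$ is (this is exactly why the $t$-exponent in (i) is $1-c$ rather than $1+\alpha-c$). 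Second, and more seriously, the interpolation you describe for the $x'$-H\"older seminorm --- between the $C^{\alpha}_{t}$ bound $t^{\alpha}$ of $g$ and its $C^{\alpha}_{x'}$ bound --- can never produce an exponent above $\alpha$: the interpolated bound is $Ct^{(\cdot)}|x_1'-x_2'|^{\alpha(1-\theta)}$ with $x'$-exponent $\alpha(1-\theta)\le\alpha$, while the claimed range $(0,1+\alpha-c)$ extends strictly above $\alpha$ since $c<1$. The same cap occurs in the term $g(x_2',t)\,[t^{1-\gamma(x_1')}-t^{1-\gamma(x_2')}]$ if one only uses $|\gamma(x_1')-\gamma(x_2')|\le[\gamma]_{C^{\alpha}}|x_1'-x_2'|^{\alpha}$; indeed for $f=t$ and $\gamma$ genuinely only $C^{0,\alpha}$ the function $t^{-\gamma}f=t^{1-\gamma(x')}$ has $x'$-H\"older exponent exactly $\alpha$ for each fixed $t$, which shows this term must be estimated through $|D_{x'}\gamma|_{L^{\infty}}|x_1'-x_2'|$ rather than through the $C^{\alpha}$ seminorm of $\gamma$.

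To reach $\epsilon$ up to $1+\alpha-c$ you must actually deploy the extra tangential hypothesis ($\tau\le l+1$) that you cite only in your preamble: interpolate between the Lipschitz-in-$x'$ bound $|f(x_1',t)-f(x_2',t)|\le Ct^{\alpha}|x_1'-x_2'|$ (from $D_{x'}f\in C^{\alpha}_{t}$ with $D_{x'}f(\cdot,0)=0$) and the bound $Ct\,|x_1'-x_2'|^{\alpha}$ (from $\partial_tf\in C^{\alpha}_{x'}$); the $\theta$-interpolation gives $Ct^{1-\theta(1-\alpha)-\gamma}|x_1'-x_2'|^{\alpha+\theta(1-\alpha)}$, bounded for $\theta(1-\alpha)\le 1-c$, hence exponent up to $\alpha+(1-c)=1+\alpha-c$. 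This is precisely the mechanism of the $I_1$ and $I_2$ estimates in Lemma~\ref{lemma-ch4-singular-integral-non-int-lower-smaller} (the exponent $1+\alpha\delta-\delta$ there), which --- not Lemma~\ref{lemma-ch4-regularity-power1} --- is the right template for part (i). Part (ii) is essentially correct in outline, but the literal ``$J_1+J_2$ transplant'' again tops out at exponent $\alpha$; you need the weighted Lipschitz bound $|\partial_t^{\nu-j}f(x',t_1)-\partial_t^{\nu-j}f(x',t_2)|\le Ct_2^{\,k-\nu+j-1+\alpha}|t_1-t_2|$ (available because $\partial_t^{\nu-j+1}f$ still vanishes to order $k-\nu+j-1+\alpha$), followed by $|t_1-t_2|^{1-\epsilon}\le t_2^{1-\epsilon}$, to produce the exponent $1+\alpha-c$.
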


The proof is similar to that of Lemma \ref{lemma-ch4-regularity-power1} and is omitted.


\end{document}